\newcommand{\powerset}{\raisebox{.15\baselineskip}{\Large\ensuremath{\wp}}}
\def\@tocline#1#2#3#4#5#6#7{\relax
  \ifnum #1>\c@tocdepth 
  \else
    \par \addpenalty\@secpenalty\addvspace{#2}%
    \begingroup \hyphenpenalty\@M
    \@ifempty{#4}{%
      \@tempdima\csname r@tocindent\number#1\endcsname\relax
    }{%
      \@tempdima#4\relax
    }%
    \parindent\z@ \leftskip#3\relax \advance\leftskip\@tempdima\relax
    \rightskip\@pnumwidth plus4em \parfillskip-\@pnumwidth
    #5\leavevmode\hskip-\@tempdima
      \ifcase #1
       \or\or \hskip 1em \or \hskip 2em \else \hskip 3em \fi%
      #6\nobreak\relax
    \dotfill\hbox to\@pnumwidth{\@tocpagenum{#7}}\par
    \nobreak
    \endgroup
  \fi}
\newtheorem{thm}{Theorem}[subsection]
\newtheorem{cor}[thm]{Corollary}
\newtheorem{lem}[thm]{Lemma}
\theoremstyle{definition}
\newtheorem{defn}[thm]{Definition}
\theoremstyle{remark}
\newtheorem{rem}[thm]{Remark}
\numberwithin{equation}{section}
\newcommand{\gF}{\mathfrak{F}}
\newcommand{\gS}{\mathfrak{S}}
\newcommand{\gB}{\mathfrak{B}}
\newcommand{\cI}{\mathcal{I}}
\newcommand{\gG}{\mathfrak{G}}
\newcommand{\gI}{\mathfrak{I}}
\newcommand{\gJ}{\mathfrak{J}}
\newcommand{\gP}{\mathfrak{P}}
\newcommand{\sB}{\mathscr{B}}
\newcommand{\sL}{\mathscr{L}}
\newcommand{\cX}{\mathscr{X}}
\newcommand{\sA}{\mathscr{A}}
\newcommand{\sC}{\mathscr{C}}
\newcommand{\spin}{{\rm spin}}
\newcommand{\NN}{\mathbb{N}}
\newcommand{\II}{\mathbb{I}}
\newcommand{\cS}{\mathcal{S}}
\newcommand{\supp}{{\rm supp}\,}
\newcommand{\cB}{\mathcal{B}}
\newcommand{\cE}{\mathcal{E}}
\newcommand{\cH}{\mathcal{H}}
\newcommand{\cP}{\mathcal{P}}
\newcommand{\cG}{\mathcal{G}}
\newcommand{\cQ}{\mathcal{Q}}
\newcommand{\RR}{\mathbb{R}}
\newcommand{\CC}{\mathbb{C}}
\newcommand{\DD}{\mathbb{D}}
\newcommand{\cD}{\mathcal{D}}
\newcommand{\cL}{\mathcal{L}}
\newcommand{\TT}{\mathbb{T}}
\newcommand{\wt}{\widetilde}
\newcommand{\Ran}{{\rm Ran}\,}
\newcommand{\Ker}{{\rm Ker}\,}
\begin{document}
\title[Spectral theorem for normal operators on a Clifford module]{The spectral theorem for normal operators on a Clifford module}%

\author[F. Colombo]{Fabrizio Colombo}
\address{(FC) Dipartmento di Matematica \\
Politecnico di Milano \\
Via E. Bonardi, 9 \\
I-20133 Milano, Italy }
\email{fabrizio.colombo@polimi.it}

\author[D. P. Kimsey]{David P. Kimsey}
\address{(DPK) School of Mathematics, Statistics and Physics \\
Newcastle University\\
Newcastle upon Tyne NE1 7RU UK}
\email{david.kimsey@ncl.ac.uk}

\date{\today}

\begin{abstract}
In this paper, using the recently discovered notion of the $S$-spectrum, we prove the spectral theorem for a bounded or unbounded normal operator on a Clifford module (i.e., a two-sided Hilbert module over a Clifford algebra based on units that all square to be $-1$). Moreover, we establish the existence of a Borel functional calculus for bounded or unbounded normal operators on  a Clifford module. Towards this end, we have developed many results on functional analysis, operator theory, integration theory and measure theory in a Clifford setting which may be of an independent interest. Our spectral theory is the natural spectral theory for the Dirac operator on manifolds in the non-self adjoint case. Moreover, our results provide a new notion of spectral theory and a Borel functional calculus for a class of $n$-tuples of commuting or non-commuting operators on a real or complex Hilbert space. Moreover, for a special class of $n$-tuples of operators on a Hilbert space our results provide a complementary functional calculus to the functional calculus of J. L. Taylor.

 \end{abstract}

\maketitle

\medskip
\noindent AMS Classification 47A10, 47A60.

\noindent Keywords: Clifford spectral theorem, $S$-spectrum, Clifford measure theory, Dirac operator on manifolds, continuous and Borel functional calculi, spectral integrals.

\tableofcontents

\section{Introduction}
\label{sec:INTRO}

The spectral theorem for a normal operator on a complex Hilbert space is an incredibly elegant result which lies at the heart of operator theory, harmonic analysis and mathematical physics. In this paper we endeavour to generalise the spectral theorem to a noncommutative setting where a complex Hilbert space is replaced by a Hilbert module over a Clifford algebra $\cH_n := \cH \otimes \RR_n$, where $\cH$ is a real Hilbert space and $\RR_n := \RR_{0,n}$ is a Clifford algebra which is generated by units $e_0 = 1$ and $e_1, \ldots, e_n$, where $e_i^2 = -1$ and $e_ie_j = - e_j e_i$ for $i,j=1,\ldots, n$ with $n > 0$ (we will favour the term Clifford module for brevity) and the notion of spectrum is replaced by the recently discovered notion of $S$-spectrum. We wish to stress that given the well-known classification of Clifford algebras $\RR_n$ for $n=1,2,\ldots$ (see, e.g., \cite{SpinGeometry}), our results can easily be translated to handle spectral theory for a linear operator on any Hilbert module over a finitely generated unital ring.

The $S$-spectrum can be characterised by the invertibility of a second order operator and it is defined to be a subset of the set of paravectors in $\RR_n$ (where one can instil a natural complex structure corresponding to any paravector). More precisely, corresponding to a right linear operator $T$ on a Clifford module $\cH_n$ over $\RR_n$, we define
\begin{equation}
\label{eq:Sspectrumintro}
\sigma_S(T) := \{ s = \sum_{j=0}^n s_j e_j: ( T^2 - 2s_0 T + (s_0^2+\ldots + s_n^2) I )^{-1} \notin \cB(\cH_n) \},
\end{equation}
where $\cB(\cH_n)$ denotes the set of bounded linear operators on $\cH_n$. The $S$-spectrum was discovered by the first author and I. Sabadini in the context of arbitrary operators on a quaternionic Banach space (i.e., a Clifford module over $\RR_2$) and paravector operators $T$ on a Banach module over a Clifford algebra $\cX_n = \cX \otimes \RR_n$, where $\cX$ is a real Banach space and  $\RR_n$ is as above with $n > 2$,  i.e., right linear operators of the form
$$T = T_0 + \sum_{j=1}^n e_j T_j ,$$
where $T_0,\ldots, T_n$ are linear operators on the real Banach space $\cX$ (see Appendix \ref{app:Spectrum} for more details).

Recently we realised that we can dispense with the restriction that $T$ is a paravector operator, i.e., the $S$-spectrum can be defined for an arbitrary right linear operator on $\cX_n$.  This observation will turn out to be critical for this manuscript. Moreover,  the consequences of this observation on the $S$-functional calculus and the slice hyperholomorphic function theory have been investigated in \cite{CGKSfunc} and \cite{CKPS}, respectively.

It turns out that Clifford numbers of the form $s = \sum_{j=0}^n s_j e_j$ embed naturally into a complex plane and hence
$$\sigma_S(T) \subseteq \bigcup_{\gI \, \in \, \mathbb{S}}\CC_{\gI},$$
where $\CC_{\gI} = \{ \lambda_1 + \lambda_2 \gI: \lambda_1,\lambda_2 \in \RR \}$ and $\mathbb{S} := \{ s = \sum_{j=1}^n s_j e_j: s^2 = -1 \}$. A moment's consideration of \eqref{eq:Sspectrumintro} will reveal that the upper half plane $\sigma_S(T) \cap \CC_{\gI}^+$ can identified in a natural way with $\sigma_S(T) \cap \CC_{\gJ}^+$ for any choice of $\gI, \gJ \in \mathbb{S}$. Thus, one may think of the $S$-spectrum as a complex notion of spectrum with an elegant symmetry. For a concise background of the history of the $S$-spectrum, related function theory and known results in the quaternionic case see Appendix \ref{app:Spectrum}.

We shall see that corresponding to every densely defined normal operator $T$ on a Clifford module $\cH_n$, there exists an imaginary operator $J$ (i.e., $J^*J = I$ and $J^* = -J$) and a uniquely determined spectral measure $E := E_{\gI}$ such that
\begin{equation}
\label{eq:SPECTRALBEGIN}
T = \int_{\CC_{\gI}^+} {\rm Re}(\lambda) dE(\lambda) + \int_{\CC_{\gI}^+} {\rm Im}(\lambda) dE(\lambda) \, J \quad \quad {\rm for} \quad \gI \in \mathbb{S}.
\end{equation}
We shall also see that
\begin{equation}
\label{eq:SPECTRALSUPPORT}
\supp \, E = \sigma_S(T) \cap \CC_{\gI}^+.
\end{equation}
Moreover, we establish a full analogue of spectral integrals corresponding to a spectral measure $E$, the Borel functional calculus associated with $T$ and the spectral mapping theorem in the bounded and unbounded case. The fact that $J$ is an imaginary operator on $\cH_n$ allows one to think of $J$ as an operator-valued analogue of an imaginary unit. It is worth pointing out that in the present setting the spectral measure $E$ gives rise to a family of $\RR_n$-valued measures which are positive as elements of the Clifford algebra $\RR_n$. This creates a significant technical difficulty when building the requisite machinery to prove \eqref{eq:SPECTRALBEGIN} and the Borel functional calculus.

\newpage

\noindent {\bf Motivation}

\bigskip

\noindent We mention some of the main motivations to study the spectral theorem on a Clifford module.

\medskip
\medskip

\noindent (I) {\em Spectral theory for vector operators}.

\medskip

\noindent The spectral theory on the $S$-spectrum is a very natural tool for studying vector operators
that come from vector analysis
such as the gradient operators with nonconstant coefficients
\begin{equation}\label{FOUR}
T=\sum_{j=1}^n e_j a_j(x)\partial_{x_j}, \ \ \ x=(x_1,...,x_n)\in \mathbb{R}^n
\end{equation}
where $e_j$ is an orthonormal basis in $\mathbb{R}^n$ and
$a_j:\Omega\to \mathbb{R}$, $j=1,...,n$
 are given functions with suitable regularity and $\Omega \subseteq \mathbb{R}^n $ is an open set.
 With our spectral theory we can define, for example, the fractional powers of operators of the form (\ref{FOUR}) which can be used to represent fractional Fourier's law for the propagation of the heat in nonhomogeneous materials
contained in $\Omega$.
In the quaternionic setting, fractional Fourier's law,
 the has been considered in various papers, see for example \cite{frc1,frc2},  and the references therein.

\medskip
\medskip

\noindent (II) {\em Dirac operator on manifolds}.

\medskip

\noindent In a great preponderance of the papers on spectral theory for the Dirac operators the self-adjoint case is considered. This is most likely the cause due to the associated difficulties of defining in an appropriate way the spectrum of the non self-adjoint Dirac operator. However, the Dirac operator is just
 a particular case of a Clifford operator, and on manifolds, it has in general non-constant coefficients, so its natural notion of spectrum is the $S$-spectrum. The $S$-spectrum of a self-adjoint Clifford operator (in particular the Dirac operator) is real.
Let us explain with more details the above motivation.
Assume that $g:U\to \mathbb{R}^{n \times n}$, given by
$$
g(x)=(g_{ij}(x))_{i,j=1}^n,
$$
is a smooth matrix-valued function defined on the open set $U$ in $\mathbb{R}^n$
where $g(x)$ will always be taken to be positive-definite and symmetric.
Then
\begin{equation}\label{1.2i}
g_x(\xi,\eta)=\sum_{i,j=1}^ng_{ij}(x)\xi_i\eta_j, \ \ \xi,\ \eta\in \mathbb{R}^n
\end{equation}
is a positive-definite inner product on $\mathbb{R}^n$ and
\begin{equation}\label{1.2ii}
g_x(X,Y)=\sum_{i,j=1}^ng_{ij}(x)a_ib_j,
\ \ {\rm where}\ \
X=\sum_{i=1}^n a_{i}(x)\partial_{x_i},\ \ \ Y=\sum_{j=1}^n b_{j}(x)\partial_{x_j},
\end{equation}
defines a positive-definite inner product space on the tangent space $\mathcal{T}_x(U)$ to $U$ at $x$.
So $U$ can be seen as a coordinate neighbourhood for the Riemannian manifold $M$ taking $x=(x_1,...,x_n)$ as coordinates and  (\ref{1.2ii}) as inner product, making it possible to study the Dirac operator on $M$, by introducing it as a nonconstant coefficients nonhomogeneous first-order systems of differential operators on $\mathcal{C}^\infty(U,\mathcal{H})$.
To give a precise definition of the spectrum of the Dirac operator on $M$, (i..e, the $S$-spectrum), we need to perform the following steps.

(a)
We need to give the definition of the Dirac operator $D$ on $M$.

(b)
Next, we need to write $D^2$ explicitly in terms of a second-order Laplacian and a curvature operator,
via the Bochner-Weitzenb\"ock theorem.

The precise  expression of the operator $D^2$ is of crucial importance in order to define the $S$-spectrum because it is associated with the operator
$$
\mathcal{Q}_s(D):=D^2-2s_0D+|s|^2 \, I,
$$
here $s$ is a paravector in the Clifford algebra $\mathbb{R}_n$.
Before we can define the Dirac operator, we require some additional notions. Let $g:  \RR^{n \times n} \to U$ be an invertible function on $\RR^{n \times n}$ and write
\begin{equation}\label{1.3i}
g^{-1}:U\to \mathbb{R}^{n\times n}, \ \ g^{-1}(x)=(g^{ij}(x))_{i,j=1}^n.
\end{equation}
Next, let
$$
\gamma^{}(x)=(\gamma_{ij}(x))_{i,j=1}^n,\ \ \gamma^{-1}(x)=(\gamma^{ij}(x))_{i,j=1}^n :U\to \mathbb{R}^{n\times n},
$$
be the unique square roots of $g$ and $g^{-1}$, respectively.
{\em
Let $e_1,...,e_n$ be the skew-adjoint operators
satisfying the Clifford relations
$e_je_k+e_ke_j=-2\delta_{jk}$, set
$$
e_i(x)=\sum_{j=1}^n \gamma^{ij}(x)e_j,\ \ \ x\in U,
$$
for every $i=1,...,n.$
}
By definition we have that
\begin{equation}\label{1.6}
e_j(x)e_k(x)+e_k(x)e_j(x)=-2g^{jk}(x),\ \ \ x\in U.
\end{equation}
Let $d\tau :\spin(n)\to \mathcal{U}(\mathcal{H})$ be the
representation of the Lie algebra of $\spin(n)$
derived from the representation $\tau: \spin(n)\to \mathcal{U}(\mathcal{H})$.
For more details see \cite{6DIRAC4}.

{\em
As a differential operator on $\mathcal{C}^\infty(U,\mathcal{H})$ the standard Dirac operator $D$ on $(U,g)$ is defined by
$$
D=\sum_{i=1}^ne_i(x)\Big(\partial_{x_i}+d\tau(\omega_i(x))\Big)
$$
 where $\omega_1,...,\omega_n:U\to \spin(n)$
are smooth functions uniquely determined by $g$.
}

Some more work is necessary to define $D^2$, but clearly the spectral theory on the $S$-spectrum allows one to consider non-self adjoint Dirac operators.
We recall that the scalar curvature of $(U,g)$ is defined by
\begin{equation}
\kappa(x)=-\sum_{i,j=1}^n R_{ijij}(x)
\end{equation}
where $R_{ijk\ell}(x)$ is the Riemann curvature tensor.
As an example we recall Spinor Laplacian, that is we
 assume $\mathcal{H}=\Theta_n$ and $\tau$ is the Spin representation of $S$ of $\spin(n)$ on $\Theta_n$, $n=2m$, (see \cite{6DIRAC4}) the second order operator
$D^2$ is often called spinor Laplacian is given by the Lichnerowicz's formula
$$
D^2=-\Delta_S+\frac{1}{4}\kappa(x).
$$
So in similar cases the $S$-spectrum can be written explicitly.
Several results, such as \cite{Bulla,bookTF,Kawai,KimFRI,Leitner} and also
\cite{songerg,Homma,Kirchberg,Moroianu,Nakad,Nakad2}
in spectral theory for the Dirac operators can now be seen in a new
light using the spectral theory on the $S$-spectrum for the Dirac operator,
also using the $S$-functional calculus \cite{6newresol,6hinfty,jfaStruppa,6JONAME} and the function theory
\cite{ABCS1,CSSf,CSSd,CSSe} on which this calculus is based on for $n$-tuples of operators.

\medskip
\medskip

\noindent (III) {\em Complementarity to the Taylor spectrum for a  class of tuples of commuting operators or non-commuting operators and harmonic analysis}.

\medskip

\noindent Given an $n$-tuple of bounded or unbounded operators $(T_1, …, T_n)$ on a real Hilbert space $\cH$, we can form the right linear operator
$$
T = \sum_{j=1}^n e_{\alpha_j} T_{j} ,
$$
where $\alpha_1, \ldots, \alpha_n \in \powerset(\{ 1, \ldots, n \})$. Now if there exists a configuration of units such that $T$ is normal, then one may define the spectrum of $(T_1, \ldots, T_n)$ to be the $S$-spectrum of $T$, i.e., $\sigma_S(T)$. Moreover, one has a Borel functional calculus at hand for a reasonably large class of functions of $T$, which is helpful in problems in harmonic analysis and partial differential equations. It is worth mentioning that one can find the relations between the monogenic functional calculus, Taylor functional calculus  and the Weyl functional calculus (see,
 \cite{6JM,6JMP,6MQ,6MP,6qian1}).
In harmonic analysis in higher dimensions, singular integrals and in the study of the Fourier transform
one can find various connections with Clifford analysis in the recent book \cite{BOOKTAO}. Boundary value problems treated with quaternionic techniques can be found in \cite{6GURLYSPROSS}. Clifford wavelets, singular integrals, and Hardy spaces are studied in \cite{Mitrea}.

\medskip

\noindent (IV) {\em Spectral theory for linear operators on a Hilbert module over a finitely generated unital ring}.

\medskip
\medskip

We first note that we may use the classification of Clifford modules (see, e.g., \cite{SpinGeometry}) and the fact that every finitely generated unital ring is isomorphic to $\RR^{m \times m}$ to see that there is a natural isomorphism between any Hilbert module over a finitely generated unital ring $R$ and a Clifford module $\cH_n$ for an appropriate choice of $n$.  Thus, given a normal right linear operator $T$ on a Hilbert module over a finitely generated unital ring $R$, we may view $T$ as a being a right linear operator on a Clifford module $\cH_n$ for a suitable choice of $n$. We can then utilise the spectral theory and functional calculi in the Clifford setting and translate back to the Hilbert module setting in a straight forward manner.

\medskip

 For the function theory of slice hyperholomorphic functions the
 main books are \cite{6ACSBOOK,bookSCE,6EntireBook,CSS,6GSBook,GSSBOOK}, while
 for the spectral theory on the $S$-spectrum we mention \cite{6COFBook,6CCKS,6CG,6CKG,CSS}.
 For the Fueter and monogenic function theory and related topics
 see the books \cite{6DIRAC1,6DIRAC2,6DIRAC3,6DIRAC4,6DIRAC5,6jefferies,6DIRAC6}.

\medskip

\noindent {\bf Strategy}

\medskip

We wish to summarise our strategy for proving the spectral theorem for a normal operator on a Clifford module.

\noindent (I)
It turns out that the $S$-spectrum and $S$-resolvent set
  can be defined for all bounded or unbounded operators (not necessarily paravector operators). This is the first crucial intuition for the decomposition of normal operators and
   we  show that the $S$-spectrum of a bounded operator is a non-empty compact subset of $\{ s \in \RR_n: s = s_0 + \sum_{j=1}^n s_j e_j \}$.

\noindent (II)
We define a spectral measure on a Clifford module and spectral integrals $\II(f)$,
where $f$ belongs to a suitable class of functions (see Section 4).

Moreover, we point out that an absolutely key result is Theorem \ref{thm:IMPORTANT} which connects
$\sigma_S(\II(f))$ to $\supp E$. Dealing with the $S$-spectrum requires to overcome substantially
different difficulties with respect to the classical complex or quaternionic Hilbert space case (where the spectral measure $E$ gives rise to a family of positive measures in the usual sense).

\noindent (III) We  prove a spectral theorem for a bounded self-adjoint operator
(see Theorem \ref{thm:BSA}) where the important result given in Theorem \ref{thm:IMPORTANT}
 is being used to show that the spectral measure $E$ has the property that  $\supp E = \sigma_S(T)$.

\noindent (IV) We prove a polar decomposition for a bounded Clifford operator $T$
   and specialise the result to the case when $T$ is normal (see Theorem \ref{thm:POLARDECOMP}).
   This then enables one to prove that {\it every} bounded normal operator can be written as
   $T = A+ BJ$, where $A = A^* \in \cB(\cH_n)$, $B \in \cB(\cH_n)$ is positive, $JJ^* =  I$ and $J^* = -J$ and $A$, $B$ and $J$ all mutually commute (see Theorem \ref{thm:AJB}).

\noindent (V)
To prove the spectral theorem for bounded normal operators
 (see Theorem \ref{thm:BN}), one needs to apply a technical result (see Theorem \ref{thm:COMMUTINGspectral}) to manufacture a uniquely determined spectral measure $E$ which lives on $\sigma_S(A) \times \sigma_S(B)$ which will be identified with the complex plane $\CC_{\gI}$, where $\gI \in \mathbb{S}$.

One applies the key result Theorem \ref{thm:IMPORTANT} with the identity function $f(\lambda) = \lambda$ to see that $\sigma_S(T) \cap \CC_{\gI} = \supp E \cap \CC_{\gI}$. One can get a suitable integral representation for $T$ which resembles the quaternionic case.

\noindent (VI)
We use the bounded case of the spectral theorem (see Theorem \ref{thm:BN}) to prove the unbounded case (see Theorem \ref{thm:UNBOUNDEDNORMAL}).

\medskip

For the convenience of the reader we have compiled a list of commonly used notation that appears throughout this manuscript.

\medskip

\noindent {\bf Notation}

\medskip

\noindent $\RR_n:=\RR_{0,n}$ will denote the Cifford algebra generated by the units $e_0 := 1$ and $e_1, \ldots, e_n$, where $e_i e_j = -e_j e_i$ for $i \neq j$. An element $a \in \RR_n$ can be written as
$$a = \sum_{\alpha } a_{\alpha} e_{\alpha} := \sum_{\alpha \in \powerset(\{ 1, \ldots, n \})} a_{\alpha} e_{\alpha},$$
where $\emptyset$ can be identified with $0$ and $\alpha \neq \emptyset$ can be interpreted as a $n$-tuple $(i_1, \ldots, i_n)$ with $i _1 < \ldots < i_n)$.

\medskip

\noindent $\mathbb{S} := \{ \gI = (0,  \gI_1, \ldots, \gI_n) \in \RR^{n+1}: \sum_{i=1}^n \gI_i = 1 \}$. A typical element of $\mathbb{S}$ will be denoted by $\gI$ or $\gJ$.

\medskip

\noindent For any $\gI \in \mathbb{S}$, we shall let $\CC_{\gI} = \{ \lambda_0 + \lambda_1 \, \gI: \lambda_0, \lambda_1 \in \RR \}$ and $\CC^+_{\gI} = \{ \lambda_0 + \lambda_1 \, \gI: \lambda_0\in \RR, \lambda_1 \geq 0 \}$.

\medskip

\noindent $\gS(\RR_n)$ will denote the set of self-adjoint Clifford numbers in $\RR_n$, i.e., all $a \in \RR_n$ such that $\bar{a} = a$.

\medskip

\noindent $\gP(\RR_n)$ will denote the set of positive semidefinite Clifford numbers in $\RR_n$, i.e., all $a \in \RR_n$ such that $a = b \bar{b}$ for some $b \in \RR_n$. In this case, we will write $a \succeq 0$.

\medskip

\noindent $\chi : \RR_n \to \RR^{2^n \times 2^n}$ will denote the injective $*$-homomorphism from the Clifford algebra $\RR_n$ to the set of real matrices of size $2^n$ (see Definition \ref{def:CHI}).

\medskip

\noindent $\cH_n = \cH \otimes \RR_n$ will denote a two-sided Clifford module over $\RR_n$, where $\cH$ is a real Hilbert space (see Definition \ref{def:CLIFFORD}).

\medskip

\noindent $\langle \cdot, \cdot \rangle: \cH_n \times \cH_n \to \RR_n$ given by \eqref{eq:INNERPRODUCT} and $\| x \| := \sqrt{{\rm Re}\, \langle x, x \rangle}$ for $x \in \cH_n$.

\medskip

\noindent For any closed submodule $Y \subseteq \cH_n$, we shall let
$$Y^{\perp} := \{ y \in \cH_n: \text{$\langle x,y \rangle = 0$ for all $x \in \cH_n$}\}.$$

\medskip

\noindent $\cL(\cH_n)$ will denote the set of right linear operators on $\cH_n$. The domain of an operator $T \in \cL(\cH_n)$ will be denoted by $\cD(T)$ (see Definition \ref{def:LINEAROPERATORS}).

\medskip

\noindent $\Ran T$ and $\Ker T$ will denote the range and kernel of $T \in\cL(\cH_n)$, respectively (see Definition \ref{def:KERNELANDRANGE}).

\medskip

\noindent $\cB(\cH_n)$ will denote the set of bounded linear operators on $\cH_n$ (see Definition \ref{def:BDDOP}. We shall let
$$\| T\|:=\sup_{\| x \|\leq 1} \|Tx\| = \lim_{\| x \| = 1 } \|T x\|.$$

\medskip

\noindent $\cG(T)$ will denote the graph of a linear operator (see Definition \ref{def:GRAPH}).

\medskip

\noindent For $T \in \cL(\cH_n)$, we shall let $\langle x,y \rangle_T := \langle x , y \rangle + \langle Tx, Ty \rangle$ and $\| x \|_T :=(\|x\| + \|T x \|^2)^{1/2}$ for $x \in \cD(T)$.

\medskip

\noindent For $S,T \in \cL(\cH_n)$, we will write  $S \subseteq T$ if $\cD(S) \subseteq \cD(T)$ and $Sx = Tx$ for $x \in \cD(S)$.

\medskip
\noindent $|T|:=(T^*T)^{1/2}$ (see  Theorem \ref{thm:POLARDECOMP}).
\medskip

\noindent For any closable operator $T \in \cL(\cH_n)$, we shall let $\overline{T}$ denote the closure of $T$ (see Definition \ref{def:Jun7er2}).

\medskip

\noindent We shall let $\cQ_s(T) := T^2 - 2{\rm Re}(s)T + |s|^2 I$.

\medskip

\noindent
 $\rho_S(T)$ denotes the $S$-resolvent set of $T$ and $\sigma_S(T)$ denotes the $S$-spectrum of $T$ (see Definition \ref{def:Dec14j1}).

\medskip

\noindent We shall let $S_L^{-1}(s,T)$ and $S_R^{-1}(s,T)$ denote the left $S$-resolvent operator and right $S$-resolvent operator of $T$ at $s$, respectively (see Definition \ref{def:Sresolvent}).

\medskip

\noindent For $T \in \cB(\cH_n)$, we shall let $r_S(T)$ denote the spectral radius of $T$ with respect to the $S$-spectrum (see \eqref{eq:SR}).

\medskip

\noindent For $T \in \cB(\cH_n)$, we shall let $\sigma_{PS}(T)$, $\sigma_{RS}(T)$ and $\sigma_{CS}(T)$ denote the point $S$-spectrum of $T$, residual $S$-spectrum of $T$ and continuous $S$-spectrum of $T$, respectively (see Definition \ref{def:SPECTRUMS}).

\medskip

\noindent For $T \in \cB(\cH_n)$, we shall let $\Pi_S(T)$ and $\Gamma_S(T)$ denote the approximate point $S$-spectrum of $T$ and compression $S$-spectrum of $T$, respectively (see Definition \ref{def:MORESPECTRUM}).

\medskip

\noindent $T \in \cL(\cH_n)$ will be called normal if $T$ is densely defined, closed, $\cD(T) = \cD(T^*)$ and $TT^* = T^*T$.

\medskip

\noindent $T \in \cL(\cH_n)$ will be called self-adjoint if $\cD(T) = \cD(T^*)$ and $Tx =T^*x$ for all $x \in \cD(T)$.

\medskip

\noindent $T \in \cL(\cH_n)$ will be called positive if $T$ is $\cD(T) = \cD(T^*)$ and $\langle Tx, x \rangle \succeq 0$ for all $x \in \cD(T)$, i.e., $\langle Tx, x \rangle \in \gP(\RR_n)$ for all $x \in \cD(T)$.

\medskip

\noindent $T \in \cL(\cH_n)$ will called anti self-adjoint if $\cD(T) = \cD(T^*)$ and $T = -T^*$ for $x \in \cD(T)$.

\medskip

\noindent $T \in \cB(\cH_n)$ will be called unitary if $TT^* = I$.

\medskip

\noindent $J \in \cB(\cH_n)$ will be called imaginary if $J$ is anti self-adjoint and $J$ is unitary.

\medskip

\noindent $\cP(\cH_n)$ will denote the set of orthogonal projections on $\cH_n$.

\medskip

\noindent $s-\lim_{i \to \infty} T_i$ will denote the limit of a sequence of operators $(T_i)_{i=1}^{\infty}$, where $T_i \in \cB(\cH_n)$ for $i=1,2,\ldots$, in the strong operator topology.

\medskip

\noindent $E$ will denote a spectral measure on $(\Omega, \sA)$, where $\sA$ is a $\sigma$-algebra of sets generated by $\Omega$.

\medskip

\noindent The set of Borel sets generated by $\Omega$ will be denoted by $\sB(\Omega)$.

\medskip

\noindent Given a spectral measure $E$ on $(\Omega, \sB(\Omega))$, $\supp E$ will denote the support of $E$ (see Definition \ref{def:SUPPORT}).

\medskip

\noindent Given an $\RR_n$-valued measure $\nu$ the total variation of $\nu$ will be denoted by $|\nu|$ (see \eqref{eq:TOTALVAR}).

\medskip

\noindent Given an imaginary operator $J \in \cB(\cH_n)$, we will say that $J$ is associated with a spectral measure $E$ on $(\Omega, \sA)$ if $E(M)J = J E(M)$ for all $M \in \sA$ (see Definition \ref{def:ASSOCIATED}).

\medskip

\noindent $\gB(\Omega,\sA, \CC_{\gI})$ will denote the Banach space of all bounded $\sA$-measurable functions $f: \Omega \to \CC_{\gI}$ equipped with the norm
$$\| f \|_{\infty} = \sup_{\lambda \in \Omega} |f(\lambda)|.$$

\medskip

\noindent $\gB_s(\Omega,\sA, \CC_{\gI})$ will denote the subspace of $\gB(\Omega,\sA,\CC_{\gI})$ of all simple functions.

\medskip

\noindent Given $f \in \gB(\Omega, \sA,\CC_{\gI})$ and a spectral measure $E$ on $(\Omega, \sA)$, we shall let $\II(f)$ denote the spectral measure of $f$ with respect to $E$ (see Definition \ref{def:BFUNC}).

\medskip

\noindent Given a spectral measure $E$ on $(\Omega, \sA)$, we shall let $\gF(\Omega,\sA, \CC_{\gI}, E)$ denote the set of all $\sA$-measurable functions $f: \Omega \to \CC_{\gI}\cup \{ \infty \}$ which are $E$-a.e. finite, i.e., $E(\{ \lambda \in \Omega: f(\lambda) = \infty \}) = 0$.

\medskip

\noindent Given $f \in \gF(\Omega,\sA, \CC_{\gI}, E)$ and a spectral measure $E$ on $(\Omega, \sA)$, we shall let $\II(f)$ denote the spectral measure of $f$ with respect to $E$ (see Theorem \ref{thm:BIG}(ii)).
\medskip

\noindent Given an imaginary operator $J \in \cB(\cH_n)$ and $\gI \in \mathbb{S}$,  $\cH_{\pm}(J, \gI) = \{ x \in \cH_n: Jx = x (\pm \gI) \}$ is a complex subspace of $\cH_n$ (see \eqref{eq:CI}).

\medskip

\noindent Given a densely defined operator $T \in \cL(\cH_n)$, we shall let $C_T := (I+T^*T)^{-1}$ and $Z_T := T C_T^{1/2}$ (see Definition \ref{def:Ztransform}).

\medskip

\section{Preliminaries}
\label{sec:PRELIM}
In this section we will formulate a number of definitions and results on Clifford algebras, Clifford modules, linear operators on Clifford modules, the $S$-spectrum and measure theory and integration theory with respect to a Clifford algebra-valued measure. While the topic of Clifford algebras is very classical and well-known, the theory of linear operators on Clifford modules and measure theory and integration theory with respect to a Clifford algebra-valued measure are not so well developed. We have furnished proofs for Clifford algebra/module analogues of results whenever the proof differs from the classical case with the aim of making the present manuscript as self contained as possible.

\subsection{Clifford algebras}
\label{sec:CAs}

\begin{defn}
A collection of $n$ elements $e_1, \ldots, e_n$, with $n = p + q$ and $p, q \in \mathbb{N}_0$ will be called {\it imaginary units} if
\begin{align}
e_i^2 =& \; +1 \quad\quad {\rm for} \quad i = 1, \ldots, p \nonumber \\
e_j^2 =& \; -1 \quad\quad {\rm for} \quad j=p+1, \ldots, n \nonumber \\
e_j e_k + e_k e_j =& \; 0 \quad\quad {\rm for} \quad j \neq k \nonumber \\
\tag*{and}  \nonumber \\
e_1 \ldots e_n \neq& \; \pm 1. \label{eq:PSEUDO}
\end{align}
We shall denote the real algebra generated by the imaginary units $e_1, \ldots, e_n$ the universal Clifford algebra and denote it by $\RR_{p,q}$. An element of $\RR_{p,q}$ is called a {\it Clifford number}.
\end{defn}

\begin{rem}
\label{rem:DIMENSION}
We note that it is only necessary to assume \eqref{eq:PSEUDO} if $p -q \equiv 1 ({\rm mod} \; 4)$. It is easy to check that $\{ e_{\alpha} \}_{\alpha \in \powerset(\{1, \ldots, n \})}$ is linearly independent and hence $\RR_{p,q}$ has dimension $2^n$ (as a real vector space).
\end{rem}

Let $\alpha \in \powerset(\{ 1, \ldots, n \})$ and if $\alpha \neq \emptyset$, then we may write $\alpha = \{ i_1, \ldots, i_r \} $, with $i_1 < \ldots < i_r$. Then we may let
\begin{align*}
e_\alpha :=& \; e_{i_1, \ldots, i_r} := e_{i_1} \ldots e_{i_r} \quad \quad {\rm if} \quad \alpha = \{ i_1, \ldots, i_r\}, \\
e_0 :=& \; e_{\emptyset} = 1 \quad \quad {\rm if} \quad \alpha = \emptyset
\end{align*}
and write an arbitrary Clifford number $a \in \RR_{p,q}$ as
$$a = \sum_{\alpha} a_\alpha e_\alpha := \sum_{\alpha \, \in \, \powerset( \{ 1, \ldots, n \} ) } a_{\alpha} e_{\alpha},$$
where the sum is taken over $\emptyset$ and subsets $\{ i_1, \ldots, i_r \}$ with $i_1 < \ldots < i_r$.

\begin{rem}
The only Clifford algebra considered in the remainder of this paper will be  $\RR_n := \RR_{0,n}$.
\end{rem}

We will let
$$\bar{a} := \sum_{\alpha} a_{\alpha} \bar{e}_{\alpha} = \sum_{\alpha} a_{\alpha} (- e_{\alpha} )$$
and
$$|a| := \left( \sum_{\alpha} a_{\alpha}^2 \right)^{1/2}  \quad \quad {\rm for} \quad a = \sum_{\alpha} a_{\alpha} e_{\alpha} \in \RR_{n}.$$
Hence one can easily check that for all $a,b \in \RR_n$, we have
\begin{align}
\overline{ab} =& \; \bar{b} \, \bar{a}  \nonumber\\
\overline{a+b} =& \; \bar{a} +\bar{b}  \nonumber \\
|a+b | \leq& \; |a| + |b| \label{eq:TRI} \\
|ab| =& \; |a|\, |b| \label{eq:MM} \quad\quad {\rm whenever} \quad b\bar{b} = |b|^2 \\
\tag*{and} \\
|ab| \leq& \; 2^{n-1} |a| \, |b|. \label{eq:SMM}
\end{align}

\begin{defn}
\label{def:unitsphere}
Given $s = (s_0, s_1, \ldots, s_n) \in \RR^{n+1}$, we may identify the vector $s \in \RR^{n+1}$ with the {\it paravector} $\mathfrak{s} = \sum_{i=0}^n s_i e_i \in \RR_n$. With a slight abuse of notation, for the remainder of this paper, we will use $s$ in place of $\mathfrak{s}$.

Let $\mathbb{S}$ denote the unit sphere of vectors in $\RR^{n+1}$, i.e.,
$$\mathbb{S} := \{ \gI = (0,\gI_1,\ldots, \gI_n) \in \RR^{n+1}:  \sum_{i=1}^n \mathfrak{I}^2_i = 1 \}.$$
It is easy to see that $\mathbb{S}$ is an $(n-1)$-sphere in $\RR^n$ and $\gI \in \mathbb{S}$ implies that $\gI^2 = -1$. Note that the real two-dimensional subspace of $\RR^{n+1}$ generated by $1$ and $\gI$ is complex plane $\CC_{\gI} := \RR + \RR \, \gI$. It is not hard to see that $\CC_{\gI}$ is isomorphic to the usual complex plane $\CC$.

Given a paravector $s = \sum_{i=0}^n s_i e_i \in \RR^{n+1}$, it is possible to find $\gI_s \in \mathbb{S}$ such that $s \in \CC_{\gI_s}$. Indeed, if $s \neq 0$, then we may write
$$s = s_0 + \left( \frac{ \sum_{i=1}^n s_i e_i }{ |s| } \right) |s|.$$
Thus, if we let $\gI_s := \frac{ \sum_{i=1}^n s_i e_i }{ |s| }$, then one can check that $\gI_s \in \mathbb{S}$ and hence $s \in \CC_{\gI_s}$.
\end{defn}

\begin{defn}[self-adjoint Clifford numbers]
\label{def:SACLIFFORD}
We will call $a \in \RR_n$ {\it self-adjoint} if $a = \bar{a}$. The set of all self-adjoint Clifford numbers in $\RR_n$ shall be denoted by $\gS(\RR_n)$.  We note that $\gS(\RR_n)$ is a real vector space.
\end{defn}

\begin{defn}[positive semidefinite Clifford numbers]
\label{def:CLIFFORD}
We will call $a \in \RR_n$ {\it positive semidefinite} if there exists $b\in \RR_n$ such that $a = b \bar{b}$. In this case, we shall write $a \succeq 0$. The set of all positive semidefinite Clifford numbers in $\RR_n$ shall be denoted by $\gP(\RR_n)$.
\end{defn}

We will need to recall a well-known injective $*$-homomorphism $\chi: \RR_n \to \RR^{2^n \times 2^n}$ which can be found, e.g., in \cite{Mitrea}.

\begin{defn}
\label{def:CHI}
Let $\chi: \RR_n \to \RR^{2^n \times 2^n}$ be the injective $*$-homomorphism given by the following inductive construction. We will first give meaning to $\chi(e_j)$ for $j=0,1,\ldots, n$. Let $\chi(e_0) = I_{2^n}$ and $\chi(e_j) := E_j^n$ for $j=1,\ldots, n$, where $\{ E_j^k \}_{j=1}^k$ are inductively defined via
$$E_1^1 := \begin{pmatrix}
0 & -1 \\ 1 & 0
\end{pmatrix}, \quad E_j^{k+1} := \begin{pmatrix}
E_j^k & 0 \\ 0 & -E_j^k
\end{pmatrix} \quad {\rm and} \quad E_{k+1}^{k+1} = \begin{pmatrix}
0 & -I_{2^k} \\
I_{2^k} & 0
\end{pmatrix}$$
for $j=1,\ldots, k$ and $k=1,\ldots, n-1$. Next, we let
$$\chi(e_{\alpha}) := \chi(e_{i_1}) \cdots \chi(e_{i_k}) \quad {\rm for} \quad \alpha = \{ i_1, \ldots, i_k \},$$
where $i_1 < \ldots < i_k$. Finally for $a = \sum_{\alpha} a_{\alpha} e_{\alpha} \in \RR_n$, we let
$$\chi(a) := \sum_{\alpha} a_{\alpha} \chi(e_{\alpha}).$$
\end{defn}

\begin{rem}
\label{rem:chiProp}
Let $a\in \RR_n$. It is easy to check that $\chi(a)$ is a positive semidefinite matrix in $\RR^{2^n \times 2^n}$ if and only if $a$ is a positive semidefinite Clifford number.
\end{rem}

\subsection{Clifford modules}
\label{sec:CMs}

\begin{defn}[Clifford modules over $\RR_n$]
\label{def:CM}
Let $\cH$ be a real Hilbert space with an inner product $\langle \cdot, \cdot \rangle_{\cH}$ and a natural norm $\| x \|_{\cH} := \langle x, x \rangle_{\cH}^{1/2}$ for $x \in \cH$. Then by $\cH_n$ we mean the {\it two-sided Clifford module generated by $\cH$ and $\RR_n$} with $n  >0$, i.e., $\cH_n$ consists of all vectors of the form
\begin{equation}
\label{eq:CM}
x  = \sum_\alpha x_\alpha \otimes e_\alpha,
\end{equation}
with
\begin{align}
x+y :=& \;  \sum_{\alpha} (x_\alpha+y_\alpha) \otimes e_\alpha, \label{eq:ADDITION} \\
xa :=& \; \sum_{\alpha}  x_\alpha  \otimes (e_\alpha a) = \sum_{\alpha, \beta} (x_{\alpha}a_{\beta}) \otimes (e_{\alpha} e_{\beta}) \label{eq:RIGHTMULT} \\
\tag*{and} \nonumber \\
ax :=& \; \sum_{\alpha}  x_\alpha  \otimes (a e_\alpha ) = \sum_{\alpha, \beta} (x_{\alpha}a_{\beta}) \otimes (e_{\beta} e_{\alpha}) \label{eq:LEFTMULT}
\end{align}
for $y = \sum_{\alpha} y_{\alpha}\otimes e_{\alpha}$ and $a = \sum_{\alpha} a_{\alpha} e_{\alpha}$. We will employ the short-hand $x = \sum_{\alpha} x_{\alpha}e_{\alpha}$ in place of $x =\sum_{\alpha} x_{\alpha} \otimes e_{\alpha}$. We may then endow a Clifford module $\cH_n$ with the $\RR_n$-valued ``{\it inner product}'' $\langle \cdot, \cdot \rangle$ given by
\begin{equation}
\label{eq:INNERPRODUCT}
\langle x, y \rangle := \sum_{\alpha, \beta} e_{\alpha} \bar{e}_{\beta} \langle x_\alpha, y_{\beta} \rangle_{\cH} \quad \quad {\rm for} \quad x,y \in \cH_n.
\end{equation}
\end{defn}

\begin{rem}
\label{rem:TWOSIDED}
For the great preponderance of this work, we will not need the fact that $\cH_n$ is a two-sided module. Thus, whenever possible we will make use of the fact that $\cH_n$ is a right module. However, for Theorem \ref{thm:SRF} we will need to use the two-sided which will be utilised when proving the spectral theorem for a bounded self-adjoint operator in Section \ref{sec:STBSA}.
\end{rem}

\begin{rem}
\label{rem:IP}
It is easy to check that the following facts hold:
\begin{align}
\langle x, x \rangle \succeq& \;  0 \quad {\rm and} \quad
\langle x, x \rangle = 0 \Longleftrightarrow x = 0 \label{eq:IP1} \\
\langle x+y, z \rangle =& \; \langle x, y \rangle + \langle y, z \rangle,  \label{eq:IP2} \\
\overline{\langle x, y \rangle} =& \; \langle y, x \rangle \label{eq:IP3} \\
\tag*{and}  \nonumber \\
\langle x a, y  \rangle =& \; \langle x, y \rangle  a \quad\quad {\rm for} \quad x,y,z \in \cH_n \quad {\rm and} \quad a \in \RR_n. \label{eq:IP4}
\end{align}
Furthermore, if we define
\begin{equation}
\label{eq:NORM}
\| x \|:= \left(\sum_\alpha \| x_\alpha \|^2_{\cH} \right)^{1/2} \quad \quad {\rm for}\quad x = \sum_\alpha x_\alpha e_\alpha,
\end{equation}
then $\| \cdot \|$ is a ``norm'' on $\cH_n$, i.e., for all $x, y \in \cH_n$ and $a \in \RR_n$, we have
\begin{align}
\| x \| >& \; 0 \quad\quad \text{ whenever $x$ is nonzero,}    \label{eq:NORM1} \\
\| x a \| =& \; \| a x \| = | a | \cdot \| x \| \quad {\rm for} \quad a \in \RR^{n+1}, \label{eq:NORM2}\\
\| x a\| = \| a x \|  \leq& \; 2^{n-1} | a | \| x \| \quad {\rm for} \quad a \in \RR_n \label{eq:NORM3}\\
\tag*{and} \nonumber \\
\| x + y \| \leq& \; \| x \| + \| y \|. \label{eq:NORM4}
\end{align}
\end{rem}

\begin{rem}
\label{rem:COMPLETE}
In view of \eqref{eq:NORM1}, \eqref{eq:NORM2} and \eqref{eq:NORM4}, the fact that $\cH$ is complete and the definition of a Clifford module (see Definition \ref{def:CLIFFORD}), it is very easy to see that $\cH_n$ together with $\| \cdot \|$ can be viewed as a complex Banach space over $\CC_{\gI}$ for any choice of $\gI \in \mathbb{S}$.
\end{rem}

\begin{defn}[orthonormal basis in $\cH_n$]
We will call $( \xi_i )_{i \in \cI} \subseteq \cH_n$ {\it linearly independent} if for any finite subset $\wt{\cI}\subseteq \cI$, we have that the equation
$$\sum_{i \in \wt{\cI}} \xi_i a_i = 0,$$
where $( a_i )_{i \in \wt{\cI}} \subseteq \RR_n$, only has the trivial solution, i.e., $a_i = 0$ for all $i \in \wt{\cI}$. Next, we will say that $\cB := ( \xi_i )_{i \in \cI} \subseteq \cH_n$ is an {\it orthonormal basis} for $\cH_n$ if $\cB$ is linearly independent, the closed right linear span of $\cB$ is $\cH_n$, i.e.,  every $x \in \cH_n$ belongs to the closure (with respect to the norm topology on $\cH_n$) of the set
$$\{ \sum_{i \in \widetilde{\cI}} \xi_i a_i: \text{$a_i \in \RR_n$ for all $i \in \widetilde{\cI}$ with $\widetilde{\cI}$ finite} \}$$
(in this case, we will utilise the short-hand $x = \sum_{i \in \cI} \xi_i a_i$)
and
$$\langle \xi_i, \xi_j \rangle = \begin{cases} 1 & {\rm if} \quad i = j, \\0 & {\rm if} \quad i \neq j. \end{cases}$$
\end{defn}

While, it is not true that
$$\| x \| = \langle x, x \rangle^{1/2} \quad \quad {\rm for} \quad x \in \cH_n,$$
we do have the following facts which are reminiscent of the classical Hilbert space setting.

\begin{lem}
\label{lem:KAPPA}
Let $\cH_n$ be a Clifford module over $\RR_n$. Then the following facts hold{\rm :}
\begin{enumerate}
\item[ {\rm (i)} ]$\| x \|^2 = {\rm Re}\, \langle x, x \rangle$ for $x \in \cH_n$.
\smallskip
\item[{\rm (ii)}] $| \langle x, y \rangle  | \leq  \| x \| \, \| y \|$ for $x, y \in \cH_n$.
\smallskip
\item[{\rm (iii)}] Every Clifford module $\cH_n$ has an orthonormal basis. Moreover, if $\cB := ( \xi_i )_{i \in \cI}$ is a basis for $\cH$, then $\cB$ is an orthonormal basis for $\cH_n$.
\smallskip
\item[{\rm (iv)}] For any orthonormal basis $( \xi_i)_{i \in \cI}$, we have
\begin{equation}
\label{eq:PARSEVAL}
x = \sum_{i \in \cI} \xi_i \langle x, \xi_i \rangle.
\end{equation}
\end{enumerate}
\end{lem}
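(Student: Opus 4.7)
The plan for (i) is a direct unpacking: expanding $\langle x, x\rangle$ by \eqref{eq:INNERPRODUCT} and noting that $\re(e_\alpha \bar e_\beta) = \delta_{\alpha\beta}$ (indeed $e_\alpha \bar e_\alpha = 1$ while $e_\alpha \bar e_\beta$ is a $\pm$-sign times a nontrivial basis vector $e_{\alpha\triangle\beta}$ for $\alpha \ne \beta$, and hence has vanishing scalar part), collapses the double sum to $\sum_\alpha \langle x_\alpha, x_\alpha\rangle_\cH$, which is $\|x\|^2$ by \eqref{eq:NORM}.

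For (ii), I would leverage (i) to treat $(x,y) \mapsto \re\langle x, y\rangle$ as a genuine real inner product on $\cH_n$ (symmetric by \eqref{eq:IP3}, positive-definite by (i)), so the classical real Cauchy--Schwarz yields $|\re\langle x, y\rangle| \le \|x\|\,\|y\|$. To lift this to the Clifford-valued statement, the key identity is $c_\gamma = \re\langle x \bar e_\gamma, y\rangle$, where $c_\gamma$ denotes the $e_\gamma$-coefficient of $\langle x, y\rangle$; this follows from \eqref{eq:IP4} together with $e_\gamma \bar e_\gamma = 1$. A short combinatorial check from \eqref{eq:NORM} shows that right multiplication by a single basis element $\bar e_\gamma$ is an isometry on $\cH_n$, so the real Cauchy--Schwarz bounds each $|c_\gamma|$, and reassembling over $\gamma$ controls $|\langle x, y\rangle|$. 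The main technical wrinkle is precisely that right multiplication by a \emph{generic} Clifford number is not an isometry (only the weaker bound \eqref{eq:NORM3} is available), which blocks the naive ``one-step'' lift of the real Cauchy--Schwarz via a single normalising right multiplier, forcing the coefficient-by-coefficient route.

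For (iii) and (iv), I would take any orthonormal basis $(\xi_i)_{i \in \cI}$ of the real Hilbert space $\cH$ (guaranteed by standard theory) and embed it into $\cH_n$ via $\xi_i \mapsto \xi_i \otimes e_0$. Orthonormality is immediate from \eqref{eq:INNERPRODUCT} since only the $\alpha = \beta = 0$ term survives, giving $\langle \xi_i, \xi_j\rangle = \langle \xi_i, \xi_j\rangle_\cH = \delta_{ij}$. For the spanning property, expand an arbitrary $x = \sum_\alpha x_\alpha \otimes e_\alpha$ and apply the classical $\cH$-Parseval expansion $x_\alpha = \sum_i \langle x_\alpha, \xi_i\rangle_\cH \, \xi_i$ to rewrite $x = \sum_i \xi_i a_i$ with $a_i := \sum_\alpha \langle x_\alpha, \xi_i\rangle_\cH \, e_\alpha \in \RR_n$. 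Right $\RR_n$-independence of $(\xi_i)$ drops out coordinate-by-coordinate over the basis $\{e_\alpha\}$ of $\RR_n$ from the $\RR$-independence of $(\xi_i)$ in $\cH$, completing (iii). The Parseval identity (iv) is then a corollary: a direct computation from \eqref{eq:INNERPRODUCT} gives $\langle x, \xi_j\rangle = \sum_\alpha e_\alpha \langle x_\alpha, \xi_j\rangle_\cH = a_j$ (only $\beta = 0$ contributes since $(\xi_j)_\beta = \delta_{\beta 0}\xi_j$), so summing over $j$ reproduces $x$.
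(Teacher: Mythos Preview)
Your treatment of (i), (iii), and (iv) is correct and matches the paper's: it cites Mitrea for (i) and gives precisely your direct computation for (iii) and (iv), expanding each $x_\alpha$ in the real Hilbert basis $(\xi_i)$ and reassembling.

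For (ii), however, your componentwise route does not reach the stated inequality. Bounding each coefficient via $|c_\gamma| = |\re\langle x\bar e_\gamma, y\rangle| \le \|x\|\,\|y\|$ and then reassembling gives only $|\langle x, y\rangle| = \bigl(\sum_\gamma c_\gamma^2\bigr)^{1/2} \le 2^{n/2}\|x\|\,\|y\|$; no Bessel-type improvement is available, since the $2^n$ vectors $\{x\bar e_\gamma\}_\gamma$ are in general \emph{not} orthogonal in the real inner product $\re\langle\cdot,\cdot\rangle$ (already for $x = 1 + e_{123}$ in $\RR_3$ one has $x\bar e_{123} = x$). This is not a repairable gap in your method but rather a defect of the statement itself: the constant-$1$ inequality fails for $n \ge 3$. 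With $\cH = \RR$ and $x = y = 1 + e_{123} \in \cH_3$, one checks $\bar e_{123} = e_{123}$ and $e_{123}^2 = 1$, so $\langle x, x\rangle = x\bar x = (1+e_{123})^2 = 2 + 2e_{123}$, whence $|\langle x, x\rangle| = 2\sqrt{2} > 2 = \|x\|^2$. The paper gives no independent argument for (ii), only a citation to Mitrea; your approach in fact yields the correct dimensional-constant bound, which is all that is genuinely available here.
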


\begin{proof}
See, e.g., Proposition 1.9 in \cite{Mitrea} for (i) and (ii). We will now prove (iii). Let $\cB := (\xi_i)_{i \in \cI}$ be an orthonormal basis for the real Hilbert space $\cH$. Then, in view of \eqref{eq:CM}, $\cB$ is a basis for $\cH_n$. In view of \eqref{eq:INNERPRODUCT}, we have
$$\langle \xi_i, \xi_j \rangle = \langle \xi_i, \xi_j \rangle_{\cH} = \begin{cases} 1 & {\rm if} \quad i = j \\
0 & {\rm if} \quad i \neq j.\end{cases}$$
Thus, $\cB$ is an orthonormal basis of $\cH_n$.

Let $\cB$ be as above. Since $\cB$ is a basis for the real Hilbert basis $\cH$, we may use the classical orthonormal expansion for a real Hilbert space to obtain
\begin{align*}
x =& \; \sum_{\alpha} x_{\alpha} e_{\alpha} \\
=& \; \sum_{\alpha} \left( \sum_{i \in \cI} \langle x_{\alpha}, \xi_i \rangle \right) e_{\alpha} \\
=& \; \sum_{i \in \cI} \sum_{\alpha} \xi_i \langle x_{\alpha} , \xi_i \rangle e_{\alpha} \\
=& \; \sum_{i \in \cI} \sum_{\alpha} \xi_i \langle x_{\alpha}e_{\alpha}, \xi_i \rangle \\
=& \; \sum_{i \in \cI} \xi_i \langle x, \xi_i \rangle.
\end{align*}
\end{proof}

In the following lemma, by $\dim \gS(\RR_n)$, we mean the dimension of the real vector space of self-adjoint Clifford numbers in $\RR_n$ (see Definition \ref{def:SACLIFFORD}). For example, $\dim \gS(\RR_0) = \dim \gS(\RR_1) = \dim \gS(\RR_2) =1$ and $\dim \gS(\RR_3) = 4$.

\begin{lem}[\cite{GR}, Polarisation formula for a Clifford module]
Let $\cH_n$ be a Clifford module over $\RR_n$. Then
\begin{equation}
\label{eq:POLAR}
4 \dim \mathfrak{S}(\RR_n) \langle x, y \rangle = \sum_\alpha ( \langle x + y e_\alpha, x+ y e_\alpha \rangle - \langle x - y e_\alpha, x- y e_\alpha \rangle )e_\alpha
\end{equation}
for all $x, y \in \cH_n$.
\end{lem}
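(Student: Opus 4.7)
The plan is to expand the quantity inside the sum using the sesquilinearity properties of $\langle \cdot, \cdot \rangle$ from Remark \ref{rem:IP}, and then reduce the identity to a purely algebraic statement in $\RR_n$ that can be verified on a basis.

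First, I would use \eqref{eq:IP4} to obtain $\langle y e_\alpha, x \rangle = \langle y, x \rangle e_\alpha$ and combine this with \eqref{eq:IP3} together with $\overline{ab} = \bar{b}\bar{a}$ to get $\langle x, y e_\alpha \rangle = \bar{e}_\alpha \langle x, y \rangle$ and $\langle y e_\alpha, y e_\alpha \rangle = \bar{e}_\alpha \langle y, y \rangle e_\alpha$. Substituting, taking the difference of the $+$ and $-$ expansions (so that the terms $\langle x,x\rangle$ and $\bar{e}_\alpha \langle y,y\rangle e_\alpha$ cancel), and multiplying by $e_\alpha$ on the right gives
\begin{equation*}
\bigl(\langle x + y e_\alpha, x + y e_\alpha \rangle - \langle x - y e_\alpha, x - y e_\alpha \rangle\bigr) e_\alpha = 2\,\bar{e}_\alpha \langle x, y \rangle e_\alpha + 2\,\langle y, x \rangle e_\alpha^2.
\end{equation*}
Writing $p := \langle x, y \rangle$ and summing over $\alpha$, the lemma thus reduces to the purely algebraic identity
\begin{equation*}
\sum_\alpha \bar{e}_\alpha\, p\, e_\alpha + \Bigl(\sum_\alpha e_\alpha^2\Bigr) \bar{p} = 2 \dim \gS(\RR_n) \, p, \qquad p \in \RR_n.
\end{equation*}

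Second, I would establish this identity by linearity on the monomial basis $\{e_\beta\}_\beta$. The relations $e_i e_j = -e_j e_i$ for $i \neq j$ and $e_i^2 = -1$ imply $\bar{e}_\alpha e_\beta e_\alpha = (-1)^{|\alpha|\,|\beta| - |\alpha \cap \beta|}\, e_\beta$ and $e_\alpha^2 = (-1)^{|\alpha|(|\alpha|+1)/2}$, so both summands act diagonally on the basis. The coefficient of $e_\beta$ on the left-hand side then becomes an explicit sum of $\pm 1$ indexed by $\alpha \subseteq \{1,\ldots, n\}$, which one evaluates by splitting each $\alpha$ as a disjoint union $\alpha_1 \sqcup \alpha_2$ with $\alpha_1 \subseteq \beta$ and $\alpha_2 \subseteq \{1,\ldots, n\}\setminus \beta$ and factoring the resulting sum into products of binomial expressions of the form $(1 \pm 1)^k$.

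The hard part will be this combinatorial evaluation: verifying that after the factorisation, the contributions from the two summands combine into the same constant $2 \dim \gS(\RR_n)$ on every basis vector $e_\beta$. The first sum $\sum_\alpha (-1)^{|\alpha||\beta|-|\alpha\cap\beta|}$ vanishes on most multi-indices $\beta$, and the second term $\bar{p}\sum_\alpha e_\alpha^2$ must be shown to supply exactly the missing contribution, with the final constant equal to twice the count of those $\beta$ for which $\bar{e}_\beta = e_\beta$ -- i.e.\ to $2 \dim \gS(\RR_n)$. The delicate point is the sign bookkeeping that ties $(-1)^{|\alpha|(|\alpha|+1)/2}$ in $e_\alpha^2$ to the conjugation sign $(-1)^{|\alpha||\beta|-|\alpha\cap\beta|}$ so that the two contributions are genuinely $\beta$-independent after combination.
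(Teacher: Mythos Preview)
The paper does not prove this lemma; it only records the statement with a citation to \cite{GR}. So there is no ``paper's own proof'' to compare against. Your reduction is the natural one and is carried out correctly: expanding with \eqref{eq:IP2}--\eqref{eq:IP4} and writing $p=\langle x,y\rangle$ does collapse \eqref{eq:POLAR} to the purely algebraic claim
\[
\sum_\alpha \bar e_\alpha\, p\, e_\alpha \;+\; \bar p\sum_\alpha e_\alpha^{2} \;=\; 2\dim\gS(\RR_n)\, p,\qquad p\in\RR_n,
\]
and your sign formulas $\bar e_\alpha e_\beta e_\alpha=(-1)^{|\alpha|\,|\beta|-|\alpha\cap\beta|}e_\beta$ and $e_\alpha^{2}=(-1)^{|\alpha|(|\alpha|+1)/2}$ are right.

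The difficulty you anticipate in the final combinatorial step is real, and in fact insurmountable for $n\ge 4$: the identity \eqref{eq:POLAR} as printed is false there. Take $n=4$ and $p=e_{123}$, so $\bar p=p$. Your factorisation gives $\sum_\alpha(-1)^{3|\alpha|-|\alpha\cap\{1,2,3\}|}=2^{3}\cdot(1-1)=0$, while $\sum_\alpha e_\alpha^{2}=\sum_{k=0}^{4}\binom{4}{k}(-1)^{k(k+1)/2}=1-4-6+4+1=-4$; hence the left side equals $-4\,e_{123}$. With the standard Clifford conjugation one has $\dim\gS(\RR_4)=\binom{4}{0}+\binom{4}{3}+\binom{4}{4}=6$, so the right side is $12\,e_{123}$. (The paper's displayed line $\bar e_\alpha=-e_\alpha$ and the value $\dim\gS(\RR_3)=4$ are also inconsistent with $\overline{ab}=\bar b\,\bar a$, suggesting transcription slips from \cite{GR}.) Your argument does go through cleanly for $n\le 3$, which covers the complex and quaternionic cases; for general $n$ one should consult \cite{GR} for the precise hypotheses or normalisation under which the stated constant is valid.
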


\begin{lem}[Parallelogram law]
Let $\cH_n$ be a Clifford module. Then for any $x, y \in \cH_n$, we have
\begin{equation}
\label{eq:PAR}
\| x\|^2 + \| y \|^2 = 2 \| x \|^2 + 2 \| y\|^2,
\end{equation}
where $\| \cdot \|$ is defined by \eqref{eq:NORM}.
\end{lem}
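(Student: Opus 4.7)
The statement as displayed contains a typographical slip on the left-hand side; I will prove the standard parallelogram identity
\[
\|x+y\|^2 + \|x-y\|^2 \;=\; 2\|x\|^2 + 2\|y\|^2,
\]
which is clearly the intended content. The plan is to reduce the identity to the real-part characterisation of the norm furnished by Lemma \ref{lem:KAPPA}(i) and then exploit sesquilinearity of $\langle \cdot,\cdot\rangle$ recorded in \eqref{eq:IP2} together with the conjugation symmetry $\overline{\langle x,y\rangle}=\langle y,x\rangle$ from \eqref{eq:IP3}.

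Concretely, I would first write $\|x\pm y\|^2 = {\rm Re}\,\langle x\pm y, x\pm y\rangle$ using Lemma \ref{lem:KAPPA}(i). Expanding by right- and left-linearity of the inner product yields
\[
\|x\pm y\|^2 \;=\; {\rm Re}\,\langle x,x\rangle \;\pm\; {\rm Re}\bigl(\langle x,y\rangle + \langle y,x\rangle\bigr) \;+\; {\rm Re}\,\langle y,y\rangle.
\]
By \eqref{eq:IP3}, $\langle y,x\rangle = \overline{\langle x,y\rangle}$, so $\langle x,y\rangle + \langle y,x\rangle = \langle x,y\rangle + \overline{\langle x,y\rangle}$, and in particular its real part equals $2{\rm Re}\,\langle x,y\rangle$; the important observation is simply that this cross-term is a real scalar that switches sign when $y$ is replaced by $-y$.

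Adding the identities for $\|x+y\|^2$ and $\|x-y\|^2$ cancels the cross-term and, applying Lemma \ref{lem:KAPPA}(i) once more to the remaining diagonal contributions, gives the claimed equality. There is no genuine obstacle here: the only subtlety is the bookkeeping that $\langle x,y\rangle$ is an $\RR_n$-valued object rather than a real number, and this is dealt with by taking real parts throughout so that the identity \eqref{eq:IP3} can be used to show the cross terms coincide with their conjugates (and hence sum to a real quantity that cancels in the $(+)+(-)$ combination). The whole argument is a few lines and is formally identical to the classical Hilbert space proof, only performed inside ${\rm Re}\,\RR_n = \RR$.
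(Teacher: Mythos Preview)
Your proposal is correct and follows essentially the same approach as the paper: both use $\|z\|^2 = {\rm Re}\,\langle z,z\rangle$ from Lemma~\ref{lem:KAPPA}(i), expand $\|x\pm y\|^2$ by bilinearity, and observe that the cross terms cancel upon adding. The paper's proof is terser (and in fact contains the same left-hand-side typo you noted, plus another in its first displayed line), while you spell out via \eqref{eq:IP3} why the cross term is real; but the argument is the same.
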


\begin{proof}
Since
\begin{align*}
\| x +y \|^2 + \| x + y \|^2 =& \; {\rm Re}( \langle x, x \rangle + \langle x, y \rangle + \langle y, x \rangle + \langle y, y \rangle) \\
+& \; {\rm Re}( \langle x, x \rangle - \langle x, y \rangle - \langle y, x \rangle + \langle y, y \rangle) \\
=& \; 2 {\rm Re} \, \langle x, x \rangle + 2 {\rm Re}\, \langle y, y \rangle \\
=& \; 2 \| x \|^2 + 2 \| y \|^2,
\end{align*}
we have \eqref{eq:PAR}.
\end{proof}

\begin{defn}
Let $K \subseteq \cH_n$. We will call $K$ {\it convex}  if for any choice of $x,y \in K$, we have
$$c \, x + (1-c) \, y \in K \quad \quad {\rm for} \quad 0 \leq c \leq 1.$$
\end{defn}

The next lemma provides a natural generalisation of the closed point in a closed  convex subset property that holds for any complex Hilbert space.

\begin{lem}
\label{lem:CPP}
Let $K$ be a non-empty, closed and convex subset of a Clifford module $\cH_n$.  For any $x \in K$, there exists a unique vector $y \in K$ such that $\| x - y\|$ is as small as possible.
\end{lem}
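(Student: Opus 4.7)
The plan is to imitate the classical closed-convex-set argument for a complex Hilbert space, using the parallelogram law from the previous lemma together with the fact that the norm $\|\cdot\|$ defined in \eqref{eq:NORM} coincides with the norm of the underlying real Hilbert space $\cH$ and is therefore complete (cf.\ Remark~\ref{rem:COMPLETE}). Let $d := \inf_{z \in K} \| x - z\|$, which is a well-defined non-negative real number, and choose a minimising sequence $(y_k)_{k \geq 1} \subseteq K$ with $\| x - y_k \| \to d$ as $k \to \infty$.

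The first step is to show that $(y_k)$ is Cauchy. For any $j,k$, convexity of $K$ gives $\tfrac{1}{2}(y_j + y_k) \in K$, so
\begin{equation*}
\Big\| x - \tfrac{1}{2}(y_j + y_k) \Big\|^2 \geq d^2.
\end{equation*}
On the other hand, applying the parallelogram law \eqref{eq:PAR} to the vectors $x - y_j$ and $x - y_k$ yields
\begin{equation*}
\| (x - y_j) + (x - y_k) \|^2 + \| y_k - y_j \|^2 = 2 \| x - y_j \|^2 + 2 \| x - y_k \|^2,
\end{equation*}
which after rearrangement and the previous inequality produces
\begin{equation*}
\| y_k - y_j \|^2 \leq 2 \| x - y_j \|^2 + 2 \| x - y_k \|^2 - 4 d^2.
\end{equation*}
As $j, k \to \infty$ the right-hand side tends to $0$, so $(y_k)$ is Cauchy. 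By completeness $(y_k)$ converges to some $y \in \cH_n$, and since $K$ is closed we have $y \in K$; continuity of the norm then gives $\| x - y \| = d$.

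For uniqueness, suppose $y, y' \in K$ both satisfy $\| x - y \| = \| x - y' \| = d$. Then $\tfrac{1}{2}(y + y') \in K$ by convexity, and another application of \eqref{eq:PAR} to $x - y$ and $x - y'$ gives
\begin{equation*}
\| y - y' \|^2 = 2 \| x - y \|^2 + 2 \| x - y' \|^2 - 4 \Big\| x - \tfrac{1}{2}(y + y') \Big\|^2 \leq 4 d^2 - 4 d^2 = 0,
\end{equation*}
so $y = y'$.

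The whole proof is really a transcription of the classical Hilbert space argument; the only conceptual point to check is that the ingredients carry over. The potentially delicate issue would have been the parallelogram law, because the $\RR_n$-valued inner product $\langle \cdot, \cdot \rangle$ is not symmetric in the classical sense; however, this was already handled in the preceding lemma via the identity $\| x \|^2 = {\rm Re}\, \langle x, x \rangle$. Once the parallelogram law is available and $\cH_n$ is complete with respect to $\| \cdot \|$, there is no obstacle beyond the classical one.
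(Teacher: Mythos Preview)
Your proof is correct and follows essentially the same approach as the paper's: both use a minimising sequence, apply the parallelogram law \eqref{eq:PAR} together with convexity to show the sequence is Cauchy, invoke completeness and closedness to obtain the minimiser, and then repeat the parallelogram estimate for uniqueness. The only cosmetic difference is that the paper applies \eqref{eq:PAR} to $(x-y_i)/2$ and $(x-y_j)/2$ rather than to $x-y_j$ and $x-y_k$, which is an equivalent scaling.
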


\begin{proof}
We begin by letting
$$m := \inf_{z \in K} \| x - z \|.$$
Let $(y_i)_{i=1}^{\infty}$ be any minimising sequence for $x$, i.e.,
\begin{equation}
\label{eq:MINI}
\lim_{i \to \infty} m_i = m \quad \quad {\rm for} \quad m_i := \| x - y_i \|.
\end{equation}
Next, if we apply \eqref{eq:PAR} to the vectors $x = (x-y_i)/2$ and $y = (x - y_j)/2$, we obtain
\begin{equation}
\label{eq:PARRESULT}
\left \|   x - \frac{y_i + y_j}{2}     \right\|^2 + \left \| \frac{y_i - y_j}{2} \right\|^2 = \frac{ d_i^2 + d_j^2 }{2}.
\end{equation}
Since $K$ is a convex set, we must have that $(y_i+y_j)/2 \in K$. Consequently, we have $\| x - (y_i + y_j)/2 \| \geq m$. Thus, \eqref{eq:MINI} and \eqref{eq:PARRESULT} imply that $(y_i)_{i=0}^{\infty}$ is a Cauchy sequence in $\cH_n$. Thus, as $K$ is closed, we have that
$$y := \lim_{i \to \infty} y_i \in K.$$
Notice that $y$ has the property that $\| x - y \|$ is as small as possible, since
$$\| x- y\| = \lim_{i \to \infty} \| x - y_i \| = m.$$

Finally, suppose $y' \in K$ such that $\| x - y'\| = m$. Then simply use \eqref{eq:PAR} with $x-y$ and $x- y'$ to deduce that $y = y'$.
\end{proof}

\begin{thm}
\label{thm:ORTHOCOMPS}
Let $Y$ be a closed right submodule of a Clifford module $\cH_n$ and
$$Y^{\perp} := \{ x \in \cH_n:\text{$\langle x, y \rangle =0$ for all $y \in Y$}\}.$$
Then the following statement hold{\rm :}
\begin{enumerate}
\item[(i)] $Y^{\perp}$ is a closed right submodule of $\cH_n$.
\smallskip
\item[(ii)] $\cH_n = Y \oplus Y^{\perp}$.
\smallskip
\item[(iii)] $(Y^{\perp})^{\perp}$.
\end{enumerate}
\end{thm}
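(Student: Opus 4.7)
For part (i), the proof is routine: additivity and right $\RR_n$-linearity of $Y^{\perp}$ follow immediately from \eqref{eq:IP2} and \eqref{eq:IP4} (the latter gives $\langle xa, y\rangle = \langle x,y\rangle a$, so if $\langle x,y\rangle = 0$ then $\langle xa, y\rangle = 0$), while topological closedness follows from the Cauchy--Schwarz inequality in Lemma \ref{lem:KAPPA}(ii), which implies that $y \mapsto \langle x, y\rangle$ is continuous in each argument.

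For part (ii), I would follow the classical Hilbert-space template adapted to the Clifford setting. Fix $x \in \cH_n$; since $Y$ is closed and convex (it is a submodule), Lemma \ref{lem:CPP} yields a unique closest point $y_0 \in Y$, and I set $z := x - y_0$. I then claim $z \in Y^{\perp}$. For any $w \in Y$ and any $t \in \RR$, the vector $y_0 + tw$ lies in $Y$, so the real-valued function $f(t) := \| z - tw \|^2 = \|z\|^2 - 2t\,{\rm Re}\,\langle z,w\rangle + t^2 \|w\|^2$ (using Lemma \ref{lem:KAPPA}(i) and \eqref{eq:IP3}) attains its minimum at $t=0$, which forces ${\rm Re}\,\langle z, w\rangle = 0$ for every $w \in Y$. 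Once this is established, the decomposition $x = y_0 + z$ is as desired and uniqueness follows from the fact that $u \in Y \cap Y^{\perp}$ implies $\langle u,u\rangle = 0$, hence $\|u\| = 0$ by Lemma \ref{lem:KAPPA}(i).

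The main obstacle is strengthening ${\rm Re}\,\langle z, w\rangle = 0$ to the full vector identity $\langle z, w\rangle = 0$, since the inner product is $\RR_n$-valued rather than real-valued. I plan to exploit the right module structure: since $Y$ is a right submodule, $w\bar{e}_{\alpha} \in Y$ for every $\alpha$, so the previous step also yields ${\rm Re}\,\langle z, w\bar{e}_{\alpha}\rangle = 0$ for each $\alpha$. Using the identity $\langle z, w\bar{e}_{\alpha}\rangle = e_{\alpha}\langle z, w\rangle$ obtained by combining \eqref{eq:IP3} and \eqref{eq:IP4}, and writing $\langle z,w\rangle = \sum_{\beta} c_{\beta} e_{\beta}$, the vanishing of the real part of $e_{\alpha} \langle z,w\rangle$ for all $\alpha$ picks out each coefficient $c_{\alpha}$ (up to a sign, since $e_{\alpha}e_{\beta}$ is a scalar precisely when $\beta = \alpha$), giving $\langle z, w\rangle = 0$. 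Alternatively, one can run essentially the same polarisation argument using \eqref{eq:POLAR} to recover $\langle z, w\rangle$ from real inner products.

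For part (iii), I would argue that $(Y^{\perp})^{\perp} = Y$. The inclusion $Y \subseteq (Y^{\perp})^{\perp}$ is immediate from the definition. For the reverse inclusion, take $x \in (Y^{\perp})^{\perp}$ and apply part (ii) to decompose $x = y + z$ with $y \in Y$ and $z \in Y^{\perp}$. Since $y \in Y \subseteq (Y^{\perp})^{\perp}$, the difference $z = x - y$ lies in $(Y^{\perp})^{\perp}$ as well; but $z$ is also in $Y^{\perp}$, so $\langle z, z\rangle = 0$ and hence $z = 0$, giving $x = y \in Y$.
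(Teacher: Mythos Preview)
Your proposal is correct and follows essentially the same route as the paper: part (i) is handled identically, part (ii) uses Lemma~\ref{lem:CPP} and the real minimisation argument to get ${\rm Re}\,\langle z,w\rangle = 0$, then exploits the right-module structure (replacing $w$ by $w e_\alpha$ or $w\bar e_\alpha$) to kill every Clifford component, and part (iii) is deduced from (ii). Your write-up is in fact more explicit than the paper's in two places: you spell out the component-extraction step via $\langle z, w\bar e_\alpha\rangle = e_\alpha\langle z,w\rangle$, and you give the full double-orthocomplement argument where the paper simply says ``direct consequence of (ii)''.
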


\begin{proof}
We will first show (i). Fix any $y \in Y$ and suppose $x, z \in Y^{\perp}$ and $a \in \RR_n$. Then
\begin{align*}
\langle x a + z , y \rangle =& \; \langle x, y \rangle a + \langle z, y \rangle \\
=& \; 0 + 0 = 0.
\end{align*}
Thus, $Y^{\perp}$ is a right submodule of $\cH_n$. Next, suppose $(x_i)_{i=0}^{\infty}$ is a convergent sequence, where $x_i \in Y^{\perp}$ for $i=0,1,\ldots$ and let
$$x := \lim_{i \to \infty} x_i.$$
Then
$$\langle x, y \rangle = \langle x - x_i, y \rangle + \langle x_i, y \rangle = \langle x - x_i, y \rangle$$
and hence for any $\epsilon > 0$, we have
\begin{align*}
| \langle x, y \rangle| =& \; | \langle x - x_i, y \rangle | \leq \| x - x_i \| \cdot \| y \| \\
< & \; \varepsilon  \quad \quad {\rm for} \quad \text{$i$ sufficiently large.}
\end{align*}
But then we have $|\langle x, y \rangle| = 0$, in which case $x \in Y^{\perp}$. Thus, (i) holds.

We will now prove (ii). Suppose $x \in \cH_n$ is arbitrary. Then by Lemma \ref{lem:CPP},  we can find a unique vector $y \in Y$ such that $\| x - y \|$ is as small as possible. Consequently, if we put $z := x - y$, then for any $v \in Y$ and $t \in \RR$, we have
$$\| z \|^2 \leq \| z + t \, v \|^2.$$
Thus,
\begin{align*}
\| z \|^2 \leq& \; {\rm Re} \, \langle z + t \, v, z + t\, v \rangle \\
=& \; {\rm Re}\, \langle z, z \rangle + t ( {\rm Re} \langle z, v \rangle +{\rm Re}\, \langle v, z \rangle ) + t^2 {\rm Re}\, \langle v, v \rangle \\
=& \; \| z\|^2 + 2 t \,  {\rm Re} \langle z, v \rangle + t^2 \| v \|^2,
\end{align*}
in which case we have
\begin{equation}
\label{eq:SUBMOD}
{\rm Re}\, \langle z, v \rangle =0 \quad \quad {\rm for} \quad v \in Y.
\end{equation}
 Since $Y$ is a submodule, we can replace $y$ by $y e_{\alpha}$ in \eqref{eq:SUBMOD} and realise that $\langle z, v \rangle =0$ for all $y \in Y$. Thus, $z \in Y^{\perp}$ and we have the decomposition $\cH_n = Y \oplus Y^{\perp}$.

 To prove that the decomposition $\cH_n = Y \oplus Y^{\perp}$ is unique, suppose $x \in \cH_n$ can be written as $x = y + z$ and $x = y' + z'$, where $y, y' \in Y$ and $z, z' \in Y^{\perp}$. But then $y - y' = z - z'$ will simultaneously belong to $Y$ and $Y^{\perp}$. Consequently, $\langle y - y', y - y' \rangle = 0$
 forces $y = y'$ (see Definition \ref{def:CM}) and hence $z = z'$.

 Finally, we note that (iii) is a direct consequence of (ii).
\end{proof}

\section{Linear operators on Clifford modules}

\begin{defn}[linear operator on a Clifford module]
\label{def:LINEAROPERATORS}
Let $\cL(\cH_n)$ denote the set of linear operators $\wt{T}: \cD(\wt{T}) \to \cH$, where $\cD(T) \subseteq \cH_n$. The subspace $\cD(\wt{T}) \subseteq \cH_n$ will be called the {\it domain} of $\widetilde{T} \in \cL(\cH_n)$. We shall let $\cL(\cH_n)$ denote the set of all operators $T: \cD(T) \to \cH_n$ of the form
$T = \sum_{\alpha}  e_{\alpha}T_{\alpha}$, where $T_{\alpha} \in \cL(\cH_n)$ which acts on
$\cD(T) = \bigcap_{\alpha} \cD(T_{\alpha}) \subseteq \cH_n$ via
\begin{equation}
\label{eq:ACTION}
Tx = \sum_{\alpha,\beta} T_{\alpha} (x_{\beta}) e_{\alpha} e_{\beta} \quad \quad {\rm for} \quad x = \sum_{\alpha} x_{\alpha} e_{\alpha} \in \cD(T).
\end{equation}
\end{defn}

\begin{rem}
\label{rem:RIGHTLINEAR}
Let $T \in \cL(\cH_n)$. One can use \eqref{eq:ACTION} to check that for all $x, y \in \cD(T)$ and $a \in \RR_n$, we have
\begin{align}
T(x a + y) =& \; (Tx ) a + Ty \label{eq:RL1}
\end{align}
Thus, in view of \eqref{eq:RL1}, $\cL(\cH_n)$ consists of right linear operators.
\end{rem}

\begin{defn}[kernel and range]
\label{def:KERNELANDRANGE}
Given $T \in \cL(\cH_n)$, with domain of $T$ denoted by $\cD(T)$.
Then the range and kernel of $T$ will be given by
$$\Ran \, T = \{\text{$y \in \cH_n${\rm :} $Tx = y$ for $x \in \cD(T)$}\}$$
and
$$\Ker \, T = \{\text{$x \in \cD(T)${\rm :} $Tx = 0$}\},$$
respectively.
\end{defn}

\begin{defn}[bounded linear operator on a Clifford module]
\label{def:BDDOP}
We will call an operator $T \in \cL(\cH_n)$ {\it bounded} if
\begin{equation}
\label{eq:K1}
\cD(T) = \cH_n
\end{equation}
and
\begin{equation}
\label{eq:K2}
\| T \| := \sup_{\| x \| \leq 1} \| Tx \| < \infty \quad \quad {\rm for} \quad x \in \cH_n.
\end{equation}
We will let $\cB(\cH_n)$ denote the set of all operators $T \in \cL(\cH_n)$ such that \eqref{eq:K1} and \eqref{eq:K2} hold.
\end{defn}

\begin{rem}
\label{rem:COMPLETEOPERATORNORM}
One can use \eqref{eq:NORM1}, \eqref{eq:NORM2} and \eqref{eq:NORM4} and the definition of the operator norm on $\cB(\cH_n)$ (see \eqref{eq:K2}) to see that $\delta(T,W) = \| T - W \|$ is a metric on $\cB(\cH_n)$. One can proceed as in the classical case when $\cH_n$ is a complex Hilbert space to see that $\cB(\cH_n)$ is complete with respect to the metric $\delta$.
\end{rem}

\begin{defn}[graph of a linear operator]
\label{def:GRAPH}
Suppose $T \in \cL(\cH_n)$. The {\it graph of $T$} is the set
$$\cG(T) := \{ (x, Tx): x \in \cD(T) \}.$$
\end{defn}

\begin{lem}
\label{lem:Jun7tm2}
Let $\cH_n$ be a Clifford module over $\RR_n$. A right submodule $\mathcal{K}$ of $\cH_n \oplus \cH_n$ satisfies
\begin{equation}
\label{eq:Jun7rk1}
\mathcal{K} = \{ (x, Tx): x \in \cD(T) \},
\end{equation}
for some $T \in \cL(\cH_n)$ if and only if
\begin{equation}
\label{eq:Jun7rk2}
(0, y) \in \mathcal{K} \Longrightarrow y = 0.
\end{equation}
\end{lem}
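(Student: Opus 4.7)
The forward direction is essentially automatic: if $\mathcal{K} = \mathcal{G}(T)$ for some $T \in \cL(\cH_n)$, then $(0,y) \in \mathcal{K}$ means $y = T(0)$; since $T$ is right linear (see Remark \ref{rem:RIGHTLINEAR}), we have $T(0) = T(0 \cdot 0) = (T0)\cdot 0 = 0$, hence $y = 0$.

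For the nontrivial direction, the plan is to build the operator $T$ directly from $\mathcal{K}$. Assuming \eqref{eq:Jun7rk2}, first define
$$\cD(T) := \{ x \in \cH_n : \exists\, y \in \cH_n \text{ with } (x,y) \in \mathcal{K} \},$$
which is a right submodule of $\cH_n$ since $\mathcal{K}$ is a right submodule of $\cH_n \oplus \cH_n$ (closed under the pointwise right action of $\RR_n$ and addition). The hypothesis \eqref{eq:Jun7rk2} is then used precisely to make the assignment $x \mapsto y$ single-valued: if $(x,y_1),(x,y_2) \in \mathcal{K}$, then
$$(0,y_1 - y_2) = (x,y_1) - (x,y_2) \in \mathcal{K},$$
so $y_1 - y_2 = 0$ by \eqref{eq:Jun7rk2}. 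This allows one to unambiguously set $Tx := y$, whence $\mathcal{K} = \{(x,Tx) : x \in \cD(T)\}$ by construction.

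Next, one must verify that $T$ is right linear in the sense of Remark \ref{rem:RIGHTLINEAR}. For $x_1,x_2 \in \cD(T)$ and $a \in \RR_n$, the pairs $(x_1,Tx_1)$ and $(x_2,Tx_2)$ lie in $\mathcal{K}$, and the submodule property yields
$$(x_1 a + x_2, (Tx_1)a + Tx_2) \in \mathcal{K}.$$
By uniqueness of the second component, $x_1 a + x_2 \in \cD(T)$ and $T(x_1 a + x_2) = (Tx_1)a + Tx_2$, which is \eqref{eq:RL1}.

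Finally, one must argue that $T$ belongs to $\cL(\cH_n)$ in the precise sense of Definition \ref{def:LINEAROPERATORS}, i.e., that it admits a representation $T = \sum_\alpha e_\alpha T_\alpha$ with each $T_\alpha$ a linear operator on the real Hilbert space $\cH$. This is the only genuinely nontrivial step. The natural approach is to identify the components by projection: for each $\alpha$ let $\pi_\alpha : \cH_n \to \cH$ denote the real-linear coordinate projection extracting the $e_\alpha$-component, and set
$$T_\alpha(h) := \pi_\alpha(T h) \quad \text{for } h \in \cH \cap \cD(T),$$
extending to $\cD(T_\alpha)$ in the obvious way. Right $\RR_n$-linearity of $T$, together with the fact that the $\pi_\alpha$'s span the tensor decomposition \eqref{eq:CM}, then gives the desired identity $T = \sum_\alpha e_\alpha T_\alpha$ on $\cD(T) = \bigcap_\alpha \cD(T_\alpha)$ upon substituting into \eqref{eq:ACTION}. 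The only subtlety I anticipate is bookkeeping with the Clifford products $e_\alpha e_\beta$; this is purely combinatorial and presents no conceptual difficulty.
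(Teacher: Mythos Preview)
Your argument is correct and follows the same route as the paper's proof: the forward direction is immediate, and for the converse you use \eqref{eq:Jun7rk2} to get well-definedness and then the right-submodule property of $\mathcal{K}$ to get right linearity. The paper is in fact terser than you are---it simply asserts that ``$T \in \cL(\cH_n)$ follows easily from the right linearity of $\mathcal{K}$'' without unpacking the component decomposition $T = \sum_\alpha e_\alpha T_\alpha$ you sketch in your final paragraph.
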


\begin{proof}
If $\mathcal{K}$ is as in \eqref{eq:Jun7rk1}, then \eqref{eq:Jun7rk2} obviously holds. Conversely, if \eqref{eq:Jun7rk2} holds, then $(x, y)$ and $(x, z)$ belonging to $\mathcal{K}$ implies that $y = z$, i.e., there exists a function $T: \cD(T) \to \cH_n$. The fact that $T \in \cL(\cH_n)$ follows easily from the right linearity of $\mathcal{K}$. Thus, \eqref{eq:Jun7rk1} holds.
\end{proof}

A very simple consequence of the polarisation formula \eqref{eq:POLAR} is the following lemma.

\begin{lem}
\label{lem:0lemma}
Suppose $T \in \cL(\cH_n)$ is a densely defined operator. If \\$\langle Tx, x\rangle = 0$ for all $x \in \cD(T)$, then $Tx = 0$ for all $x \in \cD(T)$.
\end{lem}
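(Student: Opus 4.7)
The plan is to first show $\langle Tx, y \rangle = 0$ for every $x, y \in \cD(T)$ via a polarisation-style manipulation in the spirit of \eqref{eq:POLAR}, then to extend this to all $y \in \cH_n$ by density and Cauchy--Schwarz, and finally to specialise $y = Tx$ and appeal to the positive-definiteness of the inner product.

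First I would fix $x, y \in \cD(T)$ and $a \in \RR_n$. Since $\cD(T)$ is a right submodule (cf.\ Remark \ref{rem:RIGHTLINEAR}), the vector $x + ya$ belongs to $\cD(T)$, so the hypothesis gives $0 = \langle T(x+ya), x+ya \rangle$. Expanding $T(x+ya) = Tx + (Ty)a$ and applying the sesquilinearity relations of Remark \ref{rem:IP}, the diagonal terms $\langle Tx, x \rangle$ and $\langle Ty, y \rangle$ vanish by hypothesis, leaving
\[
\bar a\,c + d\,a = 0, \qquad c := \langle Tx, y \rangle,\ d := \langle Ty, x \rangle.
\]
Setting $a = 1$ forces $d = -c$, and substituting back yields the central identity $\bar a\,c = c\,a$ for every $a \in \RR_n$.

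The only delicate step is to deduce $c = 0$ from this relation, since the noncommutativity of $\RR_n$ prevents a one-line conclusion. For $n \ge 2$, fix distinct $i, j \in \{1, \ldots, n\}$. Specialising $a = e_i$ and $a = e_j$ (both satisfying $\bar e_k = -e_k$) yields $e_k c = -c\,e_k$ for $k = i, j$, and composing gives
\[
e_i e_j\,c \;=\; e_i(-c\,e_j) \;=\; -(e_i c)\,e_j \;=\; (c\,e_i)\,e_j \;=\; c\,e_i e_j.
\]
Specialising instead $a = e_i e_j$, which satisfies $\overline{e_i e_j} = -e_i e_j$, yields $e_i e_j\,c = -c\,e_i e_j$. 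Combining the two relations gives $2 c\,e_i e_j = 0$, and since $e_i e_j$ is invertible in $\RR_n$ (with inverse $-e_i e_j$) we conclude $c = 0$. When $n = 1$, expanding $c = c_0 + c_1 e_1$ in $e_1 c + c e_1 = 0$ directly forces $c_0 = c_1 = 0$. This case analysis is where I expect the main subtlety to lie.

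Having established that $\langle Tx, y \rangle = 0$ for all $x, y \in \cD(T)$, the density of $\cD(T)$ in $\cH_n$ together with Cauchy--Schwarz (Lemma \ref{lem:KAPPA}(ii)) at once extends this to $\langle Tx, z \rangle = 0$ for every $z \in \cH_n$: for any sequence $(y_k) \subseteq \cD(T)$ with $y_k \to z$, the vanishing $\langle Tx, y_k \rangle = 0$ gives
\[
|\langle Tx, z \rangle| \;=\; |\langle Tx, z - y_k \rangle| \;\le\; \|Tx\|\,\|z - y_k\| \;\to\; 0.
\]
Choosing $z = Tx$ then yields $\langle Tx, Tx \rangle = 0$, and \eqref{eq:IP1} forces $Tx = 0$.
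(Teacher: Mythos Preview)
Your proof is correct. The expansion of $\langle T(x+ya), x+ya\rangle$ and the resulting identity $\bar a\,c = c\,a$ are valid, and your case analysis on $n$ successfully forces $c=0$; the density step and the final appeal to \eqref{eq:IP1} are standard.

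That said, you are working harder than necessary, and the paper's route is shorter and more uniform. The paper simply applies the polarisation formula \eqref{eq:POLAR} to the sesquilinear form $B(x,y) := \langle Tx, y\rangle$ (the proof of \eqref{eq:POLAR} uses only the sesquilinearity axioms, so it applies verbatim to $B$). This yields
\[
4\dim\gS(\RR_n)\,\langle Tx, y\rangle \;=\; \sum_\alpha \bigl(\langle T(x+ye_\alpha), x+ye_\alpha\rangle - \langle T(x-ye_\alpha), x-ye_\alpha\rangle\bigr)e_\alpha,
\]
and every bracket on the right is of the form $\langle Tz, z\rangle$ with $z\in\cD(T)$, hence zero by hypothesis. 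So $\langle Tx, y\rangle = 0$ drops out immediately, with no need to isolate the ``central identity'' $\bar a\,c = c\,a$ or to split on $n$. In effect, your argument is re-deriving by hand the specific consequence of the polarisation formula that is needed here; the formula packages the algebra once and for all, across all $n$. Your approach does have the minor merit of being self-contained (it does not rely on the cited result from \cite{GR}), but the trade-off is the ad hoc case analysis.
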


\begin{lem}
\label{lem:surjective}
Suppose $S, T \in \cL(\cH_n)$ such that $S \subseteq T$, $S$ is surjective and $T$ is injective. Then $S = T$.
\end{lem}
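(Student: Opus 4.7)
The plan is a short set-chasing argument: since $S \subseteq T$ already guarantees agreement on $\cD(S)$, it suffices to show $\cD(T) \subseteq \cD(S)$, at which point $S$ and $T$ coincide on the common domain and we are done.

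To do that, I would pick an arbitrary $y \in \cD(T)$ and try to find some $x \in \cD(S)$ with $x = y$. The surjectivity of $S$ is exactly the hypothesis that lets me back out a candidate: there exists $x \in \cD(S)$ with $Sx = Ty$. Now invoking $S \subseteq T$ tells me $Tx = Sx$, so $Tx = Ty$, i.e., $T(x-y) = 0$ (note $x - y \in \cD(T)$ because $\cD(S) \subseteq \cD(T)$ and $y \in \cD(T)$, and $\cD(T)$ is a right submodule by right linearity).

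The injectivity of $T$ then forces $x = y$, so $y \in \cD(S)$. Since $y \in \cD(T)$ was arbitrary, $\cD(T) \subseteq \cD(S)$, which combined with the reverse inclusion from $S \subseteq T$ yields $\cD(S) = \cD(T)$. Together with $Sx = Tx$ on $\cD(S)$, this gives $S = T$.

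There is no real obstacle here; the only thing to be slightly careful about is that the argument is a pure set-theoretic/linear algebraic manipulation and uses nothing specific to the Clifford structure beyond the fact that domains are right submodules, so the classical Hilbert-space proof transfers verbatim.
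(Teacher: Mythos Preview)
Your proof is correct and essentially identical to the paper's own argument (with the roles of $x$ and $y$ swapped): pick an element of $\cD(T)$, use surjectivity of $S$ to produce a preimage in $\cD(S)$, then use $S\subseteq T$ and injectivity of $T$ to conclude the two coincide, forcing $\cD(T)\subseteq\cD(S)$. The extra remark about $x-y\in\cD(T)$ is harmless but unnecessary, since injectivity of $T$ directly gives $x=y$ from $Tx=Ty$.
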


\begin{proof}
Fix $x \in \cD(T)$. Thus, as $S$ is surjective, we can find $y \in \cD(S)$ such that $S y = Tx$.  Since $S \subseteq T$, we have $Ty = Tx$. By the injectivity of $T$, we have $x = y$. Thus, $\cD(T) \subseteq \cD(S)$ and hence $\cD(T) = \cD(S)$, in which case $S = T$.
\end{proof}

\begin{defn}[graph norm]
Suppose $T \in \cL(\cH_n)$. It is easy to check that $\cD(T)$ is a right submodule of $\cH_n$ which can be endowed with $\langle \cdot, \cdot \rangle_T: \cD(T) \times \cD(T) \to \RR_n$ given by
\begin{equation}
\label{eq:RRnIP}
\langle x, y \rangle_T := \langle x, y \rangle + \langle Tx, Ty \rangle \quad \quad {\rm for} \quad x, y \in \cD(T),
\end{equation}
where $\langle \cdot, \cdot \rangle_T$ obeys \eqref{eq:IP1}-\eqref{eq:IP4}
and the corresponding norm
\begin{equation}
\label{eq:RRnNORM}
\| x \|_T := (\| x \|^2 + \| Tx \|^2 )^{1/2} \quad \quad {\rm for} \quad x \in \cD(T).
\end{equation}

\end{defn}

\begin{defn}[closed operator]
\label{def:Dec14tt9}
An operator $T \in \cL(\cH_n)$ is called {\it closed} if the set $\{ (x, Tx): x \in \cH_n \}$ is a closed subset of $\cH_n\times
\cH_n$ (endowed with the product topology). Let $S$ and $T$ both belong $\cL(\cH_n)$. We will write $S = T$ if $\cD(S) = \cD(T)$ and $Sx = Tx$ for all $x \in \cD(S) = \cD(T)$. We will write $S \subseteq T$ if $\cD(S) \subseteq \cD(T)$ and $Sx = Tx$ for all $x \in \cD(S)$. Clearly, $S = T$ if and only if $S \subseteq T$ and $T \subseteq S$. An operator $T \in \cL(\cH_n)$ is called {\it closable} if there exists a closed operator $X \in \cL(\cH_n)$ so that $T \subseteq X$.
\end{defn}

\begin{thm}
\label{thm:closed}
Let $T \in \cL(\cH_n)$. The follow statements are equivalent{\rm :}
\begin{enumerate}
\item[(i)] $T$ is closed.
\item[(ii)] For any sequence $(x_i)_{i=1}^{\infty}$, with $x_i \in \cD(T)$ for $i=1,2,\ldots$, such that
$$\lim_{i \to \infty} x_i = x,$$
where $x \in \cD(T)$, and
$$\lim_{i \to \infty} T(x_n) = y,$$
where $y \in \cH_n$, we have $Tx = y$.
\item[(iii)] $\cD(T)$ together with $\| \cdot, \cdot  \|_T$ {\rm(}see \eqref{eq:GRAPH}{\rm )} is a complete normed right module over $\RR_n$.
\end{enumerate}
\end{thm}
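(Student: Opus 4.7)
The plan is to establish the chain of equivalences by proving (i) $\Leftrightarrow$ (ii) and (i) $\Leftrightarrow$ (iii) separately, both by following the classical Banach-space template. The key ambient observation is that by Remark \ref{rem:COMPLETE}, $\cH_n$ is a complete normed space under $\|\cdot\|$, and therefore so is the product $\cH_n \oplus \cH_n$ equipped with the norm $\|(x,y)\|_{\oplus} := (\|x\|^2 + \|y\|^2)^{1/2}$. Moreover, convergence in the product topology is coordinatewise, so a sequence $(x_i, Tx_i)$ in $\cG(T)$ converges to $(x,y) \in \cH_n \oplus \cH_n$ if and only if $x_i \to x$ and $Tx_i \to y$.

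For (i) $\Leftrightarrow$ (ii), I would invoke the sequential characterization of closed subsets of a metric space: a subset $M \subseteq \cH_n \oplus \cH_n$ is closed if and only if every sequence in $M$ that converges in the ambient space has its limit in $M$. Specialising $M = \cG(T)$ and using the coordinatewise convergence observation, closedness of $\cG(T)$ is precisely the statement that whenever $x_i \in \cD(T)$, $x_i \to x$ and $Tx_i \to y$, the limit point $(x,y)$ lies in $\cG(T)$, i.e., $x \in \cD(T)$ and $Tx = y$. This matches (ii) directly.

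For (i) $\Leftrightarrow$ (iii), I would introduce the map $\Phi: \cD(T) \to \cG(T)$ given by $\Phi(x) := (x, Tx)$. It is a bijection, with inverse the projection onto the first coordinate. By Remark \ref{rem:RIGHTLINEAR} (right linearity of $T$), $\Phi$ preserves addition and the right $\RR_n$-action, so $\cG(T)$ is a right submodule of $\cH_n \oplus \cH_n$. Crucially, $\Phi$ is an isometry, since
\[
\|\Phi(x)\|_{\oplus}^2 = \|x\|^2 + \|Tx\|^2 = \|x\|_T^2,
\]
so that $(\cD(T), \|\cdot\|_T)$ and $(\cG(T), \|\cdot\|_{\oplus})$ are isometrically isomorphic as normed right $\RR_n$-modules. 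Because $\cH_n \oplus \cH_n$ is complete, a subset is complete in the induced metric if and only if it is closed. Chaining these two facts, $\cG(T)$ is closed in $\cH_n \oplus \cH_n$ if and only if $(\cD(T), \|\cdot\|_T)$ is complete, yielding (i) $\Leftrightarrow$ (iii).

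The only technical bookkeeping — and what might be mistaken for an obstacle — is verifying that $\|\cdot\|_T$ really is a module norm on $\cD(T)$ (positivity, scaling by Clifford numbers via \eqref{eq:NORM1}--\eqref{eq:NORM4}, triangle inequality) and that $\cG(T)$ is a right submodule; both reduce componentwise to the corresponding facts for $\|\cdot\|$ on $\cH_n$, together with the right linearity of $T$. After these routine checks, the equivalences follow exactly as outlined, with no genuine complication arising from the Clifford setting.
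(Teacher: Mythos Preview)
Your proposal is correct and follows essentially the same approach as the paper's proof: the paper dispatches (i) $\Leftrightarrow$ (ii) as immediate from the definition of a closed operator, and for (i) $\Leftrightarrow$ (iii) observes (via \eqref{eq:RRnNORM}) that completeness of $(\cD(T),\|\cdot\|_T)$ is equivalent to completeness of $\cG(T)$, hence to closedness of $\cG(T)$ in the complete ambient space. Your version simply makes explicit the isometry $\Phi(x)=(x,Tx)$ and the sequential characterisation of closed sets that the paper leaves implicit.
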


\begin{proof}
In view of Definition \ref{def:Dec14tt9}, ${\rm (i)} \Longleftrightarrow {\rm (ii)}$ is immediate. We will now show ${\rm (i)} \Longleftrightarrow {\rm (iii)}$. In view of \eqref{eq:RRnNORM}, $\cD(T)$ together with $\| \cdot \|_T$ is a complete normed right module over $\RR_n$ if and only if $\cG(T)$ is complete, i.e., $\cG(T)$ is a closed.
\end{proof}

\begin{thm}
\label{thm:Jun7pw2}
Let $T \in \cL(\cH_n)$. The following statements are equivalent{\rm :}
\begin{enumerate}
\item[(i)] $T$ is closable.
\smallskip
\item[(ii)] $\overline{ \{ (x, Tx): x \in \cD(T) \}} = \{ (x, Wx): \text{for some operator $W \in \cL(\cH_n)$} \}$.
\smallskip
\item[(iii)] For any sequence $(x_i)_{i=1}^{\infty}$, where $x_i \in \cD(T)$ for $i=1,2,\ldots$, such that
$$\lim_{i \to \infty} x_i = 0$$
and
$$\lim_{i \to \infty} T(x_i) = y,$$
where $y \in \cH_n$, then $y = 0$.
\end{enumerate}
\end{thm}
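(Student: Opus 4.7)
The plan is to prove the three equivalences in a cycle (i) $\Rightarrow$ (iii) $\Rightarrow$ (ii) $\Rightarrow$ (i), using the characterisation of graphs provided by Lemma \ref{lem:Jun7tm2}. The key object throughout is the closure $\overline{\cG(T)}$ of the graph of $T$ inside $\cH_n\oplus\cH_n$, endowed with the natural product norm, and the observation that the condition \eqref{eq:Jun7rk2} characterising graphs is precisely a reformulation of condition (iii).

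For (i) $\Rightarrow$ (iii), I would pick a closed extension $X \in \cL(\cH_n)$ with $T\subseteq X$ and a sequence $(x_i)\subseteq \cD(T)$ with $x_i\to 0$ and $Tx_i\to y$. Then $x_i\in \cD(X)$ and $Xx_i = Tx_i \to y$; closedness of $X$ via Theorem \ref{thm:closed} yields $0\in\cD(X)$ and $X(0) = y$, so $y = 0$ by right linearity of $X$. For (iii) $\Rightarrow$ (ii), let $\cK := \overline{\cG(T)}$. I would first verify that $\cK$ is a right submodule of $\cH_n\oplus\cH_n$, and then check condition \eqref{eq:Jun7rk2}: if $(0,y)\in \cK$, pick a sequence $(x_i,Tx_i)\to (0,y)$; then $x_i\to 0$ and $Tx_i\to y$, so (iii) forces $y=0$. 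Lemma \ref{lem:Jun7tm2} then hands us an operator $W\in \cL(\cH_n)$ with $\cG(W) = \cK$, which is exactly (ii). For (ii) $\Rightarrow$ (i), the operator $W$ has $\cG(W) = \overline{\cG(T)}$, which is by construction a closed subset of $\cH_n\oplus\cH_n$, so $W$ is closed by Theorem \ref{thm:closed}. Since $\cG(T)\subseteq \overline{\cG(T)} = \cG(W)$, we have $T\subseteq W$, so $T$ is closable.

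The main subtlety — and the step where one cannot just quote the complex Hilbert space proof verbatim — is verifying that $\overline{\cG(T)}$ really is a right $\RR_n$-submodule. Addition on $\cH_n\oplus \cH_n$ is continuous by the triangle inequality \eqref{eq:NORM4}, but right multiplication by a general Clifford scalar $a\in \RR_n$ is only bounded (not isometric) in view of \eqref{eq:NORM3}, namely $\|xa\|\leq 2^{n-1}|a|\,\|x\|$. This is still enough continuity: if $(x_i,Tx_i)\to (x,z)$ in $\cH_n\oplus\cH_n$, then applying \eqref{eq:NORM3} componentwise gives $(x_i a, (Tx_i) a) = (x_i a, T(x_i a))\to (xa,za)$, so the limit of right translations lies in $\overline{\cG(T)}$; a completely analogous argument handles closure under addition. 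Once this technical continuity point is in hand, the rest of the argument is formal bookkeeping, and condition \eqref{eq:Jun7rk2} of Lemma \ref{lem:Jun7tm2} does all the work of turning a closed submodule into the graph of a right linear operator.
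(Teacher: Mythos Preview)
Your proof is correct and essentially the same as the paper's: both hinge on Lemma \ref{lem:Jun7tm2} applied to $\overline{\cG(T)}$. The only differences are organisational --- you run the implications as a cycle (i)$\Rightarrow$(iii)$\Rightarrow$(ii)$\Rightarrow$(i), whereas the paper proves (i)$\Leftrightarrow$(ii) directly and then appeals to Lemma \ref{lem:Jun7tm2} for (ii)$\Leftrightarrow$(iii) --- and you explicitly verify that $\overline{\cG(T)}$ is a right submodule (via the continuity of right scalar multiplication from \eqref{eq:NORM3}), a point the paper takes for granted.
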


\begin{proof}
We will first show ${\rm (i)} \Longrightarrow {\rm (ii)}$. If $S \in \cL(\cH_n)$ is any closed operator such that $T \subseteq S$, then
$$\{ (x, Tx): x \in \cD(T) \} \subseteq \{ (x, Sx): x \in \cD(S) \}.$$
Hence, as $S$ is closed,
$$\overline{\{ (x, Tx): x \in \cD(T) \}} \subseteq \{ (x, Sx): x \in \cD(S) \}.$$
Therefore, in view of Lemma \ref{lem:Jun7tm2}, (ii) holds.

We will now show ${\rm (ii)} \Longrightarrow {\rm (i)}$. If (ii) holds, then $T \subseteq W$ and hence $W$ is closed since
$$\{ (x, Wx): x \in \cD(W) \}$$
is closed. Thus, $T$ is closable.

The proof of ${\rm (ii} \Longrightarrow {\rm (iii)}$ follows immediately from the Lemma \ref{lem:Jun7tm2} and the fact that $(0,y) \in \overline{\cG(T)}$ implying that $y = 0$.

\end{proof}

\begin{defn}
\label{def:Jun7er2}
Let $T \in \cL(\cH_n)$ be closable. We let
$$\overline{T}x := \lim_{i \to \infty} T(x_i)$$
denote the operator in $\cL(\cH_n)$ with domain
\begin{align*}
\cD(T) =& \; \{ x \in \cH_n: x = \lim_{i \to \infty} i_n\ \text{for $( x_i )_{n=0}^{\infty} \subseteq \cD(T)$ and $\{ T(x_i) \}_{i=0}^{\infty}$} \\
& \; \text{converges in $\cH_n$}\}.
\end{align*}
In view of Theorem \ref{thm:Jun7pw2}, the definition of $\overline{T}$ is independent of the choice of sequence $( x_i )_{i=0}^{\infty}$. Note that for any closed operator $W \in \cL(\cH_n)$ such that $T \subseteq W$,
$$\overline{T} \subseteq W.$$
\end{defn}

\begin{defn}[continuous operator]
\label{def:CONTOP}
Let $T \in \cL(\cH_n)$. $T$ will be called {\it continuous} if for any sequence $(x_i)_{i=1}^{\infty}$, where $x_i \in \cD(T)$ for $i=1,2,\ldots$, such that
$$\lim_{i \to \infty} x_i = x,$$
where $x \in \cD(T)$, we have
$$\lim_{i \to \infty} T\, x_i = Tx.$$
\end{defn}

\begin{thm}
\label{thm:CLOSEDCONT}
Let $T \in \cL(\cH_n)$. Then $T$ is continuous if and only if $T \in \cB(\cH_n)$.
\end{thm}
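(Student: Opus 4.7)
The plan is to prove the two implications separately, mirroring the classical complex Hilbert space argument while paying attention to the particular way scalar multiplication interacts with the norm on a Clifford module.

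For the forward direction $T \in \cB(\cH_n) \Longrightarrow T$ continuous, I would argue directly from the definition. Given a convergent sequence $(x_i)_{i=1}^\infty$ in $\cH_n = \cD(T)$ with limit $x$, right linearity (Remark \ref{rem:RIGHTLINEAR}) gives $Tx_i - Tx = T(x_i - x)$. For each index with $x_i \neq x$, set $r_i := 1/\|x_i - x\| \in \RR$; using right linearity together with \eqref{eq:NORM2} (which applies because $r_i \in \RR \subseteq \RR^{n+1}$), I get $\|T((x_i-x) r_i)\| \leq \|T\|$ and therefore
\[
\|Tx_i - Tx\| \;=\; \|T((x_i-x) r_i)\| \cdot \|x_i - x\| \;\leq\; \|T\| \cdot \|x_i - x\| \;\longrightarrow\; 0.
\]

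For the reverse direction, assume $T$ is continuous on $\cD(T) = \cH_n$ and suppose, for contradiction, that $\|T\| = \infty$. Then there is a sequence $(x_i)_{i=1}^\infty$ with $\|x_i\| \leq 1$ and $\|Tx_i\| \geq i$ for every $i \in \NN$. Put $y_i := x_i \cdot (1/i)$. Again, since $1/i$ is a real scalar, \eqref{eq:NORM2} gives $\|y_i\| = \|x_i\|/i \leq 1/i$, so $y_i \to 0$. But right linearity forces $T(0) = 0$, and right linearity also gives $\|Ty_i\| = \|Tx_i\|/i \geq 1$, so $(Ty_i)$ cannot converge to $T(0) = 0$. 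This contradicts continuity of $T$ at the point $0 \in \cD(T)$.

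The main delicacy specific to the Clifford setting is that for a general $a \in \RR_n$ only the inequality \eqref{eq:NORM3}, namely $\|xa\| \leq 2^{n-1}|a| \|x\|$, is available, rather than the exact identity \eqref{eq:NORM2}. If the rescalings above were by arbitrary Clifford numbers, one would pick up an undesirable factor of $2^{n-1}$; the argument goes through cleanly because both normalisations use only real scalars ($1/\|x_i-x\|$ and $1/i$), placing them in $\RR \subseteq \RR^{n+1}$ where \eqref{eq:NORM2} applies as an equality. A secondary point is the implicit requirement that $\cD(T) = \cH_n$ in the reverse direction, since this is baked into the definition of $\cB(\cH_n)$ via \eqref{eq:K1}; the statement is therefore to be read under this standard hypothesis. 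Beyond these points the proof is essentially the same as in the classical complex Hilbert space case.
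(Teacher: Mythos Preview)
Your proof is correct and follows exactly the approach the paper intends: the paper simply defers to the classical complex Hilbert space argument (citing Lax), and you have spelled it out while correctly noting that the rescalings use only real scalars, so \eqref{eq:NORM2} rather than \eqref{eq:NORM3} applies. Your observation about the implicit hypothesis $\cD(T) = \cH_n$ is also apt, as this is indeed baked into \eqref{eq:K1} and is the standard reading.
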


\begin{proof}
The proof can be carried out as in the classical complex Hilbert space case, see, e.g., \cite{Lax}.
\end{proof}

\begin{defn}
\label{def:CORE}
Let $T \in \cL(\cH_n)$. A subset $\cE$ of $\cD(T)$ will be called {\it core} of $\cD(T)$ if $\cE$ is a right submodule of $\cD(T)$ and $\cE$ is dense in $(\cD(T), \| \cdot \|_T)$.
\end{defn}

\begin{defn}[adjoint operator]
\label{def:Dec11rro3}
Given $T \in \cL(\cH_n)$ which is densely defined, we let $T^* \in \cL(\cH_n)$ denote the unique operator so that
$$\langle Tx, y \rangle = \langle x, T^* y \rangle, \quad x\in \cD(T), y \in \cD(T^*),$$
where the domain of $T^*$ is given by
\begin{align*}
\cD(T^*) =& \; \{\text{$y \in \cH_n$ : there exists $z \in \cH_n$ with $\langle Tx, y \rangle = \langle x, z \rangle$} \\
 & \text{for every $x \in \cD(T)$}\}.
 \end{align*}
\end{defn}

\begin{rem}
\label{rem:ADJOINTADJOINT}
Let $T \in \cB(\cH_n)$. Then, in view of Definition \ref{def:Dec11rro3}, it is easy to check that $(T^*)^* = T$.
\end{rem}

\begin{lem}
\label{lem:BDD}
Let $T, W \in \cB(\cH_n)$. Then $TW \in \cB(\cH_n)$,
\begin{equation}
\label{eq:PRODUCTBDD}
\| T W \| \leq \| T \| \cdot \| W \|
\end{equation}
and
\begin{equation}
\label{eq:C*identity}
\| T^* T \| =  \| T^* \|^2 = \| T \|^2.
\end{equation}
\end{lem}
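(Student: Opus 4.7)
The proof naturally splits into three pieces that mirror the classical complex Hilbert space argument. For the submultiplicativity, the plan is simply to apply the bound in Definition \ref{def:BDDOP} twice: for any $x \in \cH_n$, one has $\|TWx\| \leq \|T\|\,\|Wx\| \leq \|T\|\,\|W\|\,\|x\|$, which immediately yields both $TW \in \cB(\cH_n)$ and $\|TW\| \leq \|T\|\,\|W\|$.

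The substantive step is establishing the auxiliary identity $\|T^*\| = \|T\|$, from which the C*-identity follows routinely. The key tools are Lemma \ref{lem:KAPPA}(i)--(ii) (i.e., $\|x\|^2 = {\rm Re}\,\langle x, x\rangle$ and Cauchy--Schwarz) together with the adjoint relation of Definition \ref{def:Dec11rro3}. Observing first that $\langle T^*y, T^*y\rangle = \langle TT^*y, y\rangle$ modulo a use of \eqref{eq:IP3} and the fact that ${\rm Re}(\bar a) = {\rm Re}(a)$, the plan is to run the short chain
\[
\|T^*y\|^2 = {\rm Re}\,\langle TT^*y,y\rangle \leq |\langle TT^*y,y\rangle| \leq \|TT^*y\|\,\|y\| \leq \|T\|\,\|T^*y\|\,\|y\|,
\]
and then divide by $\|T^*y\|$ (the zero case being trivial) to conclude $\|T^*\| \leq \|T\|$. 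Invoking Remark \ref{rem:ADJOINTADJOINT} that $(T^*)^* = T$ and applying the same inequality to $T^*$ in place of $T$ yields the reverse inequality.

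For the C*-identity, the submultiplicativity together with $\|T^*\| = \|T\|$ immediately gives $\|T^*T\| \leq \|T^*\|\,\|T\| = \|T\|^2$. The matching lower bound is extracted from the companion estimate $\|Tx\|^2 = {\rm Re}\,\langle T^*Tx, x\rangle \leq \|T^*Tx\|\,\|x\| \leq \|T^*T\|\,\|x\|^2$ by taking the supremum over the unit ball, giving $\|T\|^2 \leq \|T^*T\|$. The full chain $\|T^*\|^2 = \|T\|^2 = \|T^*T\|$ then follows.

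I do not anticipate any serious obstacle; the Clifford setting introduces nothing new beyond the elementary inequality ${\rm Re}(a) \leq |a|$, which is precisely what permits passage from the $\RR_n$-valued inner product to the real-valued norm in which the operator norm is defined. The only point that deserves a sanity check is the translation between the real norm $\|\cdot\|$ on $\cH_n$ and the Clifford modulus $|\cdot|$ on $\RR_n$ in the Cauchy--Schwarz step, but this is handled uniformly by Lemma \ref{lem:KAPPA}.
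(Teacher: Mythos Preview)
Your proposal is correct and is precisely the classical complex Hilbert space argument adapted to the Clifford setting, which is exactly what the paper does: its proof simply asserts that \eqref{eq:PRODUCTBDD} and \eqref{eq:C*identity} ``can be proved exactly as in the classical complex Banach algebra case'' and refers to Lax and Conway. Your write-up in fact supplies the details the paper omits, and your identification of the one Clifford-specific wrinkle (passing from ${\rm Re}\,\langle\cdot,\cdot\rangle$ to $|\langle\cdot,\cdot\rangle|$ via ${\rm Re}(a)\leq |a|$, then invoking Lemma~\ref{lem:KAPPA}(ii)) is spot on.
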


\begin{proof}
The fact that $TW \in \cB(\cH_n)$, \eqref{eq:PRODUCTBDD} and \eqref{eq:C*identity} can be proved exactly as in the classical complex Banach algebra case, see, e.g., Theorem 8 on page 168 of \cite{Lax} for a proof of the classical complex Hilbert space case of \eqref{eq:PRODUCTBDD} and \cite{Conway} for a classical complex Hilbert space case of \eqref{eq:C*identity}.
\end{proof}

\begin{defn}[self-adjoint, anti self-adjoint and unitary]
\label{def:Sept15t1}
Let $T \in \cL(\cH_n)$. We will call $T$ {\it self-adjoint}, {\it anti self-adjoint} and {\it unitary} if $T = T^*$ with $\mathcal{D}(T)=\mathcal{D}(T^*)$, $T = - T^*$ with $\mathcal{D}(T)=\mathcal{D}(T^*)$ and $T T^* = T^* T = I$, respectively.
\end{defn}

\begin{rem}
\label{rem:SAASAU}
In view of Theorem \ref{thm:Dec14yy1}(i), we have that if $T \in \cL(\cH_n)$ is self-adjoint or anti self-adjoint operator, then $T$ is closed.
\end{rem}

\begin{thm}
\label{thm:Dec14yy1}
If $T \in \cL(\cH_n)$ is densely defined and $W \in \cL(\cH_n)$, then{\rm :}
\begin{enumerate}
\item[(i)] $T^* \in \cL(\cH_n)$ is closed.
\item[(ii)] ${\rm \Ran}(T)^{\perp} = {\rm Ker}(T^*)$.
\item[(iii)] If $T \subseteq W$, then $W^* \subseteq T^*$.
\end{enumerate}
\end{thm}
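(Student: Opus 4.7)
The plan is to translate the standard Hilbert-space proofs of the three assertions to the Clifford module setting, relying on the Cauchy--Schwarz inequality from Lemma \ref{lem:KAPPA}(ii), the basic inner-product axioms \eqref{eq:IP1}--\eqref{eq:IP4}, and Theorem \ref{thm:ORTHOCOMPS}. First I would verify that continuity of the inner product carries over: for fixed $x \in \cH_n$ and any convergent sequence $y_i \to y$ in $\cH_n$, Lemma \ref{lem:KAPPA}(ii) gives $|\langle x, y_i\rangle - \langle x, y\rangle| \le \|x\|\cdot\|y_i - y\| \to 0$, and similarly in the first slot via \eqref{eq:IP3}. I would also note that densely-defined-ness of $T$ together with this continuity implies the uniqueness of the vector $z$ in Definition \ref{def:Dec11rro3}: if $\langle x, z_1\rangle = \langle x, z_2\rangle$ for all $x$ in a dense submodule, then $\langle x, z_1 - z_2\rangle = 0$ for all $x \in \cH_n$, whence $\|z_1 - z_2\|^2 = \re\langle z_1 - z_2, z_1 - z_2\rangle = 0$ by \eqref{eq:IP3} and Lemma \ref{lem:KAPPA}(i).

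For (i), I would take a sequence $(y_i) \subseteq \cD(T^*)$ with $y_i \to y$ and $T^* y_i \to z$, and show $y \in \cD(T^*)$ with $T^* y = z$. For each $x \in \cD(T)$, the defining identity $\langle Tx, y_i\rangle = \langle x, T^* y_i\rangle$ passes to the limit by continuity of the inner product, giving $\langle Tx, y\rangle = \langle x, z\rangle$. By Definition \ref{def:Dec11rro3} this forces $y \in \cD(T^*)$ and $T^* y = z$, which by Theorem \ref{thm:closed} shows $T^*$ is closed.

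For (ii), I would observe that both inclusions follow directly from unwinding the definitions: $y \in \Ran(T)^\perp$ iff $\langle Tx, y\rangle = 0 = \langle x, 0\rangle$ for every $x \in \cD(T)$, which by Definition \ref{def:Dec11rro3} is equivalent to $y \in \cD(T^*)$ with $T^* y = 0$, i.e., $y \in \Ker(T^*)$. For (iii), I would take $y \in \cD(W^*)$ so that $\langle Wx, y\rangle = \langle x, W^* y\rangle$ for all $x \in \cD(W)$. Restricting to $x \in \cD(T) \subseteq \cD(W)$ and using $Wx = Tx$ there yields $\langle Tx, y\rangle = \langle x, W^* y\rangle$ for all $x \in \cD(T)$, so by the defining property of $T^*$ we get $y \in \cD(T^*)$ and $T^* y = W^* y$; hence $W^* \subseteq T^*$.

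I do not expect a genuine obstacle here: all three statements are formal consequences of Definition \ref{def:Dec11rro3} together with continuity of the Clifford-valued inner product, and the only subtlety beyond the classical case is the noncommutativity of scalars, which never intervenes since on the right slot the scalars land outside of $\langle\cdot,\cdot\rangle$ in the correct order via \eqref{eq:IP4}. The mildest care is needed when taking limits in the defining identity in (i), but the Cauchy--Schwarz estimate from Lemma \ref{lem:KAPPA}(ii) handles this uniformly.
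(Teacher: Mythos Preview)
Your proposal is correct and is precisely the kind of argument the paper has in mind: the paper's own proof simply states that ``the proofs can be completed in much the same way as the case when $\cH_n$ is a complex Hilbert space (see, e.g., Proposition 1.6 in \cite{Schmuedgen}),'' and your write-up is exactly such a translation of the classical argument using Lemma~\ref{lem:KAPPA}(ii) and the inner-product axioms.
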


\begin{proof}
The proofs can completed in much the same way as the case when $\cH_n$ is a complex Hilbert space (see, e.g., Proposition 1.6 in \cite{Schmuedgen}).
\end{proof}

\begin{thm}
\label{thm:Dec14uu1}
If $T \in \cL(\cH_n)$ is densely defined, then{\rm :}
\begin{enumerate}
\item[(i)] $T$ is closable if and only if $\cD(T^*)$ is dense in $\cH_n$.
\item[(ii)] If $T$ is closable, then $\overline{T} = T^{**}$, where $T^{**} := (T^*)^*$.
\item[(iii)] $T$ is closed if and only if $T = T^{**}$.
\item[(iv)] If $T$ is closable and ${\rm Ker}(T) = \{ 0 \}$ and $\Ran(T)$ is dense in $\cH_n$, then $T^*$ is invertible and $(T^*)^{-1} = (T^{-1})^*$.
\item[(v)] If $T$ is closable and $\Ker(T) = \{ 0 \}$, then $T^{-1}$ is closable if and only if $\Ker(\overline{T}) = \{ 0 \}$. In this case, we have $(\overline{T})^{-1} = \overline{ T^{-1} }$.
\item[(vi)] Suppose $T$ is invertible. Then $T$ is closed if and only if $T^{-1}$ is closed.
\end{enumerate}
\end{thm}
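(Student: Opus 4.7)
The plan is to adapt the classical graph-theoretic proof of these facts to the Clifford module setting, working in $\cH_n \oplus \cH_n$ with the natural $\RR_n$-valued inner product $\langle (x_1,x_2),(y_1,y_2) \rangle = \langle x_1,y_1 \rangle + \langle x_2, y_2 \rangle$. I will introduce two right-linear unitary maps on $\cH_n \oplus \cH_n$: the flip $V(x,y) = (-y,x)$, which satisfies $V^2 = -I$, and the swap $W(x,y) = (y,x)$. A direct unwinding of Definition \ref{def:Dec11rro3} gives the key identity
\begin{equation*}
\cG(T^*) = (V\cG(T))^{\perp},
\end{equation*}
and similarly $\cG(T^{-1}) = W\cG(T)$ whenever $T$ is injective. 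I will also note that $VW = -WV$, that $V$ and $W$ commute with orthogonal complementation in the sense $V(M^{\perp}) = (VM)^{\perp}$ (and likewise for $W$), and that $(-M)^{\perp}=M^{\perp}$. Finally, I will extend Theorem \ref{thm:ORTHOCOMPS}(iii) to arbitrary right submodules by applying it to the closure, giving $Y^{\perp\perp} = \overline{Y}$.

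With this machinery, part (i) follows by characterising closability via Lemma \ref{lem:Jun7tm2}: $T$ is closable iff $(0,y) \in \overline{\cG(T)}$ forces $y = 0$, and since $\overline{\cG(T)} = \cG(T)^{\perp\perp}$, the condition $\langle (0,y), (x,Tx) \rangle = 0$ for all $x \in \cD(T)$ translates, via the identity above, to $y \perp \cD(T^*)$. Part (ii) is then the computation
\begin{equation*}
\cG(T^{**}) = (V\cG(T^*))^{\perp} = (V(V\cG(T))^{\perp})^{\perp} = ((V^2\cG(T))^{\perp})^{\perp} = \cG(T)^{\perp\perp} = \overline{\cG(T)} = \cG(\overline{T}),
\end{equation*}
using $V^2 = -I$ and $(-M)^{\perp} = M^{\perp}$. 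Part (iii) is immediate: if $T$ is closed it is closable and self-closing, so (ii) yields $T = T^{**}$; conversely $T = T^{**}$ requires $T^*$ to be densely defined, whence (i) gives closability and (ii) gives $T = \overline{T}$.

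For (iv), Theorem \ref{thm:Dec14yy1}(ii) combined with the density of $\Ran T$ yields $\Ker T^* = \overline{\Ran T}^{\perp} = \{0\}$, so $T^*$ is injective and $(T^*)^{-1}$ is a well-defined right-linear operator with graph $W\cG(T^*)$. Then
\begin{equation*}
\cG((T^*)^{-1}) = W(V\cG(T))^{\perp} = (WV\cG(T))^{\perp} = (-VW\cG(T))^{\perp} = (V\cG(T^{-1}))^{\perp} = \cG((T^{-1})^*),
\end{equation*}
where the density of $\Ran T$ ensures $T^{-1}$ is densely defined so $(T^{-1})^*$ exists. For (v), $T^{-1}$ is closable iff $\overline{\cG(T^{-1})} = W \overline{\cG(T)} = W\cG(\overline{T})$ satisfies the graph criterion of Lemma \ref{lem:Jun7tm2}, which translates via the swap to $(y,0) \in \cG(\overline{T}) \Rightarrow y = 0$, i.e., $\Ker \overline{T} = \{0\}$; the identity $(\overline{T})^{-1} = \overline{T^{-1}}$ then follows by equating the graphs $W\cG(\overline{T}) = W\overline{\cG(T)} = \overline{W\cG(T)}$. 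Part (vi) is the simplest: $\cG(T)$ is closed iff $W\cG(T) = \cG(T^{-1})$ is closed, since $W$ is a homeomorphism.

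The main technical obstacles are not in the algebraic manipulations themselves, which are essentially formal, but in verifying that the needed Clifford-module analogues of the Hilbert-space scaffolding are in place: specifically, that $V$ and $W$ are honestly right $\RR_n$-linear unitaries on $\cH_n \oplus \cH_n$; that the identity $\cG(T^*) = (V\cG(T))^{\perp}$ does follow from the $\RR_n$-valued sesquilinearity of the inner product; and, most importantly, that $Y^{\perp\perp}$ equals the norm-closure of a general submodule $Y$, which requires extending Theorem \ref{thm:ORTHOCOMPS} beyond the closed case. Once these tools are checked, the six parts follow from the same short graph identities used in the classical complex Hilbert space theory.
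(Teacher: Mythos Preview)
Your proposal is correct and follows exactly the approach the paper intends: the paper's proof simply defers to the classical complex Hilbert space argument (Theorem 1.8 in Schm\"udgen), which is precisely the graph-theoretic route via $V$, $W$, and the identity $\cG(T^*) = (V\cG(T))^{\perp}$ that you carry out---indeed the paper records this last identity separately as Lemma \ref{lem:GRAPHLEMMA}. Your write-up is more detailed than the paper's one-line deferral; the only minor slip is in the phrasing for part (i), where the condition you want is $(0,y) \perp \cG(T)^{\perp} = V^{-1}\cG(T^*)$ (not $(0,y) \perp \cG(T)$), but your conclusion $y \perp \cD(T^*)$ is the right one.
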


\begin{proof}
The proofs can completed in much the same way as the case when $\cH_n$ is a complex Hilbert space (see, e.g., Theorem 1.8 in \cite{Schmuedgen}).
\end{proof}

\begin{lem}
\label{lem:GRAPHLEMMA}
Let $T \in \cL(\cH_n)$ be a densely defined operator. Then the graph of $T^*$ satisfies
\begin{equation}
\label{eq:GRAPH}
\cG(T^*) = V(\cG(T))^{\perp} = V(\cG(T)^{\perp}),
\end{equation}
where $V: \cH_n \oplus \cH_n \to \cH_n \oplus \cH_n$ denotes the unitary operator given by
$$V(x,y) =  (-y,x).$$
\end{lem}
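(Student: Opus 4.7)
The plan is to prove the two equalities separately. For the second equality $V(\cG(T))^{\perp} = V(\cG(T)^{\perp})$, I first verify that $V$ is a unitary right $\RR_n$-linear operator on $\cH_n \oplus \cH_n$ equipped with the natural $\RR_n$-valued inner product $\langle (x,y),(x',y')\rangle_{\oplus} := \langle x,x'\rangle + \langle y,y'\rangle$. Right linearity of $V$ is direct from the definition, and a one-line computation
$$\langle V(x,y), V(x',y')\rangle_{\oplus} = \langle -y,-y'\rangle + \langle x,x'\rangle = \langle (x,y),(x',y')\rangle_{\oplus}$$
together with $V^{-1}(x,y) = (y,-x)$ shows $V$ is unitary and bijective. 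Once this is in hand, the equality $V(M)^{\perp} = V(M^{\perp})$ for any subset $M$ follows from a standard two-line argument: $(a,b) \in V(M)^{\perp}$ iff $\langle (a,b), V(m)\rangle_{\oplus} = 0$ for all $m \in M$, iff (transporting via unitarity) $\langle V^{-1}(a,b), m\rangle_{\oplus} = 0$ for all $m \in M$, iff $V^{-1}(a,b) \in M^{\perp}$, iff $(a,b) \in V(M^{\perp})$.

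For the first equality $\cG(T^*) = V(\cG(T))^{\perp}$, I will unpack both sides into the same condition on a pair $(y,z) \in \cH_n \oplus \cH_n$. Since $V(\cG(T)) = \{(-Tx, x) : x \in \cD(T)\}$, the condition $(y,z) \in V(\cG(T))^{\perp}$ reads
$$-\langle y, Tx\rangle + \langle z, x\rangle = 0 \quad \text{for every } x \in \cD(T).$$
Conjugating this identity via \eqref{eq:IP3} yields the equivalent statement $\langle Tx, y\rangle = \langle x, z\rangle$ for every $x \in \cD(T)$, which by Definition \ref{def:Dec11rro3} is precisely the statement that $y \in \cD(T^*)$ and $T^* y = z$, i.e.\ $(y,z) \in \cG(T^*)$. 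All steps are reversible, giving both inclusions at once.

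The only delicate point is bookkeeping the conjugates on the $\RR_n$-valued inner product, which is conjugate-symmetric rather than symmetric (cf.~\eqref{eq:IP3}) and admits no complex multiplicative structure to convert $\RR$-linear identities into $\CC$-linear ones. In practice this is cosmetic: conjugation of the orthogonality identity is immediate and requires nothing beyond \eqref{eq:IP3}. The entire argument thus parallels the classical complex Hilbert space proof verbatim, and no machinery beyond Theorem \ref{thm:ORTHOCOMPS} and Definition \ref{def:Dec11rro3} is invoked.
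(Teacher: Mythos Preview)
Your proof is correct and follows essentially the same approach as the paper's: both arguments unpack the orthogonality condition $(y,z)\perp V(\cG(T))$ into the defining relation $\langle Tx,y\rangle=\langle x,z\rangle$ of the adjoint, and both obtain the second equality from the unitarity of $V$. The only cosmetic difference is that you write the orthogonality as $\langle (y,z),(-Tx,x)\rangle_\oplus=0$ and then conjugate, whereas the paper writes it as $\langle(-Tx,x),(y,z)\rangle_\oplus=0$ directly; and you spell out the $V(M)^\perp=V(M^\perp)$ step that the paper leaves as ``an immediate consequence of $V$ being unitary.''
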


\begin{proof}
Suppose $x \in \cD(T)$ and $y \in \cD(T^*)$. Using Definition \ref{def:Dec11rro3}, we have
\begin{align*}
\langle \langle V(x, Tx), (y, T^* y ) \rangle =& \; \langle (-Tx, x ), (y, T^* y ) \rangle \\
=& \; \langle -Tx, y \rangle + \langle x, T^* y \rangle \\
=& \; 0,
\end{align*}
in which case we have $\cG(T^*) \subseteq V( \cG(T))^{\perp}$.

Conversely, suppose $(y,z) \in V(\cG(T))^{\perp}$. Then for any $x \in \cD(T)$, we have
$$\langle V(x, Tx), (y,z) \rangle = \langle -Tx, y \rangle + \langle x, z \rangle =0,$$
and hence $\langle Tx, y \rangle = \langle x, z \rangle$. But then we have again use Definition \ref{def:Dec11rro3} to obtain $y \in \cD(T^*)$ and $z = T^*y$, i.e., $(y, z) \in \cG(T^*)$. Therefore, we have $V( \cG(T))^{\perp} \subseteq \cG(T^*)$.

The second equality is an immediate consequence of $V$ being unitary on $\cH_n \oplus \cH_n$.
\end{proof}

\begin{thm}[Riesz representation theorem for Clifford modules]
\label{thm:RRforHM}
Let $\cH_n$ be a Clifford module over $\RR_n$. Suppose $B: \cH_n \times \cH_n \to \RR_n$ is bounded, i.e., there exists $M \geq 0$ such that
$$|B(x,y)| \leq M \| x \| \| y \| \quad \quad {\rm for} \quad x,y \in \cH_n,$$
and satisfies the following{\rm :}
\begin{enumerate}
\item[(i)] $B(x+y, z) = B(x,z) + B(y,z)$ and $B(x,y+z) = B(x,y) + B(x,z)$ for $x,y,z \in \cH_n$.
\smallskip
\item[(ii)] $B(x \, a, y) = B(x,y) a$ and $B(x, y\, a) = \bar{a}\, B(x,y)$ for $x,y \in \cH_n$ and $a \in \RR_n$.
\end{enumerate}
Then there exists a unique $T \in \cB(\cH_n)$ such that
\begin{equation}
\label{eq:OPEQ}
B(x,y) = \langle T  x, y \rangle \quad \quad {\rm for} \quad x,y \in \cH_n.
\end{equation}
\end{thm}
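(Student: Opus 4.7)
The plan is to reduce the theorem to a Riesz-type representation for bounded right $\RR_n$-linear, $\RR_n$-valued functionals on $\cH_n$. For each fixed $x \in \cH_n$, define $\psi_x: \cH_n \to \RR_n$ by $\psi_x(y) := \overline{B(x,y)}$. Using $\overline{ab} = \bar b \bar a$ and $\overline{\bar a c} = \bar c a$ together with properties (i) and (ii) of $B$, a short check shows that $\psi_x$ is additive, right $\RR_n$-linear in the sense $\psi_x(y a) = \psi_x(y)\,a$, and satisfies $|\psi_x(y)| = |B(x,y)| \leq M\|x\|\|y\|$.

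Next I would prove the following key lemma: every bounded right $\RR_n$-linear functional $\phi: \cH_n \to \RR_n$ satisfying $|\phi(y)| \leq C\|y\|$ admits a unique $z \in \cH_n$ with $\phi(y) = \langle y, z \rangle$ for every $y \in \cH_n$, and $\|z\| \leq C$. Fix an orthonormal basis $(\xi_i)_{i \in \cI}$ of $\cH_n$ from Lemma \ref{lem:KAPPA}(iii) and set
$$z := \sum_{i \in \cI} \xi_i\, \overline{\phi(\xi_i)}.$$
For any finite $\cI_N \subseteq \cI$, put $z_N := \sum_{i \in \cI_N} \xi_i \overline{\phi(\xi_i)}$. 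Orthonormality and right-linearity of $\phi$ combined with the identity $\mathrm{Re}(\phi(\xi_i)\overline{\phi(\xi_i)}) = |\phi(\xi_i)|^2$ yield
$$\|z_N\|^2 = \sum_{i \in \cI_N} |\phi(\xi_i)|^2 = {\rm Re}\,\phi(z_N).$$
Since $|a| \geq {\rm Re}(a)$ in $\RR_n$, combining with $|\phi(z_N)| \leq C\|z_N\|$ gives the Bessel-type bound $\sum_i |\phi(\xi_i)|^2 \leq C^2$; thus the series defining $z$ converges in norm with $\|z\| \leq C$. To verify $\phi(y) = \langle y, z \rangle$, use continuity of $\phi$ together with the Parseval expansion \eqref{eq:PARSEVAL} to obtain $\phi(y) = \sum_i \phi(\xi_i) \langle y, \xi_i \rangle$, and expand $\langle y, z \rangle$ in the same form via $\langle \xi_i, z \rangle = \phi(\xi_i)$ (which is immediate from orthonormality and the rule $\langle x, y a \rangle = \bar a \langle x, y \rangle$). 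Uniqueness follows by taking $y = z_1 - z_2$.

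Applying the lemma to each $\psi_x$ yields $w_x \in \cH_n$ with $\overline{B(x,y)} = \langle y, w_x \rangle$, equivalently $B(x,y) = \langle w_x, y \rangle$; set $Tx := w_x$. Additivity and right-linearity of $T$ follow by testing $\langle Tx, y \rangle = B(x,y)$ against every $y$ and using properties (i), (ii) of $B$ in the first slot: for instance $\langle T(x_1+x_2) - Tx_1 - Tx_2, y \rangle = 0$ for all $y$ forces $T(x_1+x_2) = Tx_1 + Tx_2$ upon choosing $y$ to be this difference. Boundedness is obtained by taking $y = Tx$ and using $|\langle Tx, Tx\rangle| = \|Tx\|^2$ --- a consequence of combining ${\rm Re}\,\langle Tx, Tx\rangle = \|Tx\|^2$ (Lemma \ref{lem:KAPPA}(i)) with the Cauchy-Schwarz bound $|\langle Tx,Tx\rangle| \leq \|Tx\|^2$ (Lemma \ref{lem:KAPPA}(ii)): this gives $\|Tx\|^2 = |B(x,Tx)| \leq M\|x\|\|Tx\|$, so $T \in \cB(\cH_n)$ with $\|T\| \leq M$. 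Uniqueness of $T$ is immediate, since $\langle (T_1 - T_2)x, y \rangle = 0$ for all $x, y$ forces $T_1 = T_2$.

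The principal obstacle is the Riesz-type lemma in the second step. The classical complex-Hilbert-space proof (pick a unit vector $z_0 \in \ker(\phi)^\perp$ and normalise by $\phi(z_0)^{-1}$) fails here because $\phi(z_0)$ is a generic Clifford number and need not be invertible in $\RR_n$. The orthonormal-basis construction above circumvents this entirely by reducing existence of $z$ to a scalar Bessel inequality for $\sum_i |\phi(\xi_i)|^2$, for which only the weaker bound $|a| \geq {\rm Re}(a)$ is required.
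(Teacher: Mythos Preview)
Your proof is correct. The paper does not supply its own argument for this theorem; it simply records that the proof of Corollary~1.11 in Mitrea's monograph can be adapted to the present right-module setting. Your route---representing a bounded right $\RR_n$-linear functional $\phi$ via the orthonormal-basis series $z=\sum_i \xi_i\,\overline{\phi(\xi_i)}$ and controlling it by the Bessel-type estimate $\|z_N\|^2={\rm Re}\,\phi(z_N)\leq |\phi(z_N)|\leq C\|z_N\|$---is a clean, self-contained substitute for the citation, and your observation that the classical $\ker(\phi)^{\perp}$ argument fails because $\phi(z_0)\in\RR_n$ need not be invertible is exactly the point that makes the basis construction the right tool here. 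The only places one might want a word more are the identity ${\rm Re}(a\bar a)=|a|^2$ (which follows from $e_\alpha\bar e_\alpha=1$ and ${\rm Re}(e_\alpha\bar e_\beta)=0$ for $\alpha\neq\beta$, or directly from Lemma~\ref{lem:KAPPA}(i) applied to $\cH=\RR$) and the passage from additivity plus the bound $|\phi(y)|\leq C\|y\|$ to continuity of $\phi$ (immediate since $\RR_n$ is finite-dimensional); both are routine.
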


\begin{proof}
The proof of Corollary 1.11 in \cite{Mitrea} can easily be adjusted to our present right Clifford module setting.
\end{proof}

The following theorem appears in \cite{Mitrea}.

\begin{thm}[Hahn-Banach theorem for a Clifford module]
\label{thm:HB}
Let $\cH_n$ be a Clifford module over $\RR_n$ and $\cS_n$ be a right submodule of $\cH_n$. Suppose $\sL : \cS_n \to \RR_n$ is continuous (i.e., bounded) and satisfies
\begin{equation}
\label{eq:HB}
\sL(x a + y) = \sL(x)a + \sL(y) \quad \quad {\rm for} \quad x,y \in \cS_n \quad {\rm and} \quad a \in \RR_n.
\end{equation}
Then $\sL$ has a continuous extension $\sL: \cH_n \to \RR_n$ {\rm (}with a slight abuse of notation, we shall use $\sL$ to denote the extension{\rm )} such that \eqref{eq:HB} holds for all $x, y \in \cH_n$ and $a \in \RR_n$.
\end{thm}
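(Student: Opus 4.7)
My plan is to imitate the classical reduction of complex Hahn-Banach to real Hahn-Banach, adapted to the $2^n$-dimensional Clifford setting. The key observation is that the scalar component $\sL_0(x) := {\rm Re}(\sL(x))$ is a bounded $\RR$-linear functional on $\cS_n$, viewed as a real Banach subspace of $\cH_n$. Indeed, right $\RR_n$-linearity of $\sL$ together with the identity ${\rm Re}(\sL(x)\bar{e}_\gamma) = \sL_\gamma(x)$ (the $\gamma$-component of $\sL(x)$, using $e_\alpha \bar{e}_\gamma = \delta_{\alpha\gamma}$ in the scalar part) yields the crucial recovery formula
\[
\sL_\gamma(x) = {\rm Re}(\sL(x)\bar{e}_\gamma) = {\rm Re}(\sL(x\bar{e}_\gamma)) = \sL_0(x\bar{e}_\gamma)
\]
for every multi-index $\gamma$ and every $x \in \cS_n$ (using that $\cS_n$ is a right submodule, so $x\bar{e}_\gamma \in \cS_n$). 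Thus the entire right $\RR_n$-linear functional $\sL$ is encoded in the single real-linear scalar $\sL_0$.

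Next, I would apply the classical real Hahn-Banach theorem to extend $\sL_0$ to a bounded $\RR$-linear functional $\widetilde{\sL_0} \colon \cH_n \to \RR$ with $\|\widetilde{\sL_0}\| \leq \|\sL_0\| \leq \|\sL\|$. Then I would define the candidate extension
\[
\widetilde{\sL}(x) := \sum_{\gamma} \widetilde{\sL_0}(x\bar{e}_\gamma)\, e_\gamma, \qquad x \in \cH_n.
\]
That $\widetilde{\sL}|_{\cS_n} = \sL$ follows at once from the recovery formula above. Boundedness is routine: $|\widetilde{\sL}(x)| \leq \sum_\gamma |\widetilde{\sL_0}(x\bar{e}_\gamma)| \leq 2^n \|\widetilde{\sL_0}\|\,\|x\|$, using $\|x\bar{e}_\gamma\| = \|x\|$ from \eqref{eq:NORM2}. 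Additivity is immediate from the $\RR$-linearity of $\widetilde{\sL_0}$.

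The main obstacle is verifying right $\RR_n$-linearity of $\widetilde{\sL}$. By $\RR$-bilinearity it suffices to check $\widetilde{\sL}(xe_\beta) = \widetilde{\sL}(x)e_\beta$ for every multi-index $\beta$. Comparing coefficients of $e_\mu$ on both sides, this reduces to the purely algebraic identity
\[
e_\beta\bar{e}_\mu \;=\; s\, \bar{e}_{\mu \Delta \beta},
\qquad \text{where } e_{\mu\Delta\beta}e_\beta = s\, e_\mu,\; s \in \{\pm 1\}
\]
(and $\mu \Delta \beta$ denotes symmetric difference). I plan to prove this by conjugating $e_{\mu\Delta\beta}e_\beta = s e_\mu$ to get $\bar{e}_\beta\bar{e}_{\mu\Delta\beta} = s\bar{e}_\mu$, multiplying on the left by $e_\beta$, and using $e_\beta\bar{e}_\beta = 1$ together with $s^2 = 1$. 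Once this sign-tracking identity is in place, the coefficient-wise comparison closes immediately and right $\RR_n$-linearity is established. The hardest part is really just being careful with the bookkeeping of signs coming from the Clifford relations; the conceptual content is the observation that right $\RR_n$-linear functionals are in bijection with their scalar parts via the formula above, exactly paralleling the $\CC$ and $\HH$ cases but with a $2^n$-fold expansion.
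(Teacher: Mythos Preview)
Your proposal is correct. The paper does not supply its own proof of this theorem; it simply attributes the result to Mitrea \cite{Mitrea} and states it without argument. Your reduction to the real Hahn--Banach theorem via the scalar part $\sL_0={\rm Re}\circ\sL$ and the reconstruction formula $\widetilde{\sL}(x)=\sum_\gamma \widetilde{\sL_0}(x\bar e_\gamma)\,e_\gamma$ is exactly the standard route (and is the one taken in Mitrea's book), so in spirit your proof coincides with the cited source.

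One minor correction: your appeal to \eqref{eq:NORM2} for the identity $\|x\bar e_\gamma\|=\|x\|$ is not quite right, since \eqref{eq:NORM2} is stated only for paravectors $a\in\RR^{n+1}$, whereas $\bar e_\gamma$ is a general basis blade. The identity is nonetheless true and follows directly from the definition \eqref{eq:NORM}: right multiplication by $\bar e_\gamma=\pm e_\gamma$ permutes (and possibly changes the sign of) the real components $x_\alpha$ via $\alpha\mapsto\alpha\,\Delta\,\gamma$, leaving $\sum_\alpha\|x_\alpha\|_\cH^2$ invariant. Your sign-tracking verification of right $\RR_n$-linearity is clean and correct.
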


\begin{thm}[closed graph theorem for a Clifford module]
\label{thm:CGT}
Suppose $T\in \cL(\cH_n)$ is a closed operator with $\cD(T) = \cH_n$. Then $T \in \cB(\cH_n)$.
\end{thm}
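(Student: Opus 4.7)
The plan is to reduce this to the classical closed graph theorem for complex Banach spaces by invoking Remark \ref{rem:COMPLETE}, which records that $(\cH_n, \|\cdot\|)$ is a complex Banach space over $\CC_{\gI}$ for any fixed $\gI \in \mathbb{S}$.

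Fix any $\gI \in \mathbb{S}$ and regard $\cH_n$ as a complex Banach space over $\CC_{\gI}$, with scalar multiplication given by right Clifford multiplication $(x, \lambda) \mapsto x\lambda$ for $x \in \cH_n$ and $\lambda \in \CC_{\gI} \subseteq \RR_n$. Since $T \in \cL(\cH_n)$ is right $\RR_n$-linear by Remark \ref{rem:RIGHTLINEAR}, restricting to scalars in $\CC_{\gI}$ gives $T(x\lambda) = (Tx)\lambda$ for all $x \in \cH_n$ and $\lambda \in \CC_{\gI}$. Hence $T$ is a $\CC_{\gI}$-linear operator on the complex Banach space $\cH_n$.

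Next, the hypotheses that $T$ is closed and $\cD(T) = \cH_n$ mean that the graph $\cG(T) \subseteq \cH_n \oplus \cH_n$ is closed in the product of the norm topologies; this product topology coincides with the one it inherits when $\cH_n$ is viewed as a complex Banach space, since the underlying norm $\|\cdot\|$ is the same in both cases. Thus $T$ is a $\CC_{\gI}$-linear closed operator with full domain on a complex Banach space, and the classical closed graph theorem (which rests on the Baire category theorem applied to the complete metric space $(\cH_n, \|\cdot\|)$) yields a constant $M \geq 0$ with $\|Tx\| \leq M\|x\|$ for all $x \in \cH_n$. In view of Definition \ref{def:BDDOP}, this is exactly the assertion $T \in \cB(\cH_n)$.

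There is no substantial obstacle here: the only subtlety is ensuring that the notions of ``closed'' and ``linear'' transfer cleanly between the Clifford module structure and the complex Banach structure, and both pass through because the norm and the underlying scalar action agree on $\CC_{\gI}$. Should one prefer a fully internal proof, one would instead establish the Baire category theorem directly for $(\cH_n, \|\cdot\|)$ and mimic the classical chain Baire $\Rightarrow$ open mapping $\Rightarrow$ closed graph in the right $\RR_n$-module category; but this is superfluous in light of Remark \ref{rem:COMPLETE}.
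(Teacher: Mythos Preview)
Your proof is correct. The paper's own proof simply cites Pedersen's Theorem 2.2.7 and asserts that the classical argument ``can easily be adjusted to the present Clifford module setting,'' so both approaches ultimately defer to the complex/real Banach space version of the closed graph theorem. Your route is marginally more explicit: rather than asking the reader to re-run the Baire category and open mapping arguments in the Clifford module category, you invoke Remark~\ref{rem:COMPLETE} to view $(\cH_n,\|\cdot\|)$ as a complex Banach space over $\CC_{\gI}$, observe that right $\RR_n$-linearity restricts to $\CC_{\gI}$-linearity, and then apply the classical theorem verbatim. This buys you a cleaner reduction with no need to verify that each step of the classical proof survives the passage to Clifford modules; the paper's phrasing leaves that verification (routine as it is) to the reader.
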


\begin{proof}
The proof given in Theorem 2.2.7 in \cite{Pedersen} can easily be adjusted to the present Clifford module setting.
\end{proof}

\begin{lem}
\label{lem:invert}
Suppose $C \in \cB(\cH_n)$ is invertible in $\cB(\cH_n)$. Then the following statements hold{\rm :}
\begin{enumerate}
\item[(i)] $C - D$ is invertible in $\cB(\cH_n)$ whenever
$$\| D \| < \frac{ 1 }{ \| C^{-1} \| }.$$
\item[(ii)] Suppose $C$ is invertible in $\cB(\cH_n)$ and $\| C - D \| < \| C^{-1} \|^{-1}$. Then $D$ is invertible in $\cB(\cH_n)$.
\end{enumerate}
\end{lem}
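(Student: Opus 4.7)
The plan is to run the classical Neumann series argument, which transfers verbatim to the Clifford module setting because the only ingredients needed are (a) submultiplicativity of the operator norm on $\cB(\cH_n)$ and (b) completeness of $\cB(\cH_n)$ in the metric induced by that norm. Both are already established: submultiplicativity comes from Lemma \ref{lem:BDD}, and completeness from Remark \ref{rem:COMPLETEOPERATORNORM}. No complex structure or Clifford-specific manipulation of scalars enters the argument.

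For (i), I would factor $C - D = C(I - C^{-1}D)$ and reduce the problem to inverting $I - C^{-1}D$. The hypothesis $\|D\| < \|C^{-1}\|^{-1}$ combined with Lemma \ref{lem:BDD} yields
\[
q := \|C^{-1}D\| \leq \|C^{-1}\| \cdot \|D\| < 1.
\]
I would then form the partial sums $S_N := \sum_{k=0}^N (C^{-1}D)^k$. By submultiplicativity, $\|(C^{-1}D)^k\| \leq q^k$, so $(S_N)_{N=0}^\infty$ is Cauchy in $\cB(\cH_n)$ by a geometric comparison. Completeness gives a limit $S \in \cB(\cH_n)$. Passing to the limit in the telescoping identities
\[
(I - C^{-1}D) S_N = S_N (I - C^{-1}D) = I - (C^{-1}D)^{N+1},
\]
together with $\|(C^{-1}D)^{N+1}\| \leq q^{N+1} \to 0$, yields $(I - C^{-1}D)S = S(I - C^{-1}D) = I$. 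Hence $I - C^{-1}D$ is invertible in $\cB(\cH_n)$, and consequently $C - D = C(I - C^{-1}D)$ is invertible with inverse $S\, C^{-1} \in \cB(\cH_n)$.

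For (ii), I would simply apply part (i) with $D$ replaced by $C - D$. The hypothesis $\|C - D\| < \|C^{-1}\|^{-1}$ is exactly the hypothesis of (i) under this substitution, so (i) delivers invertibility of $C - (C - D) = D$ in $\cB(\cH_n)$.

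I do not anticipate any genuine obstacle. The only point deserving explicit mention is that the argument is carried out entirely inside the Banach algebra $(\cB(\cH_n), \|\cdot\|)$: right linearity of $(C^{-1}D)^k$ is preserved under composition, the partial sums live in $\cB(\cH_n)$ because it is a right submodule closed under composition, and the limit $S$ lies in $\cB(\cH_n)$ by completeness. Thus the proof proceeds without appealing to any property specific to a complex Hilbert space.
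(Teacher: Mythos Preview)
Your proof is correct and follows precisely the classical Neumann series argument that the paper itself invokes by reference (the paper's proof simply states that (i) and (ii) are ``exactly the same as in the classical case'' and cites Lax). You have supplied the details the paper omits, and your reduction of (ii) to (i) via the substitution $D \mapsto C - D$ is the standard move.
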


\begin{proof}
The proof of (i) and (ii) is exactly the same as in the classical case, see, e.g., Theorem 2 in Chapter 17 of \cite{Lax} for the proof of (i).
\end{proof}

\begin{defn}[normal operator on a Clifford module]
\label{def:Aug24y1}
Suppose $T \in \cL(\cH_n)$. We will call $T$ {\it normal} if $T$ is densely defined, $T$ is closed, $\cD(T) = \cD(T^*)$ and $TT^* = T^* T$.
\end{defn}

\begin{lem}
\label{lem:Aug25yq1}
Let $T \in \cL(\cH_n)$ be normal. If $S \in \cL(\cH_n)$ so that $T \subseteq S$ and $\cD(S) \subseteq D(S^*)$, then $S = T$.
\end{lem}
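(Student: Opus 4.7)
The plan is to chase domains using the antitone behaviour of taking adjoints together with the defining domain equality for a normal operator. The inclusion $T \subseteq S$ immediately gives $\cD(T) \subseteq \cD(S)$ and $Sx = Tx$ for $x \in \cD(T)$, so the only thing to verify is the reverse domain inclusion $\cD(S) \subseteq \cD(T)$; once we have that, $S = T$ follows at once from Definition \ref{def:Dec14tt9}.

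For the reverse inclusion I would apply Theorem \ref{thm:Dec14yy1}(iii) to $T \subseteq S$ to obtain $S^* \subseteq T^*$, which in particular yields $\cD(S^*) \subseteq \cD(T^*)$. Then I would chain this with the standing hypothesis $\cD(S) \subseteq \cD(S^*)$ and with the defining property of normality (Definition \ref{def:Aug24y1}), namely $\cD(T) = \cD(T^*)$, to get
\[
\cD(S) \;\subseteq\; \cD(S^*) \;\subseteq\; \cD(T^*) \;=\; \cD(T).
\]
Combined with $\cD(T) \subseteq \cD(S)$ from $T \subseteq S$, this forces $\cD(S) = \cD(T)$, and since $S$ and $T$ already agree on $\cD(T)$, we conclude $S = T$.

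There is no real obstacle here: all the heavy lifting has been packaged into Theorem \ref{thm:Dec14yy1}(iii) and the domain equality built into the definition of a normal operator on $\cH_n$. The only thing to be careful about is that Theorem \ref{thm:Dec14yy1}(iii) requires $T$ (hence also $S$, as its extension) to be densely defined, but this is automatic from $T$ being normal. Thus the argument is a short domain inclusion chase and produces the required conclusion in a few lines.
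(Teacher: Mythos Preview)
Your proposal is correct and follows essentially the same approach as the paper: from $T \subseteq S$ deduce $S^* \subseteq T^*$, then chain $\cD(T) \subseteq \cD(S) \subseteq \cD(S^*) \subseteq \cD(T^*) = \cD(T)$ to force $\cD(S) = \cD(T)$ and hence $S = T$.
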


\begin{proof}
If $T \subseteq S$, then $S^* \subseteq T^*$ and hence
$$\cD(T) \subseteq \cD(S) \subseteq \cD(S^*) \subseteq \cD(T^*) = \cD(T),$$
i.e., $\cD(S) = \cD(T)$. Therefore, $S = T$.
\end{proof}

\subsection{$S$-resolvent set, $S$-spectrum for a linear operator and the spectral radius formula}

\begin{defn}[$S$-resolvent set and $S$-spectrum for $T \in \cL(\cH_n)$]
\label{def:Dec14j1}
Let \\
$T \in \cL(\cH)$ be densely defined and $\mathcal{Q}_s(T): \cD(T^2) \to \cH$ be given by
\begin{equation}
\label{eq:Q}
\mathcal{Q}_s(T)x  =(T^2 - 2 {\rm Re}(s) T + |s|^2 I )x \quad {\rm for} \quad x \in \mathcal{D}(T^2).
\end{equation}
The $S$-resolvent set of $T$ is defined as follows
\begin{align*}
\rho_S(T)
=& \; \{\text{$s\in \RR^{n+1}$ {\rm :} $\Ker \,\mathcal{Q}_s(T)=\{0\}$, $\Ran \,\mathcal{Q}_s(T)$ is dense in $\cH_n$ and} \\
& \; \text{$\mathcal{Q}_s(T)^{-1} : \Ran \cQ_s(T) \to \cD(T^2$) is bounded}\}.
\end{align*}
The $S$-spectrum is defined as
$$
\sigma_S(T)=\RR^{n+1} \, \setminus\,  \rho_S(T).
$$
\end{defn}

\begin{rem}
\label{rem:PARAVECTOR}
In \cite{CSS}, the $S$-spectrum for $T \in \cL(\cH_n)$ was considered
for paravector operators, i.e., operators of the form
$$
T = \sum_{i=0}^n e_{i}T_{i},
$$
because the purpose of this theory was to define
 a functional calculus for $(n+1)$-
 tuples of noncommuting operators.
Quite early on in our investigation of the Clifford spectral theorem, we directed our attention to fully Clifford operators because of the crucial decomposition
\begin{equation}
T = A  +J_0 B.
\end{equation}
of normal bounded operators given in Theorem \ref{thm:AJB}.
This decomposition implies that, even if $T$ is a paravector operator,
 the operators $J_0$ and $B$ will not be paravector operators in general.

\medskip
A further observation is that
in the case $T$ is a paravector operator with noncommuting components
the $S$-spectrum is defined by
the operator $T^2 - 2 {\rm Re}(s) T + |s|^2 I$ that is not a paravector.

\medskip
For an explanation of why this theory is so flexible see the introduction of the paper \cite{CKPS},
where some considerations are made on various Cauchy formulas that
define various holomorphic functional calculi.
The properties of the $S$-spectrum and the $S$-resolvent operators
for fully Clifford operators
remain the same with the same proofs valid for
paravector operators, see \cite{CGKSfunc,CKPS,GhiloniRecupero}.
\end{rem}

\begin{rem}
\label{rem:ALTNERATIVE}
For any $T \in \cL(\cH_n)$, it is easy to show that  the $S$-resolvent set is equal to
\begin{align}
\rho_S(T) =  \{ s \in \RR^{n +1}: \cQ_s(T)^{-1}: & \;\Ran \, \cQ_s(T) \to \cD(T^2) \nonumber \\
& \; \text{is bijective and bounded} \} \label{eq:SRESOLVENT}
\end{align}
\end{rem}

\begin{defn}[left and right $S$-resolvent operator]
\label{def:Sresolvent}
Suppose $T \in \cB(\cH_n)$. For any choice of $s \in \rho_S(T)$, we shall define the {\it left $S$-resolvent operator} via
\begin{equation}
\label{eq:LEFTRESOLVENT1}
S_L^{-1}(s, T) := -\cQ_s(T)^{-1}(T-\bar{s}\, I) \in \cB(\cH_n)
\end{equation}
and the {\it right $S$-resolvent operator} via
\begin{equation}
\label{eq:RIGHTRESOLVENT1}
S_R^{-1}(s, T) := -(T-\bar{s}\, I)\cQ_s(T)^{-1} \in \cB(\cH_n).
\end{equation}
\end{defn}

\begin{rem}
\label{rem:SLICE}
One can check that $S_L^{-1}(s,T): \rho_S(T) \to \cB(\cH_n)$ is a {\it right slice hyperholomorphic function} (see Lemma 3.10 in \cite{CGKSfunc} for details). Analogously, one can check that $S_R^{-1}(s,T) : \rho_S(T) \to \cB(\cH_n)$ is a {\it left slice hyperholomorphic function}.
\end{rem}

\begin{defn}[axially symmetric]
We will call a set $\Omega \subseteq \RR^{n+1}$ {\it axially symmetric}, if whenever $s = s_0 + s_1 \gI \in \Omega$, where $s_0,s_1 \in \RR$ and $\gI \in \mathbb{S}$,  then  $s_0 + s_1 \gJ \in \Omega$ for all $\gJ \in \mathbb{S}$.
\end{defn}

\begin{rem}[the $S$-spectrum is axially symmetric]
\label{rem:AXIALLYSYMMETRIC}
Let $T \in \sL(\cH_n)$. Then $\sigma_S(T)$ is axially symmetric. If $\sigma_S(T) = \emptyset$, then there is nothing to prove. If $\sigma_S(T) \not= \emptyset$, then one need only notice that $s \in \rho_S(T)$ depends only on ${\rm Re}(s)$ and $|s|$.
\end{rem}

\begin{thm}[properties of the $S$-spectrum for $T \in \cB(\cH_n)$]
\label{thm:SPECNONEMPTYCOMPACT}
Suppose \\$T \in \cB(\cH_n)$. Then $\sigma_S(T)$ is a non-empty compact subset of
$$\{ s \in \RR^{n+1}: 0 \leq |s| \leq \| T \| \}.$$
\end{thm}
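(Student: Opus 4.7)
I would prove the three assertions --- closedness, boundedness by $\|T\|$, and nonemptiness --- in turn. For closedness, observe that the map $\RR^{n+1} \to \cB(\cH_n)$, $s \mapsto \cQ_s(T) = T^2 - 2\,{\rm Re}(s)\,T + |s|^2 I$, is polynomial (hence continuous) in the real coordinates of $s$, since ${\rm Re}(s) = s_0$ and $|s|^2 = \sum s_i^2$. By Lemma \ref{lem:invert} the invertible elements of $\cB(\cH_n)$ form an open set, so the characterisation \eqref{eq:SRESOLVENT} identifies $\rho_S(T)$ as the preimage of an open set, hence open; consequently $\sigma_S(T)$ is closed.

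For boundedness, fix $s \in \RR^{n+1}$ with $|s| > \|T\|$ and construct $\cQ_s(T)^{-1}$ explicitly. Expanding the rational function $1/\cQ_s(\lambda) = 1/[(\lambda - s)(\lambda - \bar s)]$ (with $s, \bar s \in \CC_{\gI_s}$ the two roots of $\cQ_s$, both at distance $|s|$ from $0$) in a Taylor series at $\lambda = 0$ gives
$$\frac{1}{\cQ_s(\lambda)} = \sum_{k=0}^{\infty} a_k \lambda^k, \quad \text{with } a_k \in \RR, \; a_0 = \tfrac{1}{|s|^2}, \; a_1 = \tfrac{2 s_0}{|s|^4},$$
and radius of convergence equal to $|s|$. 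Since the $a_k$ are real scalars commuting with $T$, the operator series $R := \sum_{k \geq 0} a_k T^k$ converges absolutely in operator norm whenever $\|T\| < |s|$; multiplying the scalar identity $\cQ_s(\lambda) \sum_k a_k \lambda^k = 1$ term-by-term and substituting $T$ for $\lambda$ (valid since all coefficients are real scalars) yields $\cQ_s(T) R = R \cQ_s(T) = I$, so $R = \cQ_s(T)^{-1} \in \cB(\cH_n)$ and $s \in \rho_S(T)$. Combined with the first step, this shows $\sigma_S(T)$ is a compact subset of $\{s \in \RR^{n+1} : |s| \leq \|T\|\}$.

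For nonemptiness I argue by contradiction. Assume $\sigma_S(T) = \emptyset$; then $s \mapsto S_L^{-1}(s, T)$ is defined on all of $\RR^{n+1}$ and, by Remark \ref{rem:SLICE}, is right slice hyperholomorphic as a $\cB(\cH_n)$-valued function. From the boundedness step together with the identity $\cQ_s(T) \sum_{k \geq 0} T^k s^{-k-1} = \bar s\, I - T$ (a direct computation reusing that the scalars $s^{-k-1} \in \CC_{\gI_s}$ can be pulled through right $\RR_n$-linear operators, and that $s, \bar s$ commute as elements of the complex plane $\CC_{\gI_s}$) one obtains the convergent representation
$$S_L^{-1}(s, T) = \sum_{k=0}^{\infty} T^k s^{-k-1} \quad \text{for } |s| > \|T\|,$$
whence $\|S_L^{-1}(s, T)\| \leq (|s| - \|T\|)^{-1} \to 0$ as $|s| \to \infty$, using \eqref{eq:NORM2} and the fact that each $s^{-k-1}$ is a paravector. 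For any fixed $x, y \in \cH_n$, the $\RR_n$-valued function $s \mapsto \langle S_L^{-1}(s, T) x, y \rangle$ is then entire slice hyperholomorphic, bounded, and vanishes at infinity; a Liouville-type theorem for slice hyperholomorphic functions (part of the function-theoretic toolkit recalled in the introduction) forces it to be identically zero. Varying $x, y$ and using nondegeneracy of $\langle \cdot, \cdot \rangle$ gives $S_L^{-1}(\cdot, T) \equiv 0$, contradicting the nonzero leading behaviour $S_L^{-1}(s, T) = s^{-1} I + O(|s|^{-2})$.

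The principal obstacle is the sharpness of the constant $\|T\|$: a naive Neumann expansion of $\cQ_s(T) = |s|^2 I - (2 s_0 T - T^2)$ requires $2|s_0|\,\|T\| + \|T\|^2 < |s|^2$ and therefore converges only in the smaller region $|s| > (1 + \sqrt 2)\,\|T\|$, so the factorisation-based Taylor expansion of $1/\cQ_s(\lambda)$ described above is essential. Secondarily, nonemptiness leans on the slice hyperholomorphic Liouville theorem --- non-classical, but well-documented in the function theory cited in the introduction --- which is the genuine substitute for the usual complex-analytic Liouville argument used in the Hilbert-space case.
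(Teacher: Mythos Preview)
Your proof is correct and follows the same overall architecture as the paper's --- continuity plus openness of invertibles for closedness, a power-series construction for the bound $|s| \leq \|T\|$, and slice-hyperholomorphic function theory for nonemptiness --- but the details differ in two places. For boundedness, the paper works with the $S$-resolvent series $\sum_{k\geq 0} T^k s^{-k-1}$ and verifies the identity $\cQ_s(T)\sum_k T^k s^{-k-1} = \bar s\, I - T$, then concludes $s \in \rho_S(T)$; your direct construction of $\cQ_s(T)^{-1}$ via the real Taylor coefficients of $1/\cQ_s(\lambda)$ at $\lambda=0$ is cleaner, since it produces a genuine two-sided inverse of $\cQ_s(T)$ in one stroke and sidesteps any question of how the resolvent identity alone yields invertibility of $\cQ_s(T)$. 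For nonemptiness, the paper integrates $S_L^{-1}(s,T)$ over a large circle in a slice $\CC_{\gI}$ and invokes a Cauchy-type theorem to derive the contradiction $2\pi I = 0$, whereas you pair with vectors and appeal to a Liouville-type theorem for bounded entire slice-hyperholomorphic functions. Both arguments import exactly one nontrivial result from the slice-hyperholomorphic toolkit, and Cauchy and Liouville are two sides of the same coin here, so neither route is materially more elementary than the other.
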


\begin{proof}
We will first show that $\sigma_S(T)$ is non-empty.  For any choice of $\varepsilon > 0$,  the series $\sum_{i=0}^{\infty} T^i s^{-i-1}$ converges uniformly in norm on $s \in \Omega_{\varepsilon} := \{ a \in \RR^{n+1}: |a| = \| T \| + \varepsilon \}$ to $S_L^{-1}(s,T)$. Thus, for any $\gI \in \mathbb{S}$, we may use the fact that
$$\int_{\Omega_{\varepsilon} \cap \CC_{\gI}} s^{-i-1} \, ds (-\gI) = \begin{cases}
2\pi & {\rm if} \quad i = 0, \\
0 & {\rm if}\quad i \neq 0,
\end{cases}
$$
to obtain
\begin{equation}
\label{eq:CAUCHY}
\int_{\Omega_{\varepsilon}} S_L^{-1}(s,T) ds (-\gI) = \sum_{i=0}^{\infty} T^i \left( \int_{\Omega_{\varepsilon}} s^{-i-1} ds (-\gI) \right) = 2\pi \, I.
\end{equation}
If $M_{\varepsilon}:=\overline{\{ a \in \RR^{n+1}: |a| \leq \| T \| + \varepsilon \}} \subseteq \rho_S(T)$, then Remark \ref{rem:SLICE} asserts that $S_L^{-1}(s,T)|_{M_{\varepsilon} }$ is a right slice hyperholomorphic $\cB(\cH_n)$-valued function. However, an analogue of Cauchy's integral formula for $\cB(\cH_n)$-valued functions asserts that \eqref{eq:CAUCHY} must be $0$, which is clearly not the case. Thus, $M_{\varepsilon}$ is not a subset of $\rho_S(T)$. But then $\sigma_S(T)$ cannot be empty.

We will now show that $\sigma_S(T)$ is a closed subset of $\RR^{n+1}$. Notice that $\varphi: \RR^{n+1} \to \cB(\cH_n)$ given by $\varphi(s) = \cQ_s(T)$ is a continuous function. Lemma \ref{lem:invert} can be used to show that the set of invertible operators in $\cB(\cH_n)$ is open. Thus, $\rho_S(T) = \varphi^{-1}(\{ W \in \cB(\cH_n): W^{-1} \in \cB(\cH_n) \})$ is open. Consequently, $\sigma_S(T) = \RR^{n+1} \setminus \rho_S(T)$ is closed.

Next, we will show that $\sigma_S(T) \subseteq \{ s \in \RR^{n+1}: |s| \leq \| T \| \}$. In view of \eqref{eq:PRODUCTBDD}, we have $\| T^i \| \leq \|T \|^i $ for $i=0,1,\ldots$. Thus, as $s \in \RR^{n+1}$, we have the estimate
$$\| T^i s^{-i-1} \| \leq 2^{n-1} |s^{-i-1}|\cdot \| T^i\| = \frac{\| T\|^i }{|s|^{i+1}} \quad \quad {\rm for}\quad i=0,1,\ldots$$ and hence
\begin{equation}
\label{eq:CONVERGENCE}
\text{$\sum_{i=0}^{\infty} \| T^i s^{-i-1} \|$ converges if and only if $|s| < \| T \|$}.
\end{equation}
We now claim that
\begin{equation}
\label{eq:CLAIMSRESOLVENT}
(T^2 -2{\rm Re}(s)T + |s|^2 \, I) \left( \sum_{i=0}^{\infty} T^i s^{-i-1} \right) = \bar{s}\, I - T.
\end{equation}
Indeed, notice that
\begin{align*}
& \; (T^2 -2{\rm Re}(s)T + |s|^2 \, I) \left( \sum_{i=0}^{\infty} T^i s^{-i-1} \right) \\
=& \; \sum_{i=0}^{\infty} \left( T^{i+1} s^{-i-1} - T^{i+1} s^{-i-1}(s+\bar{s}) + T^i s^{-i-1} s \bar{s}   \right) \\
=& \; \sum_{i=0}^{\infty} \left( T^{i+1} s^{-i} - T^{i+1} s^{-i} - T^{i+1} s^{-i-1} \bar{s} + T^i s^{-i} \bar{s} \right) \\
=& \; \bar{s} \, I - T.
\end{align*}
Thus,  we have \eqref{eq:CLAIMSRESOLVENT}. Putting together \eqref{eq:CONVERGENCE} and \eqref{eq:CLAIMSRESOLVENT}, we have that $\sigma_S(T) \subseteq \{ s \in \RR^{n+1}: |s| \leq \| T \| \}$. As $\sigma_S(T)$ is closed, we have that is a closed subset of a compact set, i.e., $\sigma_S(T)$ is compact.

Finally, we will show that $\sigma_S(T)$ is closed. Suppose $s \in \rho_S(T)$ and $\tilde{s} \in \RR^{n+1}$ be such that $|s - \tilde{s}|$ is sufficiently small. Then
\begin{align*}
\cQ_s(T) - \cQ_{\tilde{s}}(T) =& \; 2 {\rm Re}(\tilde{s} - s)T + (|s|^2 - |\tilde{s}|^2)I \\
=& \; \frac{1}{|s|^2 - |\tilde{s}|^2 } \left(    \frac{2 {\rm Re}(\tilde{s} - s ) }{|s|^2 - |\tilde{s}|^2 } \, T + I \right),
\end{align*}
one can use Lemma \ref{lem:INVERT} to see that $\cQ_{\tilde{s}}$ is invertible in $\cB(\cH_n)$. Thus, $\tilde{s} \in \rho_S(T)$ and $\rho_S(T)$ is open, i.e., $\sigma_S(T)$ is closed.
\end{proof}

\begin{defn}[spectral radius]
Let $T \in \cB(\cH_n)$. Then we shall denote the {\it spectral radius} of $T$ by $r_S(T)$ which is given by
\begin{equation}
\label{eq:SR}
r_S(T) := \sup_{s \in \sigma_S(T)} |s| = \max_{s \in \sigma_S(T)} |s|.
\end{equation}
\end{defn}

\begin{thm}[spectral radius formula]
\label{thm:SRF}
Let $T \in \cB(\cH_n)$ be normal. Then
\begin{equation}
\label{eq:SRF}
\| T \| = r_S(T).
\end{equation}
\end{thm}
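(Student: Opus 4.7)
The plan is to combine a Gelfand-type spectral radius formula with the $C^*$-identity \eqref{eq:C*identity}. The upper bound $r_S(T) \leq \|T\|$ is already part of Theorem \ref{thm:SPECNONEMPTYCOMPACT}. For the reverse, I would first prove that for an arbitrary $T \in \cB(\cH_n)$,
\begin{equation*}
r_S(T) = \lim_{n \to \infty} \|T^n\|^{1/n},
\end{equation*}
and then use normality to evaluate the right-hand side as $\|T\|$.

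The Gelfand formula would be established in two steps. For the upper bound $r_S(T) \leq \limsup_n \|T^n\|^{1/n}$, I would argue as in the proof of Theorem \ref{thm:SPECNONEMPTYCOMPACT}: for $|s|$ strictly greater than $L := \limsup_n \|T^n\|^{1/n}$, the series $A := \sum_{i=0}^\infty T^i s^{-i-1}$ converges absolutely in operator norm (using $|s^{-i-1}| = |s|^{-i-1}$, valid for paravectors by \eqref{eq:MM}). The identity \eqref{eq:CLAIMSRESOLVENT} together with the fact that $A$ is a norm limit of polynomials in $T$ gives $\cQ_s(T) A = A \, \cQ_s(T) = \bar{s} I - T$. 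Since $\bar{s}I - T$ is invertible whenever $|s| > \|T\|$, this already shows $\cQ_s(T)$ is invertible for such $s$, and a standard analytic continuation argument along the slice $\CC_\gI$ extends invertibility to the whole region $|s| > L$, giving $r_S(T) \leq L$. For the reverse bound $L \leq r_S(T)$, I would restrict $s \mapsto S_L^{-1}(s, T)$ to $\CC_\gI$ for some $\gI \in \mathbb{S}$; by Remark \ref{rem:SLICE} it is a classical Banach-space-valued holomorphic function on $\CC_\gI \setminus (\sigma_S(T) \cap \CC_\gI)$, and its Laurent expansion at $\infty$ must converge for $|s| > r_S(T)$. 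Matching coefficients with $\sum T^i s^{-i-1}$ forces $\|T^n\|^{1/n} \leq r_S(T)$ in the limit.

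With the Gelfand formula in hand, I would exploit normality. The $C^*$-identity \eqref{eq:C*identity} gives $\|T^2\|^2 = \|(T^2)^*T^2\|$. Normality allows me to rewrite
\begin{equation*}
(T^2)^*T^2 = T^*T^*TT = T^*(T^*T)T = (T^*T)^2,
\end{equation*}
and since $T^*T$ is self-adjoint, applying \eqref{eq:C*identity} once more to $T^*T$ yields $\|(T^*T)^2\| = \|T^*T\|^2 = \|T\|^4$, so $\|T^2\| = \|T\|^2$. The same computation shows $T^2$ is again normal (because $(T^2)(T^2)^* = (TT^*)^2 = (T^*T)^2 = (T^2)^*T^2$), so induction produces $\|T^{2^k}\| = \|T\|^{2^k}$ for every $k \geq 0$. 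Consequently $\|T^{2^k}\|^{1/2^k} = \|T\|$ for all $k$, and since the full limit $\lim_n \|T^n\|^{1/n}$ exists by Fekete's lemma applied to the submultiplicative sequence $\|T^n\|$, it must coincide with $\|T\|$. Combining this with the Gelfand formula gives $r_S(T) = \|T\|$.

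I expect the principal obstacle to lie in making the lower bound for the Gelfand formula fully rigorous. The issue is that $\cQ_s(T)$ depends quadratically on $s$, so the classical Laurent expansion at $\infty$ does not apply verbatim to the $S$-resolvent. The remedy is to restrict attention to a single slice $\CC_\gI$, on which Remark \ref{rem:SLICE} recovers a genuine Banach-space-valued holomorphic function and the argument reduces to complex analysis; the remaining work is essentially bookkeeping, made consistent across slices by the axial symmetry of $\sigma_S(T)$.
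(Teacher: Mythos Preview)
Your approach is the same as the paper's in its essentials: both combine the $C^*$-identity \eqref{eq:C*identity} (yielding $\|T^{2^k}\|^{1/2^k}=\|T\|$ by induction) with a Gelfand-type spectral radius formula. The only difference lies in how the Gelfand formula is obtained: the paper does not argue via slice holomorphy of $S_L^{-1}(s,T)$ as you propose, but instead cites the identity $r_S(T)=\lim_i(\|T^i\|_1)^{1/i}$ from \cite{CGKSfunc} in the auxiliary norm $\|T\|_1:=\sum_\alpha\|T_\alpha\|_{\cH}$, and then passes to the operator norm through the comparison $\|T\|\leq 2^{3n-2}\|T\|_1$. Your route is more self-contained; the paper's is shorter but outsources precisely the Laurent-expansion work you flag as the obstacle. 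One remark on your sketch: the ``analytic continuation'' step for the inequality $r_S(T)\leq L$ is not justified as written (invertibility of $\bar sI-T$ is only clear for $|s|>\|T\|$, not for $|s|>L$), but that direction is in fact unnecessary here---you already have $r_S(T)\leq\|T\|$ from Theorem~\ref{thm:SPECNONEMPTYCOMPACT}, so only your ``reverse bound'' $\limsup_n\|T^n\|^{1/n}\leq r_S(T)$, combined with $\|T^{2^k}\|^{1/2^k}=\|T\|$, is required to close the argument.
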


\begin{proof}
Theorem 6.7 in [\cite{CGKSfunc} asserts, in particular, that for any $T \in\cB(\cH_n)$, we have
$$\lim_{i \to \infty} (\| T^i \|_1)^{\frac{1}{i}} =r_S(T),$$
where
$$\| T \|_1 := \sum_{\alpha} \| T_{\alpha} \|_{\cH} \quad\quad {\rm for}\quad T = \sum_{\alpha} T_{\alpha} e_{\alpha}.$$
Notice that
\begin{align*}
\| T\| :=& \; \sup_{\| x\|=1} \| T x \| = \sup_{\|x\|=1} \| \sum_{\alpha,\beta} T_{\alpha}x_{\beta}e_{\alpha} e_{\beta} \| \\
\leq& \; \sup_{\|x\|=1} \sum_{\alpha,\beta} \|T_{\alpha} x_{\beta} e_{\alpha} e_{\beta} \| \\
\leq& \; 2^{n-1}\sup_{\|x\| =1 } \sum_{\alpha,beta} \| T_{\alpha} x_{\beta} \|_{\cH} \cdot |e_{\alpha}e_{\beta}| \\
\leq& \; 2^{n-1}2^{n-1} \sum_{\alpha,\beta} \left( \sup_{\|x\|=1} \|T_{\alpha}x_{\beta}\|_{\cH} \right)\\
\leq& \; 2^{2(n-1)} \sum_{\alpha,\beta} \|T_{\alpha}\|_{\cH} \\
=& \; 2^{3n-2} \| T  \|_1.
\end{align*}

In view of \eqref{eq:C*identity}, one can use induction on $i$ to show that
$\| T^{2^i} \|^{\frac{1}{2^i}} = \| T \|$. In view of Theorem \ref{thm:SPECNONEMPTYCOMPACT}, we have $r_S(T) \leq \| T\|$. Thus,
$$
r_S(T) \leq  \|T \| = \|T^{2^i} \|^{\frac{1}{2i}} \quad\quad {\rm for} \quad i=1,2,\ldots
$$
and hence
\begin{align*}
r_S(T) \leq& \; \lim_{i \to \infty} \|T^{2^i} \|^{\frac{1}{2i}} \\
=& \; \lim_{i \to \infty} \left( 2^{\frac{3n-2}{i}} ( \| T^{2^i} \|_1)^{\frac{1}{2^i}} \right)\\
=& \; r_S(T).
\end{align*}
and hence we have \eqref{eq:SRF}.
\end{proof}

Following the possible splittings of the classical spectrum for operators on complex Banach space we can give the same spitting also for fully linear operators on Clifford modules.
We recall that the splitting of the spectrum is defined according to where an operators is not invertible.
We will mention two possible splittings. The first one is the point,  residual, continuous  $S$-spectrum of a Clifford operator.

\begin{defn}[point,  residual, continuous  $S$-spectrum]
\label{def:SPECTRUMS}
Let $T:\mathcal{D}(T)\to \mathcal{H}_n$.  We split the $S$-spectrum into the three disjoint sets:
\begin{itemize}
\item[(P)] The point $S$-spectrum of $T$:
$$
\sigma_{PS}(T)=\{ s\in \mathbb{R}^{n+1} \ :\ \ {\rm Ker} ( \mathcal{Q}_s(T))\not=\{0\} \}.
$$
\item[(R)] The residual  $S$-spectrum of $T$:
$$
\sigma_{RS}(T)=\left\{ s\in \mathbb{R}^{n+1} \ :\ \ {\rm Ker} ( \mathcal{Q}_s(T))=\{0\},\ \overline{{\rm Ran}(\mathcal{Q}_s(T))}\not=\mathcal{H}_n \right\}.
$$
\item[(C)] The continuous  $S$-spectrum of $T$:
\end{itemize}
\begin{align*}
\sigma_{CS}(T)=& \; \{ s\in \mathbb{R}^{n+1} \ :\  {\rm Ker} ( \mathcal{Q}_s(T))=\{0\},\
\overline{{\rm Ran}(\mathcal{Q}_s(T))}
=\mathcal{H}_n \\
& \;\; {\rm and} \; \mathcal{Q}_s(T)^{-1} \not\in \mathcal{B}(\mathcal{H}_n)\}.
\end{align*}
\end{defn}
\label{def:MORESPECTRUM}
\begin{rem}
Notice that if $A \in \cB(\cH_n)$ that satisfies the two conditions:
\begin{enumerate}
\item[(i)] There exists $K>0$ such that $\|Av\|\geq K\|v\|$ for $v\in \mathcal{D}(A)$ (bounded from below)
\item[(ii)] the range of $A$ is dense in $\cH_n$,
\end{enumerate}
then $A$ is invertible.
\end{rem}
So in analogy to the classical case for the $S$-spectrum we have:
 \begin{defn}[approximate point and compression $S$-spectrum]
 Let $T$ be a Clifford bounded linear operator.
The approximate point $S$-spectrum of $T$, denoted by $\Pi_{S}(T)$, is defined as
$$
\Pi_{S}(T)=\{s\in \mathbb{R}^{n+1}\ :\  T^2 - 2{\rm Re}(s)T + |s|^2\mathcal{I} \text{ is not bounded from below}\}.
$$
The compression $S$-spectrum of $T$, denoted by $\Gamma_{S}(T)$, is defined as
$$
\Gamma_{S}(T)=\{s\in \mathbb{R}^{n+1}\ :\   \text{ the range of }\ T^2 - 2{\rm Re}T + |s|^2\mathcal{I} \text{ is not dense}\}.
$$
 \end{defn}
The set $\Pi_{S}(T)$ contains the $S$-eigenvalues.

\subsection{Basic facts for normal operators}
\label{sec:SA}

In this section we are defining self-adjoint, anti self-adjoint operators and positive operators on a Clifford module. We are also formulating and proving a number of facts which will be useful when proving the spectral theorem for a bounded self-adjoint operator (see Section \ref{sec:STBSA}) and also the spectral theorem for an unbounded normal operator (see \ref{sec:UBN}).

For the quaternionic setting similar properties hold, see the book
\cite{6CKG} and the references therein.

\begin{lem}
\label{lem:SA}
Let $T \in \cL(\cH_n)$ be given. The following statements are equivalent{\rm :}
\begin{enumerate}
\item[(i)] $T$ is self-adjoint.
\item [(ii)] $\overline{\langle Tx, x \rangle} = \langle Tx, x \rangle$ for all $x \in \cD(T)$.
\end{enumerate}
\end{lem}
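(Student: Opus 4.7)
The plan is as follows. The direction (i)$\Rightarrow$(ii) is immediate: if $T$ is self-adjoint, then $\cD(T)=\cD(T^*)$ and for every $x\in\cD(T)$ the adjoint property gives $\langle Tx,x\rangle = \langle x,T^*x\rangle = \langle x,Tx\rangle = \overline{\langle Tx,x\rangle}$, where the last equality uses the conjugate symmetry \eqref{eq:IP3} of the Clifford inner product.

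For (ii)$\Rightarrow$(i), the core task is to show $T\subseteq T^*$, i.e.\ $\langle Tx,y\rangle=\langle x,Ty\rangle$ for all $x,y\in\cD(T)$; the equality of domains is then forced by the standing hypothesis that $T$ is self-adjoint in the sense of Definition \ref{def:Sept15t1}. My plan is a polarisation-style argument. Fix $x,y\in\cD(T)$ and an arbitrary $a\in\RR_n$; since $\cD(T)$ is a right submodule, $x+ya\in\cD(T)$, so (ii) applies. Expanding, using right linearity of $T$ together with \eqref{eq:IP4} and the derived identity $\langle u,va\rangle = \bar a\,\langle u,v\rangle$, one obtains
\[
\langle T(x+ya),x+ya\rangle \;=\; \langle Tx,x\rangle + \bar a\,\langle Tx,y\rangle + \langle Ty,x\rangle a + \bar a\,\langle Ty,y\rangle a.
\]
By (ii), both $\langle Tx,x\rangle$ and $\langle Ty,y\rangle$ are self-adjoint Clifford numbers; a direct check shows that $\bar a\,\langle Ty,y\rangle a$ is then also self-adjoint. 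Taking conjugates of the whole expression and subtracting, the self-adjoint pieces cancel and, after rearrangement using $\overline{\langle Tx,y\rangle}=\langle y,Tx\rangle$, I get the single identity
\[
\bar a\,c \;=\; \bar c\, a \qquad \text{for every } a\in\RR_n,
\]
where $c:=\langle Tx,y\rangle-\langle x,Ty\rangle$.

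The main obstacle is then a purely Clifford-algebraic claim: if $c\in\RR_n$ satisfies $\bar a c=\bar c a$ for every $a\in\RR_n$, then $c=0$. I would prove this in two stages. First, plug in $a=1$ to conclude $c=\bar c$, i.e.\ $c$ is self-adjoint, and the relation simplifies to $\bar a c = ca$. Next, specialise $a=e_i$ to get $e_ic+ce_i=0$ for every $i=1,\ldots,n$, so $c$ anti-commutes with every generator. Decomposing $c=\sum_\alpha c_\alpha e_\alpha$ and comparing components (using that $e_ie_\alpha+e_\alpha e_i$ vanishes exactly for those $\alpha$ with a prescribed parity depending on whether $i\in\alpha$), one finds that only $\alpha=\emptyset$ or $\alpha=\{1,\ldots,n\}$ could survive, and a further test with $a=e_\beta$ for an appropriately chosen multi-index $\beta$ kills both of these remaining possibilities. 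Once $c=0$ is established, the identity $\langle Tx,y\rangle=\langle x,Ty\rangle$ holds for all $x,y\in\cD(T)$, which by Definition \ref{def:Dec11rro3} means $\cD(T)\subseteq\cD(T^*)$ with $T^*y=Ty$ on $\cD(T)$. Invoking the equality of domains that is part of the self-adjointness definition then yields $T=T^*$ and completes the proof.
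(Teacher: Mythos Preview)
Your proof of (i)$\Rightarrow$(ii) matches the paper's. For (ii)$\Rightarrow$(i), the paper simply invokes the Clifford polarisation formula \eqref{eq:POLAR}: applying it to the sesquilinear form $(u,v)\mapsto\langle Tu,v\rangle$ expresses $\langle Tx,y\rangle$ as a fixed $\RR_n$-linear combination of the diagonal values $\langle T(x\pm ye_\alpha),x\pm ye_\alpha\rangle$, and the same formula applied to $(u,v)\mapsto\langle u,Tv\rangle$ gives the identical combination of $\langle x\pm ye_\alpha,T(x\pm ye_\alpha)\rangle$; by (ii) and \eqref{eq:IP3} these diagonal values agree termwise, so $\langle Tx,y\rangle=\langle x,Ty\rangle$ in one stroke.

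Your route is essentially a hand-rolled polarisation: you test with one $a$ at a time and then must prove the auxiliary Clifford-algebra fact that $\bar a c=\bar c a$ for all $a\in\RR_n$ forces $c=0$. That fact is correct and your outline is sound, but the bookkeeping is slightly off: $\alpha=\emptyset$ already fails the anticommutation test (the scalar $1$ \emph{commutes} with every $e_i$), so after the $e_i$-tests only $\alpha=\{1,\ldots,n\}$ with $n$ even can remain; imposing $c=\bar c$ cuts this down to $n\equiv 0\pmod 4$, and your final test with, say, $a=e_1e_2$ then finishes. This works but is noticeably longer than citing \eqref{eq:POLAR}, which packages exactly the same cancellation.

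One logical wrinkle: your last sentence invokes ``the equality of domains that is part of the self-adjointness definition'' to upgrade $T\subseteq T^*$ to $T=T^*$. That is circular---you are trying to \emph{prove} (i), so $\cD(T)=\cD(T^*)$ is not available as a hypothesis. In fact (ii) only yields symmetry $T\subseteq T^*$ in general; the paper's own proof stops at the same point, so the lemma is tacitly meant in the bounded setting (or with $\cD(T)=\cD(T^*)$ assumed), where symmetry and self-adjointness coincide.
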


\begin{proof}
If $T$ is self-adjoint, then
$$\langle Tx, x \rangle = \langle x, T x \rangle \quad\quad {\rm for} \quad x \in \cD(T)$$
and hence
\begin{equation}
\label{eq:QF}
\overline{\langle x, Tx \rangle } = \langle Tx, x \rangle\quad\quad {\rm for} \quad x \in \cD(T).
\end{equation}
On the other hand, if (ii) is in force, then \eqref{eq:POLAR} can be used to show that $\langle Tx, y \rangle = \langle x, T y \rangle$ for all $x, y \in \cD(T)$. Thus, (i) holds.
\end{proof}

\begin{defn}[positive operator on a Clifford module]
\label{def:POS}
Let $T \in \cL(\cH_n)$ be given. The operator $T$ is called {\it positive} if $\langle Tx, x \rangle \succeq 0$ for all $x \in \cD(T)$.
\end{defn}

The following theorem will be useful when considering the unbounded case of the spectral theorem for normal operators.

\begin{thm}
\label{thm:Ctransform}
Suppose $T \in \cL(\cH_n)$ is a densely defined closed operator. Then the following statement hold{\rm :}
\begin{enumerate}
\item[(i)] $I+ T^*T$ is a bijective mapping on $\cH_n$. If $C_T := (I + T^*T)^{-1} \in \cB(\cH_n)$, then $C_T \in \cB(\cH_n)$, $C_T$ is positive and $I- C_T$ is positive.
\item[(ii)] The operator $T^*T \in \cL(\cH_n)$ is positive and $\cD(T^*T)$ is a core for $T$. In particular, if $T$ is self-adjoint, then $\cD(T^2)$ is a core for $T$.
\end{enumerate}
\end{thm}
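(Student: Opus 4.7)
The plan is to prove both parts by exploiting the orthogonal decomposition of $\cH_n \oplus \cH_n$ coming from the graph characterisation of the adjoint (Lemma \ref{lem:GRAPHLEMMA}), mirroring the classical von Neumann argument. Since $T$ is densely defined and closed, $\cD(T^*)$ is dense by Theorem \ref{thm:Dec14uu1}(i), $T^*$ is itself closed by Theorem \ref{thm:Dec14yy1}(i), and $T = T^{**}$ by Theorem \ref{thm:Dec14uu1}(iii). Applying Lemma \ref{lem:GRAPHLEMMA} to $T^*$ in place of $T$ yields $\cG(T) = \cG(T^{**}) = V(\cG(T^*))^{\perp}$, and since $V$ is unitary and $\cG(T^*)$ is closed, Theorem \ref{thm:ORTHOCOMPS} provides the orthogonal decomposition $\cH_n \oplus \cH_n = \cG(T) \oplus V(\cG(T^*))$.

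For part (i), fix $x \in \cH_n$ and decompose $(x, 0) = (u, Tu) + V(y, T^*y) = (u - T^*y,\, Tu + y)$ with $u \in \cD(T)$ and $y \in \cD(T^*)$. The second coordinate forces $y = -Tu$, hence $Tu \in \cD(T^*)$ (so $u \in \cD(T^*T)$), while the first gives $x = u + T^*Tu = (I + T^*T)u$, proving surjectivity. Injectivity follows since $(I + T^*T)u = 0$ implies $\|u\|^2 + \|Tu\|^2 = \re \langle (I + T^*T) u, u \rangle = 0$. For the norm bound, Lemma \ref{lem:KAPPA}(ii) applied to the identity $\re \langle x, u \rangle = \|u\|^2 + \|Tu\|^2 \geq \|u\|^2$ gives $\|C_T x\| = \|u\| \leq \|x\|$, so $C_T \in \cB(\cH_n)$. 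The positivity statements are direct calculations in $\RR_n$ with $x = (I + T^*T) u$:
\begin{equation*}
\langle C_T x, x \rangle = \langle u, u \rangle + \langle Tu, Tu \rangle, \qquad \langle (I - C_T) x, x \rangle = \langle Tu, Tu \rangle + \langle T^*Tu, T^*Tu \rangle,
\end{equation*}
each of which is a sum of elements of $\gP(\RR_n)$.

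For part (ii), positivity of $T^*T$ is immediate from $\langle T^*T x, x \rangle = \langle Tx, Tx \rangle \succeq 0$. The core property requires more care: I would show that $\cG_0 := \{ (u, Tu) : u \in \cD(T^*T) \} = \{ (C_T x, T C_T x) : x \in \cH_n \}$ is dense in $\cG(T)$ in the $\cH_n \oplus \cH_n$ norm (equivalently the graph norm). Suppose $(v, Tv) \in \cG(T)$ is orthogonal to $\cG_0$, i.e., $\langle v, C_T x \rangle + \langle Tv, T C_T x \rangle = 0$ for every $x \in \cH_n$. Because $(v, Tv) \in \cG(T)$ is automatically orthogonal to $V(\cG(T^*))$, and the identity $(x, 0) - (C_T x, T C_T x) = (T^*T C_T x, -T C_T x) = V(-T C_T x, -T^*T C_T x)$ exhibits the second summand as an element of $V(\cG(T^*))$ (using $C_T x \in \cD(T^*T)$), we also have $\langle v, T^*T C_T x \rangle - \langle Tv, T C_T x \rangle = 0$. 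Adding these two relations yields $\langle v, (I + T^*T) C_T x \rangle = \langle v, x \rangle = 0$ for every $x \in \cH_n$, forcing $v = 0$. The final assertion is immediate, since $T = T^*$ implies $\cD(T^*T) = \cD(T^2)$.

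The main obstacle, relative to the classical complex Hilbert space case, is that positivity and orthogonality are $\RR_n$-valued rather than real-valued, so every estimate must be routed through Lemma \ref{lem:KAPPA} (to pass between $\|\cdot\|$ and $\re \langle \cdot, \cdot \rangle$) and through closure of $\gP(\RR_n)$ under addition, in place of the scalar ordered-field arithmetic that suffices in the complex setting.
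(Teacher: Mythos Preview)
Your proof is correct and follows essentially the same von Neumann graph-decomposition strategy as the paper. The only cosmetic differences are that the paper decomposes $(0,z)$ in $\cG(T^*)\oplus V(\cG(T))$ rather than $(x,0)$ in $\cG(T)\oplus V(\cG(T^*))$, obtains $\|C_T\|\le 1$ from the norm identity \eqref{eq:IMPORT} rather than via Cauchy--Schwarz, and handles the core property more directly by observing that $\langle y,x\rangle_T = \langle y,(I+T^*T)x\rangle$ for $x\in\cD(T^*T)$ and then invoking surjectivity of $I+T^*T$, which is a shorter route than your two-relation argument.
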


\begin{proof}
We will first prove (i). In view of Lemma \ref{lem:GRAPHLEMMA}, we have $\cG(T^*) = V(\cG(T))^{\perp}$, where $V$ denotes the unitary operator in the statement of Lemma \ref{lem:GRAPHLEMMA}. Notice that $\cH_n \oplus \cH_n = \cG(T^*) \oplus V(\cG(T))$. Consequently, corresponding to every $z \in \cH_n$, there exist $x \in \cD(T)$ and $y \in \cD(T^*)$ such that
$$(0, z) = (y, T^*y) = V(x, Tx) = (y-Tx, T^*y + x).$$
Thus, $y = Tx$ and $z = x+T^*y = (I+ T^*T)x$, in which case the operator $I+T^*T\in \cL(\cH_n)$ is surjective. To see that $I+T^*T$ is injective, notice that for any $x,y \in \cD(T^*T)$, we have
\begin{align}
\| (I + T^*T)(x-y) \|^2 =& \; {\rm Re} \langle (x-y)+T^*T(x-y), (x-y) +T^* T(x-y) \rangle \nonumber \\
=& \; {\rm Re}\,  \langle x-y, x-y \rangle + 2 {\rm Re}\, \langle T(x-y), T(x-y) \rangle \\
 +& \;  {\rm Re}\, \langle T^*T(x-y), T^*T(x-y) \rangle  \nonumber \\
=& \; \| x-y \|^2 + 2 \| T(x-y) \|^2 +  \| T^*T(x-y) \|^2. \label{eq:IMPORT}
\end{align}
Thus, as $I+T^*T$ is injective and surjective, we have that $I+T^*T$ is bijective and let $C_T:= (I+T^*T)^{-1}$.

Suppose $z = (I+T^*T)x$ for some $x \in \cD(T^*T)$. Then $C_T z = x$ and one can use \eqref{eq:IMPORT} with $y = 0$ to obtain
$$\| C_T z \| = \| x \| \leq \| (I+T^*T) x \| = \| z \|$$
and hence $C_T \in \cB(\cH_n)$. The fact that $C_T$ is self-adjoint is an immediate consequence of
$$\langle C_T z, z \rangle = \langle x, z \rangle = \langle x, (I+T^*T)x \rangle = \langle (I+T^*T)x, x \rangle.$$
The remaining conclusions in (i) can be easily justified via \eqref{eq:IMPORT}.

We will now prove (ii). Since $C_T \in \cB(\cH_n)$ is self-adjoint, we have that $C_T^{-1}$ is also self-adjoint by Theorem \ref{thm:Dec14uu1}(iv), i.e., $I+T^*T$ is self-adjoint. The fact that $T^*T$ is positive follows from
$$\langle T^*Tx, x \rangle = \langle Tx, Tx \rangle \succeq 0 \quad {\rm for} \quad x \in \cD(T^*T).$$
To check that $\cD(T^*T)$ is a core for $T$, we must check that $\cD(T^*T)$ is dense in the Clifford module $(\cD(T), \| \cdot \|_T)$. If $y \in \cD(T)$ satisfies $\langle y, x \rangle_T = 0$ for all $x \in \cD(T^*T)$, then realise that
$$0 = \langle y, x \rangle + \langle Ty, Tx \rangle = \langle y, (I+T^*T)x \rangle$$
for every $x \in \cD(T^*T)$. Consequently, $y = 0$ since $\Ran(I+T^*T) = \cH_n$, in which case we have that $\cD(T^*T)$ is a core for $T$.

The second assertion is very obvious.
\end{proof}

\begin{lem}
\label{lem:15July1}
Let $T \in \cL(\cH_n)$ be self-adjoint. Then $\sigma_S(T) \subseteq \RR$.
\end{lem}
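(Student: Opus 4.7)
My plan is to prove the contrapositive-like statement: for any paravector $s \in \RR^{n+1}$ whose ``imaginary part'' is nontrivial (i.e.\ $s \notin \RR$), I will show directly that $s \in \rho_S(T)$ by exhibiting that $\cQ_s(T)$ is injective, has dense range, and admits a bounded inverse on its range. Write $s_0 = \re(s)$ and $|\vec{s}|^2 := |s|^2 - s_0^2 = \sum_{i=1}^{n} s_i^2 > 0$. Then the key algebraic identity is
\begin{equation*}
\cQ_s(T) = (T - s_0 I)^2 + |\vec{s}|^2 I \quad \text{on } \cD(T^2).
\end{equation*}
Since $s_0 \in \RR$ and $T$ is self-adjoint, $T - s_0 I$ is self-adjoint.

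The heart of the argument is the coercivity estimate. For $x \in \cD(T^2)$, using the identity above together with the self-adjointness of $T - s_0 I$,
\begin{equation*}
\re \langle \cQ_s(T) x, x \rangle = \| (T - s_0 I) x \|^2 + |\vec{s}|^2 \| x \|^2 \geq |\vec{s}|^2 \| x \|^2.
\end{equation*}
Combining this with the Cauchy--Schwarz inequality from Lemma \ref{lem:KAPPA}(ii) yields $\| \cQ_s(T) x \| \geq |\vec{s}|^2 \| x \|$. This immediately gives $\Ker \cQ_s(T) = \{0\}$ and boundedness of $\cQ_s(T)^{-1}$ on $\Ran \cQ_s(T)$ (with norm at most $|\vec{s}|^{-2}$).

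For density of the range, I will invoke that $\cQ_s(T)^* = \cQ_s(T^*) = \cQ_s(T)$: indeed $\cQ_s$ depends only on $\re(s)$ and $|s|^2$, both real, so taking adjoints commutes with forming $\cQ_s$. Then by Theorem \ref{thm:Dec14yy1}(ii),
\begin{equation*}
(\Ran \cQ_s(T))^{\perp} = \Ker \cQ_s(T)^* = \Ker \cQ_s(T) = \{0\},
\end{equation*}
so $\Ran \cQ_s(T)$ is dense in $\cH_n$. By Definition \ref{def:Dec14j1}, $s \in \rho_S(T)$. Hence $\sigma_S(T) \cap (\RR^{n+1} \setminus \RR) = \emptyset$.

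The main subtlety to keep an eye on is the unbounded case: one must ensure that $\cQ_s(T)$ is indeed densely defined (which follows because $T$ self-adjoint implies $\cD(T^2)$ is a core for $T$, by Theorem \ref{thm:Ctransform}(ii)) and that the adjoint manipulation $\cQ_s(T)^* = \cQ_s(T^*)$ holds with the correct domain (using that $T^2$ is itself self-adjoint when $T$ is). In the bounded case these issues evaporate and the three conditions defining $\rho_S(T)$ are verified transparently by the coercivity estimate and the self-adjointness of $\cQ_s(T)$.
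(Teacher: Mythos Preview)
Your proof is correct and follows essentially the same approach as the paper's: both write $\cQ_s(T) = (T - s_0 I)^2 + |\vec{s}|^2 I$, establish a coercivity bound $\|\cQ_s(T)x\| \geq |\vec{s}|^2 \|x\|$ to get injectivity and bounded inverse, and then use self-adjointness of $\cQ_s(T)$ together with Theorem~\ref{thm:Dec14yy1}(ii) to obtain dense range. The only cosmetic difference is that the paper obtains the coercivity estimate by expanding $\|\cQ_s(T)x\|^2$ directly (yielding $\|\cQ_s(T)x\|^2 \geq |\vec{s}|^4\|x\|^2$), whereas you reach the same inequality via $\re\langle \cQ_s(T)x, x\rangle \geq |\vec{s}|^2\|x\|^2$ and Cauchy--Schwarz; the handling of the unbounded-case subtleties (density of $\cD(T^2)$ via Theorem~\ref{thm:Ctransform}(ii), self-adjointness of $(T-s_0 I)^2$) is likewise the same.
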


\begin{proof}
In order to show that $\sigma_S(T) \subseteq \RR$, it suffices to show that
$\rho_S(T) \subseteq \{ s \in \RR^{n+1}: {\rm Re}(s) = 0 \}$. Let $s = s_0 + s_1 \in \RR^{n+1}$, where $s_0 \in \RR$ and $s_1 \in {\rm Im} (\RR^{n+1} ) \setminus \{ 0 \}$. Notice that
$$\cQ_s(T) := T^2 - 2 {\rm Re}(s) T + |s|^2 I = (T - s_0\,  I)^2 + |s_1|^2 I$$
and hence $\cD(T^2) = \cD( (T-s_0 \, I)^2)$ and as $T$ is self-adjoint, we have that $(T - s_0 \, I)^2 + |s|^2 I$ is self-adjoint.  Moreover, in view of the second assertion in Theorem \ref{thm:Ctransform}(ii), we have that $\cD(T^2)$ is dense in $\cH_n$. We claim that ${\rm Re}(\langle (T - s_0 \, I)^2 \, x, x \rangle) \geq 0$ for all $x \in \cD(T^2)$. Indeed,
\begin{align*}
{\rm Re}( \langle (T- s_0\, I )^2 x, x \rangle =& \; {\rm Re}( \langle (T-s_0\, I)x, (T- s_0\, I)x \rangle ) \\
=& \; \| (T-s_0\, I)x \|^2 \geq 0 \quad\quad {\rm for} \quad x \in \cH_n.
\end{align*}
Notice that
\begin{align}
\| \cQ_s(T) x \|^2 =& \; {\rm Re} \langle \{ (T-s_0 \, I)^2 + |s_1|^2 I \} x, \{ (T-s_0 \, I)^2 + |s_1|^2 \, I \} x \rangle \nonumber \\
=& \; \| (T-s_0\, I)^2 x \|^2 + 2 {\rm Re} \langle (T- s_0\, I)^2 x, |s_1|^2 x \rangle + |s_1|^4 \| x \|^2 \nonumber \\
=& \; \| (T-s_0 \, I)^2 x \|^2 + 2 |s_1|^2 \| (T-s_0 \, I)x \|^2 + |s_1|^4 \| x \|^2 \nonumber \\
\geq& \; |s_1|^4 \| x \|^2 \quad\quad {\rm for} \quad x \in \cD(T^2) \label{eq:D2}
\end{align}
and hence $\cQ_s(T)^{-1}: \Ran \cQ_s(T)\to \cD(T^2)$ is bounded (just take $x = \cQ_s(T)^{-1} y$ for $y \in \Ran \cQ_s(T)$ in \eqref{eq:D2}) and $\Ker \cQ_s(T) = \{ 0 \}.$ As $\cQ_s(T)$ is self-adjoint, we have that
\begin{align*}
\overline{ \Ran \cQ_s(T) } =& \; \Ran ( \cQ_s(T)^{\perp} )^{\perp} \\
=& \; \Ker ( \cQ_s(T)^* )^{\perp} \\
=& \; \Ker \cQ_s(T) )^{\perp} \\
=& \; \{ 0 \}^{\perp} = \cH_n.
\end{align*}
Thus, for all $s = s_0 + s_1$, with $s_1 \in {\rm Im}(\cH_n) \setminus \{ 0 \}$, we have $s \in \rho_S(T)$, i.e., $\rho_S(T) \subseteq \RR^{n+1} \setminus \RR$, i.e., $\sigma_S(T) \subseteq \RR$.

\end{proof}

\begin{lem}
\label{lem:15July1pos}
Let $T \in \cL(\cH_n)$ be a positive operator. Then $\sigma_S(T) \subseteq [0, \infty)$.
\end{lem}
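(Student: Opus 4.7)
The plan is to reduce this to Lemma \ref{lem:15July1} plus a direct check that every negative real number lies in $\rho_S(T)$.

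\textbf{Step 1 (self-adjointness of positive operators).} First I would observe that positivity forces self-adjointness. Indeed, if $a \in \gP(\RR_n)$, then $a = b\bar b$ for some $b \in \RR_n$, and so $\bar a = \overline{b\bar b} = b \bar b = a$; hence $\gP(\RR_n) \subseteq \gS(\RR_n)$. Thus $\overline{\langle Tx, x\rangle} = \langle Tx, x\rangle$ for all $x \in \cD(T)$, and since the Notation specifies $\cD(T) = \cD(T^*)$ for positive operators, Lemma \ref{lem:SA} and the polarisation formula \eqref{eq:POLAR} yield $T = T^*$. In particular, by Lemma \ref{lem:15July1}, $\sigma_S(T) \subseteq \RR$, and so it only remains to prove that every $s_0 < 0$ belongs to $\rho_S(T)$.

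\textbf{Step 2 (non-negativity of the real part).} I next need the scalar fact that ${\rm Re}\, a \geq 0$ for every $a \in \gP(\RR_n)$. Writing $a = b \bar b = \sum_{\alpha,\beta} b_\alpha b_\beta e_\alpha \bar e_\beta$, a short computation shows that $e_\alpha \bar e_\beta$ has vanishing scalar part whenever $\alpha \neq \beta$, while $e_\alpha \bar e_\alpha = 1$. Hence ${\rm Re}(b\bar b) = \sum_\alpha b_\alpha^2 = |b|^2 \geq 0$. Applied to $a = \langle Tx, x\rangle$, this gives ${\rm Re}\,\langle Tx, x\rangle \geq 0$ for all $x \in \cD(T)$.

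\textbf{Step 3 (invertibility of $T + |s_0| I$ for $s_0 < 0$).} For $s_0 < 0$, one has
\[
\cQ_{s_0}(T) = T^2 - 2s_0 T + s_0^2 I = (T + |s_0|I)^2 \quad \text{on } \cD(T^2).
\]
Expanding the norm using the inner product and Lemma \ref{lem:KAPPA}(i), for every $x \in \cD(T)$,
\[
\|(T + |s_0|I)x\|^2 = \|Tx\|^2 + 2|s_0|\,{\rm Re}\,\langle Tx, x\rangle + s_0^2 \|x\|^2 \geq s_0^2 \|x\|^2,
\]
so $T + |s_0|I$ is injective and $(T + |s_0|I)^{-1}$ is bounded by $1/|s_0|$ on its range. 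Since $T$ is self-adjoint it is closed (Remark \ref{rem:SAASAU}), hence so is $T + |s_0|I$; a standard argument (Cauchy in the range $\Rightarrow$ Cauchy preimages $\Rightarrow$ limit lies in the graph) then shows that $\Ran(T + |s_0|I)$ is closed. Density comes from Theorem \ref{thm:Dec14yy1}(ii) applied to the self-adjoint operator $T + |s_0|I$:
\[
\overline{\Ran(T + |s_0|I)} = \bigl(\Ker (T + |s_0|I)^*\bigr)^\perp = \bigl(\Ker(T + |s_0|I)\bigr)^\perp = \{0\}^\perp = \cH_n.
\]
Combining closed plus dense gives $\Ran(T + |s_0|I) = \cH_n$, so $T + |s_0|I$ is a bijection with everywhere defined bounded inverse.

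\textbf{Step 4 (conclusion).} Since $\cQ_{s_0}(T) = (T + |s_0|I)^2$, its inverse is $((T + |s_0|I)^{-1})^2$, which is bounded (of norm at most $1/s_0^2$) and maps $\cH_n$ into $\cD((T+|s_0|I)^2) = \cD(T^2)$. In view of \eqref{eq:SRESOLVENT}, this gives $s_0 \in \rho_S(T)$. Combined with Step 1, $\sigma_S(T) \subseteq [0,\infty)$. The only delicate point is the scalar inequality ${\rm Re}\,a \geq 0$ for $a \in \gP(\RR_n)$; once that is in hand, the remainder is parallel to Lemma \ref{lem:15July1}.
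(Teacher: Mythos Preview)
Your proof is correct and is exactly the detailed version of what the paper does: the paper's own proof simply says that a careful inspection of the proof of Lemma~\ref{lem:15July1}, using the positivity hypothesis, shows $\cQ_s(T)^{-1}\in\cB(\cH_n)$ for all $s<0$, and you have carried out precisely that inspection. Your explicit verification that $\gP(\RR_n)\subseteq\gS(\RR_n)$ and that ${\rm Re}\,a\geq 0$ for $a\in\gP(\RR_n)$ fills in the scalar facts the paper takes for granted, and your Step~3 factorisation $\cQ_{s_0}(T)=(T+|s_0|I)^2$ together with the coercivity bound $\|(T+|s_0|I)x\|\geq |s_0|\,\|x\|$ is exactly the adaptation of \eqref{eq:D2} that the paper has in mind.
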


\begin{proof}
A careful inspection of the proof of Lemma \ref{lem:15July1}, bearing in mind the additional hypothesis that $T$ is a positive operator, will reveal that for all $s < 0$, we have that $\cQ_s(T)^{-1} \in \cB(\cH_n)$ and hence $(- \infty, 0) \subseteq \rho_S(T)$, i.e., $\sigma_S(T) \subseteq [0, \infty)$.
\end{proof}

\begin{lem}
\label{lem:15July1ASA}
Let $T \in \cL(\cH_n)$ be anti self-adjoint. Then
\begin{align}\sigma_S(T) \subseteq {\rm Im}(\RR^{n+1}) := \{ a \in \RR^{n+1}: {\rm Re}(a) = 0 \}.\label{eq:15Julye1ASA}
\end{align}
\end{lem}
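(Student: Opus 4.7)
The plan is to mimic the proof of Lemma \ref{lem:15July1}, showing that every $s = s_0 + s_1 \in \RR^{n+1}$ with $s_0 := {\rm Re}(s) \neq 0$ belongs to $\rho_S(T)$; this will confine $\sigma_S(T)$ to paravectors with vanishing real part. The main ingredient is a direct bounded-below estimate on $\cQ_s(T)$, driven entirely by the identity $T^* = -T$.

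First, since $T$ is anti self-adjoint, Remark \ref{rem:SAASAU} gives that $T$ is closed, while $T^*T = -T^2$ yields $\cD(T^2) = \cD(T^*T)$; Theorem \ref{thm:Ctransform}(ii) then ensures that $\cD(T^2)$ is a core for $T$ and in particular dense in $\cH_n$, so $\cQ_s(T)$ is densely defined. For $x \in \cD(T^2)$ I would expand $\|\cQ_s(T) x\|^2 = {\rm Re}\langle \cQ_s(T) x, \cQ_s(T) x\rangle$. The anti self-adjoint identity forces ${\rm Re}\langle Tu, u\rangle = 0$ for every $u \in \cD(T)$ (from $\langle Tu, u\rangle = -\overline{\langle Tu, u\rangle}$); applying this with $u = x$ and with $u = Tx$ gives ${\rm Re}\langle Tx, x\rangle = {\rm Re}\langle T^2 x, Tx\rangle = 0$, while a direct calculation produces ${\rm Re}\langle T^2 x, x\rangle = -\|Tx\|^2$. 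After these cancellations the expansion reduces to
\begin{align*}
\|\cQ_s(T) x\|^2 = \|T^2 x\|^2 + 2(s_0^2 - |s_1|^2)\|Tx\|^2 + |s|^4 \|x\|^2,
\end{align*}
and completing the square in $T^2 x$ (using the identity for ${\rm Re}\langle T^2 x, x\rangle$ once more) rewrites this in the manifestly bounded-below form
\begin{align*}
\|\cQ_s(T) x\|^2 = \|(T^2 + |s_1|^2 I) x\|^2 + 2 s_0^2 \|Tx\|^2 + s_0^2(s_0^2 + 2|s_1|^2)\|x\|^2 \geq s_0^4 \|x\|^2.
\end{align*}
When $s_0 \neq 0$ this shows that $\cQ_s(T)$ is injective with closed range and that $\cQ_s(T)^{-1} : \Ran \cQ_s(T) \to \cD(T^2)$ is bounded by $|s_0|^{-2}$.

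It then remains to establish that $\Ran \cQ_s(T)$ is dense in $\cH_n$. The relation $\langle \cQ_s(T) x, y\rangle = \langle x, \cQ_{-s_0 + s_1}(T) y\rangle$ for $x, y \in \cD(T^2)$ (a direct consequence of $T^* = -T$) shows $\cQ_{-s_0 + s_1}(T) \subseteq \cQ_s(T)^*$, and repeating the estimate above with $s_0$ replaced by $-s_0$ yields the analogous bound $\|\cQ_{-s_0 + s_1}(T) y\|^2 \geq s_0^4 \|y\|^2$. Via Theorem \ref{thm:Dec14yy1}(ii), density of $\Ran \cQ_s(T)$ then reduces to showing $\Ker \cQ_s(T)^* = \{0\}$, after which $s \in \rho_S(T)$ by the reformulation in Remark \ref{rem:ALTNERATIVE} and \eqref{eq:15Julye1ASA} follows.

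The main obstacle is precisely this last step: a priori $\cD(\cQ_s(T)^*)$ may be strictly larger than $\cD(T^2)$, so an element $y \in \Ker \cQ_s(T)^*$ need not immediately satisfy $\cQ_{-s_0+s_1}(T) y = 0$. To resolve this I would exploit that $T$ being anti self-adjoint makes $T$ normal and forces $T^2 = -T^*T$ to be self-adjoint (via Theorem \ref{thm:Ctransform}(ii)); Theorem \ref{thm:Dec14uu1} can then be used to promote the inclusion $\cQ_{-s_0+s_1}(T) \subseteq \cQ_s(T)^*$ into an equality of operators with common domain $\cD(T^2)$, after which the bounded-below estimate on $\cQ_{-s_0+s_1}(T)$ makes $\Ker \cQ_s(T)^* = \{0\}$ immediate.
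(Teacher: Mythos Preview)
Your argument is correct and follows the same overall strategy as the paper --- establish a bounded-below estimate for $\cQ_s(T)$ when $s_0 \neq 0$ and then deduce $s \in \rho_S(T)$ --- but your execution differs in two places worth noting.

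First, your square-completion
\[
\|\cQ_s(T)x\|^2 = \|(T^2 + |s_1|^2 I)x\|^2 + 2s_0^2\|Tx\|^2 + s_0^2(s_0^2 + 2|s_1|^2)\|x\|^2
\]
is cleaner than the paper's route: the paper obtains the raw expansion $\|T^2x\|^2 + 2(s_0^2 - |s_1|^2)\|Tx\|^2 + |s|^4\|x\|^2$ and then splits into the cases $|s_0| \geq |s_1|$ and $|s_0| < |s_1|$, invoking the inequality $\|Tx\|^2 \leq \|T^2x\|\,\|x\|$ in the second case. Your rewriting makes the bound $\|\cQ_s(T)x\|^2 \geq s_0^4\|x\|^2$ immediate and uniform.

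Second, for the density of $\Ran \cQ_s(T)$ the paper simply writes ``proceed as in Lemma~\ref{lem:15July1}'', but that lemma used the self-adjointness of $\cQ_s(T)$, which fails here. You correctly flag this and resolve it by identifying $\cQ_s(T)^*$ with $\cQ_{-s_0+s_1}(T)$. Your appeal to Theorem~\ref{thm:Dec14uu1} for this step is a bit compressed; the clean way to finish is to note that your own identity, applied to differences $x_n - x_m$, controls $\|(T^2 + |s_1|^2 I)(x_n - x_m)\|$ and $\|T(x_n - x_m)\|$ separately, so the closedness of $T^2$ (self-adjoint, via Theorem~\ref{thm:Ctransform}(ii)) and of $T$ force $\cQ_s(T)$ to be closed. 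Then $\cQ_s(T) \subseteq \cQ_{-s_0+s_1}(T)^*$ together with Theorem~\ref{thm:Dec14yy1}(iii) gives $\cQ_s(T)^* \subseteq \cQ_{-s_0+s_1}(T)^{**} = \cQ_{-s_0+s_1}(T)$, hence equality, and the bounded-below estimate finishes.
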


\begin{proof}
Since $T = -T^*$, we have
\begin{align}
\| \cQ_s(T) x \|^2 =& \; {\rm Re} \langle T^2 x, T^2  x \rangle + 2 (s_0^2 - |s_1|^2) {\rm Re}\langle Tx, Tx \rangle
+ (s_0^2 + |s_1|^2) {\rm Re}\langle x, x \rangle \nonumber \\
=& \; \| T^2 x \| + 2(s_0^2 - |s_1|^2) \| Tx \|^2 + (s_0^2 + |s_1|^2) \| x\|^2 \quad {\rm for} \; x \in \cD(T^2). \label{eq:DD2}
\end{align}
Therefore, if $|s_0| \geq |s_1|$, then we have
\begin{equation}
\label{eq:DD3}
\| \cQ_s(T) x \|^2 \geq s_0^4 \| x \|^2 \quad \quad {\rm for} \quad x \in \cD(T^2)
\end{equation}
and we may proceed as in Lemma \ref{lem:15July1} to show that $s = s_0 + s_1 \in \rho_S(T)$, i.e., \eqref{eq:15Julye1ASA} holds.

If $|s_0| < |s_1|$, then
$(s_0^2 - |s_1|^2) \| T^2 x \|  \|x \| \leq \| (s_0^2-|s_1|^2) \| T x \|^2$ for $x \in \cD(T)$ together with \eqref{eq:DD2} can be used to show that \eqref{eq:DD3} holds. But then we may proceed as above to obtain \eqref{eq:15Julye1ASA}.
\end{proof}

\begin{thm}
\label{thm:SPLITTING}
Let $T\in \mathcal{B}(\mathcal{H}_n)$ be a normal operator. Then we have
$$
\sigma_{PS}(T)=\sigma_{PS}(T^*),
\ \ \ \ \ \ \sigma_{RS}(T)=\sigma_{RS}(T^*)=0,
\ \ \ \ \ \sigma_{CS}(T)=\sigma_{CS}(T^*).
$$
\end{thm}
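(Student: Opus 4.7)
The plan is to exploit the fact that $\cQ_s(T)$ is itself a normal operator in $\cB(\cH_n)$ whenever $T$ is, and then adapt the classical Hilbert-space argument for splitting the spectrum of a normal operator. The essential algebraic observation is that since $\mathrm{Re}(s)$ and $|s|^2$ are real scalars,
\begin{equation*}
\cQ_s(T)^{*} \;=\; (T^{*})^{2} - 2\,\mathrm{Re}(s)\,T^{*} + |s|^{2} I \;=\; \cQ_s(T^{*}),
\end{equation*}
and because $T$ and $T^{*}$ commute, $\cQ_s(T)$ and $\cQ_s(T^{*})$ commute, so $\cQ_s(T)$ is normal.

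First I would prove the Clifford module analogue of the standard identity $\|Nx\| = \|N^{*}x\|$ for a normal $N \in \cB(\cH_n)$. Using Lemma \ref{lem:KAPPA}(i) together with \eqref{eq:IP3} and \eqref{eq:IP4},
\begin{equation*}
\|Nx\|^{2} \;=\; \mathrm{Re}\langle N^{*}Nx,x\rangle \;=\; \mathrm{Re}\langle NN^{*}x,x\rangle \;=\; \|N^{*}x\|^{2},
\end{equation*}
hence $\Ker N = \Ker N^{*}$. Applying this to $N = \cQ_s(T)$ yields $\Ker\,\cQ_s(T) = \Ker\,\cQ_s(T^{*})$ for every $s\in\RR^{n+1}$, which is exactly the identity $\sigma_{PS}(T) = \sigma_{PS}(T^{*})$.

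Next I would handle the residual spectra. Suppose for contradiction that $s\in\sigma_{RS}(T)$. Then $\Ker\,\cQ_s(T) = \{0\}$ while $\overline{\Ran\,\cQ_s(T)} \neq \cH_n$. Using Theorem \ref{thm:Dec14yy1}(ii) and the orthogonal decomposition from Theorem \ref{thm:ORTHOCOMPS}, we have $\overline{\Ran\,\cQ_s(T)}^{\perp} = \Ker\,\cQ_s(T)^{*} = \Ker\,\cQ_s(T^{*})$, so $\Ker\,\cQ_s(T^{*}) \neq \{0\}$; but this contradicts $\Ker\,\cQ_s(T) = \Ker\,\cQ_s(T^{*})$ established above. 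Therefore $\sigma_{RS}(T) = \emptyset$, and since $T^{*}$ is also normal, symmetrically $\sigma_{RS}(T^{*}) = \emptyset$.

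Finally, for the continuous parts, observe that the three sets $\sigma_{PS},\sigma_{RS},\sigma_{CS}$ partition $\sigma_S$. I would show $\sigma_S(T) = \sigma_S(T^{*})$ by noting that for a bounded operator $A$, $A^{-1}\in\cB(\cH_n)$ if and only if $(A^{*})^{-1}\in\cB(\cH_n)$ (with $(A^{*})^{-1} = (A^{-1})^{*}$), applied to $A = \cQ_s(T)$; hence $\rho_S(T) = \rho_S(T^{*})$. Combining $\sigma_S(T)=\sigma_S(T^{*})$, $\sigma_{PS}(T)=\sigma_{PS}(T^{*})$ and $\sigma_{RS}(T)=\sigma_{RS}(T^{*})=\emptyset$ forces $\sigma_{CS}(T)=\sigma_{CS}(T^{*})$. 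The only subtle point, and the one I would be most careful with, is verifying the Clifford-module version of $\|Nx\|=\|N^{*}x\|$ for normal $N$, since the two-sided multiplication by Clifford scalars means one must work with $\mathrm{Re}\langle\cdot,\cdot\rangle$ rather than $\langle\cdot,\cdot\rangle$ directly; once this is in place, every other step is a routine transcription of the classical argument.
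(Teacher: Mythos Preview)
Your proposal is correct and follows essentially the same route as the paper: both use $\cQ_s(T)^{*}=\cQ_s(T^{*})$ together with $\Ker N=\Ker N^{*}$ for normal $N$ to get the point-spectrum equality, derive $\sigma_{RS}=\emptyset$ by contradiction via $(\Ran\,\cQ_s(T))^{\perp}=\Ker\,\cQ_s(T^{*})$, and conclude $\sigma_{CS}(T)=\sigma_{CS}(T^{*})$ from $\sigma_S(T)=\sigma_S(T^{*})$ and the disjointness of the three components. Your write-up is in fact more explicit than the paper's about the Clifford-module verification of $\|Nx\|=\|N^{*}x\|$ and about why $\sigma_S(T)=\sigma_S(T^{*})$, which is all to the good.
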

\begin{proof}
Since $T$ is normal and $\mathcal{Q}_{s}(T)^*=\mathcal{Q}_{s}(T^*)$ it is clear that $\mathcal{Q}_{s}(T)^*$ is normal.
For bounded linear operators the kernel $T$ and the kernel of its adjoint are equal so
$$
\Ker (\mathcal{Q}_{s}(T))=\Ker (\mathcal{Q}_{s}(T^*))
$$
so by the definition of point $S$-spectrum se have
$$
\sigma_{PS}(T)=\sigma_{PS}(T^*).
$$
The fact that $\sigma_{RS}(T)=\sigma_{RS}(T^*)=0$ follows by contradiction, in fact if $0\not=s\in \sigma_{RS}(T)$ we get
$$
\{0\}=\Ker (\mathcal{Q}_{s}(T))=\Ker (\mathcal{Q}_{s}(T^*))=(\Ran(\mathcal{Q}_{s}(T))^\perp\not=\{0\}.
$$
In the same way we can prove that $\sigma_{RS}(T^*)=0$.
Since $T$ and $T^*$ have the same $S$-spectrum and the three components of the $S$-spectrum, by definition,
 are pairwise disjoint it follows that $\sigma_{CS}(T)=\sigma_{CS}(T^*)$.

\end{proof}

\section{Measure theory and integration theory for $\RR_n$-valued measures}
\label{sec:MT}

\begin{defn}[positive $\RR_n$-valued measure]
Let $\Omega$ be a non-empty set and $\sA$ denote a $\sigma$-algebra on $\Omega$. We will call $\mu: \sA \to \RR_n \cup \{ \infty\}$ {\it positive} if $\mu(M) \succeq 0$ (see Definition \ref{def:CLIFFORD}) for every $M \in \sA$ such that $\mu(M) \neq \infty$ and $\mu$ is $\sigma$-additive, i.e.,
\begin{equation}
\label{eq:SIGMAADDITIVITY}
\mu\left( \bigcup_{i=1}^{\infty} M_n \right) = \sum_{i=1}^{\infty} \mu(M_n)
\end{equation}
for every sequence $M := (M_i)_{i=1}^{\infty}$, where $M_i \cap M_j = \emptyset$ for $i \neq j$ and $M_i \in \sA$ for $i=1,2,\ldots$.  In this case, we shall write $\mu$ is $\gP(\RR_n)$-valued.
\end{defn}

\begin{defn}[finite, semi-finite and $\sigma$-finite $\gP(\RR_n)$-valued measure]
\label{def:FINITEMEASURE}
Let $\Omega$ be a non-empty set and $\sA$ denote a $\sigma$-algebra on $\Omega$. We will call a $\gP(\RR_n)$-valued measure $\mu$ {\it finite} if $\mu(\Omega) < \infty$, {\it semi-finite} if for every $M \in \sA$ such that
$$\text{$\mu(M) = \infty$, there exists $N \in \sA$ such that $\mu(N) < \infty$ and $N \subseteq M$}$$
and {\it $\sigma$-finite} if
$$\Omega = \bigcup_{i=1}^{\infty} M_i,$$
where $\mu(M_i) < \infty$ for $i=1,2,\ldots$. Note that if $\mu$ is finite, then the finite additivity of $\mu$ together with $\mu(\Omega) < \infty$ implies that $\mu(M) < \infty$ for every $M \in \sA$.
\end{defn}

\begin{defn}[Borel measure]
Let $X$ be a Hausdorff space. We will call a $\gP(\RR_n)$-valued measure $\mu$ on the Borel $\sigma$-algebra generated by $X$ a {\it positive $\RR_n$-valued Borel measure}.
\end{defn}

\begin{defn}[$\RR_n$-valued measure]
\label{def:JORDAN}
Let $\Omega$ be a non-empty set and $\sA$ denote a $\sigma$-algebra on $\Omega$.  We will call $\mu: \sA \to \RR_n$ a $\RR_n$-valued measure if $\mu$ is $\sigma$-additive. In this case, $\mu$ has the {\it Jordan decomposition}
\begin{equation}
\label{eq:JORDAN}
\mu = \sum_{\alpha} \{ \mu^{(\alpha)}_+ - \mu^{(\alpha)}_- \} e_{\alpha},
\end{equation}
where $\mu_{\pm}^{(\alpha)}$ are positive measures (in the usual sense) for every \\$\alpha \in \powerset( \{1 ,\ldots, n \})$.
The {\it support of a $\RR_n$-valued measure $\mu$} shall be denoted by $\supp \mu$ and is defined as the set $N$ which satisfies
$$\mu(M) = \mu(M \cap N)  \quad\quad {\rm for} \quad M \in \sA.$$
We will call $\mu$ {\it finite} if $\mu(\Omega) < \infty$.
\end{defn}

\begin{defn}[Integral with respect to a $\RR_n$-valued measure]
Let $\mu$ be a finite $\RR_n$-valued measure on a non-empty set $\Omega$, $\sA$ be a $\sigma$-algebra generated by $\Omega$, $\gI \in \mathbb{S}$ and $\mu = \sum_{\alpha} \{ \mu_+^{(\alpha)} - \mu_-^{(\alpha)} \}e_{\alpha}$ be the Jordan decomposition for $
\mu$. Then for any $\sA$-measurable function $f: \Omega \to \RR$, we shall define
\begin{equation}
\label{eq:realLEFTINTEGRAL}
\int_{\Omega} f(\lambda) \, d\mu(\lambda) := \sum_{\alpha} \left( \int_{\Omega} f(\lambda) \, d\mu_+^{(\alpha)} - \int_{\Omega} f(\lambda)\, d\mu_-^{(\alpha)} \right) e_{\alpha}
\end{equation}
and for any $\sA$-measurable function $f: \Omega \to \CC_{\gI}$, we shall define
\begin{align}
\int_{\Omega} f(\lambda) \, d\mu(\lambda) := & \; \sum_{\alpha} \left( \int_{\Omega} {\rm Re}(f(\lambda)) \, d\mu_+^{(\alpha)} - \int_{\Omega} {\rm Re}(f(\lambda)) \, d\mu_-^{(\alpha)} \right)e_{\alpha} \nonumber \\
+& \;  \sum_{\alpha}  \left( \int_{\Omega} {\rm Im}(f(\lambda)) \, d\mu_+^{(\alpha)} - \int_{\Omega} {\rm Im}(f(\lambda))  d\mu_-^{(\alpha)} \right)\, \gI \, e_{\alpha}, \label{eq:complexLEFTINTEGRAL}
\end{align}
provided that all of the four integrals on the right-hand side exist and we do not end up with the indeterminate expression $\infty - \infty$. Similarly, for any $\sA$-measurable function $f: \Omega \to \CC_{\gI}$, we can define an integral with $f(\lambda)$ on the right via
\begin{align}
\int_{\Omega}  d\mu(\lambda) \, f(\lambda) := & \; \sum_{\alpha} \left( \int_{\Omega} {\rm Re}(f(\lambda)) \, d\mu_+^{(\alpha)} - \int_{\Omega} {\rm Re}(f(\lambda)) \, d\mu_-^{(\alpha)} \right) e_{\alpha} \nonumber \\
+& \; \sum_{\alpha}  \left( \int_{\Omega} {\rm Im}(f(\lambda)) \, d\mu_+^{(\alpha)} - \int_{\Omega} {\rm Im}(f(\lambda)) \, d\mu_-^{(\alpha)} \right) \, \gI \, e_{\alpha}. \label{eq:complexRIGHTINTEGRAL}
\end{align}
\end{defn}

\begin{defn}[measure space and $\mu$-integrability]
Let $\Omega$ be a non-empty set and $\sA$ denote a $\sigma$-algebra on $\Omega$. Suppose $\mu = \sum_{\alpha} \mu^{(\alpha)} e_{\alpha}$ is a $\gP(\RR_n)$-valued measure on $\sA$. We shall call the triple $(\Omega, \sA, \mu)$ a {\it measure space}. We will write that a property holds {\it $\mu$-a.e. on $\Omega$} whenever the desired property holds except on a set $M \in \sA$, where $\mu(M) = 0$. A function $f: \Omega \to \RR \cup \{ \infty \}$ is called {\it measurable} if $\{ \lambda \in \Omega: f(\lambda) \leq t \} \in \sA$ for every $t \in \RR$. We will call a function $f: \Omega \to \CC_{\gI} \cup \{ \infty\}$, where $\gI \in \mathbb{S}$, $\mu$-integrable if $\int_{\Omega} f(\lambda) d\mu^{(\alpha)}(\lambda)$ converge for all $\alpha \in \powerset(\{ 1, \ldots, n \}).$
\end{defn}

\begin{thm}
\label{thm:RR}
Let $X$ be a compact Hausdorff space and $\mathscr{C}(X, \RR)$ denote the normed space of real-valued continuous functions on $X$ together with the supremum norm $\| \cdot \|_{\infty}$. Corresponding to any bounded positive linear functional $\mathscr{L}: \mathscr{C}(X, \RR) \to \RR$, there exists a unique positive Borel measure $\mu$ on $X$ such that
$$\mathscr{L}(p) = \int_X p(t) \, d\mu(t) \quad \quad {\rm for} \quad p \in \mathscr{C}(X, \RR).$$
In this case, $\mu(M)  \leq \| \mathscr{L} \|$ for every set $M$ that belongs to the Borel $\sigma$-algebra generated by $X$, i.e., $\sB(X)$.
\end{thm}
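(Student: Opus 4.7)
The statement is exactly the classical real-valued Riesz representation theorem for positive linear functionals on $\mathscr{C}(X,\RR)$ over a compact Hausdorff space $X$, with no Clifford-module content whatsoever. My plan is therefore to invoke the standard proof (see, e.g., Rudin's \emph{Real and Complex Analysis}, Theorem 2.14) and only verify the last auxiliary bound $\mu(M) \leq \|\mathscr{L}\|$. The reason for treating it as a stand-alone statement in this section, I expect, is to serve as a building block for the subsequent $\RR_n$-valued Riesz-type representation results, where the positive scalar functionals obtained by composing $\mathscr{L}$ with the coefficient extractors $a \mapsto a_\alpha$ will be handled by this theorem.

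For the existence part I would follow the standard outer-measure construction: define, for each open $U \subseteq X$,
\begin{equation*}
\mu^*(U) := \sup\{ \mathscr{L}(f) : f \in \mathscr{C}(X,\RR),\ 0 \leq f \leq 1,\ \supp f \subseteq U \},
\end{equation*}
extend to arbitrary subsets via $\mu^*(E) := \inf\{ \mu^*(U) : E \subseteq U, U \text{ open}\}$, verify that $\mu^*$ is a Borel outer measure, and show via Urysohn's lemma (available because $X$ is compact Hausdorff, hence normal) that every Borel set is $\mu^*$-measurable. Restricting $\mu^*$ to $\sB(X)$ yields the sought positive Borel measure $\mu$. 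The integral representation $\mathscr{L}(p) = \int_X p\,d\mu$ is then obtained by approximating $p \in \mathscr{C}(X,\RR)$ from below by nonnegative simple functions built from the characteristic functions of the open sets featuring in the definition of $\mu^*$, applying positivity and linearity of $\mathscr{L}$, and using monotone convergence on the integral side.

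For uniqueness I would use outer regularity: if $\mu_1$ and $\mu_2$ both represent $\mathscr{L}$, then for any compact $K$ and any open $U \supseteq K$ one can use Urysohn to sandwich $1_K \leq f \leq 1_U$ with $f \in \mathscr{C}(X,\RR)$ and conclude $\mu_1(K) = \mu_2(K)$, and hence $\mu_1 = \mu_2$ on all Borel sets. Finally, the norm bound is immediate: since $X$ is compact, the constant function $1$ lies in $\mathscr{C}(X,\RR)$ with $\|1\|_\infty = 1$, so
\begin{equation*}
\mu(M) \leq \mu(X) = \int_X 1\, d\mu = \mathscr{L}(1) \leq \|\mathscr{L}\|\cdot \|1\|_\infty = \|\mathscr{L}\|
\end{equation*}
for every $M \in \sB(X)$.

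The only nontrivial step is the outer-measure construction and verifying that Borel sets are $\mu^*$-measurable; but since this is entirely classical (no Clifford structure intervenes, as $\mathscr{L}$ takes values in $\RR$), there is no genuine obstacle, and in practice the proof would be compressed to a citation.
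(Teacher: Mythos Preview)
Your proposal is correct and matches the paper's approach exactly: the paper cites a standard reference (Theorem D in Section 56 of Halmos) for existence and uniqueness, and proves the bound $\mu(M) \leq \|\mathscr{L}\|$ via the same chain $\mu(M) \leq \mu(X) = \mathscr{L}(1) \leq \|\mathscr{L}\|$ that you wrote down. Your anticipation that the proof would be compressed to a citation is precisely what happens.
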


\begin{proof}
The existence and uniqueness of $\mu$ is a special case of Theorem D in Section 56 of \cite{Halmos}. The last assertion follows immediately from the fact that
$$\| \mathscr{L}\| \geq \mathscr{L}(1) = \int_X d\mu(t) = \mu(X) \geq \mu(M) \quad \quad {\rm for} \quad M \in \sB(X).$$
\end{proof}

\begin{cor}
\label{cor:RR}
Let $X$ be a compact Hausdorff space. Corresponding to any bounded linear functional $\mathscr{L}: \mathscr{C}(X, \RR) \to \RR_n$, there exists a unique $\RR_n$-valued Borel measure $\mu$ on $X$ such that
\begin{equation}
\label{eq:INT}
\mathscr{L}(p) = \int_X p(t) \, d\mu(t) \quad \quad {\rm for} \quad p \in \mathscr{C}(X, \RR).
\end{equation}
In this case, $|\mu(M)|  \leq \| \mathscr{L} \|$ for every set $M \in \sB(X)$.
\end{cor}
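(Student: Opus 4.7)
The plan is to reduce to Theorem \ref{thm:RR} componentwise relative to the basis $\{ e_{\alpha} \}$ of $\RR_n$. First I would write $\mathscr{L}(p) = \sum_{\alpha} \mathscr{L}_{\alpha}(p) e_{\alpha}$, where $\mathscr{L}_{\alpha} : \mathscr{C}(X, \RR) \to \RR$ is obtained by extracting the $\alpha$-component. Since $|a_{\alpha}| \leq |a|$ for every $a \in \RR_n$, each $\mathscr{L}_{\alpha}$ is a bounded real-linear functional with $\| \mathscr{L}_{\alpha} \| \leq \| \mathscr{L} \|$. Invoking the classical Hahn--Jordan decomposition for bounded real-linear functionals on $\mathscr{C}(X, \RR)$, I would split each $\mathscr{L}_{\alpha} = \mathscr{L}_{\alpha,+} - \mathscr{L}_{\alpha,-}$ into a difference of bounded positive linear functionals, then apply Theorem \ref{thm:RR} to obtain unique positive Borel measures $\mu_{\alpha,\pm}$ representing $\mathscr{L}_{\alpha,\pm}$. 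Setting
\[
\mu := \sum_{\alpha} ( \mu_{\alpha,+} - \mu_{\alpha,-} ) e_{\alpha}
\]
yields an $\RR_n$-valued Borel measure in the sense of Definition \ref{def:JORDAN}, and by the definition \eqref{eq:realLEFTINTEGRAL} combined with the linearity of $\mathscr{L}$ the representation \eqref{eq:INT} is immediate for every $p \in \mathscr{C}(X, \RR)$.

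For uniqueness, suppose $\tilde{\mu}$ is a second $\RR_n$-valued Borel measure with the same integral representation. Then $\nu := \mu - \tilde{\mu}$ has vanishing integral against every $p \in \mathscr{C}(X, \RR)$, and projecting onto each $e_{\alpha}$-component reduces the problem to uniqueness in the classical real Riesz representation theorem applied to the signed component $\nu_{\alpha}$. This forces $\nu_{\alpha} = 0$ for every $\alpha$ and hence $\mu = \tilde{\mu}$.

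The main delicate point is the norm bound $|\mu(M)| \leq \| \mathscr{L} \|$, since a naive componentwise estimate only yields the much weaker bound $|\mu(M)| \leq 2^{n/2} \| \mathscr{L} \|$. I would instead argue directly by approximation. Let $\lambda := \sum_{\alpha} ( \mu_{\alpha,+} + \mu_{\alpha,-} )$, which is a single finite positive Borel measure on $X$ dominating the total variation of every $\mu_{\alpha,\pm}$. Since $X$ is compact Hausdorff and $\lambda$ is automatically regular, $\chi_M \in L^1(\lambda)$ can be approximated by a sequence $(p_i) \subseteq \mathscr{C}(X, \RR)$ with $\| p_i \|_{\infty} \leq 1$, for instance via Lusin's theorem followed by a truncation step. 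Dominated convergence then gives
\[
\mathscr{L}(p_i) = \sum_{\alpha} \left( \int_X p_i \, d\mu_{\alpha,+} - \int_X p_i \, d\mu_{\alpha,-} \right) e_{\alpha} \longrightarrow \sum_{\alpha} ( \mu_{\alpha,+}(M) - \mu_{\alpha,-}(M) ) e_{\alpha} = \mu(M),
\]
while $| \mathscr{L}(p_i) | \leq \| \mathscr{L} \| \, \| p_i \|_{\infty} \leq \| \mathscr{L} \|$ for every $i$. Passing to the limit delivers $|\mu(M)| \leq \| \mathscr{L} \|$. The hardest step to formalise is this approximation, because the chosen sequence $(p_i)$ must simultaneously approximate $\chi_M$ with respect to each of the $2 \cdot 2^n$ positive Jordan components, and the vector-valued limit must be identified with $\mu(M)$ in the $\RR_n$-sense rather than componentwise with lossy constants.
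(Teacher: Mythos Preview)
Your existence and uniqueness arguments are essentially identical to the paper's: decompose $\mathscr{L}$ along the basis $\{e_\alpha\}$, split each real component into positive and negative parts, apply Theorem~\ref{thm:RR} to each, and reassemble. Your uniqueness argument (subtract and reduce to the signed real case) is in fact slightly cleaner than the paper's, which asserts $\mu_{\pm}^{(\alpha)} = \nu_{\pm}^{(\alpha)}$ without commenting on uniqueness of the Jordan decomposition.

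Where your proposal genuinely diverges is the norm bound $|\mu(M)| \leq \|\mathscr{L}\|$. The paper simply says this ``can be shown in much the same way as the proof of the final assertion in Theorem~\ref{thm:RR}'', but that earlier proof uses $\mu(M) \leq \mu(X) = \mathscr{L}(1) \leq \|\mathscr{L}\|$, which hinges on positivity of $\mu$ and does not transfer to an $\RR_n$-valued measure. Your Lusin-type approximation argument---approximate $\chi_M$ in $L^1(\lambda)$ by continuous $p_i$ with $\|p_i\|_\infty \leq 1$, then pass to the limit in $|\mathscr{L}(p_i)| \leq \|\mathscr{L}\|$---is the correct way to obtain the sharp bound and actually fills a gap the paper leaves open. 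One small caveat: your claim that ``$\lambda$ is automatically regular'' is not true for arbitrary finite Borel measures on a compact Hausdorff space; rather, regularity of $\lambda$ follows because each $\mu_{\alpha,\pm}$ produced by the Riesz representation (Theorem~\ref{thm:RR}, via Halmos) is regular, and a finite sum of regular measures is regular. With that adjustment your argument is complete.
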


\begin{proof}
Write $\sL(f) = \sum_{\alpha} \{ \sL_+^{(\alpha)}(f) - \sL_-^{(\alpha)}(f) \} e_{\alpha}$, where $\sL_{\pm}^{(\alpha)}: \sC(X, \RR) \to \RR$ are positive linear functionals for $\alpha \in \powerset( \{1, \ldots, n \})$. It is easy to see that $\sL_{\pm}^{(\alpha)}$ are bounded linear functionals and we may apply Theorem \ref{thm:RR} to obtain the unique positive Borel measures $\mu^{(\alpha)}_{\pm}$ so that
$$\sL_{\pm}^{(\alpha)}(f) = \int_X f(t) d\mu_{\pm}^{(\alpha)}(t) \quad \quad {\rm for} \quad f \in \sC(X, \RR).$$
Thus, if we let $\mu = \sum_{\alpha} \{ \mu_+^{(\alpha)} - \mu_-^{(\alpha)} \}e_{\alpha}$, then we obtain \eqref{eq:INT}.

To see that $\mu$ is unique, suppose there is another $\RR_n$-valued measure $\nu$ on $X$ so that \eqref{eq:INT} holds. Write the Jordan decomposition $\nu = \sum_{\alpha} \{ \nu^{(\alpha)}_+ - \nu^{(\alpha)}_- \}e_{\alpha}$, where $\nu^{(\alpha)}_{\pm}$ are positive Borel measures on $X$.  Then, for any $\alpha \in \powerset(\{ 1, \ldots, n \})$, we must have $\mu_{\pm}^{(\alpha)} = \nu_{\pm}^{(\alpha)}$. Thus, $\mu = \nu$.

The final assertion can be shown in much the same way as the final assertion of Theorem \ref{thm:RR}.
\end{proof}

\begin{defn}[transformation of a $\RR_n$-valued measure]
\label{defn:TRANSR_nMEASURE}
Suppose $\mu$ is a positive $\RR_n$-valued measure on a $\sigma$-algebra of sets $\sA$ generated by a set $\Omega$ and $\psi: \Omega \to \Omega'$. Let $\sA'$ be the family of sets given by $M' \in \sA'$ if $\psi^{-1}(M') \in \sA$. Notice that $\sA'$ is a $\sigma$-algebra on $\Omega'$ and $\mu'(M') := \mu(\psi^{-1}(M'))$ is a positive $\RR_n$-valued measure on $\sA'$.
\end{defn}

\begin{thm}
\label{thm:TRANSSCALARMEAS}
Let $\Omega, \Omega', \sA, \sA', \mu$ and $\mu'$ be as in Definition \ref{defn:TRANSR_nMEASURE}. Suppose $f: \Omega' \to \CC_{\gI}$, where $\gI \in \mathbb{S}$, is a $\mu'$-a.e. finite $\Omega'$-measurable function.  Then $f \circ \psi$ is a $\mu$-a.e. finite $\sA$-measurable function on $\Omega$ and
\begin{equation}
\label{eq:TRANSSCALAR}
\int_{\Omega'} f(\lambda') d\mu'(\lambda') = \int_{\Omega} f(\psi(\lambda)) d\mu(\lambda).
\end{equation}
\end{thm}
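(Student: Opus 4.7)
The plan is to reduce the statement to the classical change-of-variables theorem for positive scalar measures by exploiting the Jordan decomposition $\mu = \sum_{\alpha}\{\mu_+^{(\alpha)} - \mu_-^{(\alpha)}\}e_{\alpha}$ component by component. First I would handle the two preliminary assertions. The measurability of $f \circ \psi$ follows directly from the definition of $\sA'$: for any Borel set $B \subseteq \CC_{\gI}\cup\{\infty\}$ we have $f^{-1}(B) \in \sA'$ by hypothesis, hence $(f \circ \psi)^{-1}(B) = \psi^{-1}(f^{-1}(B)) \in \sA$ by definition of $\sA'$. For the $\mu$-a.e.\ finiteness, set $N' := \{\lambda' \in \Omega' : f(\lambda') = \infty\}$ and observe
\[
\{\lambda \in \Omega : (f \circ \psi)(\lambda) = \infty\} = \psi^{-1}(N');
\]
since $\mu'(N') = 0$ and $\mu'(N') = \mu(\psi^{-1}(N'))$, each Jordan component of $\mu$ vanishes on $\psi^{-1}(N')$, giving $\mu$-a.e.\ finiteness.

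Next, I would establish the crucial observation that the pushforward operation $\nu \mapsto \nu \circ \psi^{-1}$ commutes with the Jordan decomposition. Concretely, writing $\mu'(M') := \mu(\psi^{-1}(M'))$ and expanding via the decomposition of $\mu$, one obtains
\[
\mu'(M') = \sum_{\alpha}\bigl\{\mu_+^{(\alpha)}(\psi^{-1}(M')) - \mu_-^{(\alpha)}(\psi^{-1}(M'))\bigr\}e_{\alpha},
\]
so the Jordan components of $\mu'$ are precisely $(\mu')_{\pm}^{(\alpha)} = \mu_{\pm}^{(\alpha)} \circ \psi^{-1}$. This lets us transport scalar classical results componentwise.

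Now for each $\alpha$ and each sign, apply the classical change-of-variables theorem (see, e.g., \cite{Halmos}) to the positive scalar measure $\mu_{\pm}^{(\alpha)}$ together with the real-valued $\sA'$-measurable functions $\re(f)$ and $\im(f)$, which are $\mu'$-integrable by hypothesis and hence $(\mu_{\pm}^{(\alpha)})'$-integrable for each $\alpha$. This gives
\[
\int_{\Omega'} \re(f)(\lambda')\, d(\mu_{\pm}^{(\alpha)})'(\lambda') = \int_{\Omega} \re(f)(\psi(\lambda))\, d\mu_{\pm}^{(\alpha)}(\lambda),
\]
and similarly with $\im(f)$. Assembling these four families of scalar identities according to the definition of the $\RR_n$-valued integral given in \eqref{eq:complexLEFTINTEGRAL}, and using the identification of the Jordan components of $\mu'$ from the previous paragraph, yields \eqref{eq:TRANSSCALAR}.

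The proof is essentially bookkeeping; the only subtlety, and the main obstacle I would watch for, is verifying that the pushforward respects the Jordan decomposition in the $\RR_n$-valued setting (so that no cancellation is lost between $\mu_+^{(\alpha)}$ and $\mu_-^{(\alpha)}$ after transport) and that $\mu'$-integrability of $f$ genuinely descends to integrability of $\re(f) \circ \psi$ and $\im(f) \circ \psi$ against each of the $2 \cdot 2^n$ positive scalar measures $\mu_{\pm}^{(\alpha)}$. Both of these follow directly from the definitions, but they need to be noted before invoking the classical change-of-variables result component by component.
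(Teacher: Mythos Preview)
Your proposal is correct and follows essentially the same approach as the paper: decompose $\mu$ via its Jordan decomposition into positive scalar components and invoke the classical change-of-variables theorem (Halmos, \S39, Theorem~C) componentwise. One small caution: the pushforwards $\mu_{\pm}^{(\alpha)}\circ\psi^{-1}$ need not be mutually singular and hence need not literally be \emph{the} Jordan decomposition of $(\mu')^{(\alpha)}$, but since the value of the integral in \eqref{eq:complexLEFTINTEGRAL} is independent of which positive decomposition one uses, this does not affect the argument.
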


\begin{proof}
Let $\mu' = \sum_{\alpha} \{  \nu_+^{(\alpha)} - \nu_-^{(\alpha)} \} e_{\alpha}$ and $\mu = \sum_{\alpha} \{ \mu_+^{(\alpha)} - \mu_-^{(\alpha)} \}e_{\alpha}$ be Jordan decompositions (see \ref{eq:JORDAN}) for $\mu'$ and $\mu$, respectively. Applying the classical result to $\nu_{\pm}^{(\alpha)}$ and $\mu_{\pm}^{(\alpha)}$ (which are just positive measures in the usual sense), see, e.g., Theorem C in Section 39 of \cite{Halmos}, we obtain both assertions.
\end{proof}

\begin{thm}
\label{thm:CliffordRRPOS}
Let $X$ be a compact Hausdorff space. Corresponding to any bounded positive linear functional $\mathscr{L}: \mathscr{C}(X, \RR) \to \gS(\RR_n)$, there exists a unique positive $\RR_n$-valued Borel measure $\mu$ on $X$ such that
\begin{equation}
\label{eq:INTpos}
\mathscr{L}(p) = \int_X p(t) \, d\mu(t) \quad \quad {\rm for} \quad p \in \mathscr{C}(X, \RR).
\end{equation}
In this case, $|\mu(M)|  \leq \| \mathscr{L} \|$ for every set $M \in \sB(X)$.
\end{thm}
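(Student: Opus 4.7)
The plan is to invoke Corollary \ref{cor:RR} for existence and uniqueness of an $\RR_n$-valued Borel measure $\mu$ realising $\sL$ as an integral, and then to use regularity of the scalar component measures together with a Urysohn approximation to deduce that $\mu$ actually takes values in the positive cone $\gP(\RR_n)$.

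More concretely, since $\gS(\RR_n) \subseteq \RR_n$ and $\sL$ is a bounded $\RR_n$-valued linear functional on $\sC(X,\RR)$, Corollary \ref{cor:RR} produces a unique $\RR_n$-valued Borel measure $\mu$ on $X$ with $\sL(p) = \int_X p \, d\mu$ for every $p \in \sC(X,\RR)$ and $|\mu(M)| \leq \|\sL\|$ for every $M \in \sB(X)$. The only thing that does not follow directly from Corollary \ref{cor:RR} is $\mu(M) \succeq 0$, which is the heart of the argument.

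Before attacking positivity I would first record that $\gP(\RR_n)$ is closed in $\RR_n$: by Remark \ref{rem:chiProp} one has $\gP(\RR_n) = \chi^{-1}(\{\text{PSD matrices in } \RR^{2^n \times 2^n}\})$, and $\chi$ is continuous between finite-dimensional real spaces while the PSD cone is closed. Next, fix a compact $K \subseteq X$ and write the Jordan decomposition $\mu = \sum_\alpha (\mu_+^{(\alpha)} - \mu_-^{(\alpha)})e_\alpha$; each $\mu_\pm^{(\alpha)}$ is a finite regular positive Borel measure on $X$ (regularity being part of the Riesz statement cited in Theorem \ref{thm:RR}). Using outer regularity simultaneously for all these component measures, I would pick a decreasing sequence of open sets $U_k \supseteq K$ with $\mu_\pm^{(\alpha)}(U_k) \to \mu_\pm^{(\alpha)}(K)$ for every $\alpha$. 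Urysohn's lemma then produces $f_k \in \sC(X,\RR)$ with $\chi_K \leq f_k \leq \chi_{U_k}$. Positivity of $\sL$ forces $\sL(f_k) \succeq 0$, while the scalar squeeze $\mu_\pm^{(\alpha)}(K) \leq \int f_k \, d\mu_\pm^{(\alpha)} \leq \mu_\pm^{(\alpha)}(U_k)$ yields $\sL(f_k) = \int f_k \, d\mu \to \mu(K)$ in $\RR_n$. Closedness of $\gP(\RR_n)$ then gives $\mu(K) \succeq 0$.

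Finally, I would extend to every $M \in \sB(X)$ by inner regularity of the same component measures: pick an increasing sequence of compacts $K_k \subseteq M$ with $\mu_\pm^{(\alpha)}(K_k) \to \mu_\pm^{(\alpha)}(M)$ for all $\alpha$, so $\mu(K_k) \to \mu(M)$ in $\RR_n$ with $\mu(K_k) \succeq 0$, whence closedness of $\gP(\RR_n)$ forces $\mu(M) \succeq 0$. The one place that needs genuine care, rather than routine bookkeeping, is the closedness of $\gP(\RR_n)$; once that is in hand the $\chi$-embedding lets me transfer the scalar Riesz-Urysohn argument verbatim, component by component, through a single continuous test function $f_k$ at a time.
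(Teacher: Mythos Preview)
Your proof is correct but follows a genuinely different route from the paper's. The paper pushes the whole problem through the matrix embedding $\chi$: it defines $L(p) := \chi(\sL(p))$, checks that $L$ is a bounded positive $\RR^{2^n\times 2^n}$-valued linear functional, and then invokes an operator-valued Riesz representation theorem (Theorem 19 in Berberian's \emph{Notes on Spectral Theory}) to obtain a positive matrix-valued measure $\nu$. A uniqueness argument on the entries $\nu_{ij}$ shows $\nu(M)\in\chi(\RR_n)$ for every $M$, and then $\mu(M):=\chi^{-1}(\nu(M))$ is automatically in $\gP(\RR_n)$ by Remark \ref{rem:chiProp}. In short, the paper imports positivity wholesale from the matrix setting.

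Your argument, by contrast, stays at the scalar level: you take the $\RR_n$-valued measure $\mu$ already supplied by Corollary \ref{cor:RR} and establish $\mu(M)\succeq 0$ directly via Urysohn approximation and regularity of the component measures, using the $\chi$-embedding only to see that $\gP(\RR_n)$ is closed. This is more elementary and entirely self-contained within the paper's framework (no appeal to an external operator-valued Riesz theorem), at the cost of a short but genuine topological argument. One minor remark: the decomposition produced in the proof of Corollary \ref{cor:RR} need not coincide with the true Jordan decomposition of each signed component, but since the Riesz measures obtained there are regular, so is each signed component and hence its Jordan parts; alternatively you could simply work with the regular positive measures from that proof rather than the Jordan decomposition. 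Either way your approximation argument goes through unchanged.
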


\begin{proof}
First, let $K \geq 0$ be such that
$$| \mathscr{L}(p) | \leq K \| p \|_{\infty} \quad \quad {\rm for} \; {\rm every} \; p \in \mathscr{C}(X, \RR).$$
Next, note that for every $p \in \mathscr{C}(X, \RR)$, we may uniquely decompose $\mathscr{L}(p)$ as $\mathscr{L}(p) = \sum_{\alpha} \mathscr{L}_{\alpha}(p) e_{\alpha}$, where $\mathscr{L}_{\alpha}(p) \in \RR$. Let $L(p): \mathscr{C}(X, \RR)  \to \chi(\RR_n)$ given by $L(p) = \chi(\mathscr{L}(p))$ for $p \in \mathscr{C}(X, \RR)$.  Since $\chi$ is a $*$-homomorphism and $\mathscr{L}: \mathscr{C}(X, RR) \to \gS(\RR_n)$, we have that $L(p)$ is a real Hermitian matrix for every $p \in \mathscr{C}(X,\RR)$. Moreover, in view of the first assertion in Remark \ref{rem:chiProp} and the estimate
\begin{align*}
\|L(p)\|_{\infty} =& \; \| \chi(\mathscr{L}(p)) \| =  \sum_{\alpha} |\mathscr{L}_{\alpha}(p)| \\
\leq& \; 2^{n/2} (\sum_{\alpha} \mathscr{L}_{\alpha}(p)^2)^{1/2} \\
=& \; 2^{n/2} | \mathscr{L}(p)| \\
\leq & \; 2^{n/2} K \| p \|_{\infty}
\end{align*}
where $\| \cdot \|_{\infty}$ denotes the maximum row sum norm of a matrix,
we have that $L(p)$ is a bounded positive $\RR^{2^n \times 2^n}$-valued linear functional. Consequently, a finite dimensional version of the operator-valued version of the Riesz representation theorem (see, e.g., Theorem 19 in \cite{Berberian}) gives us the existence of a uniquely determined positive $\RR^{2^n \times 2^n}$-valued measure $\nu := (\nu_{ik})_{i,j=1}^{2^n}$ such that
$$L(p) = \int_X p(t) \, d\nu(t) := \left(\int_X p(t) d\nu_{ij}(t) \right)_{i,j=1}^{2^n} \quad \quad {\rm for} \quad p \in  \mathscr{C}(X, \RR).$$

If we write $L(p) = (L_{ij}(p))_{i,j=1}^{2^n}$, then $L_{ij}: \mathscr{C}(X, \RR) \to \RR$ is a bounded linear functional such that
\begin{equation}
\label{eq:REP}
L_{ij}(p) = \int_X p(t) d\nu_{ij}(t) \quad \quad {\rm for} \quad p \in  \mathscr{C}(X, \RR).
\end{equation}
Consequently, Corollary \ref{cor:RR}, with $n =0$, asserts that $\nu_{ij}$ is the only $\RR$-valued Borel measure such that \eqref{eq:REP} holds. If we use the fact that $L(p) = \chi(\mathscr{L}(p))$ and the aforementioned uniqueness of the $\RR$-valued measure in \eqref{eq:REP}, then for any $M \in \sB(X)$, we  have that $\nu(M) = \chi(a_M)$ for some $a_M \in \RR_n$. In view of Remark \ref{rem:chiProp}, the fact that $\nu(M)$ is a positive semidefinite matrix implies and $\nu(M) = \chi(a_M)$ implies that $a_M \in \gP(\RR_n)$ for all $M \in \sB(X)$. It is easy to check that $\mu(M) := a_M= \chi^{-1}(\nu(M))$ is a positive $\RR_n$-valued measure with the property that
$$\mathscr{L}(p) = \chi^{-1}(L(p)) = \int_X p(t) \, d\mu(t) \quad \quad {\rm for} \quad p \in \mathscr{C}(X, \RR),$$
i.e., \eqref{eq:INTpos} holds.

The uniqueness of $\mu$ such that \eqref{eq:INTpos} holds follows immediately from the uniqueness of $\nu$ and the injectivity of $\chi$. The final assertion can be proved in much the same way as the proof of the final assertion in Theorem \ref{thm:RR}.
\end{proof}

\section{Spectral integrals}
\label{sec:SIs}

Throughout this section, $\sA$ will denote an algebra of subsets of $\Omega$, $\cH_n$ will denote a Clifford module over $\RR_n$ and all infinite sums of operators in $\cB(\cH_n)$ will be meant in the strong operator topology, i.e., $\sum_{k=1}^{\infty} T_k = T$ if and only if
$$Tx = \lim_{k \to \infty} \sum_{j=1}^k T_j x \quad \quad {\rm for} \quad x \in \cH_n.$$

\subsection{Basic properties}
\label{sec:BASIC}
\begin{defn}[orthogonal projection]
An operator $T \in \cB(\cH_n)$ is called an {\it orthogonal projection} if $T$ is self-adjoint and $T^2 = T$.  The set of orthogonal projections on $\cH_n$ will be denoted by $\cP(\cH_n)$.
\end{defn}

\begin{defn}[spectral premeasure and spectral measure]
\label{def:SPECTRALMEASURE}
Let $\sA$ be an algebra of subsets in $\Omega$. We will call $E: \sA \to \cP(\cH_n)$ a {\it spectral premeasure} if the following conditions hold{\rm :}
\begin{enumerate}
\item[(i)] $E$ is countably additive, i.e.,
$$E\left( \bigcup_{i=1}^{\infty} M_i \right) = \sum_{i=1}^{\infty} E(M_i)$$
for every sequence of mutually disjoint sets $(M_i)_{i=1}^{\infty}$ such that $M_i \in \Omega$ for $i=1,2,\ldots$ and $\cup_{i=1}^{\infty} M_i \in \Omega$.
\smallskip
\item[(ii)] $E(\Omega) = I$.
\end{enumerate}
If $\sA$ is a $\sigma$-algebra, then $E$ will be called a {\it spectral measure}. In this case, we will write that $E$ is a spectral premeasure (resp., measure) on $(\Omega, \sA)$.
\end{defn}

\begin{lem}
\label{lem:DISJOINT}
Let $\sA$ be an algebra of sets in $\Omega$. Suppose $E: \sA \to \cP(\cH_n)$ is a finitely additive map, i.e.,
$$E\left( \bigcup_{i=1}^{k} M_i \right) = \sum_{i=1}^{k} E(M_i)$$
for every collection of mutually disjoint sets $(M_n)_{i=1}^k$ such that $M_i \in \Omega$ for $i=1,\ldots,k$ and $\bigcup_{i=1}^k M_i \in \Omega$. Then
\begin{equation}
E(M) E(N) = E(M \cap N) \quad \quad {\rm for} \quad M, N \in \sA.
\end{equation}
In particular, we have $E(M) E(N) = 0$ if $M \cap N = \emptyset$.
\end{lem}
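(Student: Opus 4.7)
The plan is to reduce to the disjoint case and then use the projection identity together with a self-adjointness argument, exactly as in the classical complex Hilbert space proof, once one checks that the relevant algebraic manipulations go through in the Clifford module setting. Specifically, for arbitrary $M, N \in \sA$, I would decompose them into disjoint pieces
$$M = (M \cap N) \cup (M \setminus N), \quad N = (M \cap N) \cup (N \setminus M),$$
and apply finite additivity to write $E(M) = E(M \cap N) + E(M \setminus N)$ and $E(N) = E(M \cap N) + E(N \setminus M)$. Expanding the product $E(M) E(N)$ then produces four terms. The ``diagonal'' term is $E(M \cap N)^2 = E(M \cap N)$ since $E(M \cap N)$ is a projection, while each of the three ``off-diagonal'' terms is a product of the $E$-values of two disjoint sets. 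Hence everything comes down to the second (in particular, disjoint) assertion of the lemma.

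So the heart of the proof is to show that $E(M) E(N) = 0$ whenever $M \cap N = \emptyset$. Here I would set $P := E(M)$ and $Q := E(N)$. By finite additivity $P + Q = E(M \cup N) \in \cP(\cH_n)$, so $(P+Q)^2 = P+Q$. Expanding and using $P^2 = P$ and $Q^2 = Q$ gives the identity
$$PQ + QP = 0.$$

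To conclude $PQ = 0$, I would multiply this identity on the left by $P$, using $P^2 = P$, to get $PQ + PQP = 0$, hence $PQ = -PQP$. Taking adjoints, since $P$ and $Q$ are self-adjoint (orthogonal projections), we obtain $QP = -(PQP)^* = -PQP$, and therefore $PQ = QP$. Combining this with $PQ = -QP$ yields $2 PQ = 0$; since scalar multiplication by $2 \in \RR$ is injective on $\cH_n$, we conclude $PQ = 0$, as needed.

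The only potential obstacle is purely notational: one must make sure the identities $(AB)^* = B^* A^*$ and the self-adjointness of projections behave exactly as in the complex Hilbert space case, which follows from Definition \ref{def:Dec11rro3} and Lemma \ref{lem:BDD}. The argument does not rely on any order structure or complex scalar multiplication, only on the $\RR$-linearity built into the module structure, so no Clifford-specific subtleties (such as non-commutativity of scalars or the behaviour of $\gI \in \mathbb{S}$) enter. The final claim ``$E(M)E(N) = 0$ if $M \cap N = \emptyset$'' then follows immediately as a special case of the general formula, recovering the stated ``in particular'' assertion.
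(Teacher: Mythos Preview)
Your argument is correct and is precisely the classical complex Hilbert space proof that the paper invokes by reference (``carried out in the same way as the complex Hilbert space case, see, e.g., Lemma 4.3 in \cite{Schmuedgen}''). The reduction to disjoint sets and the $PQ+QP=0 \Rightarrow PQ=0$ projection trick are exactly what that citation unpacks to, and your remarks about why nothing Clifford-specific interferes are appropriate.
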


\begin{proof}
The proof can be carried out in the same way as the complex Hilbert space case (see, e.g., Lemma 4.3 in \cite{Schmuedgen}).
\end{proof}

\begin{lem}
\label{lem:2Sept}
Suppose $\sA$ is an algebra of sets in $\Omega$. The mapping
$E: \sA \to \cP(\cH_n)$ is a spectral premeasure if and only if the following conditions hold{\rm :}
\begin{enumerate}
\item[(i)] For any $x \in \cH_n$, the set function $E_x(\cdot)$ given by
$$E_x(M) :=  \langle E(M)x, x \rangle,$$
where $M \in \sA$, is a countably additive $\gP(\RR_n)$-valued set function.
\smallskip
\item[(ii)] $E(\Omega) = I$.
\end{enumerate}
\end{lem}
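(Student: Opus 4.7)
Condition (ii) appears on both sides of the equivalence, so the content is the interchange between strong-operator countable additivity of $E$ and the countable additivity of each scalar set function $E_x$. In the forward direction, if $E$ is a spectral premeasure then for every self-adjoint projection $E(M)$ one has
\begin{equation*}
E_x(M)=\langle E(M)x,x\rangle=\langle E(M)^{2}x,x\rangle=\langle E(M)x,E(M)x\rangle\succeq 0,
\end{equation*}
so $E_x$ takes values in $\gP(\RR_n)$; countable additivity then follows from the Cauchy--Schwarz type estimate of Lemma \ref{lem:KAPPA}(ii), which makes $\langle\,\cdot\,,x\rangle$ continuous on $\cH_n$ and therefore transports strong-operator convergence of $\sum_i E(M_i)$ into $\RR_n$-valued convergence of $\sum_i E_x(M_i)$.

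\textbf{Reverse direction.} Assume (i) and (ii). First I would recover \emph{finite} additivity of $E$: for disjoint $M,N\in\sA$ the operator $F:=E(M\cup N)-E(M)-E(N)\in\cB(\cH_n)$ satisfies $\langle Fx,x\rangle=0$ for every $x\in\cH_n$ by finite additivity of $E_x$, and Lemma \ref{lem:0lemma} (applicable since $F$ is densely defined) forces $F=0$. With finite additivity in hand, Lemma \ref{lem:DISJOINT} yields $E(M_i)E(M_j)=0$ for $i\neq j$, so the vectors $(E(M_i)x)_{i\geq 1}$ are pairwise orthogonal in $\cH_n$. Since each $E(M_i)$ is a self-adjoint projection,
\begin{equation*}
\|E(M_i)x\|^{2}=\re\,\langle E(M_i)^{2}x,x\rangle=\re\,E_x(M_i),
\end{equation*}
and by countable additivity of $E_x$ the series $\sum_i\re\,E_x(M_i)=\re\,E_x(\cup_iM_i)$ converges. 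Hence the partial sums $y_k:=\sum_{i=1}^{k}E(M_i)x$ form a Cauchy sequence with some limit $y\in\cH_n$.

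\textbf{Identifying the limit.} Setting $M:=\cup_iM_i$ and using $E(M)E(M_i)=E(M_i)$ (because $M_i\subseteq M$), together with continuity of $\langle\cdot,\cdot\rangle$ and countable additivity of $E_x$, each of the four inner products $\langle y,y\rangle$, $\langle y,E(M)x\rangle$, $\langle E(M)x,y\rangle$, $\langle E(M)x,E(M)x\rangle$ collapses to $E_x(M)$ (noting $\overline{E_x(M)}=E_x(M)$ since $E(M)$ is self-adjoint). Therefore $\langle y-E(M)x,y-E(M)x\rangle=0$, so $y=E(M)x$, establishing strong-operator countable additivity of $E$. The only step where the Clifford (rather than complex) structure of the inner product genuinely intervenes is the passage from the vanishing of the scalar quadratic form $\langle Fx,x\rangle$ to the vanishing of $F$ itself, which is exactly what Lemma \ref{lem:0lemma} delivers via the polarisation identity \eqref{eq:POLAR}; this is the main obstacle to a proof copied verbatim from the complex Hilbert space case.
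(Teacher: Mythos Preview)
Your proof is correct and follows essentially the same route as the paper's: positivity of $E_x(M)$ via $E(M)^2=E(M)$, finite additivity of $E$ recovered from that of each $E_x$ (the paper just asserts this; you fill it in with Lemma~\ref{lem:0lemma}), orthogonality of the $E(M_i)$ via Lemma~\ref{lem:DISJOINT}, and then strong-operator convergence of $\sum_i E(M_i)$. The only cosmetic difference is in identifying the limit with $E(M)x$: the paper uses the polarisation formula~\eqref{eq:POLAR} to pass from $\langle E(M)x,x\rangle=\langle\sum_iE(M_i)x,x\rangle$ to equality for all $y$, whereas you expand $\langle y-E(M)x,\,y-E(M)x\rangle$ directly---both arguments rest on the same ingredients.
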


\begin{proof}
Suppose $E$ is a spectral premeasure. Then, by Definition \ref{def:SPECTRALMEASURE}, (ii) holds. It follows from Lemma \ref{lem:DISJOINT} with $M=N$ that $E(M)^2 x = E(M)x$ for $x \in \cH_n$ and $M \in \sA$, we have
$$
E_x(M)
=  \langle E(M)x, E(M)x \rangle \in \gP(\RR_n)  \quad \text{for $M \in \sA$ and $x \in \cH_n$.}
$$
Thus, (i) holds.

Conversely, suppose (i) and (ii) hold. Suppose $(M_i)_{i=1}^{\infty}$ be a sequence of mutually disjoint sets such that $M_i \in \sA$ for $i=1,2,\ldots$ and hence $M := \cup_{i=1}^{\infty} M_n \in \sA$, since $\sA$ is a $\sigma$-algebra. For any $x\in \cH_n$, we note that $E_x$ is finitely additive and hence $E$ is finitely additive as well. Thus, we may use Lemma \ref{lem:DISJOINT} to deduce that $(E(M_i))_{i=1}^{\infty}$ is a sequence of orthogonal projections such that $E(M_j)E(M_i) = 0$ whenever $j \neq n$. Consequently, $\sum_{i=1}^{\infty} E(M_i)$ has a limit in the strong operator topology of $\cB(\cH_n)$. Since
\begin{align*}
E_x(M) =& \; \sum_{i=1}^{\infty} E_x(M_i) \\
=& \; \sum_{i=1}^{\infty} \langle E(M_i)x, x \rangle \\
=& \; \left \langle \sum_{i=1}^{\infty} E(M_i)x, x \right \rangle \quad \quad {\rm for} \quad x \in \cH_n,
\end{align*}
we may use the polarisation formula \eqref{eq:POLAR} to obtain
$$\langle E(M)x, y \rangle = \langle \sum_{i=1}^{\infty} E(M_i)x, y \rangle \quad \quad {\rm for} \quad x, y \in \cH_n,$$
i.e., $E(M) = \sum_{i=1}^{\infty} E(M_i)$. Thus, $E$ is a spectral premeasure.
\end{proof}

\begin{rem}
\label{rem:GENERAL}
In the event that $\sA$ is a $\sigma$-algebra, a careful inspection of the proof of Lemma \ref{lem:2Sept} shows that we can easily adapt the proof to obtain the following result where spectral premeasure and countably additive $\gP(\RR_n)$-valued set function are replaced by spectral measure and $\gP(\RR_n)$-valued measure, respectively.
\end{rem}

\begin{lem}
\label{lem:2SeptMEASURE}
Suppose $\sA$ is a $\sigma$-algebra of sets in $\Omega$. The mapping $E: \sA \to \cP(\cH_n)$ is a spectral measure if and only if the following conditions hold{\rm :}
\begin{enumerate}
\item[(i)] For any $x \in \cH_n$, the set function $E_x(\cdot)$ given by\\ $E_x(M) :=  \langle E(M)x, x \rangle$, where $M \in \sA$, is a $\gP(\RR_n)$-valued measure.
\smallskip
\item[(ii)] $E(\Omega) = I$.
\end{enumerate}
Moreover, $E_x(\cdot)$ is a finite $\gP(\RR_n)$-valued measure.
\end{lem}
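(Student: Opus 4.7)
The plan is to reduce this lemma to Lemma \ref{lem:2Sept} and then verify the extra finiteness assertion directly. As Remark \ref{rem:GENERAL} already anticipates, the only difference between the present setup and that of Lemma \ref{lem:2Sept} is that $\sA$ is now a $\sigma$-algebra rather than merely an algebra. This means that every countable union of sets in $\sA$ automatically lies in $\sA$, so the hypothesis ``$\cup_{i=1}^\infty M_i \in \sA$'' in the definition of countable additivity is vacuous. Consequently the statement ``$E_x(\cdot)$ is a countably additive $\gP(\RR_n)$-valued set function'' on an algebra upgrades without change to ``$E_x(\cdot)$ is a $\gP(\RR_n)$-valued measure'' on a $\sigma$-algebra, and similarly ``spectral premeasure'' upgrades to ``spectral measure''.

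Concretely, I would first assume $E$ is a spectral measure. Then (ii) holds by Definition \ref{def:SPECTRALMEASURE}, and Lemma \ref{lem:DISJOINT} (applied with $M = N$) gives $E(M)^2 = E(M)$, so for each $x \in \cH_n$ and $M \in \sA$,
\[
E_x(M) = \langle E(M)x, x \rangle = \langle E(M)^2 x, x \rangle = \langle E(M)x, E(M)x \rangle,
\]
which lies in $\gP(\RR_n)$ by Definition \ref{def:CLIFFORD} and \eqref{eq:IP1}. Countable additivity of $E_x$ follows immediately from countable additivity of $E$ in the strong operator topology together with the continuity of $\langle \cdot, x\rangle$, giving (i).

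Conversely, assuming (i) and (ii), I would proceed exactly as in the proof of Lemma \ref{lem:2Sept}: take a sequence $(M_i)_{i=1}^\infty$ of pairwise disjoint sets in $\sA$ (no extra hypothesis needed since $\sA$ is a $\sigma$-algebra), observe via Lemma \ref{lem:DISJOINT} that the $E(M_i)$ are mutually orthogonal projections so that $\sum_{i=1}^\infty E(M_i)$ converges in the strong operator topology to some operator $P \in \cP(\cH_n)$, then compute
\[
\langle P x, x \rangle = \sum_{i=1}^\infty \langle E(M_i)x, x \rangle = \sum_{i=1}^\infty E_x(M_i) = E_x(\cup_i M_i) = \langle E(\cup_i M_i) x, x \rangle
\]
for every $x \in \cH_n$, and finally invoke the polarisation formula \eqref{eq:POLAR} together with Lemma \ref{lem:0lemma} to upgrade this quadratic-form identity to $P = E(\cup_i M_i)$.

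Finally, the finiteness of $E_x$ follows from (ii): since $E(\Omega) = I$, we have
\[
E_x(\Omega) = \langle E(\Omega)x, x \rangle = \langle x, x \rangle,
\]
and $|\langle x, x \rangle| \le \|x\|^2 < \infty$ for every $x \in \cH_n$ by Lemma \ref{lem:KAPPA}(i)--(ii), so $E_x$ is indeed a finite $\gP(\RR_n)$-valued measure in the sense of Definition \ref{def:FINITEMEASURE}. The only mild subtlety, and the step I would be most careful about, is the polarisation step needed to pass from agreement of $\langle \cdot x, x\rangle$ to agreement of operators, since this uses that $\cH_n$ is a two-sided module (so that $ye_\alpha$ makes sense in \eqref{eq:POLAR}); everything else is a routine transcription of the argument for Lemma \ref{lem:2Sept}.
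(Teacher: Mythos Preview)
Your proposal is correct and follows essentially the same approach as the paper, which simply refers to Remark \ref{rem:GENERAL} (adapting Lemma \ref{lem:2Sept} to the $\sigma$-algebra case) and then verifies finiteness via $E_x(\Omega) = \langle x, x\rangle < \infty$. In fact you have spelled out more of the details than the paper does, including the polarisation step; the only cosmetic difference is that the paper writes $\langle x,x\rangle < \infty$ directly rather than invoking Lemma \ref{lem:KAPPA}.
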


\begin{proof}
See Remark \ref{rem:GENERAL} for the characterisation of spectral measures. The second assertion is a direct consequence of $E(\Omega) = I$. Indeed, $E_x(\Omega) = \langle E(\Omega) x , x \rangle = \langle x, x \rangle < \infty$.
\end{proof}

\begin{rem}
\label{rem:Exy}
Fix $x, y \in \cH_n$ and let $E$ be a spectral measure on $\sA$. Then consider the $\RR_n$-valued measure $E_{x,y}(M) := \langle E(M)x, y \rangle$ for $M \in \sA$. One can use the polarisation formula \eqref{eq:POLAR} to verify
\begin{equation}
\label{eq:Exy}
E_{x,y} = \frac{    \sum_{\alpha} ( E_{x+ye_\alpha} - E_{x-ye_\alpha} ) e_\alpha  }{ 4 \dim \gS(\RR_n)}.
\end{equation}
\end{rem}

It turns out that the $\RR_n$-valued measure $E_{x,y}$ has the properties detailed in the following lemma.

\begin{lem}
\label{lem:MONOTONE}
Let $\sA$ be an algebra of sets in $\Omega$. Suppose $E$ is a spectral premeasure on $\sA$. Then for any sequences of sets $(M_i)_{i=1}^{\infty}$ and $(N_i)_{i=1}^{\infty}$ in $\sA$ such that $M_{i+1} \subseteq M_i$, $N_{i} \subseteq N_{i+1}$ for $i=1,2,\ldots$ and $M := \cap_{i=1}^{\infty} M_i \in \sA$, we have
\begin{equation}
E(M) = s-\lim_{i \to \infty} E(M_i) \quad {\rm and} \quad E(N) = s-\lim_{i \to \infty} E(N_i),
\end{equation}
where $N := \cup_{i=1}^{\infty} N_i$.
\end{lem}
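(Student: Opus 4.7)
The plan is to deduce both limits from the countable additivity of $E$ applied to a disjoint decomposition of the sets in question, and then recognise the differences $E(M_j)x - E(M)x$ (and $E(N)x - E(N_j)x$) as tails of a convergent series in $\cH_n$.

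For the decreasing case, I would first observe that $M_1$ admits the disjoint decomposition
\[
M_1 = M \cup \bigcup_{i=1}^{\infty} (M_i \setminus M_{i+1}),
\]
where each constituent belongs to $\sA$ (as $\sA$ is an algebra and $M \in \sA$ by hypothesis). Definition \ref{def:SPECTRALMEASURE}(i), interpreted in the strong operator topology as stated at the top of Section \ref{sec:SIs}, then yields, for any $x \in \cH_n$,
\[
E(M_1)x = E(M)x + \sum_{i=1}^{\infty} E(M_i \setminus M_{i+1}) x,
\]
the series converging in $\cH_n$. The same identity applied with $M_j$ in place of $M_1$ (using $M_j = M \cup \bigcup_{i=j}^{\infty} (M_i \setminus M_{i+1})$) gives
\[
E(M_j)x - E(M)x = \sum_{i=j}^{\infty} E(M_i \setminus M_{i+1}) x,
\]
which is the $j$th tail of a convergent series in $\cH_n$ and therefore tends to $0$ as $j \to \infty$. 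This is exactly the assertion $E(M) = s-\lim_{i \to \infty} E(M_i)$.

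For the increasing case I would proceed symmetrically: writing the disjoint decomposition
\[
N = N_1 \cup \bigcup_{i=1}^{\infty} (N_{i+1} \setminus N_i)
\]
(assuming $N \in \sA$, which is automatic in the $\sigma$-algebra setting we ultimately care about) and applying countable additivity again, one obtains
\[
E(N)x - E(N_j)x = \sum_{i=j}^{\infty} E(N_{i+1} \setminus N_i) x \longrightarrow 0
\]
as $j \to \infty$, so $E(N) = s-\lim_{i \to \infty} E(N_i)$.

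There is no real obstacle here. The only things to keep track of are that every set appearing in the decompositions lies in $\sA$ (which holds because $\sA$ is closed under finite differences and unions, and the overall unions $M_1$ and $N$ are assumed to lie in $\sA$) and that the premeasure identity in Definition \ref{def:SPECTRALMEASURE}(i) is built into the premeasure assumption. Lemma \ref{lem:DISJOINT} is the structural fact that makes the infinite sums of projections behave nicely: the summands are mutually orthogonal projections, so partial sums form an increasing sequence in $\cP(\cH_n)$ whose strong limit exists. The Clifford module structure plays no distinguished role beyond this; the argument is exactly the classical continuity-of-measure proof, transcribed to the projection-valued setting on $\cH_n$.
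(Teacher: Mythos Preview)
Your proof is correct and is precisely the standard continuity-of-measure argument that the paper has in mind; the paper itself only writes ``The proof is very straight forward'' without giving details, so your write-up simply fills in what the authors deemed routine. Your parenthetical caveat about $N \in \sA$ is appropriate, since the statement only explicitly assumes $M \in \sA$ but implicitly needs $N \in \sA$ for $E(N)$ to be defined.
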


\begin{proof}
The proof is very straight forward.
\end{proof}

We will now define the support of a spectral measure. We shall suppose $\Omega$ is a Hausdorff topological space which has a countable base of open sets.  In what follows, $\sB(\Omega)$ will denote the Borel $\sigma$-algebra of sets generated by $\Omega$, i.e., the smallest $\sigma$-algebra which contains all open sets in $\Omega$. For the sake of brevity, we shall sometimes write that $E$ is a spectral measure on $\sB(\Omega)$ in place of $E$ is a spectral measure on $(\Omega, \sB(\Omega))$.

\begin{defn}[support of a spectral measure]
\label{def:SUPPORT}
Let $E$ be a spectral measure on $\mathscr{B}(\Omega)$. We shall define the support of $E$ by
$$\supp E := \Omega \setminus \bigcup_{\stackrel{\text{$M \in \sB(\Omega)$ open}}{E(M)=0}} M.$$
\end{defn}

The following lemma will be important when proving the bounded case for the spectral theorem for normal operators on a Clifford module.

\begin{lem}
\label{lem:IMPORTANT}
Let $E$ be a spectral measure on $(\Omega, \sB(\Omega))$. Then
\begin{equation}
\label{eq:SPRTALT}
\supp E = \{ \lambda \in \Omega:\text{$E(M) \neq 0$ for every open set $M \in \sB(\Omega)$ with $\lambda \in M$}\}.
\end{equation}
\end{lem}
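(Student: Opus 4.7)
The plan is to prove \eqref{eq:SPRTALT} by a direct chain of equivalences, since the identity is essentially a tautological reformulation of Definition~\ref{def:SUPPORT}. First, I would unpack the definition of $\supp E$: a point $\lambda \in \Omega$ belongs to $\supp E$ precisely when $\lambda$ does not lie in the union $\bigcup_{M} M$ indexed over all open sets $M \in \sB(\Omega)$ with $E(M) = 0$. By the very meaning of a union of sets, this is equivalent to the statement that no open set $M \in \sB(\Omega)$ satisfying $E(M) = 0$ contains $\lambda$.

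Next, I would rephrase this contrapositively: the condition ``no open $M \in \sB(\Omega)$ with $\lambda \in M$ satisfies $E(M) = 0$'' is the same as ``every open $M \in \sB(\Omega)$ with $\lambda \in M$ satisfies $E(M) \neq 0$''. This is exactly the membership condition that defines the right-hand side of \eqref{eq:SPRTALT}, so the two sets coincide. Both inclusions follow simultaneously from the biconditional, so there is no need to split into two separate arguments.

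There is really no serious obstacle here; the content is purely set-theoretic bookkeeping, and no properties of the Clifford module $\cH_n$ or of the projections $E(M)$ beyond the mere set-valued nature of $E$ are invoked. It is worth pointing out that the countable base assumption imposed on $\Omega$ at the start of this subsection plays no role in the present equality. Rather, that hypothesis will become indispensable in subsequent results where one would like to upgrade \eqref{eq:SPRTALT} to the stronger statement $E(\Omega \setminus \supp E) = 0$: reducing the arbitrary union of open $E$-null sets defining $\Omega \setminus \supp E$ to a countable subcover requires a Lindel\"of-type argument, after which one can apply the countable additivity in Lemma~\ref{lem:2SeptMEASURE}(i) to each $E_x$ and then pass back to $E$ via Remark~\ref{rem:Exy}.
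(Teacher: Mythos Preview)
Your proposal is correct and takes essentially the same approach as the paper, which simply states that \eqref{eq:SPRTALT} is a straightforward consequence of Definition~\ref{def:SUPPORT}. Your explicit unpacking of the definition and the chain of equivalences is exactly what the paper leaves implicit, and your additional remark about the countable-base hypothesis being irrelevant for this particular lemma is a helpful observation that the paper does not spell out.
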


\begin{proof}
The characterisation \eqref{eq:SPRTALT} is a straight forward consequence of Definition \ref{def:SUPPORT}.
\end{proof}

Let $\sA$ be an algebra of sets in $\Omega$. For any $\RR_n$-valued measure $\nu$ on $\Omega$, we shall let the {\it total variation} of $\nu$ be given by
\begin{equation}
\label{eq:TOTALVAR}
|\nu| := \sup_{(M_i)_{i=1}^k} |\nu(M_i)|,
\end{equation}
where $(M_i)_{i=1}^k$ is any sequence of mutually disjoint sets with $M_i \in\sA$ for $i=1,\ldots,k$ and $k \in \NN$ is arbitrary.

\begin{defn}[An $L_2$-space with respect to $\RR_n$-valued measure]
\label{def:L_2}
Let $\sA$ be an algebra of sets in $\Omega$, $\gI \in \mathbb{S}$ and $\nu$ be a $\RR_n$-valued measure on $\sA$. Then we shall let
$$
L_2(\Omega, \sA, \CC_{\gI}, \nu) := \{ \text{$\sA$-measurable $f: \Omega \to \CC_{\gI}$}: \int_{\Omega} |f(\lambda)|^2 d|\nu|(\lambda) < \infty \}.
$$
For $x \in \cH_n$ and a spectral measure $E$ on $\sB(\Omega)$ we let $\mu_x(M) = \langle E(M)x, x \rangle$ for $M \in \sB(\Omega)$ and
$$
L_2(\Omega, \sA, \CC_{\gI}, E_x) := \{ \text{$\sA$-measurable $f: \Omega \to \CC_{\gI}$}: \int_{\Omega} |f(\lambda)|^2 d|\mu_x|(\lambda) < \infty \}.
$$
\end{defn}

\begin{rem}
\label{rem:L_2}
Let $\Omega, \sA, \gI$, $E$, $x$, $E_x$ and $\nu$ be as in Definition \ref{def:L_2}. Write the Jordan decomposition $\nu = \sum_{\alpha} = \{ \nu_+^{(\alpha)} - \nu_-^{(\alpha)} \} e_{\alpha}$ for $\nu$. Then it is easy to see that
\begin{equation}
\label{eq:L2all}
L_2(\Omega, \sA,\CC_{\gI}, \nu)
= \bigcap_{\alpha \in \powerset(\{ 1, \ldots, n \})} \{ L_2(\Omega, \sA,\CC_{\gI}, \nu_{+}^{(\alpha)}) \cap L_2(\Omega, \sA,\CC_{\gI},\nu_-^{(\alpha)})\},
\end{equation}
and
\begin{align}
L_2(\Omega, \sA, \CC_{\gI}, E_x) = \{ \text{$\sA$-measurable $f: \Omega \to \CC_{\gI}$}: & \; \int_{\Omega} |f(\lambda)|^2 d( {\rm Re}\mu_x(\lambda))\nonumber \\
<& \; \infty \}, \label{eq:L2E_x}
\end{align}
where $\mu_x(M) := \langle E(M)x, x \rangle$ for $M \in \sA$.
\end{rem}

\begin{lem}
\label{lem:MEASURECOMP}
Let $E'$ be a spectral premeasure on $(\Omega, \sA')$. Then there exists a spectral measure $E$ on $(\Omega, \sA)$, where $\sA$ is the $\sigma$-algebra generated by $\sA'$, such that $E(M) = E'(M)$ for all $M \in \sA'$.
\end{lem}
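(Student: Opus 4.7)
The plan is to mimic the classical Carath\'eodory extension argument but conducted through the ``matrix coefficients'' $E_{x,y}(\cdot) = \langle E(\cdot)x, y \rangle$, relying on Lemma \ref{lem:2Sept}, Remark \ref{rem:Exy}, and the Riesz representation theorem for Clifford modules (Theorem \ref{thm:RRforHM}) to reassemble a projection-valued object from its scalar data.

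First, for each fixed $x \in \cH_n$, Lemma \ref{lem:2Sept} tells us that $E'_x(M) := \langle E'(M) x, x \rangle$ is a countably additive $\gP(\RR_n)$-valued set function on the algebra $\sA'$. Writing out the Jordan decomposition $E'_x = \sum_{\alpha} \{ (E'_x)_+^{(\alpha)} - (E'_x)_-^{(\alpha)} \} e_{\alpha}$ gives a finite collection of countably additive real-valued (in fact nonnegative in a suitable sense) set functions on $\sA'$; each of these can be extended by the classical Carath\'eodory extension theorem to a measure on the generated $\sigma$-algebra $\sA$. Reassembling componentwise yields a $\gP(\RR_n)$-valued measure $E_x$ on $(\Omega, \sA)$ with $E_x(M) = E'_x(M)$ for $M \in \sA'$ and $E_x(\Omega) = \langle x, x \rangle$. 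Next, use the polarisation identity from Remark \ref{rem:Exy} to \emph{define}, for every pair $x, y \in \cH_n$, the $\RR_n$-valued set function
\[
E_{x,y}(M) := \frac{1}{4 \dim \gS(\RR_n)} \sum_{\alpha} \bigl( E_{x + y e_{\alpha}}(M) - E_{x - y e_{\alpha}}(M) \bigr) e_{\alpha}, \qquad M \in \sA.
\]
By construction, $E_{x,y}$ is an $\RR_n$-valued measure on $\sA$ that agrees with $\langle E'(\cdot) x, y \rangle$ on $\sA'$.

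Second, I upgrade $E_{x,y}$ to an operator. Fix $M \in \sA$ and view $(x,y) \mapsto E_{x,y}(M)$ as an $\RR_n$-valued sesquilinear form on $\cH_n \times \cH_n$. The algebraic properties \eqref{eq:IP1}--\eqref{eq:IP4} for $E_{x,y}$ on $\sA'$ are inherited on $\sA$ by the uniqueness of Carath\'eodory extension and the polarisation formula; explicitly, right $\RR_n$-linearity in the first slot and the conjugate-symmetry $\overline{E_{x,y}(M)} = E_{y,x}(M)$ propagate from $\sA'$ through the extension since each is an identity between two $\RR_n$-valued measures that agree on the generating algebra. Boundedness with constant $\|x\|\,\|y\|$ likewise follows from the corresponding bound on $\sA'$ (which holds because $|E'_{x,y}(M)| \le \|E'(M) x\|\,\|y\| \le \|x\|\,\|y\|$) and passage to the monotone limits provided by Lemma \ref{lem:MONOTONE} applied to $E'$ on $\sA'$; alternatively one can cite the standard bound $|\nu|(\Omega) \le 4 \sup_{M \in \sA'} |\nu(M)|$ for a componentwise Jordan decomposition. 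Theorem \ref{thm:RRforHM} then produces a unique $E(M) \in \cB(\cH_n)$ with $\langle E(M) x, y \rangle = E_{x,y}(M)$ for all $x,y \in \cH_n$, and $E(M) = E'(M)$ whenever $M \in \sA'$.

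Third, I verify that $E$ is a spectral measure. Self-adjointness of $E(M)$ follows from $\overline{E_{x,y}(M)} = E_{y,x}(M)$. Idempotence $E(M)^2 = E(M)$ is the only nontrivial point: on the generating algebra $\sA'$ it holds by Lemma \ref{lem:DISJOINT} applied to the spectral premeasure $E'$, and the general case follows by a monotone class argument --- fix $M \in \sA'$ and consider the collection of all $N \in \sA$ for which $E(M \cap N) = E(M) E(N)$; this collection contains $\sA'$, is closed under complements and countable disjoint unions (using Lemma \ref{lem:MONOTONE}), hence coincides with $\sA$, then iterate the same argument with $M$ now varying in $\sA$. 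The property $E(\Omega) = I$ is inherited from $E'$. Finally, countable additivity of $E$ in the strong operator topology follows from the identity $\langle E(\bigcup_i M_i) x, x \rangle = E_x(\bigcup_i M_i) = \sum_i E_x(M_i) = \sum_i \langle E(M_i) x, x \rangle$ for disjoint $(M_i) \subseteq \sA$, combined with the fact that $\sum_i E(M_i)$ exists in the strong operator topology because the $E(M_i)$ are pairwise orthogonal projections (via Lemma \ref{lem:DISJOINT} applied to the already-verified multiplicative property of $E$). The main obstacle I anticipate is the second step: pushing the multiplicativity $E(M)E(N) = E(M \cap N)$ from the algebra $\sA'$ to the $\sigma$-algebra $\sA$, since the monotone class manipulation must be carried out while only having strong-operator convergence, not norm convergence, of the partial sums defining $E$.
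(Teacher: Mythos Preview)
Your proposal is correct and follows the same overall architecture as the paper's proof: extend the scalar data $E'_x(\cdot)=\langle E'(\cdot)x,x\rangle$ componentwise via Carath\'eodory, rebuild $E_{x,y}$ by the polarisation formula \eqref{eq:POLAR}, push the algebraic properties (right linearity, conjugate symmetry, boundedness) from $\sA'$ to $\sA$ by monotone-class arguments, and then invoke Theorem \ref{thm:RRforHM} to recover $E(M)\in\cB(\cH_n)$.

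The one place where you and the paper diverge is precisely the step you flag as the main obstacle. You aim for the multiplicative identity $E(M)E(N)=E(M\cap N)$ via a two-variable bootstrap: fix $M\in\sA'$, run a monotone class in $N$, then free $M$. The paper bypasses this entirely. Having already shown that $\langle E(M)x,x\rangle=\mu_{x,x}(M)\succeq 0$ for every $M\in\sA$ (so each $E(M)$ is a positive contraction), it simply sets $\mathscr{P}:=\{M\in\sA:E(M)\in\cP(\cH_n)\}$ and observes that $\mathscr{P}$ is a monotone class containing $\sA'$, hence equals $\sA$. This single-variable argument is cleaner: once you know $E(M_i)\uparrow E(M)$ strongly (increasing net of positive operators bounded by $I$), the strong limit of projections is a projection, and you are done. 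Multiplicativity then comes for free from Lemma \ref{lem:DISJOINT} once countable additivity is established via Lemma \ref{lem:2SeptMEASURE}. Your route works too, but it front-loads the product identity, which forces you to control $E(M)E(N_i)\to E(M)E(N)$ before you know the $E(N_i)$ are projections; you still need the same strong-convergence fact for monotone positive operators, so there is no real saving.
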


\begin{proof}
For any $x \in \cH_n$, we may use Lemma \ref{lem:2Sept} to see that $\mu'_x(\cdot)$ given by $\mu'_x(M) = \langle E(M)x, x \rangle$ is a premeasure on $\sA'$. Moreover, using Lemma \ref{lem:KAPPA}, we obtain
\begin{align*}
|\mu'_x(\Omega)| =& \; |\langle \mu'(\Omega) x, x \rangle| = |\langle x, x \rangle| \\
\leq& \; \| x \|^2.
\end{align*}
Thus, $\mu'_x$ is a finite $\RR_n$-valued premeasure on $\sA'$ and hence all of the positive measure components in the Jordan decomposition of
$$
\mu'_x = \sum_{\alpha} (\mu'_{x,\alpha,+} - \mu'_{x,\alpha,-})e_{\alpha},
$$
namely $(\mu_{x, \alpha, \pm})_{\alpha \in \powerset( \{ 1, \ldots, n \} )}$, are also finite positive measures. Thus, we may appeal to Theorem A in \S 13 of \cite{Halmos} to produce uniquely determined measures $(\mu_{x,\alpha,+})_{\alpha \in \powerset(\{ 1, \ldots, n\})}$ on the $\sigma$-algebra $\sA$. Consequently, $\mu_x := \sum_{\alpha} (\mu_{x,\alpha,+}- \mu_{x,\alpha,-})e_{\alpha}$ is the unique $\gS(\RR_n)$-valued measure on $\sA$ such that $\mu_x(M) = \mu'_x(M)$ for $M \in \sA'$.

Let $x, y \in \cH_n$ and $\mu_{x,y}$ be the $\RR_n$-valued measure given by
\begin{equation}
\label{eq:MUxy}
\mu_{x,y}(M) := \frac{ \sum_{\alpha} (  \mu_{x+ye_{\alpha}} - \mu_{x-ye_{\alpha}})e_{\alpha}       }{4 \dim \gS(\RR_n) }  \quad\quad {\rm for} \quad M \in \sA.
\end{equation}
Next, let
$$\tilde{\sA} := \{ M \in \sA: \text{the map $x \mapsto \mu_{x,y}(M)$ is right linear for all $y \in \cH_n$} \}.$$
One can use \eqref{eq:MUxy} and Lemma \ref{lem:MONOTONE} to show that $\tilde{\sA}$ has the following property. The union $\cup_{m=1}^{\infty} M_i \in \tilde{\sA}$ for every sequence of sets $(M_i)_{m=1}^{\infty}$ such that $M_i \in \tilde{\sA}$ for $i=1,2,\ldots$ with $M_{i} \subseteq M_{i+1}$. Thus, as $\sA' \subseteq \tilde{\sA}$, we may use Theorem B in \S 6 of \cite{Halmos} to deduce that $\sA \subseteq \tilde{\sA}$ and hence $\tilde{\sA} = \sA$.

In much the same way as above, one can show that the map $y \mapsto \mu_{x,y}(M)$ is anti-right linear, i.e., $\mu_{x,ya+z}(M) = \bar{a} \mu_{x,y}(M)+\mu_{x,z}(M)$ for all $M \in \sA$ and $x,z \in \cH_n$. It is easy to check that
$$\mu'_{x+xe_{\alpha} }(M) - \mu'_{x-xe_{\alpha}}(M) = 2 ( \mu'_{x,x}(M)e_{\alpha} - e_{\alpha}\mu'_{x,x}(M))$$
and hence by the uniqueness of $\mu$ we have
$$\mu_{x+xe_{\alpha} }(M) - \mu_{x-xe_{\alpha}}(M) = 2 ( \mu_{x}
(M)e_{\alpha} - e_{\alpha}\mu_{x}(M)).$$
Consequently,
$$\mu_{x,x}(M) = 2 \sum_{\alpha} (\mu_{x}(M) - e_{\alpha} \mu_{x}(M) e_{\alpha}),$$
in which case $\mu_{x,x}(M) \in \gP(\RR_n)$ and ${\rm Re}\, \mu_{x,x}(M) \geq 0$ for all $M \in \sA$ and $x \in \cH_N$.
For any $M \in \sA$, we may use the polarisation formula \eqref{eq:POLAR} to check that $\langle E'(M)x, y \rangle = \mu'_{x,y}(M)$ and  Lemma \ref{lem:KAPPA}(ii) to deduce
\begin{align*}
|\mu_{x,y}(M)| \leq& \; | \mu_{x,y}(\Omega) | \\
=& \; | \mu'_{x,y}(\Omega) | \\
=& \; | \langle E'(\Omega)x, y \rangle | = | \langle x, y \rangle \\
=& \; \| x \|^2 \| y \|^2 \quad \quad {\rm for} \quad x, y \in \cH_n.
\end{align*}
Thus, for any $M \in\sA$, we may invoke Theorem \ref{thm:RRforHM} to obtain a positive operator $E(M)  \in\cB(\cH_n)$ such that $\langle E(M)x, y \rangle = \mu_{x,y}(M)$ for $x,y \in \cH_n$. For $M \in \sA'$, we have
$$E'(M)x, x \rangle = \langle E(M)x, x \rangle \quad \quad {\rm for} \quad  x \in \cH_n$$
and, hence, one can use the polarisation formula \eqref{eq:POLAR} to check that
$E'(M) = E(M)$ for $M \in \sA'$.

Finally, let $\mathscr{P} := \{ M \in \sA: E(M) \in \cP(\cH_n) \}$. It is very easy to see that $\mathscr{P}$ has the monotone property and hence $\sA' \subseteq \mathscr{P}$. But then $\sA \subseteq \mathscr{P}$. Thus, we may use Lemma \ref{lem:2SeptMEASURE} to deduce that $E$ is a spectral measure.
\end{proof}

\begin{lem}
\label{lem:REGULARMEASURE}
Suppose $\Omega$ is a locally compact Hausdorff space which has a countable base of open sets and $E$ is a spectral measure on $ \sB(\Omega)$. For any $x \in \cH_n$, let $\mu_x(M) := \langle E(M)x, x \rangle$ for $M \in \sB(\Omega)$ and let $\mu_x = \sum_{\alpha} \{ \mu_{x,+}^{(\alpha)} - \mu_{x,-}^{(\alpha)} \} e_{\alpha}$ be the Jordan decomposition for $\mu_x$. Then
\begin{equation}
\label{eq:REGULAR}
\text{$\mu_{x,+}^{(\alpha)}$ is a  finite positive Borel measure for every $\alpha \in \powerset( \{1,\ldots, n\})$.}
\end{equation}
\end{lem}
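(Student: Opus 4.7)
The plan is to reduce the claim to the classical Hahn--Jordan decomposition for real-valued finite signed Borel measures, once we have verified that each scalar component of $\mu_x$ is such a finite signed measure. The topological hypotheses on $\Omega$ play no essential role in the bare finiteness assertion, but they guarantee that the conclusion is the ``right'' one in later applications, since on a second-countable locally compact Hausdorff space every finite positive Borel measure is automatically regular.

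First I would apply Lemma \ref{lem:2SeptMEASURE} to conclude that $\mu_x(\cdot) = \langle E(\cdot) x, x\rangle$ is a finite $\gP(\RR_n)$-valued measure on $\sB(\Omega)$, with total mass $\mu_x(\Omega) = \langle x, x \rangle$. Writing this in components via the basis $\{e_\alpha\}_{\alpha \in \powerset(\{1,\ldots,n\})}$, we have
\[
\mu_x(M) \;=\; \sum_\alpha \mu_x^{(\alpha)}(M)\, e_\alpha \qquad \text{for } M \in \sB(\Omega),
\]
where each $\mu_x^{(\alpha)}: \sB(\Omega) \to \RR$ is a set function. Since the basis decomposition $\RR_n = \bigoplus_\alpha \RR e_\alpha$ is a direct sum, countable additivity of $\mu_x$ in the norm $|\cdot|$ on $\RR_n$ descends coordinatewise to countable additivity of each $\mu_x^{(\alpha)}$; thus every $\mu_x^{(\alpha)}$ is a real-valued signed Borel measure on $\Omega$. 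Moreover $|\mu_x^{(\alpha)}(\Omega)| \leq |\mu_x(\Omega)| = |\langle x,x\rangle| < \infty$, so each $\mu_x^{(\alpha)}$ is in fact a finite signed Borel measure.

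At this stage the classical Hahn decomposition theorem for real signed measures (as in Halmos \S 29) applies verbatim and produces a Hahn decomposition $\Omega = P_\alpha \sqcup N_\alpha$ with $P_\alpha, N_\alpha \in \sB(\Omega)$, yielding the Jordan decomposition
\[
\mu_x^{(\alpha)} \;=\; \mu_{x,+}^{(\alpha)} - \mu_{x,-}^{(\alpha)}, \qquad \mu_{x,+}^{(\alpha)}(M) := \mu_x^{(\alpha)}(M \cap P_\alpha),\ \mu_{x,-}^{(\alpha)}(M) := -\mu_x^{(\alpha)}(M \cap N_\alpha),
\]
into mutually singular positive Borel measures on $\sB(\Omega)$. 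Finiteness of $\mu_{x,+}^{(\alpha)}$ is then immediate from $\mu_{x,+}^{(\alpha)}(\Omega) \leq |\mu_x^{(\alpha)}|(\Omega) < \infty$.

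I do not expect any genuine obstacle here: every non-trivial technical point has already been absorbed into Lemma \ref{lem:2SeptMEASURE}. The only thing to check carefully is the first reduction, namely that coordinatewise countable additivity really does follow from countable additivity in $\RR_n$; this is a one-line consequence of the fact that the continuous linear projection $\RR_n \to \RR$, $a \mapsto a_\alpha$, commutes with convergent series. Once that is in place, everything else is the standard real-valued Hahn--Jordan theory.
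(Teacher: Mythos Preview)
Your proposal is correct and follows essentially the same approach as the paper: invoke Lemma~\ref{lem:2SeptMEASURE} to see that $\mu_x$ is a finite $\gP(\RR_n)$-valued Borel measure, pass to real components, and then apply the classical Hahn--Jordan decomposition to obtain finite positive Borel measures $\mu_{x,+}^{(\alpha)}$. The paper's proof is terser---it simply asserts that finiteness of $\mu_x$ immediately gives finiteness of each $\mu_{x,+}^{(\alpha)}$---whereas you spell out the intermediate step that countable additivity passes to coordinates via the continuous projections $\RR_n \to \RR$; you also correctly observe that the topological hypotheses are only there to secure regularity, which the paper's proof likewise notes as a follow-on remark.
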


\begin{proof}
The last assertion of Lemma \ref{lem:2SeptMEASURE} ensures that $\mu_x$ is a finite $\gP(\RR_n)$-valued Borel measure. Consequently, $\mu_{x, +}^{(\alpha)}$ is a finite positive Borel measure for all $\alpha \in \powerset( \{1, \ldots, n \})$. But then we may use the well-known fact that every positive Borel measure on a locally compact Hausdorff space which has a countable base of open sets is regular (see, e.g., Proposition 7.2.3 in \cite{Cohn}) to obtain \eqref{eq:REGULAR}.
\end{proof}

\begin{thm}
\label{thm:COMMUTING}
Suppose $\Omega_i$ is a locally compact Hausdorff space which has a countable base of open sets and $E_i: \sB(\Omega_i) \to \cP(\cH_n)$ is a spectral measure on $\sB(\Omega_i)$ for $i=1,\ldots, d$. If $E_i E_j = E_j E_i$ for $i,j=1,\ldots, d$, then there exists a unique spectral measure $E: \sB(\Omega) \to \cP(\cH_n)$, where $\Omega := \Omega_1 \times \cdots \times \Omega_d$, such that
\begin{equation}
\label{eq:PRODUCTMEASURE}
E(M_1 \times \cdots \times M_d) = E(M_1) \cdots E(M_d) \quad {\rm for} \quad M_i \in \sB(\Omega_i) \; {\rm and} \; i=1,\ldots,d.
\end{equation}
\end{thm}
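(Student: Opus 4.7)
The plan is to construct $E$ first on an algebra of measurable rectangles, verify the hypotheses of Lemma \ref{lem:MEASURECOMP}, and invoke that lemma to obtain the spectral measure on $\sB(\Omega)$. Let $\sA'$ denote the algebra generated by the measurable rectangles $M_1 \times \cdots \times M_d$ with $M_i \in \sB(\Omega_i)$; every element of $\sA'$ is a finite disjoint union of such rectangles. For a single rectangle $R = M_1 \times \cdots \times M_d$, set
$$E(R) := E_1(M_1) E_2(M_2) \cdots E_d(M_d).$$
Since the $E_i$ pairwise commute, $E(R)$ is a product of mutually commuting orthogonal projections; an easy induction shows such a product is again an orthogonal projection. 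Extend $E$ to $\sA'$ by finite additivity, checking that two representations of the same set as a finite disjoint union of rectangles yield the same operator; this verification uses only the finite additivity of each $E_i$ together with the commutativity $E_i E_j = E_j E_i$.

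The principal step, and where I expect the main difficulty, is proving countable additivity of $E$ on $\sA'$ so that it qualifies as a spectral premeasure. By Lemma \ref{lem:2Sept} it suffices to verify, for every $x \in \cH_n$, that the $\gP(\RR_n)$-valued set function $E_x(M) := \langle E(M) x, x \rangle$ is countably additive on $\sA'$. I would argue by induction on $d$; the inductive step reduces to $d = 2$ by applying the inductive hypothesis to produce a spectral measure $F$ on $\sB(\Omega_1 \times \cdots \times \Omega_{d-1})$ and then showing, via a monotone class argument starting from rectangles, that $F$ commutes with $E_d$. For $d = 2$, fix $x \in \cH_n$. By commutativity,
$$E_x(A \times B) = \langle E_1(A) E_2(B) x, x \rangle = \langle E_2(B) y, y \rangle,$$
where $y := E_1(A) x$, so for fixed $A$ the set function $B \mapsto E_x(A \times B)$ is a finite $\gP(\RR_n)$-valued Borel measure on $\Omega_2$ by Lemma \ref{lem:2SeptMEASURE}, and symmetrically for fixed $B$. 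Writing the Jordan decomposition \eqref{eq:JORDAN} for these $\RR_n$-valued measures and applying the classical Fubini theorem componentwise to the positive measure pieces $\mu^{(\alpha)}_{\pm}$ then yields countable additivity of $E_x$ on disjoint unions of rectangles, which extends to $\sA'$ by finite additivity.

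Once $E$ is shown to be a spectral premeasure on $\sA'$, Lemma \ref{lem:MEASURECOMP} extends it to a spectral measure on the $\sigma$-algebra generated by $\sA'$. Because each $\Omega_i$ has a countable base of open sets, every open set in the product topology on $\Omega$ is a countable union of open rectangles, so the $\sigma$-algebra generated by $\sA'$ coincides with $\sB(\Omega)$. For uniqueness, any spectral measure $\widetilde{E}$ on $\sB(\Omega)$ satisfying \eqref{eq:PRODUCTMEASURE} agrees with $E$ on $\sA'$; using Remark \ref{rem:Exy} and applying the classical monotone class theorem componentwise to the Jordan decompositions of $\langle E(\cdot) x, y \rangle$ and $\langle \widetilde{E}(\cdot) x, y \rangle$ yields their equality on all of $\sB(\Omega)$, after which Theorem \ref{thm:RRforHM} forces $\widetilde{E} = E$.
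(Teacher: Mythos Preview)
Your overall architecture matches the paper's exactly: define $E$ on rectangles, extend by finite additivity to the algebra $\sA'$, verify that $E$ is a spectral premeasure there, and then invoke Lemma~\ref{lem:MEASURECOMP} (and second countability to identify the generated $\sigma$-algebra with $\sB(\Omega)$). The uniqueness argument is also essentially the same.

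The gap is in the countable-additivity step. Your proposal to take the Jordan decomposition of $E_x$ and ``apply the classical Fubini theorem componentwise to the positive measure pieces $\mu_{\pm}^{(\alpha)}$'' does not go through. For a fixed component $\alpha$, the map $(A,B)\mapsto (E_x)_\alpha(A\times B)$ is merely a signed \emph{bimeasure}; it is not of the form $\mu_1(A)\,\mu_2(B)$ for fixed measures $\mu_1,\mu_2$, so there is no product measure to which Fubini could be applied. Moreover, the Jordan decomposition of the measure $B\mapsto (E_x)_\alpha(A\times B)$ depends on $A$, so ``decomposing and then Fubini'' is not well-posed. Going from a bimeasure to a countably additive set function on the algebra of finite unions of rectangles is exactly the nontrivial point here, and it does not follow from Fubini.

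What the paper does instead is work with the single scalar positive set function $\rho(N):={\rm Re}\,E_x(N)=\|E(N)x\|^2$ (positivity holds because each $E(N)$, $N\in\sA'$, is an orthogonal projection). The inequality $\sum_i \rho(N_i)\le\rho(N)$ for disjoint $N_i\subseteq N$ is immediate from monotonicity. The reverse inequality uses the regularity of the finite positive Borel measures ${\rm Re}\,\langle E_i(\cdot)x,x\rangle$ on the second-countable locally compact spaces $\Omega_i$ (cf.\ Lemma~\ref{lem:REGULARMEASURE}) and a compact-approximation/finite-subcover argument \`a la Schm\"udgen's Theorem~4.10. Once $\rho$ is countably additive, strong-operator countable additivity of $E$ follows directly from $\|E(N)x\|^2=\rho(N)$; no Jordan decomposition is needed. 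If you replace your Fubini paragraph with this regularity argument, the rest of your outline is fine.
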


\begin{proof}
Let us consider the case $d=2$ (the more general case follows in much the same way).  Let $\sA'$ denote the algebra of sets generated by sets of the form $M_1 \times M_2$, where $M_i \in \sB(\Omega_i)$ for $i=1,2$. Thus, every $N \in \sA'$ can be written as $N = \cup_{j=1}^k N_j$, where $N_1, \ldots N_k$ are mutually disjoint and are of the form
$$N_j = M_{1j} \times M_{2j} \in \sA' \quad \quad {\rm for} \quad j=1,\ldots,k.$$
We shall now define $E(N) := \sum_{j=1}^k E_1(M_{1j}) E_2(M_{2j})$.

We claim that $E(N) \in \cP(\cH_n)$. To this end, note that if $i \neq j$, then $N_i \cap N_j = \emptyset$ forces $M_{1i} \cap M_{1j} = \emptyset$ or $M_{2i} \cap M_{2j} = \emptyset$. In either case, we can make use of Lemma \ref{lem:DISJOINT} to deduce
\begin{align*}
0 =& \; E_1(M_{1i} \cap M_{1j}) E_2(M_{2i} \cap M_{2j}) \\
=& \; E_1(M_{1i}) E_1(M_{1j}) E_2(M_{2i}) E_2(M_{2j}) \\
=& \;  E_1(M_{1i}) E_2(M_{2i}) E_1(M_{1j}) E_2(M_{2j}).
\end{align*}
Consequently, $E(N) \in \cP(\cH_n)$. One can easily modify the proof of Theorem E in \S 8 of \cite{Halmos} to show that $E$ is independent of the representation that we choose for $N \in \cB(\Omega)$.

What remains is to show that $E$ is countably additive, i.e., for any monotone sequence of sets $(N_i)_{i=1}^{\infty}$, where $N_i \in \sB(\Omega')$, with $N := \cup_{i=1}^{\infty} N_i$, we must show that $E(N) = \sum_{i=1}^{\infty} E(N_i)$. In view of the fact that ${\rm Re} \langle E(N')x, x \rangle$ is a positive measure for any choice of $N' \in \sB(\Omega')$ and $x \in \cH_n$, we can prove
$${\rm Re} \langle E(N)x, x \rangle \leq \sum_{i=1}^{\infty} {\rm Re} \langle E(N_i x, x \rangle$$
in much the same way as the proof of Theorem 4.10 in \cite{Schmuedgen} with the caveat that terms of the form $\langle E(N')x, x \rangle$ must be replaced by ${\rm Re} \langle E(N')x, x \rangle$. Since $N_i \subseteq N$, we immediately have $\sum_{i=1}^k {\rm Re}\langle E(N_i)x, x \rangle \leq {\rm Re} \langle E(N)x, x \rangle$ and hence we arrive at
\begin{equation}
\label{eq:ALL}
{\rm Re}\, \langle E(N)x, x \rangle = \sum_{i=1}^{\infty} {\rm Re}\, \langle E(N_i)x, x \rangle.
\end{equation}
Since $E \in \cP(\cH_n)$, we have
$$\| E(N') x \|^2 = {\rm Re}\, \langle E(N')x, E(N') x \rangle = {\rm Re}\, \langle E(N')x, x \rangle \quad \quad {\rm for} \quad N' \in \sB(\Omega)$$
and thus we may use \eqref{eq:ALL} and the fact that all sums are taken with respect to the strong operator topology to obtain $E(N) = \sum_{i=1}^{\infty} E(N_i)$, i.e., $E$ is countably additive.

Finally, since the $\sigma$-algebra generated by $\sA'$ is $\sB(\Omega)$, we may use Lemma \ref{lem:MEASURECOMP} to deduce that the spectral premeasure $E$ has an extension to a spectral measure. With a slight abuse of notation, the aforementioned spectral measure on $\sB(\Omega)$ will be denoted by $E$.  One can use the definition of $E$ to verify \eqref{eq:PRODUCTMEASURE}. The uniqueness of a spectral $E$ which obeys \eqref{eq:PRODUCTMEASURE} drops out immediately from the fact that the $\sigma$-algebra generated by the algebra of sets of the form $M_1 \times M_2$, where $M_i \in \sB(\Omega_i)$ for $i=1,2$, is $\sB(\Omega)$.

\end{proof}

\subsection{Spectral integrals of bounded measurable functions}
\label{sec:SIsbdd}

We anticipate the definition of imaginary operator
given in Definition \ref{def:IMAGINARY}.
We will call an operator $J_0 \in \cB(\cH_n)$ a {\it partial imaginary operator} if $J_0$ is a partial isometry and $J_0^* = - J_0$. We will call $J \in \cB(\cH_n)$ an {\it imaginary operator} if $J$ is unitary and $J^* = -J$.

\begin{defn}
\label{def:ASSOCIATED}
Given a spectral measure $E$ on $(\Omega, \sA)$ and an imaginary operator $J \in \cB(\cH_n)$, we will say that {\it $J$ is associated with the spectral measure $E$} if
\begin{equation}
\label{eq:BLAH}
JE(M) = E(M)J \quad \quad {\rm for} \quad M \in \sA.
\end{equation}
\end{defn}

Given a spectral measure $E$ on $(\Omega, \sA)$ as above and an imaginary $J$ associated with $E$, we wish to give meaning to the integral of a measurable function $f: \Omega \to \CC_{\gI}$, where $\gI \in \mathbb{S}$, against a spectral measure $E$. Let $\mathfrak{B}(\Omega, \sA, \CC_{\gI})$ denote the Banach space of all bounded $\sA$-measurable functions $f: \Omega \to \CC_{\gI}$, where $\gI \in \mathbb{S}$, equipped with the norm
$$\| f \|_{\infty} = \sup \{|f(\lambda)|: \lambda \in \Omega\}.$$
We let $\chi_{M}$ denote the characteristic function with respect to $M \in \Omega$, i.e.,
$$\chi_M(\lambda) = \begin{cases} 1 & {\rm if} \quad \lambda \in M \\
0 & {\rm if} \quad \lambda \notin M. \end{cases}$$
Let $\mathfrak{B}_s(\Omega, \sA, \CC_{\gI})$ denote the subspace of simple functions in \\$\mathfrak{B}(\Omega, \sA, \CC_{\gI})$, i.e., the subspace of functions $f \in \mathfrak{B}(\Omega, \sA, \CC_{\gI})$ which are of the form
\begin{equation}
\label{eq:SIMPDEF}
f(\lambda) = \sum_{j=1}^k c_j \, \chi_{M_j}(\lambda),
\end{equation}
where $c_1, \ldots, c_k \in \CC_{\gI}$ and $M_1, \ldots, M_k$ are pairwise disjoint sets belonging to $\mathfrak{B}(\Omega, \sA, \CC_{\gI})$.

Given $f \in \mathfrak{B}_s(\Omega, \sA, \CC_{\gI})$, we shall let
\begin{equation}
\label{eq:SIMPLE}
\II(f) := \sum_{j=1}^k \{ {\rm Re}(c_j) E(M_i) + {\rm Im}(c_j) E(M_j)J \} \in \cB(\cH_n).
\end{equation}
It can be easily checked that the finite additivity of $E$ implies that the definition of $\II(f)$ in \eqref{eq:SIMPLE} is independent of the \eqref{eq:SIMPDEF}.

\begin{lem}
\label{lem:BOUNDED}
Let $f \in \mathfrak{B}_s(\Omega, \sA, \CC_{\gI})$. Then $\| \II(f) \| \leq \| f \|_{\infty}$.
\end{lem}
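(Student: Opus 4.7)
The plan is to exploit the disjointness of the sets $M_1, \ldots, M_k$ (which by Lemma \ref{lem:DISJOINT} yields $E(M_i)E(M_j) = 0$ for $i \neq j$) together with the fact that $J$ is an imaginary operator associated with $E$. Writing $f = \sum_{j=1}^k c_j \chi_{M_j}$, set $T_j := {\rm Re}(c_j) I + {\rm Im}(c_j) J$, so that
\begin{equation*}
\II(f) x = \sum_{j=1}^k E(M_j) T_j x, \qquad x \in \cH_n,
\end{equation*}
because $J$ commutes with every $E(M_j)$ by \eqref{eq:BLAH}. Put $u_j := E(M_j) T_j x$. The first key step is the pairwise orthogonality of the $u_j$: using self-adjointness of $E(M_i)$ and $E(M_i)E(M_j) = 0$ for $i \neq j$, one computes $\langle u_j, u_i \rangle = \langle E(M_i) E(M_j) T_j x, T_i x \rangle = 0$. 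Therefore, since $\|\cdot\|^2 = \mathrm{Re}\,\langle \cdot, \cdot \rangle$ by Lemma \ref{lem:KAPPA}(i), the cross terms vanish and
\begin{equation*}
\| \II(f) x \|^2 = \sum_{j=1}^k \| u_j \|^2.
\end{equation*}

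Next I would compute each $\|u_j\|^2$. Setting $y_j := E(M_j) x$, the fact that $E(M_j)$ commutes with $J$ and is an orthogonal projection gives $u_j = T_j y_j$, and expanding yields four terms. The two cross terms contain $\mathrm{Re}\,\langle J y_j, y_j \rangle$, which vanishes: indeed, using $J^* = -J$ together with \eqref{eq:IP3}, we have $\langle J y_j, y_j \rangle = -\langle y_j, J y_j \rangle = -\overline{\langle J y_j, y_j \rangle}$, so the real part is zero. The remaining two terms give $\mathrm{Re}(c_j)^2 \|y_j\|^2 + \mathrm{Im}(c_j)^2 \|J y_j\|^2$, and since $J$ is unitary, $\|J y_j\|^2 = \mathrm{Re}\,\langle J^* J y_j, y_j \rangle = \|y_j\|^2$. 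Thus $\|u_j\|^2 = |c_j|^2 \|y_j\|^2$.

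Finally, observe that the $y_j$ are themselves pairwise orthogonal for the same reason as the $u_j$, and $\sum_{j=1}^k E(M_j) = E\bigl(\bigcup_j M_j\bigr) =: P$ is an orthogonal projection, so
\begin{equation*}
\sum_{j=1}^k \| y_j \|^2 = \| P x \|^2 \leq \| x \|^2.
\end{equation*}
Combining these,
\begin{equation*}
\| \II(f) x \|^2 = \sum_{j=1}^k |c_j|^2 \| y_j \|^2 \leq \| f \|_\infty^2 \sum_{j=1}^k \| y_j \|^2 \leq \| f \|_\infty^2 \| x \|^2,
\end{equation*}
which is the desired inequality. The only mildly delicate point is the vanishing of the cross term $\mathrm{Re}\,\langle J y_j, y_j \rangle$, which is where the anti self-adjointness of $J$ is essential; everything else follows from the projection algebra inherited from the spectral measure and the unitarity of $J$.
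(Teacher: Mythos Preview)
Your proof is correct and follows essentially the same approach as the paper: both use the orthogonality $E(M_i)E(M_j)=0$ for $i\neq j$ to reduce $\|\II(f)x\|^2$ to $\sum_j \|T_j E(M_j)x\|^2$, then bound each summand by $|c_j|^2\|E(M_j)x\|^2$ and use orthogonality again to control the sum by $\|x\|^2$. Your treatment is in fact slightly more detailed than the paper's, since you explicitly justify the equality $\|T_j y_j\|^2 = |c_j|^2\|y_j\|^2$ via the vanishing of $\mathrm{Re}\,\langle Jy_j,y_j\rangle$ (anti self-adjointness) and $\|Jy_j\|=\|y_j\|$ (unitarity), whereas the paper records this step only as an inequality without comment.
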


\begin{proof}
Since $M_1 \ldots, M_k$ are mutually disjoint sets belonging to $\sB(\Omega)$, Lemma \ref{lem:DISJOINT} asserts that $E(M_i)E(M_j) = 0$ whenever $i \neq j$. Thus, for any $x \in \cH_n$, we have
\begin{align*}
\| \II(f)x \|^2 =& \; \sum_{i,j=1}^k {\rm Re} \left \langle \{ {\rm Re}(c_i)I  + {\rm Im}(c_i) J \} E(M_i) x, \{ {\rm Re}(c_j)I + {\rm Im}(c_j) J \} E(M_j) x  \right\rangle \\
=& \; \sum_{i=1}^k {\rm Re} \left \langle \{ {\rm Re}(c_i)I + {\rm Im}(c_i) J \} E(M_i) x, \{ {\rm Re}(c_i) + {\rm Im}(c_i) J \} x  \right\rangle \\
=& \; \sum_{i=1}^k {\rm Re} \left \langle \{ {\rm Re}(c_i) I+ {\rm Im}(c_i) J \} E(M_i) x, \{ {\rm Re}(c_i)I + {\rm Im}(c_i) J \} E(M_i) x  \right\rangle \\
=& \; \sum_{i=1}^k \| ({\rm Re}(c_i)I + {\rm Im}(c_i)J )E(M_i) x \|^2 \\
\leq& \; \sum_{i=1}^k |c_i|^2 \| E(M_i) x \|^2 \\
\leq& \; \sum_{i=1}^k \| f \|^2_{\infty} \| E(M_i)x \|^2 = \| f \|_{\infty}^2 \left \| \sum_{i=1}^k E(M_i) x \right\|^2 \\
\leq& \; \| f \|_{\infty}^2 \| x\|^2.
\end{align*}
Consequently, $\| \II(f) \| \leq \| f \|_{\infty}$ holds.
\end{proof}

\begin{defn}[$\II(f)$ for $f \in \mathfrak{B}(\Omega, \sA, \CC_{\gI})$]
\label{def:BFUNC}
We will now give a definition for $\II(f)$ when $f \in \mathfrak{B}(\Omega, \sA, \CC_{\gI})$.  Since $\mathfrak{B}_s(\Omega, \sA, \CC_{\gI})$ is a dense subset of the Banach space $\mathfrak{B}(\Omega, \sA, \CC_{\gI})$ with respect to the supremum norm $\| \cdot \|_{\infty}$, given any $f \in \mathfrak{B}(\Omega, \sA, \CC_{\gI})$, we have the existence of a Cauchy sequence $(f_j)_{j=1}^{\infty}$, where $f_j \in \mathfrak{B}_s(\Omega, \sA, \CC_{\gI})$ for $j=1,2\ldots$, such that
$$\lim_{j \to \infty} \| f - f_j \|_{\infty} = 0.$$
Lemma \ref{lem:BOUNDED} can be used to show that $(\II(f_j))_{j=1}^{\infty}$ is a Cauchy sequence of operators belonging to $\cB(\cH_n)$. As $\cB(\cH_n)$ is a complete metric space (see Remark \ref{rem:COMPLETEOPERATORNORM}), we have the existence of an operator $\II(f) \in \cB(\cH_n)$ such that
$$\lim_{j \to \infty} \| \II(f) - \II(f_j) \| = 0.$$
Moreover, Lemma \ref{lem:BOUNDED} can be used to show that the limit $\II(f)$ does not depend on the choice of Cauchy sequence.
\end{defn}

\begin{thm}
\label{thm:BFUNC}
For any $f, g \in \mathfrak{B}(\Omega, \sA, \CC_{\gI})$, we have the following{\rm :}
\begin{enumerate}
\item[(i)] $\II(\bar{f}) = \II(f)^*$.

\smallskip

\item[(ii)] $\II(fg) = \II(f)\II(g)$.

\smallskip

\item[(iii)] $\II(\gI) = J$ and $\II(cf + g) = c\, \II(f) + \II(g)$ for all  $c \in \CC_{\gI}$ and $\gI \in \mathbb{S}$.

\smallskip

\item[(iv)] $\langle \II(f)x, y \rangle = \int_{\Omega} {\rm Re}(f(\lambda)) d\langle E(\lambda)x, y \rangle + \int_{\Omega} {\rm Im}(f(\lambda)) d\langle JE(\lambda)x, y \rangle$ for all $x,y \in \cH_n$.

\smallskip

\item[(v)] $\| \II(f) x \|^2 = \int_{\Omega} | f(\lambda)|^2 d ({\rm Re}\, \langle E(\lambda)x, x \rangle)$ for all $x \in \cH_n$.

\smallskip

\item[(vi)] $\| \II(f) \| \leq \| f \|_{\infty}$.

\smallskip

\item[(vii)] For any sequence of functions $(f_j)_{j=1}^{\infty}$, where $f_j \in \mathfrak{B}(\Omega, \sA, \CC_{\gI})$ for $j=1,2,\ldots$, which converges pointwise $E$-a.e. on $\Omega$ to $f$ and there exists $\kappa > 0$ such that $|f_j(\lambda)| \leq \kappa$ for all $\lambda \in \Omega$ and $j=1,2,\ldots$, we have
$$s-\lim_{j \to \infty} \II(f_n) = \II(f).$$
\end{enumerate}
\end{thm}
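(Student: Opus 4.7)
The plan is to prove each identity first for simple $f, g \in \mathfrak{B}_s(\Omega,\sA,\CC_{\gI})$, where everything reduces to direct manipulation of the finite sum in \eqref{eq:SIMPLE}, and then to transfer to arbitrary $f \in \mathfrak{B}(\Omega,\sA,\CC_{\gI})$ by approximation in $\|\cdot\|_{\infty}$ via Lemma \ref{lem:BOUNDED} and Definition \ref{def:BFUNC}. Throughout, the essential structural facts that make the Clifford bookkeeping work are: $E(M)^* = E(M)$, $E(M)E(N) = E(M \cap N)$ (Lemma \ref{lem:DISJOINT}), $J^* = -J$, $J^2 = -I$, and the commutation $JE(M) = E(M)J$ from the assumption that $J$ is associated with $E$.

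At the simple-function level, (iii) is read directly from \eqref{eq:SIMPLE}: $\II(\gI) = 0 \cdot E(\Omega) + 1 \cdot E(\Omega)J = J$, while $\CC_{\gI}$-linearity is immediate on a common refinement of partitions. For (i), each real-part term of $\II(f)$ is self-adjoint and each imaginary-part term $\mathrm{Im}(c_j)E(M_j)J$ satisfies $(E(M_j)J)^* = J^* E(M_j) = -E(M_j)J$, giving $\II(f)^* = \II(\bar{f})$. For (ii), expand $\II(f)\II(g)$ over a common refinement; the four cross-terms $E(M_i)E(N_k)$, $E(M_i)E(N_k)J$, $E(M_i)JE(N_k)$, $E(M_i)JE(N_k)J$ collapse via the commutation and $J^2 = -I$ into $E(M_i \cap N_k)$ and $E(M_i \cap N_k)J$ with coefficients that reproduce precisely the complex product $c_i d_k \in \CC_{\gI}$, yielding $\II(f)\II(g) = \II(fg)$. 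Property (iv) is a direct expansion of $\langle \II(f)x, y\rangle$ as a finite Riemann sum. Property (vi) at the simple level is Lemma \ref{lem:BOUNDED} itself, and (v) is obtained by chaining (i) and (ii), $\II(f)^*\II(f) = \II(\bar{f}f) = \II(|f|^2)$, and then applying (iv) to the real-valued function $|f|^2$ so that the $J$-integral on the right in (iv) drops out.

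To pass to $f \in \mathfrak{B}(\Omega,\sA,\CC_{\gI})$, pick simple $f_j$ with $\|f - f_j\|_{\infty} \to 0$. Lemma \ref{lem:BOUNDED} together with the completeness of $\cB(\cH_n)$ (Remark \ref{rem:COMPLETEOPERATORNORM}) gives $\II(f_j) \to \II(f)$ in operator norm, so the norm-continuous identities (i), (ii), (iii), (vi) transfer automatically from simple functions to $f$. For (iv) and (v), one uses in addition that the right-hand sides depend continuously on $f$ in $\|\cdot\|_{\infty}$, which follows from Lemma \ref{lem:2SeptMEASURE} (finiteness of the $\RR_n$-valued measures $M \mapsto \langle E(M)x, y\rangle$).

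Finally, (vii) is the one part that genuinely needs a measure-theoretic step rather than density. Applying (v) to $f - f_j$ yields
\begin{equation*}
\|(\II(f) - \II(f_j))x\|^2 = \int_{\Omega} |f(\lambda) - f_j(\lambda)|^2 \, d({\rm Re}\, \langle E(\lambda)x, x\rangle),
\end{equation*}
and since ${\rm Re}\langle E(\cdot)x, x\rangle$ is a finite positive Borel measure on $\Omega$ and $|f - f_j|^2 \leq 4\kappa^2$ $E$-a.e.\ with $f_j \to f$ pointwise $E$-a.e., the classical dominated convergence theorem gives $\II(f_j)x \to \II(f)x$ for each $x$, i.e., strong convergence. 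The main obstacle I expect is (ii): it is the only place where the full noncommutative apparatus (commutation of $J$ with $E$, and $J^2 = -I$) is used in a nontrivial way, and getting the four cross-terms to assemble into the genuine complex product $c_i d_k$ is the computation that makes the rest of the calculus work.
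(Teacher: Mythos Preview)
Your proposal is correct and follows essentially the same approach as the paper: verify (i)--(vi) on simple functions via direct computation using $E(M)^*=E(M)$, $E(M)E(N)=E(M\cap N)$, $J^*=-J$, $J^2=-I$, and $JE(M)=E(M)J$, then extend by density through Lemma~\ref{lem:BOUNDED}, and handle (vii) separately by applying (v) to $f-f_j$ and invoking dominated convergence against the finite positive measure ${\rm Re}\,\langle E(\cdot)x,x\rangle$. The paper derives the second part of (iii) from (ii) and $\II(\gI)=J$ rather than by a direct refinement argument, but this is a cosmetic difference.
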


\begin{proof}
In view of Definition \ref{def:BFUNC}, it suffices to prove (i)-(vi) for $f, g \in \mathfrak{B}_s(\Omega, \sA, \CC_{\gI})$. Let $f(\lambda) = \sum_{j=1}^k c_j \chi_{M_j}(\lambda)$ and $g(\lambda) = \sum_{j=1}^k d_j \chi_{N_j}(\lambda)$. Using the fact that $J E(M) = E(M) J$ for all $M \in \sA$ and $J^* = -J$, we have
\begin{align*}
\II(f)^* =& \; \sum_{j=1}^k E(M_j)^* \{ {\rm Re}(c_j)I + {\rm Im}(c_j) J^* \} E(M_j) \\
=& \; \sum_{j=1}^k E(M_j) \{ {\rm Re}(c_j)I - {\rm Im}(c_j) J \} E(M_j) \\
=& \; \sum_{j=1}^k  \{ {\rm Re}(c_j) I - {\rm Im}(c_j) J \} E(M_j) \\
=& \; \II(\bar{f}).
\end{align*}
Thus, we have proved (i).

Next, since
\begin{align*}
\II(f) \II(g) =& \; \left( \sum_{i=1}^k \{ {\rm Re}(c_i)I + {\rm Im}(c_i) J \}E(M_i)  \right)
\left( \sum_{j=1}^k \{ {\rm Re}(d_j)I + {\rm Im}(d_j) J \}E(N_j)  \right) \\
=& \; \sum_{i,j=1}^k \{ {\rm Re}(c_i) {\rm Re}(d_j)I - {\rm Im}(c_i) {\rm Im}(d_j)I \\ \;\;\;\; +& ({\rm Re}(c_i) {\rm Im}(d_j)  + {\rm Re}(d_j) {\rm Im}(c_i) )J \}  E(M_i \cap N_j) \\
=& \; \II(f) \II(g),
\end{align*}
we have proved (ii).

To prove the first statement in (iii), simply observe that $f(\lambda) = \gI \, \chi_{\Omega}(\lambda) = \gI$ and hence $\II(\gI) = J$. The second statement in (iii) is an easy consequence of (ii) and the fact that $\II(\gI) = J$.

The proof of (iv) is very straight forward. To prove (v), we may use (i), (ii) and (iv) to verify that
\begin{align*}
\| \II(f) x \|^2 =& \; {\rm Re} \langle \II(f)x, \II(f)x \rangle \\
=& \; {\rm Re}\, \langle \II(\bar{f}) \II(f)x, x \rangle \\
=& \; {\rm Re}\, \langle \II(|f|^2) x, x \rangle \\
=& \; {\rm Re} \left( \int_{\Omega} |f(\lambda)|^2 d \langle E(\lambda) x, x \rangle\right) \\
=& \; \int_{\Omega} |f(\lambda)|^2 d( {\rm Re} \, \langle E(\lambda)x, x \rangle) \quad \quad {\rm for} \quad  x\in \cH_n.
\end{align*}
Statement (v) is a direct consequence of (iv). Indeed, using (iv), we have
$$\| \II(f)x \|^2 \leq \| f \|_{\infty} \, {\rm Re}\langle x, x \rangle = \| f \|_{\infty} \| x\|^2 \quad \quad {\rm for} \quad x \in \cH_n,$$
in which case we have (v).

Finally, to prove (vii), we may use (vi) to see that
$$\| \{ \II(f) - \II(f_n) \} x \|^2 = \int_{\Omega} |f(\lambda) - f_n(\lambda)|^2 d ( {\rm Re}\langle E(\lambda)x, x \rangle ) \quad \quad {\rm for} \quad x \in \cH_n$$
and hence (vii) follows immediately from the Lebesgue dominated convergence theorem.
\end{proof}

\begin{lem}
\label{lem:L2}
Let $\sA$ be an algebra of sets in $\Omega$ and $E$ be a spectral measure on $\sA$. Then the following facts hold{\rm :}
\begin{enumerate}
\item[(i)] $|E_{x,y}|(M) \leq
\sqrt{ {\rm Re}\, E_x(M) } \, \sqrt{ {\rm Re} \, E_y(M) }$ for all $x,y \in \cH_n$ and $M \in \sA$.

\smallskip

\item[(ii)] Let $f \in L_2(\Omega, \CC_{\gI}, E_x)$ and $g \in L_2(\Omega,\CC_{\gI}, E_y)$. Then
\begin{align}
& \; \left| \int_{\Omega} {\rm Re}(f(\lambda) g(\lambda)) \, d\langle E(\lambda)x, y \rangle +\int_{\Omega} {\rm Im}(f(\lambda) g(\lambda)) \, d\langle E(\lambda)Jx, y \rangle \right| \nonumber \\
\leq& \; \int_{\Omega} |f(\lambda)g(\lambda)| d|E_{x,y}|(\lambda) \nonumber \\
\leq& \;  2^n \| f \|_{L_2(\Omega,\CC_{\gI}, E_x)} \| g \|_{L_2(\Omega,\CC_{\gI}, E_y)}, \label{eq:CSL_2}
\end{align}
where $E_{x,y}(M) = \langle E(M)x, y\rangle$ for $M \in \sB(\Omega)$ and for all $x, y \in \cH_n$.
\end{enumerate}
\end{lem}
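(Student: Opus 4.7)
The plan is as follows. For part (i), I would fix $M\in\sA$ and an arbitrary finite partition $(M_i)_{i=1}^k\subseteq\sA$ of $M$, and exploit the orthogonal-projection identity $E(M_i)=E(M_i)^{*}E(M_i)$ to rewrite $E_{x,y}(M_i)=\langle E(M_i)x, E(M_i)y\rangle$. The Clifford Cauchy-Schwarz inequality of Lemma~\ref{lem:KAPPA}(ii) then yields
\[
|E_{x,y}(M_i)| \;\leq\; \|E(M_i)x\|\,\|E(M_i)y\| \;=\; \sqrt{{\rm Re}\,E_x(M_i)}\sqrt{{\rm Re}\,E_y(M_i)}.
\]
Summing in $i$ and applying the classical discrete Cauchy-Schwarz inequality together with the $\sigma$-additivity of the scalar positive measures ${\rm Re}\,E_x$ and ${\rm Re}\,E_y$ gives $\sum_i |E_{x,y}(M_i)|\leq\sqrt{{\rm Re}\,E_x(M)}\sqrt{{\rm Re}\,E_y(M)}$; taking the supremum over all such partitions then delivers part~(i).

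For the first inequality of part (ii), I would reduce to simple $fg=\sum_k c_k\chi_{N_k}$ via Theorem~\ref{thm:BFUNC}(vii) and dominated convergence. Using $E(N_k)=E(N_k)^{2}$, the commutativity $JE(N_k)=E(N_k)J$, and $J^{*}=-J$, a direct calculation rewrites each summand as
\[
{\rm Re}(c_k)E_{x,y}(N_k)+{\rm Im}(c_k)\langle E(N_k)Jx,y\rangle \;=\; \langle\,({\rm Re}(c_k)I+{\rm Im}(c_k)J)E(N_k)x,\,E(N_k)y\,\rangle,
\]
and the anti self-adjointness of $J$ yields $\|({\rm Re}(c_k)I+{\rm Im}(c_k)J)v\|=|c_k|\,\|v\|$. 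The triangle inequality together with the Clifford Cauchy-Schwarz inequality applied termwise therefore bounds the LHS by $\sum_k |c_k|\sqrt{{\rm Re}\,E_x(N_k)}\sqrt{{\rm Re}\,E_y(N_k)}$, which in turn is majorised by $\sum_k|c_k|\,|E_{x,y}|(N_k) = \int_\Omega |fg|\,d|E_{x,y}|$ after a further setwise invocation of part~(i).

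For the second inequality, I would again reduce to simple $f=\sum_i a_i\chi_{M_i^{f}}$, $g=\sum_j b_j\chi_{M_j^{g}}$ and pass to the common refinement $P_{ij}:=M_i^{f}\cap M_j^{g}$. By part~(i) applied to each $P_{ij}$,
\[
\int_\Omega|fg|\,d|E_{x,y}|=\sum_{i,j}|a_ib_j|\,|E_{x,y}|(P_{ij}) \;\leq\; \sum_{i,j}|a_i|\,|b_j|\sqrt{{\rm Re}\,E_x(P_{ij})}\sqrt{{\rm Re}\,E_y(P_{ij})}.
\]
A single application of the discrete Cauchy-Schwarz inequality separates this sum into a product of $L_{2}({\rm Re}\,E_x)$- and $L_{2}({\rm Re}\,E_y)$-norms. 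The constant $2^{n}$ enters when converting ${\rm Re}\,E_x,{\rm Re}\,E_y$ into the total-variation measures $|E_x|,|E_y|$ used in $\|\cdot\|_{L_{2}(\Omega,\CC_{\gI},E_x)}$: the pointwise inequality $|a|\leq 2^{n}{\rm Re}(a)$ for $a\in\gP(\RR_n)$, obtained by expanding $a=\langle v,v\rangle$ in the $2^{n}$-element Clifford basis and applying Cauchy-Schwarz to its coefficients, yields $|E_x|(M)\leq 2^{n}{\rm Re}\,E_x(M)$ upon summation over partitions; absorbing this into the $L_2$-norms produces the stated constant.

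The hard part will be the first inequality of part~(ii), where one must fuse two distinct $\RR_n$-valued measures $E_{x,y}$ and $\langle E(\cdot)Jx,y\rangle$ into a single bound against the total variation $|E_{x,y}|$ alone; this forces a careful use of the projection identity $E(N_k)=E(N_k)^{2}$ together with the isometric character of the imaginary operator $J$. A secondary bookkeeping issue is the comparison chain between the three natural quantities ${\rm Re}\,E_x$, $|E_x(\cdot)|$, and $|E_x|$, whose mutual majorisations are the source of the Clifford-algebra constant $2^{n}$.
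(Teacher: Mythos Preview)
Your proof of part~(i) is correct and matches the paper's argument exactly.

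For part~(ii), however, there is a genuine gap in your argument for the \emph{first} inequality. You obtain the bound
\[
\text{LHS}\;\leq\;\sum_k |c_k|\sqrt{{\rm Re}\,E_x(N_k)}\sqrt{{\rm Re}\,E_y(N_k)}
\]
(which is correct), and then assert that this is ``majorised by $\sum_k|c_k|\,|E_{x,y}|(N_k)$ after a further setwise invocation of part~(i)''. But part~(i) gives the inequality in the \emph{opposite} direction: $|E_{x,y}|(N_k)\leq\sqrt{{\rm Re}\,E_x(N_k)}\sqrt{{\rm Re}\,E_y(N_k)}$, not the reverse. Indeed the reverse inequality is false in general (take $x\perp y$ with $E(N_k)x,E(N_k)y$ both nonzero). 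So the passage to $\int_\Omega|fg|\,d|E_{x,y}|$ cannot be made this way.

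The paper's own proof sidesteps this issue entirely: it does not isolate the middle term $\int_\Omega|fg|\,d|E_{x,y}|$ but goes directly from the LHS to the final $L_2$ bound. The mechanism is different from yours: after the triangle inequality one has two Clifford products ${\rm Re}(c_k)E_{x,y}(N_k)$ and ${\rm Im}(c_k)E_{Jx,y}(N_k)$, each bounded via the submultiplicativity $|ab|\leq 2^{n-1}|a|\,|b|$ of \eqref{eq:SMM}. One then applies part~(i) separately to $|E_{x,y}|$ and to $|E_{Jx,y}|$, and uses that $J$ is unitary and commutes with $E$ to obtain ${\rm Re}\,E_{Jx}(N_k)=\|E(N_k)Jx\|^2=\|JE(N_k)x\|^2=\|E(N_k)x\|^2={\rm Re}\,E_x(N_k)$. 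The two resulting identical terms combine with the $2^{n-1}$ to give the constant $2^n$, after which discrete Cauchy--Schwarz finishes as in your second-inequality argument. In particular, your account of the origin of the $2^n$ (a putative bound $|a|\leq 2^n{\rm Re}(a)$ for $a\in\gP(\RR_n)$) is not where the constant actually comes from; in fact Lemma~\ref{lem:KAPPA}(ii) already gives $|\langle v,v\rangle|\leq\|v\|^2={\rm Re}\,\langle v,v\rangle$ with constant~$1$.
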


\begin{proof}
We will first prove (i). Suppose $M = \cup_{k=1}^p M_k$, where $M_k \in \sA$ and $M_k \cap M_j = \emptyset$ whenever $k \neq j$. Using Lemma \ref{lem:KAPPA}, we have
\begin{align*}
|E_{x,y}(M_k) | =& \;  | \langle E(M_k)x, y \rangle | = | \langle E(M_k)x, E(M_k)y \rangle | \\
\leq& \;  \| E(M_k)x \| \| E(M_k) y \| \\
=& \;  \sqrt{ {\rm Re}\, E_x(M_k) } \, \sqrt{ {\rm Re} \, E_y(M_k) }.
\end{align*}
Consequently, we have
\begin{align*}
\sum_{k=1}^p |E_{x,y}(M_k)| \leq& \;  \sum_{k=1}^p \sqrt{ {\rm Re}\, E_x(M_k) } \, \sqrt{ {\rm Re} \, E_y(M_k) } \\
\leq& \; \left( \sum_{k=1}^p  {\rm Re}\, E_x(M_k) \right)^{1/2} \, \left( \sum_{k=1}^p {\rm Re} \, E_y(M_k) \right)^{1/2} \\
=& \; \sqrt{{\rm Re}\, E_x(M) } \, \sqrt{ {\rm Re}\, E_y(M) }.
\end{align*}
Finally, if we take the supremum over all possible disjoint unions of $M$ in $\sA$, then we obtain (i).

We will now prove (ii). We will verify \eqref{eq:CSL_2} for simple functions, in which case \eqref{eq:CSL_2} will hold by the density of simple functions in both $L_2$-spaces. Suppose $f(\lambda) = \sum_{k=1}^p f_k \chi_{M_k}(\lambda)$ and $g(\lambda) = \sum_{k=1}^p g_k \chi_{M_k}(\lambda)$ are simple functions in $L_2(\Omega, \CC_{\gI}, E_x)$ and $L_2(\Omega, \CC_{\gI}, E_y)$, respectively. Using \eqref{eq:SMM}, Assertion (i) and \eqref{eq:SMM}, we have
\begin{align*}
& \; \left| \int_{\Omega} {\rm Re}(f(\lambda) g(\lambda)) d\langle E(\lambda)x,y\rangle + \int_{\Omega} {\rm Im}(f(\lambda)g(\lambda)) d\langle E(\lambda)Jx, y \rangle \right|\\
 =& \; \left| \sum_{k=1}^p \{ {\rm Re}(f_k g_k) \langle E(M_k)x, y \rangle + {\rm Im}(f_kg_k)\langle E(M_k)Jx, y \rangle \} \right| \\
\leq& \; 2^{n-1} \sum_{k=1}^p |f_k \, g_k| | \{ |E_{x,y}|(M_k)  + |E_{Jx,y}|(M_k)\} \\
\leq& \;  2^{n-1} \sum_{k=1}^p |f_k \, g_k | \left( \sqrt{ {\rm Re}\, E_x(M_k) } \, \sqrt{ {\rm Re}\, E_y(M_k) } +  \sqrt{ {\rm Re}\, E_{Jx}(M_k) } \, \sqrt{ {\rm Re}\, E_y(M_k) } \right)\\
=& \;  2^{n-1} \sum_{k=1}^p |f_k \, g_k | \left( \sqrt{ {\rm Re}\, E_x(M_k) } \, \sqrt{ {\rm Re}\, E_y(M_k) } +  \sqrt{ {\rm Re}\, E_{x}(M_k) } \, \sqrt{ {\rm Re}\, E_y(M_k) } \right)\\
\leq& \;  2^n \left(   \sum_{k=1}^p |f_k|^2 {\rm Re}\, E_x(M_k)  \right)^{1/2} \\
\times& \;
\left(   \sum_{k=1}^p |g_k|^2 {\rm Re}\, E_y(M_k)  \right)^{1/2}.
\end{align*}
Thus, \eqref{eq:CSL_2} holds for simple functions.
\end{proof}

\subsection{Spectral integrals of unbounded measurable functions}
\label{sec:SIubdd}
Fix $\gI \in \mathbb{S}$. Let $\gF(\Omega, \sA, \CC_{\gI}, E)$ denote the set of all $\sA$-measurable functions \\$f: \Omega \to \CC_{\gI} \cup \{ \infty \}$ which are $E$-a.e. finite, i.e., $E ( \{ \lambda \in\Omega: f(\lambda) = \infty \}) = 0$.

\begin{defn}[bounding sequence]
\label{def:BOUNDINGSEQ}
Let $(M_j)_{j=1}^{\infty}$ be a sequence of sets, where $M_j \in \sB(\Omega, \sA, \CC_{\gI})$ and $M_j \subseteq M_{j+1}$ for $j=1,2,\ldots$. We will call $(M_j)_{j=1}^{\infty}$ a {\it bounding sequence} for a subset $\gG$ of $\gF(\Omega, \sA, \CC_{\gI}, E)$ if every $f \in \gG$ is bounded on $M_j$ and $E(\cup_{j=1}^{\infty} M_j) = I$.
\end{defn}
\begin{rem}
We note that if $(M_j)_{j=1}^{\infty}$ is a bounding sequence, then for the results appearing in Subsection \ref{sec:BASIC}, we have $E(M_j) \preceq E(M_{j+1})$ for $j=1,2,\ldots$,
$$\lim_{j \to \infty} E(M_j) x = x \quad \quad {\rm for} \quad x \in \cH_n$$
and $\cup_{j=1}^{\infty} E(M_j) \cH_n$ is dense in $\cH_n$.
\end{rem}

\begin{rem}
\label{rem:EVERYBDD}
Every finite subset $\{ f_1, \ldots, f_k \} \subseteq \gF(\Omega, \sA, \CC_{\gI}, E)$ has a bounding sequence. To see this, let
$$M_j := \{ \lambda \in \Omega: |f_j(\lambda)| \leq n \quad {\rm for} \quad j =1,\ldots, k \}$$
and $M = \cup_{j=1}^{\infty} M_j$. Thus, $\Omega \, \setminus \, M \subseteq \cup_{j=1}^k \{ \lambda \in \Omega: f_j(\lambda) = \infty \}$, in which case $E(\Omega \, \setminus \, M) = 0$.  Consequently, $(M_j)_{j=1}^{\infty}$ is a bounding sequence for $\{ f_1, \ldots, f_k \}$.
\end{rem}

\begin{thm}
\label{thm:BIG}
Let $f \in \gF(\Omega, \sA, \CC_{\gI}, E)$ and
\begin{equation}
\label{eq:DOM}
\cD( \II(f) ) := \{ x \in \cH_n: \int_{\Omega} |f(\lambda)|^2 d( {\rm Re}\langle E(\lambda)x, x \rangle ) < \infty \}.
\end{equation}
For any bounded sequence $(M_j)_{j=1}^{\infty}$ for $f$, we have the following{\rm :}
\begin{enumerate}
\item[(i)] $\cD(\II(f)) = \{ x \in \cH_n: \text{$(\II(f \chi_{M_j})x)_{j=1}^{\infty}$ converges in $\cH_n$}\} = \{ x \in \cH_n: \sup_{j \in \NN} \| \II(f \chi_{M_j}) x \| < \infty \}$.
\smallskip
\item[(ii)] For any $x \in \cD(\II(f))$, the limit of the sequence $(\II(f \chi_{M_j})_{k=1}^{\infty}$ does not depend on the choice of the bounding sequence $(M_j)_{j=1}^{\infty}$. Moreover, there is a linear operator $\II(f) \in \cL(\cH_n)$ with domain $\cD(\II(f))$ given by
\begin{equation}
\label{eq:IIf}
\II(f)x = \lim_{j \to \infty} \II(f \chi_{M_j})x \quad \quad {\rm for} \quad x \in \cD(\II(f)).
\end{equation}

\smallskip
\item[(iii)] The right submodule $\cup_{j=1}^{\infty} E(M_j)\cH_n \subseteq \cD(\II(f)) \subseteq \cH_n$ and is a core for $\II(f)$. Moreover,
\begin{equation}
\label{eq:SUBDOM}
E(M_j) \II(f) \subseteq \II(f) E(M_j) = \II(f \chi_{M_j}) \quad \quad {\rm for} \quad j=1,2,\ldots.
\end{equation}
\end{enumerate}
\end{thm}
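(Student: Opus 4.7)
The plan is to reduce everything to the scalar $L^2$-theory for the positive Borel measure $\mu_x := \mathrm{Re}\langle E(\cdot)x, x\rangle$ via the fundamental isometry
\[
\|\II(g)x\|^2 = \int_\Omega |g(\lambda)|^2 \, d\mu_x(\lambda), \qquad g \in \gB(\Omega, \sA, \CC_\gI),
\]
from Theorem \ref{thm:BFUNC}(v). Writing $M := \bigcup_j M_j$, I note that $E(\Omega \setminus M) = 0$, so $\mu_x(\Omega \setminus M) = 0$ for every $x \in \cH_n$. For (i), applying the isometry to $g = f\chi_{M_j}$ and using monotone convergence on $|f|^2 \chi_{M_j} \nearrow |f|^2 \chi_M$ gives $\lim_j \|\II(f\chi_{M_j})x\|^2 = \int_\Omega |f|^2 \, d\mu_x$, so the ``$\sup_j$ bounded'' condition is equivalent to $x \in \cD(\II(f))$. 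The Cauchy condition follows from the identity $\II(f\chi_{M_k})x - \II(f\chi_{M_j})x = \II(f\chi_{M_k \setminus M_j})x$ (valid for $j \leq k$) combined with $\|\II(f\chi_{M_k \setminus M_j})x\|^2 = \int_{M_k \setminus M_j}|f|^2 d\mu_x \to 0$.

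For (ii), given a second bounding sequence $(N_j)$ with union $N$, I would form the common refinement $R_j := M_j \cap N_j$. Monotonicity of the two sequences implies $\bigcup_j R_j = M \cap N$, and countable subadditivity of the scalar measure $\mu_x$ combined with $\mu_x(\Omega \setminus M) = \mu_x(\Omega \setminus N) = 0$ forces $\mu_x(\Omega \setminus (M \cap N)) = 0$ for all $x \in \cH_n$; since $E$ is projection-valued, this upgrades to $E(\Omega \setminus (M \cap N)) = 0$, so $(R_j)$ is itself a bounding sequence for $f$. The isometry then gives
\[
\|\II(f\chi_{M_j})x - \II(f\chi_{R_j})x\|^2 = \int_{M_j \setminus N_j} |f|^2 \, d\mu_x \leq \int_{M \setminus N_j} |f|^2 \, d\mu_x,
\]
and for $x \in \cD(\II(f))$ the dominator $|f|^2 \chi_M$ lies in $L^1(\mu_x)$, so dominated convergence along $M \setminus N_j \searrow M \setminus N \subseteq \Omega \setminus N$ drives the right side to $0$. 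The symmetric estimate handles $N_j$ versus $R_j$, so the limits along $(M_j)$, $(R_j)$, and $(N_j)$ all agree. Right $\RR_n$-linearity of the resulting $\II(f)$ then transfers from each $\II(f\chi_{M_j})$ by passing to the limit, and (i) implies directly that $\cD(\II(f))$ is closed under $x \mapsto xa$ and under addition, hence is a right submodule.

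For (iii), the engine is the change-of-measure identity $\mu_{E(M_k)x} = \mu_x(\,\cdot\, \cap M_k)$, which follows from Lemma \ref{lem:DISJOINT} via $\langle E(N)E(M_k)x, E(M_k)x\rangle = \langle E(N \cap M_k)x, x\rangle$; this gives $\int_\Omega |f|^2 d\mu_{E(M_k)x} = \int_{M_k}|f|^2 d\mu_x < \infty$ since $f$ is bounded on $M_k$, so $E(M_k)\cH_n \subseteq \cD(\II(f))$. Next, Theorem \ref{thm:BFUNC}(ii) yields $\II(f\chi_{M_k})E(M_j) = \II(f\chi_{M_j \cap M_k}) = \II(f\chi_{M_j})$ for $k \geq j$; letting $k \to \infty$ and using continuity of $E(M_j) \in \cB(\cH_n)$ furnishes both $\II(f)E(M_j) = \II(f\chi_{M_j})$ on all of $\cH_n$ and $E(M_j)\II(f) \subseteq \II(f)E(M_j)$ on $\cD(\II(f))$. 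For the core property, the vectors $x_j := E(M_j)x$ satisfy $x_j \to x$ by Lemma \ref{lem:MONOTONE} and $\II(f)x_j = \II(f\chi_{M_j})x \to \II(f)x$ by (ii), so $\|x_j - x\|_{\II(f)} \to 0$.

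The main obstacle will be the bookkeeping in step (ii): not any deep estimate, but the need to simultaneously verify that the common refinement $(R_j)$ genuinely bounds $f$, that the two $E$-null sets combine correctly to another null set, and that the dominator $|f|^2 \chi_M$ is $\mu_x$-integrable precisely on $\cD(\II(f))$. Once this scalar-measure bookkeeping is organized, the remaining steps are routine applications of monotone and dominated convergence on the positive measures $\mu_x$, with the projection-valued structure entering only through the identities $E(A)^2 = E(A)^* = E(A)$ and the product rule from Lemma \ref{lem:DISJOINT}.
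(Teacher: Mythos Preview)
Your proposal is correct and follows essentially the same approach as the paper: both reduce everything to the scalar $L^2$-isometry from Theorem \ref{thm:BFUNC}(v) together with monotone/dominated convergence for the positive measure $\mu_x = \mathrm{Re}\langle E(\cdot)x,x\rangle$, and both establish \eqref{eq:SUBDOM} via the multiplicativity $\II(f\chi_{M_k})E(M_j) = \II(f\chi_{M_j})$ for $k\geq j$ followed by passage to the limit. The only notable difference is in part (ii): you introduce the common refinement $R_j = M_j\cap N_j$ and compare each sequence to it, whereas the paper bypasses this by directly estimating $\|\II(f\chi_{M_i})x - \II(f\chi_{N_j})x\| \leq \|f\chi_{M_i}-f\|_{L_2(\mu_x)} + \|f - f\chi_{N_j}\|_{L_2(\mu_x)}$ via the triangle inequality in $L_2(\mu_x)$; your route is slightly longer but equally valid.
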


\begin{proof}
The proof is broken into steps.

\bigskip

\noindent {\bf Step 1:} {\it Prove {\rm (i)}}.

\bigskip

Let $x \in \cD(\II(f))$. Thus, by definition,  $f \in L_2(\Omega, \sA, \CC_{\gI}, E_x)$ and we may use Lebesgue's dominated convergence theorem to obtain $f \chi_{M_i} \to f$ in $L_2(\Omega, \CC_{\gI}, E_x)$. Since $(M_j)_{j=1}^{\infty}$ is a bounding sequence for $f$, we have that $f$ is bounded on $M_j$. Thus, $f \chi_{M_j} \in \gB(\Omega, \sA, \CC_{\gI})$ and $\II(f \chi_{M_j}) \in \cB(\cH_n)$ given by Definition \ref{def:BFUNC}. Using Theorem \ref{thm:BFUNC}(v), we obtain
\begin{align*}
\| \II(f \chi_{M_i})x - \II(f \chi_{M_j}) x \|^2 =& \; \| \II(f \chi_{M_i} - f \chi_{M_j} )x \|^2 \\
=& \; \int_{\Omega} | f(\lambda)\chi_{M_i}(\lambda) - f(\lambda) \chi_{M_j}(\lambda) |^2 d ( {\rm Re} \langle E(\lambda)x, x \rangle) \\
=& \; \| f \chi_{M_i} - f \chi_{M_j} \|^2_{L_2(\Omega, \CC_{\gI}, E_x)} \quad {\rm for} \quad i,j=1,2,\ldots.
\end{align*}
 Thus, putting all of the above observations together, we have that \\ $(\II(f \chi_{M_i}) x)_{i=1}^{\infty}$ is a Cauchy sequence in $\cH_n$ and hence $\II(f \chi_{M_i})x)_{i=1}^{\infty}$ converges in $\cH_n$.

Next, if $\{ \II(f \chi_{M_i})x : i=1,2,\ldots \}$ converges in $\cH_n$, then $\{ \| \II(f \chi_{M_i})x \| : i=1,2,\ldots \}$ is bounded.

Finally, suppose that $\{ \| \II(f \chi_{M_i} )x \|: i=1,2,\ldots \}$ is bounded. Then
$$\sup_{i \in \NN} \| \II(f \chi_{M_i})x \| < \infty.$$
Since $|f(\lambda)\chi_{M_i}(\lambda)|^2$ converges monotonically to $|f(\lambda)|^2$ $E_x$-a.e. on $\Omega$, we may use Lebesgue's monotone convergence theorem to obtain
\begin{align*}
\int_{\Omega} |f(\lambda)|^2 d( {\rm Re}\langle E(\lambda)x, x \rangle) =& \; \lim_{i \to \infty} \int_{\Omega} f(\lambda) \chi_{M_i}(\lambda)|^2 d( {\rm Re}\langle E(\lambda)x, x \rangle) \\
=& \; \lim_{i \to \infty} \| \II(f \chi_{M_i})x \|^2 < \infty.
\end{align*}
Thus, $f \in L_2(\Omega, \sA, \CC_{\gI}, E_x)$ and hence $x \in \cD(\II(f))$. We have managed to show
\begin{align*}
\cD(\II(f)) \subseteq& \; \{ x \in \cH_n: \text{$(\II(f \chi_{M_j})x)_{j=1}^{\infty}$ converges in $\cH_n$}\} \\
\subseteq& \;  \{ x \in \cH_n: \sup_{j \in \NN} \| \II(f \chi_{M_j}) x \| < \infty \} \\
\subseteq& \; \cD(\II(f)),
\end{align*}
in which case we have (i).

\bigskip

\noindent {\bf Step 2:} {\it Prove {\rm (ii)}}.

\bigskip

Suppose $(N_i)_{i=1}^{\infty}$ is also a bounding sequence for $f$ and $x \in \cD(\II(f)).$ Using Theorem \ref{thm:BFUNC}(i) and (v), we have
\begin{align*}
\| \II(f \chi_{M_i}x - \II(f \chi_{N_j} )x \|=& \; \| \II( f \chi_{M_i} - f \chi_{N_j}) x \|_{L_2(\Omega, \CC_{\gI}, E_x)} \\
\leq& \; \| f \chi_{M_i} - f \|_{ L_2(\Omega, \sA, \CC_{\gI}, E_x) } + \| f \chi_{N_j} - f \|_{ L_2(\Omega, \sA, \CC_{\gI}, E_x)} \\
\to& \; 0 \quad\quad {\rm as} \quad i,j \to \infty.
\end{align*}
Thus,
$$\lim_{i \to \infty} \II(f \chi_{M_i})x = \lim_{j \to \infty} \II(f \chi_{N_j})x \quad \quad {\rm for} \quad x \in \cD(\II(f)).$$
By (i), we have that $\cD(\II(f))$ is a right linear subspace of $\cH_n$ and \eqref{eq:IIf} gives rise to a right linear operator $\II(f) \in \cL(\cH_n)$.

\bigskip

\noindent {\bf Step 3:} {\it Prove {\rm (iii)}}.

\bigskip

Suppose $y \in \cH_n$. Recall that $E(M_i) = \II(\chi_{M_i})$ and hence we may use Theorem \ref{thm:BFUNC} to obtain
\begin{align}
\II( f \chi_{M_i}) y =& \; \II(f \chi_{M_i} \chi_{M_j} )y = \II(f \chi_{M_i}) E(M_j)y  \nonumber \\
=& \; E(M_j) \II(f \chi_{M_i})y \quad\quad {\rm whenever} \quad i \geq j.\label{eq:SUPPEQ}
\end{align}
Consequently, $\sup \{ \| \II(f \chi_{M_i})y \|: i=1,2,\ldots \} < \infty$ and hence $E(M_i)y \in \cD(\II(f))$, i.e., the union $\cup_{i=1}^{\infty} E(M_i) \cH_n \subseteq \cD(\II(f))$.

Letting $i \to \infty$ in \eqref{eq:SUPPEQ}, we obtain
$$\II(f) E(M_i)y = \II(f \chi_{M_i})y \quad \quad {\rm for} \quad y \in \cH_n.$$
Suppose $x \in \cD(\II(f))$ and letting $i \to \infty$ in \eqref{eq:SUPPEQ},  we obtain the equality in \eqref{eq:SUBDOM}.

Finally, using the fact that $E(M_i)y \to y$ as $i \to \infty$ for all $y \in \cH_n$ and $\II(f) E(M_i)x = E(M_i) \II(y) \to \II(f)x$ for all $x \in \cD(\II(f))$, we have that $\cup_{i=1}^{\infty} E(M_i) \cH_n$ is a core for $\II(f)$.
\end{proof}

\begin{rem}
\label{rem:OBSERVATION}
If $f \in \gB(\Omega, \sA, \CC_{\gI}, E)$, then $(M_i)_{i=1}^{\infty}$, where $M_i = \Omega$ for $i=1,2,\ldots$, is a bounding sequence for $f$. Consequently, the operator $\II(f) \in \cL(\cH_n)$ given by \eqref{eq:IIf} coincides with $\II(f) \in \cB(\cH_n)$ in Definition \ref{def:BFUNC}.
\end{rem}

\begin{thm}
\label{thm:FG}
Suppose $f,g \in \gF(\Omega, \sA, \CC_{\gI}, E)$. Then
\begin{align}
\langle \II(f))x, \II(g)y \rangle =& \; \int_{\Omega} {\rm Re}(f(\lambda) \overline{g(\lambda)}) d \langle E(\lambda)x, y \rangle  \nonumber \\
+& \; \int_{\Omega} {\rm Im}(f(\lambda) \overline{g(\lambda)}) d \langle JE(\lambda)x, y \rangle \label{eq:XY}
\end{align}
for $x \in \cD(\II(f))$ and $y \in \cD(\II(g))$ and
\begin{equation}
\label{eq:fNORM}
\| \II(f)x \|^2 = \int_{\Omega} |f(\lambda)|^2 d( {\rm Re}\langle E(\lambda)x, x \rangle \quad \quad {\rm for} \quad x \in \cD(\II(f)).
\end{equation}
\end{thm}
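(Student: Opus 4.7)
The plan is a two-step extension: establish both formulas for bounded measurable functions via the already-developed calculus of Theorem \ref{thm:BFUNC}, then lift to the unbounded case by truncation along a common bounding sequence and a $L_2$ Cauchy--Schwarz estimate.

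First, I would handle the bounded case. Suppose $f, g \in \gB(\Omega,\sA,\CC_{\gI})$. Since $f,g$ are $\CC_{\gI}$-valued (and $\CC_{\gI}$ is commutative), the product $\bar g f$ equals $f \bar g$ pointwise. Applying Theorem \ref{thm:BFUNC}(i)--(ii),
\[
\langle \II(f)x, \II(g)y\rangle \;=\; \langle \II(g)^*\II(f)x, y\rangle \;=\; \langle \II(\bar g)\II(f)x, y\rangle \;=\; \langle \II(f\bar g)x, y\rangle,
\]
and then Theorem \ref{thm:BFUNC}(iv) applied to $f\bar g \in \gB(\Omega,\sA,\CC_{\gI})$ gives \eqref{eq:XY} immediately. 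Specializing to $g=f$ and $y=x$ yields $\langle \II(f)x,\II(f)x\rangle = \int_\Omega |f(\lambda)|^2\,d\langle E(\lambda)x,x\rangle$ (the imaginary-part integral vanishes because $|f|^2$ is real), and taking real parts produces \eqref{eq:fNORM} in the bounded case.

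Next, for the general unbounded case, fix $f,g \in \gF(\Omega,\sA,\CC_{\gI},E)$, $x\in \cD(\II(f))$, $y \in \cD(\II(g))$. By Remark \ref{rem:EVERYBDD} there is a common bounding sequence $(M_j)_{j=1}^{\infty}$ for $\{f,g\}$. Theorem \ref{thm:BIG}(ii) gives $\II(f)x=\lim_j \II(f\chi_{M_j})x$ and $\II(g)y=\lim_j \II(g\chi_{M_j})y$, so by continuity of the inner product and the bounded case applied to $f\chi_{M_j}, g\chi_{M_j}$,
\begin{align*}
\langle \II(f)x,\II(g)y\rangle \;=\; \lim_{j\to \infty} \Bigl[& \int_{M_j} {\rm Re}(f(\lambda)\overline{g(\lambda)})\,d\langle E(\lambda)x,y\rangle \\
+\;& \int_{M_j} {\rm Im}(f(\lambda)\overline{g(\lambda)})\,d\langle JE(\lambda)x,y\rangle\Bigr].
\end{align*}
The remaining task is to identify this limit with the analogous integrals over $\Omega$. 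Applying Lemma \ref{lem:L2}(ii) to $f\chi_{\Omega\setminus M_j}$ and $\bar g$ bounds the tail
\[
\Bigl|\int_{\Omega\setminus M_j} {\rm Re}(f\bar g)\,d\langle Ex,y\rangle + \int_{\Omega\setminus M_j} {\rm Im}(f\bar g)\,d\langle JEx,y\rangle\Bigr|
\;\leq\; 2^n\, \| f\chi_{\Omega\setminus M_j}\|_{L_2(E_x)}\,\|g\|_{L_2(E_y)},
\]
and the right-hand side tends to $0$ by dominated convergence since $f \in L_2(\Omega,\sA,\CC_{\gI},E_x)$ (by definition of $\cD(\II(f))$) and $g \in L_2(\Omega,\sA,\CC_{\gI},E_y)$. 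This yields \eqref{eq:XY}. Then \eqref{eq:fNORM} follows by taking $g=f$, $y=x$ exactly as in the bounded case.

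I expect the main obstacle to be the dominated-limit step: one must have Lemma \ref{lem:L2}(ii) already in hand (supplying the correct Cauchy--Schwarz bound on the \emph{combined} integral $\int {\rm Re}(\cdot)\,dE_{x,y} + \int {\rm Im}(\cdot)\,d\langle JEx,y\rangle$) so that one does not need to analyze the real and imaginary pieces individually against the $\RR_n$-valued measures $\langle Ex,y\rangle$ and $\langle JEx,y\rangle$, whose total variations are not directly controlled by $E_x$ and $E_y$ alone. The commutativity of $J$ with each $E(M)$, which equates $\langle JEx,y\rangle$ with $\langle EJx,y\rangle$, is also implicitly used so that Lemma \ref{lem:L2}(ii) applies verbatim.
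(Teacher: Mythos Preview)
Your proposal is correct and follows essentially the same route as the paper: establish the identity for the truncations $f\chi_{M_j}, g\chi_{M_j}$ via Theorem \ref{thm:BFUNC}(i),(ii),(iv), then pass to the limit using Theorem \ref{thm:BIG}(ii) together with the $L_2$ Cauchy--Schwarz bound of Lemma \ref{lem:L2}(ii) to control the tail, and finally specialize to $g=f$, $y=x$ for \eqref{eq:fNORM}. Your remark about needing the combined estimate from Lemma \ref{lem:L2}(ii) (rather than treating the two integrals separately) is exactly the point.
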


\begin{proof}
We will first show \eqref{eq:XY}. Let $(M_i)_{i=1}^{\infty}$ be a bounding sequence for $\{ f, g \}$ and hence $fg \chi_{M_i} \in \gB(\Omega, \sA, \CC_{\gI}, E)$. We may make use of Theorem \ref{thm:BFUNC}(i) and (ii) to obtain
\begin{align}
& \; \int_{\Omega} {\rm Re}(f(\lambda) \overline{g(\lambda)}) d\langle E(\lambda)x, y \rangle
+ \int_{\Omega}{\rm Im}(f(\lambda)\overline{g(\lambda)})d\langle JE(\lambda)x,y\rangle \nonumber \\
=& \; \langle \II(f \bar{g}\chi_{M_i})x, y \rangle \nonumber \\
=& \; \langle \II(f \chi_{M_i})x, \II(g \chi_{M_i})y \rangle. \label{eq:FGsplit}
\end{align}
Since $x \in \cD(\II(f)$, we have $f \in L_2(\Omega, \sA, \CC_{\gI}, E_x)$ by \eqref{eq:DOM}. Similarly, since $y \in \cD(\II(g)$, we have $g \in L_2(\Omega,\sA, \CC_{\gI},  E_y)$. One can use Lemma \ref{lem:L2}(ii) to see that the integrals
$$\int_{\Omega} {\rm Re}(f(\lambda)\overline{g(\lambda)} d \langle E(\lambda)x, y \rangle \;
{\rm and} \;
\int_{\Omega} {\rm Im}(f(\lambda)\overline{g(\lambda)} ) d \langle E(\lambda)Jx, y \rangle$$
are both convergent. We may also use Lemma \ref{lem:L2}(ii) and the fact that $f \to f \chi_{M_i}$ in $L_2(\Omega, \sA,\CC_{\gI},E_x)$ to see that
\begin{align*}
& \; | \int_{\Omega} {\rm Re}(f(\lambda)\overline{g(\lambda)})\chi_{M_i}(\lambda) d\langle E(\lambda)x, y \rangle - \int_{\Omega} {\rm Re}(f(\lambda)\overline{g(\lambda)}) d\langle E(\lambda)x, y \rangle  \\
+& \; \int_{\Omega} {\rm Im}(f(\lambda)\overline{g(\lambda)})\chi_{M_i}(\lambda) d\langle E(\lambda)Jx, y \rangle - \int_{\Omega} {\rm Im}(f(\lambda)\overline{g(\lambda)}) d\langle E(\lambda)Jx, y \rangle | \\
=& \; | \int_{\Omega} {\rm Re}(  \{ f(\lambda)\chi_{M_i}(\lambda) - f(\lambda) \} \overline{g(\lambda)}) d\langle E(\lambda)x, y \rangle \\
+& \; \int_{\Omega} {\rm Im}(  \{ f(\lambda)\chi_{M_i}(\lambda) - f(\lambda) \} \overline{g(\lambda)}) d\langle E(\lambda)Jx, y \rangle| \\
\leq& \; 2^n \| f \chi_{M_i} - f \|_{L_2(\Omega, \sA,\CC_{\gI}, E_x)} \| g \|_{L_2(\Omega,\sA, \CC_{\gI}, E_y)} \\
\to& \; 0 \quad {\rm as} \quad i \to \infty.
\end{align*}
Thus, if we let $i \to \infty$ in \eqref{eq:FGsplit}, we obtain \eqref{eq:XY}.

Formula \eqref{eq:fNORM} is easily obtained from \eqref{eq:XY} by putting $y = x$ and $g = f$.
\end{proof}

\begin{thm}
\label{thm:UNBFUNCCALC}
Let $J$ be an imaginary operator associated with the spectral measure $E$. For any $f, g \in \gF(\Omega, \sA, \CC_{\gI}, E)$ and $c \in \CC_{\gI}$, where $\gI \in \mathbb{S}$, we have the following{\rm :}
\begin{enumerate}
\item[(i)] $\II(\bar{f}) = \II(f)^*$.

\smallskip
\item[(ii)] $\II(fg) = \overline{\II(f)\II(g)}$.

\smallskip
\item[(iii)] $\II(fc +g) = \overline{\II(f) ({\rm Re}(c)I + {\rm Im}(c)J )+ \II(g)}$ for all $c \in \CC_{\gI}$.

\smallskip
\item[(iv)] $\II(f)$ is a closed normal operator belonging to $\cL(\cH_n)$.

\smallskip
\item[(v)] $\cD(\II(f) \II(g)) = \cD( \II(fg) ) \cap \cD(\II(g))$.

\end{enumerate}
\end{thm}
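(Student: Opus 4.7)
I would prove the five parts in the order (i), (v), (ii), (iii), (iv), since each part leans on the previous ones. The workhorse throughout is the bounding-sequence description of $\II(f)$ from Theorem \ref{thm:BIG}, combined with the bounded spectral calculus of Theorem \ref{thm:BFUNC} and the bilinear integral identity of Theorem \ref{thm:FG}.

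For (i), note first that $\cD(\II(f))=\cD(\II(\bar f))$ since $|f|^2=|\bar f|^2$. Applying Theorem \ref{thm:FG} once with $g\equiv 1$ (so $\II(g)=I$) and once with the first function equal to $1$ shows that $\langle \II(f)x,y\rangle=\langle x,\II(\bar f)y\rangle$ on this common domain, giving $\II(\bar f)\subseteq \II(f)^*$. For the reverse inclusion, take $x\in\cD(\II(f)^*)$ and a bounding sequence $(M_j)$ for $\{f\}$. Since $E(M_j)v\in\cD(\II(f))$ for every $v\in\cH_n$, the adjoint relation combined with Theorem \ref{thm:BFUNC}(i) applied to the bounded operator $\II(f\chi_{M_j})$ yields $\II(\bar f\chi_{M_j})x=E(M_j)\II(f)^*x$, whose norm is at most $\|\II(f)^*x\|$. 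Theorem \ref{thm:BIG}(i) then forces $x\in\cD(\II(\bar f))$, and $\II(\bar f)x=\II(f)^*x$ follows.

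The pivot for (v) is the change-of-measure identity
\[
\|E(M)\II(g)x\|^2=\int_M |g(\lambda)|^2\,d({\rm Re}\,E_x)(\lambda)\qquad (x\in\cD(\II(g)),\,M\in\sA).
\]
For bounded $g$ this is immediate from Theorem \ref{thm:BFUNC}(v) applied to $\chi_M g$; for $g\in\gF$, a bounding-sequence argument combined with monotone convergence on the finite positive measure ${\rm Re}\,E_x$ (Lemma \ref{lem:2SeptMEASURE}) yields the general case. Given this, the condition $\II(g)x\in\cD(\II(f))$ becomes $\int|fg|^2\,d({\rm Re}\,E_x)<\infty$, delivering (v). Part (ii) then follows: for $x$ in the common domain from (v) and a bounding sequence $(M_j)$ for $\{f,g,fg\}$, Theorem \ref{thm:BFUNC}(ii) gives $\II(f\chi_{M_j})\II(g\chi_{M_j})x=\II(fg\chi_{M_j})x$; using $\II(g\chi_{M_j})x=E(M_j)\II(g)x$ from Theorem \ref{thm:BIG}(iii) together with $\II(f\chi_{M_j})E(M_j)=\II(f\chi_{M_j})$, the left side converges to $\II(f)\II(g)x$, so $\II(f)\II(g)\subseteq\II(fg)$. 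Since $\II(fg)$ is closed by (i) and Theorem \ref{thm:Dec14yy1}(i), its closure contains $\overline{\II(f)\II(g)}$; conversely, for any $x\in\cD(\II(fg))$ the sequence $x_j:=E(M_j)x$ sits in $\cD(\II(f)\II(g))$ by the change-of-measure identity and satisfies $\II(f)\II(g)x_j=\II(fg\chi_{M_j})x\to\II(fg)x$. Part (iii) follows by taking $g$ to be the constant function $c$ in (ii) (so $\II(c)={\rm Re}(c)I+{\rm Im}(c)J$) and handling the additive piece via the elementary bound $|f_1+f_2|^2\le 2(|f_1|^2+|f_2|^2)$ on a common bounding sequence.

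For (iv), $\II(f)$ is densely defined because the core $\cup_j E(M_j)\cH_n$ is dense; it is closed because applying (i) twice yields $\II(f)^{**}=\II(\bar f)^*=\II(f)$, so Theorem \ref{thm:Dec14uu1}(iii) applies; and the domain identity $\cD(\II(f))=\cD(\II(f)^*)$ is automatic. For normality proper, (v) used with $g=\bar f$ and with $g=f$ forces $\cD(\II(f)\II(f)^*)=\cD(\II(f)^*\II(f))=\cD(\II(|f|^2))$, because the finiteness of ${\rm Re}\,E_x$ gives $L_4\subseteq L_2$ and hence $\cD(\II(|f|^2))\subseteq\cD(\II(f))$; combined with the operator inclusion from (ii), both compositions equal $\II(|f|^2)$. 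The main technical obstacle is the change-of-measure identity for unbounded $g$: one has to control the $\gP(\RR_n)$-valued measure $E_{\II(g)x}$ through its scalar real part and interchange limits with the noncommutative spectral integral, which is precisely where the finiteness of $E_x$ and the bounding-sequence approximation of Theorem \ref{thm:BIG} become essential.
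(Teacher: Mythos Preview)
Your proposal is correct and follows essentially the same bounding-sequence strategy as the paper, but with two noteworthy departures. First, you prove (v) before (ii), via the change-of-measure identity ${\rm Re}\,E_{\II(g)x}(M)=\int_M|g|^2\,d({\rm Re}\,E_x)$; the paper instead establishes (ii) first (using the core property of $\cE=\bigcup_i E(M_i)\cH_n$ and the containment $E(M_i)\,\overline{\II(f)\II(g)}\subseteq\II(fg)E(M_i)$) and only then reads off (v) from the equality $\II(fg)E(M_i)x=\II(f)\II(g)E(M_i)x$ together with closedness of $\II(f)$. Your route makes (v) logically independent of (ii) and is arguably cleaner. Second, for (iv) the paper does not compute domains directly: it observes $\II(f)^*\II(f)\subseteq\II(|f|^2)$, notes that $\II(f)^*\II(f)$ is automatically self-adjoint by Theorem~\ref{thm:Ctransform}(ii) while $\II(|f|^2)$ is self-adjoint by (i), and concludes equality since a self-adjoint operator has no proper self-adjoint extension; your $L_4\subseteq L_2$ argument via (v) reaches the same conclusion by explicit domain bookkeeping. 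Both arguments are valid; the paper's is more abstract, yours more computational.
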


\begin{proof}
Let $(M_i)_{i=1}^{\infty}$ be a bounding sequence $f, g \in \gF(\Omega, \sA, \CC_{\gI}, E)$ (see Remark \ref{rem:EVERYBDD}). We note that it is easy to check that $(M_i)_{I=1}^{\infty}$ is a bounded sequence for the functions $f+g, fg$ and $\overline{f}$.  Consequently, Theorem \ref{thm:BIG}(iii) can be used to see that
\begin{equation}
\label{eq:cE}
\mathcal{E} := \{ \bigcup_{i=1}^{\infty} E(M_i) x: x \in \cH_n \}
\end{equation}
is a core for $\II(f+g)$ and $\II(fg)$. The remainder of the proof is broken into steps.

\smallskip

\noindent {\bf Step 1:} {\it Prove {\rm (i)}}.

\smallskip

Suppose $x \in \cD(\II(f))$ and $ y\in \cD(\II(g))$. Using \eqref{eq:SUBDOM} and Theorem \ref{thm:BFUNC}(i) we obtain
\begin{align*}
\langle E(M_i)\II(f)x, y \rangle =& \; \langle \II(f \chi_{M_i}) x, y \rangle = \langle x, \II( \overline{f \chi_{M_i} y} ) \rangle = \langle x, \II( \overline{f} \chi_{M_i} )y\rangle \\
=& \; \langle x, E(M_i) \II(\overline{f}) y \rangle.
\end{align*}
Letting $i \to \infty$ above results in $\langle \II(f)x, y \rangle = \langle x, \II(\overline{f}) y\rangle$, i.e., $\II(\overline{f}) \subseteq \II(f)^*$. Next, suppose $x \in \cH_n$ and $y \in \cD(\II(f)^*)$. Then we can again make use of \eqref{eq:SUBDOM} and Theorem \ref{thm:BFUNC}(i) to obtain
$$\langle x, E(M_i) \II(f)^* y \rangle
= \langle \II(f) E(M_i)x, y \rangle
= \langle \II(f \chi_{M_i}) \, x, y \rangle
= \langle x, \II( \overline{f \chi_{M_i}} )  y \rangle.$$
Thus, $E(M_i) \II(f)^* y = \II(\overline{f \chi_{M_i} } )y$ and hence
\begin{align*}
\| \II(\overline{f \chi_{M_i}} ) y \|^2 =& \;  {\rm Re} \langle \II(\overline{f \chi_{M_i}} ) y, \II(\overline{f \chi_{M_i} } ) y \rangle  \\
=& \; {\rm Re} \langle E(M_i) \II(f)^* y, E(M_i) \II(f)^* y \rangle  \\
=& \; \| E(M_i) \II(f)^* y \|^2.
\end{align*}
But then
$$\sup_{i=1,2,\ldots } \| \II(f \chi_{M_i} ) \| \leq \| \II(f)^* y \|,$$
in which case, Theorem \ref{thm:BIG}(i) ensures that $y \in \cD(\II(\overline{f}))$. Thus, $\cD( \II(f)^* ) \subseteq \cD(\II(\overline{f}) )$ and hence $\II(\overline{f}) \subseteq \II(f)^*$ implies that $\II(f)^* = \II(\overline{f})$ as required.

\smallskip

\noindent {\bf Step 2:} {\it Prove {\rm (ii)}}.

\smallskip

We begin by noting that \eqref{eq:SUBDOM} asserts that $\cE$ is a dense right submodule such that $\cE \subseteq \cD(\II ( \overline{g} ) \II(\overline{f} ) )$.  Consequently, we may use (i) and Theorem \ref{thm:Dec14uu1} to see that $\II(f)\II(g)$ is closable. One can use \eqref{eq:SUBDOM} and Theorem \ref{thm:BFUNC}(i) to check that
\begin{equation}
\label{eq:E1}
\II(fg) E(M_i) = \II(f) \II(g) E(M_i)
\end{equation}
and hence
\begin{equation}
\label{eq:E2}
E(M_i) \II(f) \II(g) \subseteq \II(f) \II(g) E(M_i) = \II(fg) E(M_i).
\end{equation}
If we let $i \to \infty$ in \eqref{eq:E1} and we make use of the fact that $\cE$ is a core for $\II(fg)$, then we obtain $\II(fg) \subseteq \overline{\II(f)\II(g)}$. On the other hand, as $\II(fg) E(M_i) \in \cB(\cH_n)$, we may use \eqref{eq:E2} to conclude $E(M_i) \overline{\II(f)\II(g)} \subseteq \II(fg) E(M_i)$. Letting $i \to \infty$ in $E(M_i) \overline{\II(f)\II(g)} \subseteq \II(fg) E(M_i)$ and using \eqref{eq:IIf}, we obtain $\overline{\II(f)\II(g)} \subseteq \II(fg)$. But then we have (ii).

\smallskip

\noindent {\bf Step 3:} {\it Prove} (iii).

\smallskip

The fact that $\II(fc) = \II(f) ({\rm Re}(c) I + {\rm Im}(c)J )$, where $I$ is the identity operator and $J$ is the imaginary operator, is an immediate consequence of Theorem \ref{thm:BFUNC}(iii) and (ii) in the present theorem. Thus, in order to prove (iii), we need only consider $\II(f+g)$. Let $\cE$ be as in \eqref{eq:cE}. Then, in view of (i), the set containments
$$\cE \subseteq \cD(\II(\overline{f}) + \II(\overline{g}) ) = \cD(\II(f)^* + \II(g)^*) \subseteq \cD( \{ \II(f) + \II(g) \}^* )$$
demonstrate that $\II(f) + \II(g)$ is closable. Moreover, $\cE$ is a dense right submodule of $\cH_n$. Using \eqref{eq:SUBDOM} and Theorem \ref{thm:BFUNC}(i), we obtain
\begin{align}
(\II(f)+\II(g)) E(M_i) =& \; \II(f \chi_{M_i}) + \II(g \chi_{M_i}) \nonumber \\
=& \;  \II( f\chi_{M_i} + g\chi_{M_i} ) \nonumber \\
=& \;  \II(f+g) E(M_i)\label{eq:FF1}
\end{align}
and
\begin{align}
E(M_i)( \II(f)+\II(g)) =& \; E(M_i) \II(f) + E(M_i) \II(g)  \nonumber \\
\subseteq & \; \{ \II(f) + \II(g) \} E(M_i) \nonumber \\
=& \; \II(f+g) E(M_i). \label{eq:FF2}
\end{align}
If we let $i \to \infty$ in \eqref{eq:FF1} and make use of the fact that
$$\{ \bigcup_{i=1}^{\infty} E(M_i) x: x \in \cH_n \}$$
is a core for $\II(f+g)$, then we obtain $\II(f+g) \subseteq \overline{ \II(f)+\II(g) }$. Finally, \eqref{eq:FF2} implies that $E(M_i) \overline{\II(f) + \II(g)} \subseteq \II(f+g) E(M_i)$ and letting $i \to \infty$ we have $\overline{\II(f)+\II(g)} \subseteq \II(f+g)$.

\smallskip

\noindent {\bf Step 4:} {\it Prove} (iv).

\smallskip

It follows from (i) and (ii) that $\II(f)^*\II(f) = \II(\overline{f})\II(f) \subset \II(\overline{f} f )$. Note that $\II(f)^* \II(f) \in \cL(\cH_n)$ is closed. Thus, we may use Theorem \ref{thm:Ctransform}(ii) to conclude that $\II(f)^* \II(f)$ is self-adjoint. On the other hand, (i) implies that $\II(\overline{f} f )$ is self-adjoint, in which case, we have $\II(f)^* \II(f) = \II(\overline{f} f )$. We can replicated the above argument to obtain $\II(f) \II(f)^* = \II(f \overline{f})$ and hence we have $\II(f)^* \II(f) = \II(f) \II(f)^*$.

\smallskip

\noindent {\bf Step 5:} {\it Prove (v)}.

\smallskip

We begin by noting that (ii) implies that $\cD(\II(f)\II(g)) \subseteq \cD(\II(fg)) \cap \cD(g)$. Thus, in order to prove (v), we need only show the opposite set containment. Suppose $x \in \cD(\II(fg)) \cap \cD(\II(g))$. Then we may use \eqref{eq:E1} to obtain
$\II(fg) E(M_i)x = \II(f) \II(g) E(M_i)x$ and letting $i \to \infty$ and using the fact that $\II(f)$ is closed, we have $\II(g) x \in \cD(\II(f))$. Thus, we have $x \in \cD(\II(f)\II(f))$ as required.
\end{proof}

\begin{thm}
\label{thm:Ffacts}
For any $f,g \in \gF(\Omega, \sA, \CC_{\gI}, E)$, we have the following{\rm :}
\begin{enumerate}
\item[(i)] If $f = g$ $E$-a.e. on $\Omega$, then $\II(f) = \II(g)$.

\smallskip
\item[(ii)] If $f$ is real-valued $E$-a.e. on $\Omega$, then $\II(f) \in \cL(\cH_n)$ is a self-adjoint operator.

\smallskip
\item[(iii)] If $f(\lambda) \geq 0$ $E$-a.e. on $\Omega$, then $\II(f) \in \cL(\cH_n)$ is a positive operator. Moreover, $(\II(\sqrt{f}))^2 = \II(f)$ and hence $\II(f)$ has a positive square root.
\end{enumerate}
\end{thm}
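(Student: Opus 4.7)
The plan is to prove the three statements in order, each building on the previous one and on the functional calculus relations already established in Theorem \ref{thm:UNBFUNCCALC} together with the isometry formula \eqref{eq:fNORM}. The unifying principle is that all the required identities for unbounded $f$ can be reduced to identities for the bounded truncations $f\chi_{M_j}$ (where $(M_j)_{j=1}^\infty$ is a common bounding sequence) via Theorem \ref{thm:BIG}, and then passed to the limit.

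For (i), let $(M_j)_{j=1}^\infty$ be a common bounding sequence for $\{f,g\}$, which exists by Remark \ref{rem:EVERYBDD}. Since $f=g$ holds $E$-a.e.\ on $\Omega$, we have $|f|^2=|g|^2$ holds $E_x$-a.e.\ for every $x\in\cH_n$, so the domain characterisation \eqref{eq:DOM} immediately gives $\cD(\II(f))=\cD(\II(g))$. Applying the isometry identity \eqref{eq:fNORM} to the bounded measurable function $(f-g)\chi_{M_j}\in\gB(\Omega,\sA,\CC_\gI)$ yields
\[
\|\II(f\chi_{M_j})x-\II(g\chi_{M_j})x\|^2=\int_\Omega|f-g|^2\chi_{M_j}\,d({\rm Re}\,\langle E(\lambda)x,x\rangle)=0
\]
for every $x\in\cD(\II(f))$, since the integrand vanishes $E_x$-a.e. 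Taking $j\to\infty$ via \eqref{eq:IIf} gives $\II(f)x=\II(g)x$.

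For (ii), assume $f$ is real-valued $E$-a.e.; by (i) we may replace $f$ by its real part and suppose $f=\bar f$ on all of $\Omega$. Then Theorem \ref{thm:UNBFUNCCALC}(i) gives $\II(f)^*=\II(\bar f)=\II(f)$, so $\II(f)$ is self-adjoint.

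For (iii), suppose $f\ge 0$ $E$-a.e.; again by (i) we may assume $f\ge 0$ pointwise. Set $g:=\sqrt f\in\gF(\Omega,\sA,\CC_\gI,E)$, which is real-valued, so by (ii) the operator $\II(g)\in\cL(\cH_n)$ is self-adjoint. Theorem \ref{thm:UNBFUNCCALC}(ii) yields $\II(f)=\II(g\cdot g)=\overline{\II(g)\II(g)}$. Since $\II(g)$ is densely defined and closed (being self-adjoint), Theorem \ref{thm:Ctransform}(ii) asserts that $\II(g)^*\II(g)=\II(g)^2$ is already self-adjoint and, in particular, closed; hence the closure is redundant and $(\II(\sqrt f))^2=\II(g)^2=\II(f)$, which also supplies the positive square root. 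For positivity of $\II(f)$, note that $\cD(\II(f))=\cD(\II(g)^*\II(g))\subseteq\cD(\II(g))$ and for every $x\in\cD(\II(f))$ we have
\[
\langle\II(f)x,x\rangle=\langle\II(g)^*\II(g)x,x\rangle=\langle\II(g)x,\II(g)x\rangle\succeq 0
\]
by \eqref{eq:IP1}. The only subtlety worth flagging is the step where one drops the closure bar in $\II(f)=\overline{\II(g)\II(g)}$; this is the main technical point, and it is handled cleanly because $\II(g)$ is self-adjoint, which makes $\II(g)^*\II(g)$ automatically closed by Theorem \ref{thm:Ctransform}(ii) and forces equality with $\II(g\cdot g)$ via Lemma \ref{lem:surjective} applied to the containment $\II(g)\II(g)\subseteq\II(g^2)$.
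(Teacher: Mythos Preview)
Your proof is correct. The closing remark about Lemma~\ref{lem:surjective} is misplaced (neither surjectivity of $\II(g)^2$ nor injectivity of $\II(g^2)$ is available in general), but it is also superfluous: your preceding sentence already shows $\II(g)^2=\II(g)^*\II(g)$ is self-adjoint, hence closed, via Theorem~\ref{thm:Ctransform}(ii), so $\overline{\II(g)^2}=\II(g)^2$ and the closure bar drops.

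Your argument for (i) matches the paper's, but (ii) and (iii) take a different route. For (ii) the paper computes $\langle\II(f)x,x\rangle$ through the integral formula \eqref{eq:XY} (with $g\equiv 1$, $y=x$) and invokes the characterisation in Lemma~\ref{lem:SA}; you instead apply the adjoint identity $\II(f)^*=\II(\bar f)$ from Theorem~\ref{thm:UNBFUNCCALC}(i) directly, which is more economical. For (iii) the paper again uses the integral formula to get $\langle\II(f)x,x\rangle\succeq 0$, and deduces $(\II(\sqrt f))^2=\II(f)$ from parts (ii) and (v) of Theorem~\ref{thm:UNBFUNCCALC}: the domain identity $\cD(\II(g)\II(g))=\cD(\II(g^2))\cap\cD(\II(g))$ together with the inclusion $\cD(\II(g^2))\subseteq\cD(\II(g))$ (Cauchy--Schwarz on the finite measure ${\rm Re}\,E_x$) forces $\II(g)\II(g)=\II(g^2)$ without any closure. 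Your approach instead factors $\II(f)=\II(g)^*\II(g)$ and reads off both closedness and positivity from Theorem~\ref{thm:Ctransform}(ii) and \eqref{eq:IP1}. Both arguments are clean; note only that to conclude $\II(f)$ has a \emph{positive} square root you still need $\II(\sqrt f)\succeq 0$, which follows by the same device applied one level down (factor $\II(\sqrt f)=\II(f^{1/4})^*\II(f^{1/4})$) or by the integral formula as in the paper.
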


\begin{proof}
Suppose $f = g$ $E$-a.e.. Then $E( \{ \lambda \in \Omega: f(\lambda) \neq g(\lambda) \} ) = 0$ and then it follows from \eqref{eq:DOM} that $\cD(\II(f)) = \cD(\II(g))$. We also have that any bounding sequence $(M_i)_{i=1}^{\infty}$ for $f$ is also a bounding sequence for $g$. Consequently, we may use \eqref{eq:IIf} to deduce that $\II(f) = \II(g)$. Thus, we have proved (i).

Next, suppose $f$ is real-valued $E$-a.e.. Then from \eqref{eq:XY} with $g(\lambda) = 1$ and $y = x$ we obtain
\begin{equation}
\label{eq:SELF}
\langle \II(f)x, x \rangle = \int_{\Omega} {\rm Re}(f(\lambda)) d\langle E(\lambda)x, x \rangle = \int_{\Omega} f(\lambda) d\langle E(\lambda)x, x \rangle \in \gS(\RR_n)
\end{equation}
for all $x \in \cD(\II(f))$. Thus, $\II(f)$ is self-adjoint and we have proved (ii).

Next, suppose $f \geq 0$ $E$-a.e.. Then \eqref{eq:SELF} can be used to show that $\II(f)$ is a positive operator. Finally, in view of the first part of (iii), it is obvious that $\II(\sqrt{f})$ is a positive operator. The fact that $(\II(\sqrt{f}))^2 = \II(f)$ is a direct consequence of Theorem \ref{thm:UNBFUNCCALC}(ii) and (v).
\end{proof}

\begin{defn}[$L_{\infty}(\Omega, \sA,\CC_{\gI}, E)$]
\label{def:L_infty}
Let $f \in \gF(\Omega, \sA, \CC_{\gI}, E)$. We shall let
$$L_{\infty}(\Omega, \sA,\CC_{\gI}, E) := \{ \text{$E$-measurable $f:\Omega \to \CC_{\gI}$: $f$ is bounded on $\Omega$} \}$$
be endowed with the norm
$$\| f \|_{\infty} := \inf_{{\stackrel{M \in \sA}{{\rm with}\; E(M) = 0}}} \sup\{ |f(\lambda)|: \lambda \in \Omega \, \setminus \, M \}.$$
\end{defn}

\begin{lem}
\label{lem:SIbounded}
Let $f \in \gF(\Omega, \sA, \CC_{\gI}, E)$. The operator $\II(f) \in \cB(\cH_n)$ if and only if $f \in L_{\infty}(\Omega, \sA, \CC_{\gI}, E)$. In this case, $\| \II(f) \| = \| f \|_{\infty}$.
\end{lem}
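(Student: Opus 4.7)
The plan is to prove the equivalence through matching norm bounds: (a) if $f \in L_\infty(\Omega, \sA, \CC_{\gI}, E)$, then $\II(f) \in \cB(\cH_n)$ with $\|\II(f)\| \le \|f\|_\infty$; (b) a lower bound $\|\II(f)\| \ge \|f\|_\infty$ that is strong enough to force $\II(f) \notin \cB(\cH_n)$ whenever $\|f\|_\infty = \infty$. Putting (a) and (b) together yields the equivalence and the norm identity in one stroke.

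For (a), choose $N \in \sA$ with $E(N) = 0$ and $|f(\lambda)| \le \|f\|_\infty$ on $\Omega \setminus N$, as guaranteed by Definition \ref{def:L_infty}. Set $\tilde f := f \chi_{\Omega \setminus N}$, so that $\tilde f \in \gB(\Omega, \sA, \CC_{\gI})$ and $\tilde f = f$ $E$-a.e. Then $\II(f) = \II(\tilde f)$ by Theorem \ref{thm:Ffacts}(i), while Remark \ref{rem:OBSERVATION} and Theorem \ref{thm:BFUNC}(vi) give $\II(\tilde f) \in \cB(\cH_n)$ with $\|\II(\tilde f)\| \le \sup_{\lambda \in \Omega}|\tilde f(\lambda)| \le \|f\|_\infty$.

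For (b), fix $0 \le c < \|f\|_\infty$ and let $M_c := \{\lambda \in \Omega : |f(\lambda)| > c\}$. By Definition \ref{def:L_infty}, $E(M_c) \neq 0$, so picking $y \in \cH_n$ with $E(M_c)y \neq 0$ and normalising, the vector $x := E(M_c)y / \|E(M_c)y\|$ is a unit vector satisfying $E(M_c)x = x$. With $\mu_x(M) := {\rm Re}\langle E(M)x, x\rangle$, Lemma \ref{lem:DISJOINT} yields $E(\Omega \setminus M_c)x = E(\Omega \setminus M_c)E(M_c)x = 0$, so $\mu_x$ is concentrated on $M_c$ with total mass $\mu_x(M_c) = \|x\|^2 = 1$. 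Provided $x \in \cD(\II(f))$, formula \eqref{eq:fNORM} then yields
\begin{equation*}
\|\II(f)x\|^2 = \int_{M_c} |f(\lambda)|^2 \, d\mu_x(\lambda) \ge c^2 \mu_x(M_c) = c^2.
\end{equation*}

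If $\II(f) \in \cB(\cH_n)$, then Definition \ref{def:BDDOP} gives $\cD(\II(f)) = \cH_n$, so the previous estimate is valid for every $c < \|f\|_\infty$ and we obtain $\|\II(f)\| \ge \|f\|_\infty$. Combined with (a), this proves $\|\II(f)\| = \|f\|_\infty$ whenever $f \in L_\infty$. Conversely, if $\|f\|_\infty = \infty$, the same estimate with $c = n \in \NN$ forces $\|\II(f)\| \ge n$ for all $n$, contradicting $\II(f) \in \cB(\cH_n)$; hence $\II(f) \in \cB(\cH_n)$ implies $f \in L_\infty$. The only mildly delicate step is the concentration identity $\mu_x(\Omega \setminus M_c) = 0$, but this is immediate from Lemma \ref{lem:DISJOINT}, so no substantive obstacle is expected.
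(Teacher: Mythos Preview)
Your proof is correct and follows essentially the same approach as the paper. Both argue part (a) via the norm formula \eqref{eq:fNORM} (the paper applies it directly, you route through Theorem~\ref{thm:Ffacts}(i) and Theorem~\ref{thm:BFUNC}(vi)), and for part (b) both exploit that on the range of $E(\{|f|>c\})$ the integrand in \eqref{eq:fNORM} is bounded below by $c^2$; the paper phrases this as ``$E(M_i)=0$ for $M_i=\{|f|\ge \|\II(f)\|+2^{-i}\}$'' while you phrase it contrapositively as ``$\|\II(f)\|\ge c$ for every $c<\|f\|_\infty$'', which is the same computation read in the other direction.
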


\begin{proof}
Suppose $f \in  L_{\infty}(\Omega, \sA, \CC_{\gI}, E)$. Then $|f(\lambda)| \leq \kappa$ $E$-a.e. for some $\kappa > 0$ and
$$
\| \II(f)x \|^2 = \int_{\Omega} |f(\lambda)|^2 d( {\rm Re}\langle E(\lambda)x, x \rangle ) \leq \| f \|_{\infty} \| x \|^2.
$$
Thus, $\II(f) \in \cB(\cH_n)$.

Next, suppose $\II(f) \in \cB(\cH_n)$ and put
$$M_i := \{ \lambda \in \Omega: |f(\lambda)| \geq \| \II(f) \| + 2^{-i} \} \quad \quad {\rm for} \quad i=1,2,\ldots.$$
and $M := \cup_{i=1}^{\infty} M_i$. Note that
$M = \{ \lambda \in \Omega: |f(\lambda)| > \| \II(f) \| \}$. Using Theorem \ref{thm:UNBFUNCCALC}(ii) and \eqref{eq:fNORM}, we obtain
\begin{align*}
\| \II(f) \|^2 \cdot \| E(M_i) x \|^2 \geq& \; \| \II(f) E(M_i) x \|^2 \\
=& \; \| \II(f \chi_{M_i} ) x \|^2 \\
=& \; \int_{\Omega} |f(\lambda) \chi_{M_i}(\lambda) |^2 d( {\rm Re}\langle E(\lambda)x, x \rangle) \\
=& \; \int_{\Omega} |f(\lambda) |^2 d( {\rm Re}\langle E(\lambda)x, x \rangle) \\
\geq& \; ( \| \II(f) \| + 2^{-i} )^2 \| E(M_i)x \|^2 \quad {\rm for} \quad i \in \NN \; {\rm and} \; x \in \cH_n.
\end{align*}
But then $E(M_i) = 0$ for all $i=1,2,\ldots$, in which case $E(M) =0$, i.e., $\| f \|_{\infty} \leq \| \II(f) \|$. Thus, we have the characterisation of elements in $L_{\infty}(\Omega, E)$ and $\| \II(f) \| = \| f \|_{\infty}$.
\end{proof}

\begin{lem}
\label{lem:INVERT}
Let $f \in \gF(\Omega, \sA, \CC_{\gI}, E)$. The right linear operator $\II(f) \in \cL(\cH_n)$ is invertible if and only if $f(\lambda) \neq 0$ $E$-a.e. on $\Omega$. In this case, $\II(f)^{-1} = \II(1/f)$, with the understanding that $\frac{1}{\infty} =0$ and $\frac{1}{0} = \infty$.
\end{lem}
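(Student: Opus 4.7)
The plan is to prove both directions by exhibiting $\II(1/f)$ as the inverse of $\II(f)$ under the hypothesis, and by producing a nontrivial kernel for $\II(f)$ when it fails.

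For the reverse direction, suppose $f \neq 0$ $E$-a.e.\ on $\Omega$ and set $g := 1/f$ with the stated conventions. The assumption guarantees $g$ is $E$-a.e.\ finite, so $g \in \gF(\Omega,\sA,\CC_{\gI}, E)$, and on the $E$-conull set where $f$ is finite and nonzero we have $fg = gf = 1$. By Theorem \ref{thm:Ffacts}(i), $\II(fg) = \II(1) = I$ and $\II(gf) = I$. Now Theorem \ref{thm:UNBFUNCCALC}(ii) yields $\overline{\II(f)\II(g)} = I$ and $\overline{\II(g)\II(f)} = I$, and Theorem \ref{thm:UNBFUNCCALC}(v) identifies
\[
\cD(\II(f)\II(g)) = \cD(\II(fg)) \cap \cD(\II(g)) = \cD(\II(g)),
\]
and similarly $\cD(\II(g)\II(f)) = \cD(\II(f))$. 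Since $\II(f)\II(g) \subseteq \overline{\II(f)\II(g)} = I$, we get $\II(f)\II(g) y = y$ for all $y \in \cD(\II(g))$, and symmetrically $\II(g)\II(f)x = x$ for all $x \in \cD(\II(f))$. These two identities then force $\II(g): \cD(\II(g)) \to \cD(\II(f))$ and $\II(f): \cD(\II(f)) \to \cD(\II(g))$ to be mutually inverse bijections; in particular $\II(f)$ is invertible and $\II(f)^{-1} = \II(g) = \II(1/f)$.

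For the forward direction, I will argue by contrapositive: assume $f = 0$ on a set $M \in \sA$ with $E(M) \neq 0$ and show $\II(f)$ has a nonzero kernel. Pick $x \in \cH_n$ with $E(M)x \neq 0$ and set $y := E(M)x$. The key computation is that $\langle E(\,\cdot\,) y, y\rangle$ is concentrated on $M$, since for $N \in \sA$,
\[
\langle E(N) y, y \rangle = \langle E(N)E(M)x, E(M)x\rangle = \langle E(N \cap M)x, x\rangle
\]
by Lemma \ref{lem:DISJOINT}. Consequently
\[
\int_\Omega |f(\lambda)|^2\, d({\rm Re}\langle E(\lambda) y, y\rangle) = \int_M |f(\lambda)|^2\, d({\rm Re}\langle E(\lambda) x, x\rangle) = 0,
\]
so $y \in \cD(\II(f))$ by \eqref{eq:DOM}.

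To compute $\II(f)y$, pick a bounding sequence $(M_i)_{i=1}^\infty$ for $f$ (Remark \ref{rem:EVERYBDD}) and use \eqref{eq:SUBDOM} together with $E(M_i)E(M) = E(M_i \cap M)$ to write
\[
\II(f\chi_{M_i}) y = \II(f\chi_{M_i}) E(M)x = \II(f \chi_{M_i \cap M})x.
\]
Since $f$ vanishes on $M$, the integrand $f\chi_{M_i \cap M}$ is identically zero, hence $\II(f \chi_{M_i \cap M}) = 0$ by Theorem \ref{thm:Ffacts}(i). Letting $i \to \infty$ and invoking \eqref{eq:IIf} gives $\II(f) y = 0$. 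Thus $y \in \Ker \II(f)$ is nonzero, and $\II(f)$ is not invertible, completing the contrapositive.

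I do not expect a serious obstacle; the only step that needs a bit of care is verifying that the identity $\overline{\II(f)\II(g)} = I$ combined with the explicit domain from Theorem \ref{thm:UNBFUNCCALC}(v) genuinely yields the unrestricted equality $\II(f)\II(g) = I|_{\cD(\II(g))}$ rather than merely its closure, and similarly that $\Ran \II(f) = \cD(\II(g))$, so that $\II(f)^{-1}$ truly equals $\II(1/f)$ as an operator (with the correct domain and range) and not just up to closure.
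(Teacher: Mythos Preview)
Your proof is correct and follows essentially the same approach as the paper: both use Theorem~\ref{thm:UNBFUNCCALC}(ii) and (v) together with $f\cdot(1/f)=1$ $E$-a.e.\ to show $\II(1/f)$ is the two-sided inverse, and both argue the contrapositive for the forward direction by showing $\II(f)$ kills the range of $E(\{f=0\})$. The only difference is cosmetic: the paper dispatches the non-invertibility in one line via $\II(f)E(N)=\II(f\chi_N)=\II(0)=0$, whereas you unpack this by hand with a bounding sequence and an explicit kernel vector.
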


\begin{proof}
Let $N :=\{ \lambda \in \Omega: f(\lambda) = 0 \}$. We may use Theorem \ref{thm:UNBFUNCCALC}(iii) and (v) to obtain
$$\II(f) E(N) = \II(f \chi_{N}) = \II(0) = 0.$$
Consequently, $\II(f)$ is not invertible whenever $E(N) \neq 0$.

Next, suppose $E(N) = 0$ and hence $1/f \in \gF(\Omega,E)$. We may use the fact that $f(1/f) = 1$ $E$-a.e. on $\Omega$ to see that $\cD(\II(f (1/f))) = \cD(\II(g)) = \cH_n$, where $g(\lambda) = 1$. Consequently, Theorem \ref{thm:UNBFUNCCALC}(v) implies that
$$\cD( \II(1/f) \II(f) ) = \cD(\II(f)).$$
Moreover, Theorem \ref{thm:UNBFUNCCALC}(iii) implies that
$$\II(1/f)\II(f) \subseteq \II( (1/f)f) = \II(1) = I.$$
Thus, putting these observations together, we have that $\II(f)$ is invertible and $\II(f)^{-1} \subseteq \II(1/f)$. To see that $\II(1/f) \subseteq \II(f)$, we may simply replace $f$ by $1/f$ in the above proof.
\end{proof}

\begin{defn}[intrinsic function]
\label{def:INTRINSIC}
  Let $\gI \in \mathbb{S}$. Let $\Omega\subseteq \mathbb{R}^{n+1}$ be an axially symmetric open set and let $\mathcal{U} = \{ (u,v)\in\mathbb{R}^2 : u+ \mathbb{S} v\subseteq \Omega\}$.
A function $f:\Omega\to \mathbb{R}_{n}$ is called a left
 slice function, if it is of the form
 \[
 f(\lambda) = f_{0}(u,v) + \gI f_{1}(u,v)\qquad \text{for } \lambda = u + \gI v\in \Omega
 \]
with two functions $f_{0},f_{1}: \mathcal{U}\to \mathbb{R}_{n}$ that satisfy the compatibility conditions
\begin{equation}\label{eq:INTRINSICREAL}
f_{0}(u,-v) = f_{0}(u,v),
\end{equation}
\begin{equation}
\label{eq:INTRINSICIMAGINARY}
 f_{1}(u,-v) = -f_{1}(u,v).
\end{equation}
We say that $f$
 is an {\it intrinsic function}
if in addition $f_{0}$, and $f_{1}$ are real valued.
We denote by  $\mathcal{S}(\Omega)$
the set of intrinsic functions defined on $\Omega$.
\end{defn}
\begin{rem}
\label{NOTATION-INTRINSIC}
For intrinsic functions, when we
 restrict $f$ to  $\Omega_\gI \subseteq\CC_{\gI}$,  we have that $f(\Omega_\gI)$ belongs to $\CC_{\gI}$,
since  $f_{0}$, and $f_{1}$ are real-valued.
To stress this fact we will often use the notation
$\gF(\Omega_\gI, \sB(\Omega_\gI),\CC_{\gI})$ instead of $\gF(\Omega, \sB(\Omega))$
when we work on a complex plane $\CC_{\gI}$, and when we consider the half plane
$\CC_{\gI}^+$ we write $\gF(\Omega_\gI^+, \sB(\Omega_\gI)^+,\CC_{\gI}^+)$.
Observe that condition (\ref{eq:INTRINSICIMAGINARY}) forces
$f_1(u,v)=0$ for every $u+\gI v\in \Omega_\gI^+\cap \mathbb{R}$ whenever $v=0$.
\end{rem}

\begin{thm}
\label{thm:IMPORTANT}
Let $E$ be a spectral measure on $\sB(\Omega_{\gI}^+)$, where $\Omega_{\gI}^+ \subseteq \CC_{\gI}^+$ for $\gI \in \mathbb{S}$, and $f \in \gF(\Omega_{\gI}^+, \sB(\Omega_{\gI}^+),\CC_{\gI}, E)$ be an intrinsic function. Then
\begin{align*}
\sigma_S(\II(f))\cap \CC^+_{\gI} =& \; \{ s \in \CC^+_{\gI}: E(\{ \lambda \in \Omega: |f(\lambda)^2 - 2{\rm Re}(s) f(\lambda) + |s|^2| < \varepsilon \} ) \\
& \;\; \neq 0 \text{ for all $\varepsilon > 0$}\}.
\end{align*}
\end{thm}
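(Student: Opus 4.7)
The plan is to identify $\cQ_s(\II(f))$ with the spectral integral of the scalar intrinsic function $g_s(\lambda) := f(\lambda)^2 - 2\,{\rm Re}(s)f(\lambda) + |s|^2$, and then to read off the $S$-spectrum of $\II(f)$ from the essential range of $g_s$ with respect to $E$ via Lemmas \ref{lem:SIbounded} and \ref{lem:INVERT}. Since $f$ is intrinsic and $\lambda \in \Omega_\gI^+ \subseteq \CC_\gI^+$, the values $f(\lambda)$ lie in $\CC_\gI$, so $g_s \in \gF(\Omega_\gI^+, \sB(\Omega_\gI^+),\CC_\gI, E)$ and scalar products make sense in $\CC_\gI$.

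First, I would use Theorem \ref{thm:UNBFUNCCALC}(ii), (iii) and Theorem \ref{thm:Ffacts}(i) to compute, on $\cD(\II(f)^2) = \cD(\II(f^2))$, the operator identity
\begin{equation*}
\cQ_s(\II(f)) \;=\; \II(f)^2 - 2\,{\rm Re}(s)\,\II(f) + |s|^2\,I \;=\; \II(g_s),
\end{equation*}
using that ${\rm Re}(s), |s|^2 \in \RR$ so scalar multiplication commutes and no closure is required once we restrict to the core $\cE$ from \eqref{eq:cE} and then extend by Theorem \ref{thm:BIG}(iii). Consequently $s \in \rho_S(\II(f))$ if and only if $\II(g_s): \cD(\II(g_s)) \to \cH_n$ is bijective with bounded inverse.

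Next, Lemma \ref{lem:INVERT} tells us that $\II(g_s)$ is invertible on its range if and only if $g_s \neq 0$ $E$-a.e., in which case $\II(g_s)^{-1} = \II(1/g_s)$. By Lemma \ref{lem:SIbounded}, this inverse lies in $\cB(\cH_n)$ precisely when $1/g_s \in L_\infty(\Omega_\gI^+, \sB(\Omega_\gI^+), \CC_\gI, E)$, i.e.\ when there exists $c > 0$ so that $|g_s(\lambda)| \geq c$ holds $E$-a.e.\ on $\Omega_\gI^+$. Equivalently, $\II(g_s)$ fails to be boundedly invertible if and only if for every $\varepsilon > 0$ the set
\begin{equation*}
M_\varepsilon(s) \;:=\; \{\lambda \in \Omega_\gI^+ : |g_s(\lambda)| < \varepsilon\} \;=\; \{\lambda \in \Omega_\gI^+ : |f(\lambda)^2 - 2\,{\rm Re}(s)f(\lambda) + |s|^2| < \varepsilon\}
\end{equation*}
satisfies $E(M_\varepsilon(s)) \neq 0$. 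Intersecting with $\CC_\gI^+$ gives exactly the characterisation in the statement.

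The one technical point is the equality $\cQ_s(\II(f)) = \II(g_s)$ with matching domains. Theorem \ref{thm:UNBFUNCCALC}(iii) yields only $\II(f)^2 - 2\,{\rm Re}(s)\II(f) + |s|^2 I \subseteq \II(g_s)$ a priori; closedness and normality of both sides (Theorem \ref{thm:UNBFUNCCALC}(iv)) together with Lemma \ref{lem:Aug25yq1} force equality. This is the only step requiring care; the rest is a direct application of the scalar $L_\infty$ criterion.
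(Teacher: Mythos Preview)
Your argument is correct and is precisely the paper's proof: reduce $\cQ_s(\II(f))$ to $\II(g_s)$ with $g_s = f^2 - 2\,{\rm Re}(s)f + |s|^2$ and then invoke Lemmas \ref{lem:SIbounded} and \ref{lem:INVERT} to characterise bounded invertibility in terms of the $E$-essential range of $g_s$. The only quibble is your appeal to Lemma \ref{lem:Aug25yq1} for the domain equality, which would require knowing that $\cQ_s(\II(f))$ itself is normal; it is cleaner to note directly from \eqref{eq:DOM} and Theorem \ref{thm:UNBFUNCCALC}(v) that $\cD(\II(f)^2) = \cD(\II(f^2)) \cap \cD(\II(f)) = \cD(\II(f^2)) = \cD(\II(g_s))$, since $|f|^2 \le 1 + |f|^4$ and $|g_s|$ is comparable to $|f|^2$ at infinity.
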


\begin{rem}
\label{rem:INTRINSIC}
The assumption that $f \in \gF(\Omega, \sB(\Omega),\CC_{\gI}, E)$ is intrinsic (see Definition \ref{def:INTRINSIC}) is necessary for preserving the fact that $\sigma_S(f(T))$ is axially symmetric (see Remark \ref{rem:AXIALLYSYMMETRIC}).
\end{rem}

\begin{proof}[Proof of Theorem \ref{thm:IMPORTANT}]
Notice that $s \in \rho_S(\II(f))$ if and only if
$( \II(f)^2 - 2 {\rm Re}(s) \II(f) + |s|^2 I )^{-1} \in \cB(\cH_n)$.
Thus, Lemma \ref{lem:SIbounded} and Lemma \ref{lem:INVERT} imply that $s \in \rho_S(\II(f)) \cap \CC_{\gI}^+$ if and only if
$f^2 - 2 {\rm Re}(s)f + |s|^2 \neq 0$ $E$-a.e. on $\Omega$ and $1/(f^2 - 2 {\rm Re}(s)f +|s|^2) \in L_{\infty}(\Omega,\sB(\Omega), \CC_{\gI}, E)$, or, equivalently, there exists $\kappa > 0$ such that
 $$
 E( \{ \lambda \in \Omega: |\ f(\lambda)- 2{\rm Re}(s)f(\lambda) - |s|^2 | \geq \kappa \}) = 0.
  $$
  Thus, $s \in \sigma_S(\II(f)) \cap \CC_{\gI}^+$ if and only if
$$E( \{ \lambda \in \Omega: |f(\lambda)^2 - 2{\rm Re}(s) f(\lambda) + |s|^2| < \varepsilon \} ) \neq 0$$
for all $\varepsilon > 0$.
\end{proof}

\begin{rem}
\label{rem:REMARKIMPORTANT}
We wish to highlight that Theorem \ref{thm:IMPORTANT} will be very useful for showing that an operator-valued integral \eqref{eq:SPECN} appearing in the spectral theorem for a bounded normal operator is on the $S$-spectrum of $T$.
\end{rem}
\begin{thm}
\label{thm:COMMUTINGspectral}
Let $E$ be a spectral measure on $\sB(\Omega)$ and $W \in \cB(\cH_n)$. Then $E(M)W  = W E(M)$ for all $M \in \sB(\Omega)$ if and only if $W \II(f) \subseteq \II(f) W$ for every $f \in \gF(\Omega, \sB(\Omega), \CC_{\gI}, E)$.
\end{thm}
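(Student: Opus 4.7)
The reverse implication is immediate: specialising $f = \chi_M$ for $M \in \sB(\Omega)$, we have $\II(\chi_M) = E(M) \in \cB(\cH_n)$, and the containment $WE(M) \subseteq E(M)W$ on the full domain $\cH_n$ forces equality $WE(M) = E(M)W$ for every $M \in \sB(\Omega)$.

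For the forward implication, I would proceed in three stages, climbing from characteristic functions up to general $f \in \gF(\Omega, \sB(\Omega), \CC_{\gI}, E)$.

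\emph{Stage 1 (Simple functions).} For $f = \sum_{j=1}^k c_j \chi_{M_j} \in \gB_s(\Omega, \sB(\Omega), \CC_{\gI})$, the explicit formula \eqref{eq:SIMPLE} together with the hypothesis $WE(M_j) = E(M_j)W$ (and the standing commutation $JE(M_j) = E(M_j)J$) yields $W\II(f) = \II(f)W$ by a direct computation.

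\emph{Stage 2 (Bounded measurable functions).} For $f \in \gB(\Omega, \sB(\Omega), \CC_{\gI})$, I would choose a sequence of simple functions $(f_n)_{n=1}^\infty$ with $\|f - f_n\|_\infty \to 0$. By Lemma \ref{lem:BOUNDED} and Definition \ref{def:BFUNC}, $\|\II(f) - \II(f_n)\| \to 0$, so passing to the operator-norm limit in $W\II(f_n) = \II(f_n)W$ gives $W\II(f) = \II(f)W$.

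\emph{Stage 3 (Unbounded measurable functions).} Fix $f \in \gF(\Omega, \sB(\Omega), \CC_{\gI}, E)$, a bounding sequence $(M_n)_{n=1}^\infty$ for $f$, and $x \in \cD(\II(f))$. Since $f\chi_{M_n}$ is bounded, Stage 2 yields
$$
W\II(f\chi_{M_n})x = \II(f\chi_{M_n})Wx \qquad \text{for every } n \geq 1.
$$
Using Theorem \ref{thm:BIG}(iii) and the hypothesis, the left-hand side equals $WE(M_n)\II(f)x = E(M_n)W\II(f)x$, which tends to $W\II(f)x$ as $n \to \infty$. To treat the right-hand side I must first show $Wx \in \cD(\II(f))$: Theorem \ref{thm:BIG}(iii) gives
$$
\|\II(f\chi_{M_n})Wx\| = \|W\II(f\chi_{M_n})x\| \leq \|W\|\,\|E(M_n)\II(f)x\| \leq \|W\|\,\|\II(f)x\|,
$$
so $\sup_n \|\II(f\chi_{M_n})Wx\| < \infty$, and Theorem \ref{thm:BIG}(i) places $Wx$ in $\cD(\II(f))$. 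The sequence on the right then converges to $\II(f)Wx$ by \eqref{eq:IIf}, establishing $W\II(f)x = \II(f)Wx$ and hence $W\II(f) \subseteq \II(f)W$.

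The principal obstacle is Stage 3: one must exploit the uniform-boundedness characterisation of $\cD(\II(f))$ from Theorem \ref{thm:BIG}(i) to move $W$ inside the domain, which is much cleaner than trying to work directly with the $L_2$-definition \eqref{eq:DOM} (that route would require transferring the commutation to the underlying scalar measure ${\rm Re}\,\langle E(\cdot)x, x\rangle$).
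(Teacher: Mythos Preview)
Your argument is correct and follows the same three-stage architecture (simple $\to$ bounded $\to$ unbounded) as the paper. The one point of divergence is the domain step in Stage~3: you establish $Wx\in\cD(\II(f))$ via the uniform-boundedness characterisation of Theorem~\ref{thm:BIG}(i), whereas the paper goes precisely the route you set aside in your closing remark, working directly with the $L_2$-definition \eqref{eq:DOM}. Concretely, the paper observes
\[
{\rm Re}\,\langle E(M)Wx,Wx\rangle=\|E(M)Wx\|^2=\|WE(M)x\|^2\le\|W\|^2\,{\rm Re}\,\langle E(M)x,x\rangle,
\]
so integrability of $|f|^2$ against ${\rm Re}\,E_x$ transfers to ${\rm Re}\,E_{Wx}$. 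This is a one-line estimate, so the $L_2$ route is not in fact more cumbersome here; your approach via Theorem~\ref{thm:BIG}(i) is equally valid and has the mild advantage of staying entirely at the operator level without touching the scalar measures.
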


\begin{proof}
Suppose $E(M)W  = W E(M)$ for all $M \in \sB(\Omega)$. Then, as $\II(\chi_{M} ) = E(M)$, we have $W \II(f) \subseteq \II(f)W$ for every $f \in \gF(\Omega, \sB(\Omega), \CC_{\gI}, E)$. Conversely, suppose $W \II(f) \subseteq \II(f) W$ for every $f \in \gF(\Omega, \sB(\Omega), \CC_{\gI}, E)$.  Then
\begin{align*}
{\rm Re}\, \langle E(M)Wx, Wx \rangle =& \; \| E(M)Wx \|^2 \\
=& \; \| W E(M) x \|^2 \\
\leq& \; \| W \|^2 \| E(M)x \|^2 \\
=& \; \| W \|^2 {\rm Re} \langle E(M)x, x \rangle
\end{align*}
for all $M \in \sB(\Omega)$ and $x \in \cH_n$. Thus, $Wx \in \cD(\II(f))$ whenever $x \in \cD(\II(f))$. It is immediate to see that $W \II(f) = \II(f) W$ whenever $f \in \gB_s(\Omega, \sA, \CC_{\gI}, E)$ and that this observation can be extended to all
$f \in \gB(\Omega, \sA, \CC_{\gI}, E)$. Consequently, we have $W \cD(\II(f)) \subseteq \cD(\II(f))$ and hence $W \II(f) \subseteq \II(f) W$.
\end{proof}

\begin{thm}
\label{thm:COMMUT}
Suppose $S \in \cB(\cH_n)$ and $E$ is a spectral measure on $(\Omega, \sA)$. Then $S E(M) = E(M) S$ for every $M \in \sA$ if and only if $S \II(f) \subseteq \II(f) S$ for every $f \in \gF(\Omega, \sB(\Omega), \CC_{\gI}, E)$.
\end{thm}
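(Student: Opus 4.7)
The plan is to follow the same three-stage lifting strategy used for Theorem \ref{thm:COMMUTINGspectral}, adapted to the present formulation with a general $\sigma$-algebra $\sA$. For the ``if'' direction, specialising $f = \chi_M$ for $M \in \sA$ gives $\II(\chi_M) = E(M) \in \cB(\cH_n)$, so the inclusion $S\II(\chi_M) \subseteq \II(\chi_M)S$ becomes the equality $SE(M) = E(M)S$ because both sides are defined on all of $\cH_n$.

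For the ``only if'' direction, assume $SE(M) = E(M)S$ for every $M \in \sA$. First I would handle simple functions $f = \sum_{j=1}^k c_j \chi_{M_j} \in \gB_s(\Omega, \sA, \CC_{\gI})$ by computing directly from \eqref{eq:SIMPLE} that $\II(f) = \sum_j \{{\rm Re}(c_j)I + {\rm Im}(c_j)J\} E(M_j)$; since $S$ commutes with each $E(M_j)$ (and with $J$, which commutes with every spectral projection by Definition \ref{def:ASSOCIATED}, the joint commutation being handled exactly as in the preceding proof), we get $S\II(f) = \II(f)S$. Next I would extend to $f \in \gB(\Omega, \sA, \CC_{\gI})$: by Definition \ref{def:BFUNC} there exists a sequence of simple functions $f_n \to f$ in supremum norm, and Lemma \ref{lem:BOUNDED} yields $\II(f_n) \to \II(f)$ in operator norm, so passing to the limit in $S\II(f_n) = \II(f_n)S$ preserves the equality.

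Finally, for general $f \in \gF(\Omega, \sA, \CC_{\gI}, E)$, I would first show that $S$ preserves the domain $\cD(\II(f))$. Using the projection identity $\|E(M)Sx\|^2 = \|SE(M)x\|^2 \leq \|S\|^2\|E(M)x\|^2$, one obtains the scalar-measure estimate
\[
\int_\Omega |f(\lambda)|^2 \, d({\rm Re}\langle E(\lambda)Sx, Sx\rangle) \leq \|S\|^2 \int_\Omega |f(\lambda)|^2 \, d({\rm Re}\langle E(\lambda)x, x\rangle),
\]
so the characterisation \eqref{eq:DOM} gives $Sx \in \cD(\II(f))$ whenever $x \in \cD(\II(f))$. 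Fixing a bounding sequence $(M_i)_{i=1}^\infty$ for $f$ as in Remark \ref{rem:EVERYBDD}, the previous step applied to $f\chi_{M_i} \in \gB(\Omega,\sA,\CC_{\gI})$ gives $S\II(f\chi_{M_i})x = \II(f\chi_{M_i})Sx$, and letting $i \to \infty$ via \eqref{eq:IIf} produces $S\II(f)x = \II(f)Sx$ on $\cD(\II(f))$, i.e.\ $S\II(f) \subseteq \II(f)S$. The main technical obstacle is the domain-preservation step: one must translate the operator-norm bound on $S$ into the inequality of $\RR_n$-valued spectral measures displayed above, which is what allows the lifting from the bounded to the unbounded functional calculus; once this is in hand, the rest is routine passage to the limit.
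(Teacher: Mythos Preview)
Your argument follows the paper's proof closely: both handle the ``if'' direction by specialising to $f = \chi_M$, and for the converse both establish domain preservation via the estimate ${\rm Re}\, E_{Sx}(M) = \|SE(M)x\|^2 \leq \|S\|^2\, {\rm Re}\, E_x(M)$, then lift through simple, bounded, and unbounded functions using \eqref{eq:IIf}. In structure and level of detail your proposal is essentially the same as the paper's.

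The one point that deserves scrutiny is your parenthetical claim that $S$ commutes with $J$. This does not follow from the hypothesis $SE(M) = E(M)S$ for all $M \in \sA$, and Definition \ref{def:ASSOCIATED} only gives $JE(M) = E(M)J$, which says nothing about $SJ$. For a simple function with nontrivial imaginary part, $\II(f)$ contains terms of the form $E(M_j)J$, and the identity $S\II(f) = \II(f)S$ would require $E(M_j)SJ = E(M_j)JS$; your appeal to Definition \ref{def:ASSOCIATED} does not deliver this. The paper's own proof does not address the point either---it simply asserts ``we have that $S\II(f_0) = \II(f_0)S$ for every simple function $f_0$'' without mentioning $J$---so your proof matches the paper's, but the justification you have inserted is not valid as written, and in fact highlights a genuine lacuna shared by both arguments.
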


\begin{proof}
If $S \II(f) \subseteq \II(f) S$ for every $f \in \gF(\Omega, \sA, E)$, then $S E(M) = E(M) S$ for every $M \in \sA$ is obvious. Conversely, suppose $S E(M) = E(M) S$ for every $M \in \sA$. Since
\begin{align*}
{\rm Re}(E_{Sx}(M)) =& \; {\rm Re}(\langle E(M)Sx, E(M)Sx \rangle) = {\rm Re}( \langle S E(M)x, S E(M)x \rangle) \\
 =& \; \| SE(M)x \|^2 \\
 \leq& \; \| S \|^2 \| E(M)x \|^2 \\
 =& \; \| S \|^2 {\rm Re}(\langle E(M)x, E(M)x \rangle,
 \end{align*}
we may use \eqref{eq:DOM} to conclude that $Sx \in \cD(\II(f))$ whenever $x \in \cD(\II(f))$. Since $S E(M) = E(M) S$ for every $M \in \sA$, we have that $S \II(f_0) = S \II(f_0)$ for every simple function $f_0 \in \gB_s$, which can easily be extended, by taking limits, to $S \II(f) = \II(f) S$ for every $f \in \gB$. The fact that $S \II(f) = \II(f) S$ for every $f \in \gF(\Omega, \sA, E)$ follows immediately from \eqref{eq:IIf}.
\end{proof}

\begin{defn}[transformation of a spectral measure]
\label{def:TSM}
Let $E$ be a spectral measure on a Borel $\sigma$-algebra $\sA$ generated by a set $\Omega$, $J$ be an associated imaginary operator with $E$ and $\psi$ be a function such that $\psi: \Omega \to \Omega'$ and $\sA'$ be the $\sigma$-algebra of all subsets $M' \subseteq \Omega'$ such that $\psi^{-1}(M') \in \sA$. We shall let
\begin{equation}
\label{eq:NEWMEASURE}
E'(M') := E(\psi^{-1}(M')) \quad \quad {\rm for} \quad M' \in \sA'.
\end{equation}
\end{defn}

\begin{rem}
\label{rem:TRANSSPEC}
It is very easy to check that $E': \sA' \to \cB(\cH_n)$ is a spectral measure on $\sA'$. Moreover, in view of \eqref{eq:NEWMEASURE} if $J$ is an imaginary operator associated with $E$, then $J$ is also an imaginary operator associated with the $E'$.
\end{rem}

\begin{thm}
\label{thm:CHANGEOFVAR}
Let $\Omega, \Omega', \psi, J, E$ and $E'$ be as in Definition \ref{def:TSM}. Suppose $h \in \gF(\Omega', \sA', F)$. Then $h \circ \psi \in \gF(\Omega, \sA, E)$ and
\begin{align}
& \;\int_{\Omega'} {\rm Re}(h(\lambda')) \, dE'(\lambda') +  \int_{\Omega'} {\rm Im}(h(\lambda')) \, dE'(\lambda')J  \nonumber \\[4pt]
=& \;  \int_{\Omega} {\rm Re}(h(\psi(\lambda))) \, dE(\lambda)  + \int_{\Omega} {\rm Im}(h(\psi(\lambda))) \, dE(\lambda)J  \label{eq:INTTRANS}
\end{align}
\end{thm}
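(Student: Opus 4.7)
The plan is to prove the identity in three stages: simple functions, bounded measurable functions, and finally general $E'$-a.e.\ finite measurable functions, piggybacking on Theorem \ref{thm:TRANSSCALARMEAS} (the scalar change-of-variables formula) to handle the domains.

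First I would verify the preparatory measurability and a.e.\ finiteness of $h \circ \psi$. By construction of $\sA'$, for any set $B$ in the Borel $\sigma$-algebra of $\CC_{\gI}\cup\{\infty\}$, $(h\circ \psi)^{-1}(B) = \psi^{-1}(h^{-1}(B))\in \sA$, so $h\circ\psi$ is $\sA$-measurable. Moreover, letting $N' = \{\lambda' \in \Omega': h(\lambda') = \infty\}$, the hypothesis $E'(N') = 0$ together with \eqref{eq:NEWMEASURE} gives $E(\psi^{-1}(N')) = 0$, and $\psi^{-1}(N') = \{\lambda \in \Omega: h(\psi(\lambda)) = \infty\}$. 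Hence $h\circ \psi \in \gF(\Omega, \sA, \CC_{\gI}, E)$.

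Next I would handle the simple function case. If $h = \sum_{j=1}^k c_j \chi_{M'_j}$ with $c_j \in \CC_{\gI}$ and the $M'_j \in \sA'$ pairwise disjoint, then $h\circ \psi = \sum_{j=1}^k c_j \chi_{\psi^{-1}(M'_j)}$ with the sets $\psi^{-1}(M'_j) \in \sA$ also pairwise disjoint, so applying \eqref{eq:SIMPLE} on both sides and using $E'(M'_j) = E(\psi^{-1}(M'_j))$ gives \eqref{eq:INTTRANS} by inspection. To pass to bounded $h$, pick a sequence of simple functions $h_i \in \gB_s(\Omega', \sA', \CC_{\gI})$ with $\|h - h_i\|_\infty \to 0$; then $h_i \circ \psi \in \gB_s(\Omega, \sA, \CC_{\gI})$ and $\|h\circ \psi - h_i \circ \psi\|_\infty \to 0$, so Theorem \ref{thm:BFUNC}(vi) (the spectral norm bound $\|\II(\cdot)\| \leq \|\cdot\|_\infty$) applied to each side yields \eqref{eq:INTTRANS} for all bounded $h$.

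Finally I would extend to $h \in \gF(\Omega', \sA', \CC_{\gI}, E')$ via bounding sequences. Picking a bounding sequence $(M'_i)_{i=1}^\infty$ for $h$ (as in Definition \ref{def:BOUNDINGSEQ}), the sets $M_i := \psi^{-1}(M'_i)$ form a bounding sequence for $h\circ \psi$: they are nested and measurable, $h\circ\psi$ is bounded on each $M_i$ because $h$ is bounded on $M'_i$, and $E(\bigcup M_i) = E'(\bigcup M'_i) = I$. The crucial step is matching the domains. Applying the scalar change-of-variables Theorem \ref{thm:TRANSSCALARMEAS} to the finite $\gP(\RR_n)$-valued measure $E_x(M) = \langle E(M)x,x\rangle$ and its push-forward $E'_x(M') = \langle E'(M')x, x\rangle$ gives
\[
\int_{\Omega'} |h(\lambda')|^2 \, d({\rm Re}\, E'_x(\lambda')) = \int_{\Omega} |h(\psi(\lambda))|^2 \, d({\rm Re}\, E_x(\lambda)),
\]
so by the domain characterisation \eqref{eq:DOM} the domains $\cD(\II'(h))$ and $\cD(\II(h\circ \psi))$ coincide. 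For $x$ in this common domain, the bounded case gives
\[
\II'(h\chi_{M'_i}) x = \II((h\circ \psi)\chi_{M_i}) x \quad \text{for every } i,
\]
and passing to the limit via Theorem \ref{thm:BIG}(ii) on both sides yields $\II'(h)x = \II(h\circ \psi)x$, which is \eqref{eq:INTTRANS}. The only real obstacle is verifying the domain equality, and this is exactly where Theorem \ref{thm:TRANSSCALARMEAS} is indispensable; all remaining work is bookkeeping with bounding sequences.
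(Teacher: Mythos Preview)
Your proof is correct, but it takes a different route from the paper. The paper's argument is shorter and more ``weak-form'': after establishing the domain equality $\cD(\II_{E'}(h)) = \cD(\II_E(h\circ\psi))$ exactly as you do (via Theorem \ref{thm:TRANSSCALARMEAS} applied to $|h|^2$ against the scalar measures ${\rm Re}\,E_x$), it appeals once more to the scalar change of variables to obtain the quadratic-form identity $\langle \II_{E'}(h)y, y\rangle = \langle \II_E(h\circ\psi)y, y\rangle$ for all $y$ in the common domain, using formula \eqref{eq:XY} with $g\equiv 1$, and then invokes the polarisation formula \eqref{eq:POLAR} to upgrade this to the full operator equality. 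No staged approximation through simple and bounded functions is needed.

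Your three-stage approach (simple $\to$ bounded $\to$ general via bounding sequences) is more explicit and self-contained: it never touches polarisation and proves the operator identity directly at each level. The cost is some extra bookkeeping; the benefit is that the argument is transparently constructive and does not rely on the inner-product representation \eqref{eq:XY}. Both methods hinge on the same indispensable ingredient, namely Theorem \ref{thm:TRANSSCALARMEAS} for the domain match, which you correctly identified as the crux.
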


\begin{proof}
The fact that $h \circ \psi \in \gF(\Omega, \sA, E)$ follows immediately from Definition \ref{def:TSM}. Since $\langle E(\cdot) x,x \rangle$ and $\langle F(\cdot) x, x \rangle$ are positive $\RR_n$-valued measures on $\sA$ and $\sA'$ for any $x \in \cH_n$, respectively, we have invoke Theorem \ref{thm:TRANSSCALARMEAS} to obtain
$$\int_{\Omega'} |h(\lambda')|^2 d\langle E'(\lambda')x, x \rangle = \int_{\Omega} |h(\psi(\lambda))|^2 d\langle E(\lambda)x ,x \rangle \quad \quad {\rm for} \quad x\in \cH_n$$
and
\begin{align}
& \;  \int_{\Omega'}  {\rm Re}(h(\lambda') ) d\langle E'(\lambda') y, y \rangle + \int_{\Omega'}  {\rm Im}(h(\lambda') ) d\langle E'(\lambda')J y, y \rangle   \nonumber \\[4pt]
=& \; \int_{\Omega} {\rm Re}(h(\psi(\lambda)) )d\langle E(\lambda)y ,y \rangle
+\int_{\Omega} {\rm Im}(h(\psi(\lambda))) d\langle E(\lambda)J y ,y \rangle \label{eq:SECONDINT}
\end{align}
for all $y \in \cH_n$ such that $h$ is $\langle E'(\cdot) y, y \rangle$-integrable on $\Omega'$. We may use  \eqref{eq:DOM} and \eqref{eq:SECONDINT} to realise that $\cD(\II_{E'}(h)) = \cD(\II_E (h \circ \psi) )$, where $\II_{E'}$ and $\II_E$ denote the spectral integrals with respect to $E'$ and $E$, respectively. Notice that \eqref{eq:XY} with $g(\lambda) = 1$ and \eqref{eq:SECONDINT} can be used to obtain $\langle \II_{E'}(h)y, y \rangle = \langle \II_E(h \circ \psi)y, y\rangle$ for all $y \in \cD(\II_E(h \circ \psi))$ (which coincides with $\II_{E'}(h)$). Finally, the polarisation formula \eqref{eq:POLAR}, to deduce \eqref{eq:INTTRANS}.

\end{proof}

\section{Spectral theorem for a bounded self-adjoint operator}
\label{sec:STBSA}

The main goal of this section is to formulate and prove the spectral theorem for bounded self-adjoint operators on a Clifford module (see Theorem \ref{thm:BSA}).

\begin{thm}[spectral theorem for bounded self-adjoint operators]
\label{thm:BSA}
Let $T \in \cB(\cH_n)$ be self-adjoint. Then there exists a spectral measure $E$ on the Borel $\sigma$-algebra $\mathscr{B}(\sigma_S(T))$ such that
\begin{equation}
\label{eq:SPECSA}
T= \int_{\sigma_S(T)} t \, dE(t).
\end{equation}
The spectral measure $E$ is unique in the sense that if $F$ is a spectral measure on $\mathscr{B}(\RR)$ such that $T = \int_{\RR} t \, dF(t)$, then $E(M \cap \sigma_S(T)) = F(M)$ for all $M \in \mathscr{B}(\RR)$.  Moreover, $W \in \cB(\cH_n)$ commutes with $T$ if and only if $W E(M) = E(M) W$ for every $M \in \sB(\sigma_S(T))$.
\end{thm}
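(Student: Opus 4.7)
My plan is the classical one adapted to the Clifford setting: build a continuous functional calculus for $T$, convert it into a spectral measure via the Clifford--Riesz theorems proved in Section~\ref{sec:MT}, and then extract the integral representation and its corollaries.

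\textbf{Continuous functional calculus.} For a real polynomial $p$, set $p(T)=\sum a_k T^k$; then $p(T)$ is self-adjoint, hence normal, and Theorem~\ref{thm:SRF} gives $\|p(T)\|=r_S(p(T))$. A polynomial spectral mapping $\sigma_S(p(T))=p(\sigma_S(T))$ can be verified directly using that $\sigma_S(T)\subseteq\RR$ by Lemma~\ref{lem:15July1}: factor $p(t)-s_0$ over $\RR$ into linear and irreducible quadratic pieces, observe that the quadratic factors evaluated at $T$ have the form $\cQ_s(T)$ with $s\in\CC\setminus\RR\subseteq\rho_S(T)$ hence are invertible, so $s_0\in\sigma_S(p(T))$ iff some real root of $p-s_0$ lies in $\sigma_S(T)$. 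This yields $\|p(T)\|=\sup_{t\in\sigma_S(T)}|p(t)|$, and Stone--Weierstrass extends $p\mapsto p(T)$ to a unital, norm-contractive, $*$-preserving $\RR$-algebra homomorphism $\Phi:\sC(\sigma_S(T),\RR)\to\cB(\cH_n)$. The calculus is positivity-preserving, since any $f\geq 0$ admits a real continuous square root $g$, so $\Phi(f)=\Phi(g)^2=\Phi(g)^*\Phi(g)$ is a positive operator and $\langle\Phi(f)x,x\rangle\succeq 0$.

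\textbf{From calculus to spectral measure.} For each $x\in\cH_n$ the linear functional $L_x(f):=\langle\Phi(f)x,x\rangle$ is bounded, $\gS(\RR_n)$-valued, and positive, so Theorem~\ref{thm:CliffordRRPOS} produces a unique positive $\RR_n$-valued Borel measure $\mu_x$ on $\sigma_S(T)$ representing $L_x$. Polarisation via \eqref{eq:POLAR} assembles the sesquilinear family $\mu_{x,y}$ with $\langle\Phi(f)x,y\rangle=\int f\,d\mu_{x,y}$ and $|\mu_{x,y}(M)|\leq C\|x\|\|y\|$ uniformly in $M$. Freezing $M$ and applying Theorem~\ref{thm:RRforHM} (which is where the two-sided module structure noted in Remark~\ref{rem:TWOSIDED} is needed) produces a bounded operator $E(M)\in\cB(\cH_n)$ with $\langle E(M)x,y\rangle=\mu_{x,y}(M)$. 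That $E(M)\in\cP(\cH_n)$ follows from $\Phi(\chi_M)^2=\Phi(\chi_M^2)=\Phi(\chi_M)$ and $\Phi(\chi_M)^*=\Phi(\overline{\chi_M})=\Phi(\chi_M)$, where $\Phi(\chi_M)$ is realised as a bounded-convergence limit of $\Phi(f_j)$ for continuous $f_j\to\chi_M$, the limit existing inside each finite measure $\mu_x$. Lemma~\ref{lem:2SeptMEASURE} then certifies that $E$ is a spectral measure on $\sB(\sigma_S(T))$.

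\textbf{Representation, uniqueness, and commutation.} Taking $f(t)=t$ gives $T=\Phi(\mathrm{id})$; since the integrand is real-valued the spectral integral $\II(\mathrm{id})$ of Section~\ref{sec:SIs} reduces to $\int_{\sigma_S(T)} t\,dE(t)$ irrespective of any associated imaginary operator, proving \eqref{eq:SPECSA}. For uniqueness, if $F$ is another spectral measure on $\sB(\RR)$ with $T=\int_\RR t\,dF(t)$, then integration of polynomials (and then of continuous functions via Stone--Weierstrass) against $F$ reproduces the same continuous calculus $\Phi$ on $\sC(\sigma_S(T),\RR)$, so the uniqueness clause of Theorem~\ref{thm:CliffordRRPOS} applied to every $L_x$ forces $F(M)=E(M\cap\sigma_S(T))$. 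For the commutation assertion, if $WT=TW$ then $Wp(T)=p(T)W$ for every real polynomial, hence by norm density $W\Phi(f)=\Phi(f)W$ for every continuous $f$, and by bounded convergence $WE(M)=E(M)W$; the converse is immediate by integrating $f(t)=t$, and in any case is a special case of Theorem~\ref{thm:COMMUT}. The hard part will be Step~2: unlike the complex-scalar case, the cone $\gP(\RR_n)$ is only a partial order and $\RR_n$ is noncommutative, so the bookkeeping required to pass from the scalar functionals $L_x$ to an honest projection-valued measure $E$ (via the Clifford-valued Riesz theorems) has to be done carefully, which is precisely what Theorems~\ref{thm:CliffordRRPOS} and~\ref{thm:RRforHM} were set up to handle.
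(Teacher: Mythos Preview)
Your proposal is essentially the paper's proof: continuous functional calculus via Lemma~\ref{lem:SPECTRAL_MAPPING}, Lemma~\ref{lem:POLYINEQ} and Theorem~\ref{thm:FC}; then Theorem~\ref{thm:CliffordRRPOS} for the measures $\mu_x$, polarisation for $\mu_{x,y}$, and Theorem~\ref{thm:RRforHM} to produce $E(M)$.

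Two places deserve a word of caution. First, your verification that $E(M)\in\cP(\cH_n)$ by writing $E(M)=\Phi(\chi_M)$ as a bounded-convergence limit of $\Phi(f_j)$ with continuous $f_j\to\chi_M$ pointwise is not quite complete: not every Borel indicator on $\sigma_S(T)$ is a pointwise limit of a \emph{single} sequence of continuous functions (only Baire class~1 functions are), so you would need an additional monotone-class layer to reach all of $\sB(\sigma_S(T))$. The paper avoids this entirely: it never extends $\Phi$ to discontinuous functions, and instead proves $E(M)E(N)=E(M\cap N)$ directly from the measure identity $d\mu_{x,E(M)y}=\chi_M\,d\mu_{x,y}$, obtained by combining the multiplicativity $\Phi(fg)=\Phi(f)\Phi(g)$ for continuous $f,g$ with the uniqueness clause of the Riesz theorem. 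Second, your uniqueness sketch invokes Theorem~\ref{thm:CliffordRRPOS}, which lives on a compact $X$, but $F$ is on $\sB(\RR)$; the paper closes this by first applying Theorem~\ref{thm:IMPORTANT} with $f(\lambda)=\lambda$ to show $\supp F=\sigma_S(T)$, after which the compact-support Riesz uniqueness applies. Both points are easily repaired, and neither changes the architecture of the argument.
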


Before we can prove Theorem \ref{thm:BSA}, we need a number of lemmas. In the following one, we wish to stress that the polynomial $p$ appearing in the spectral mapping identity \eqref{eq:SPID} is a polynomial with real coefficients.

\begin{lem}
\label{lem:SPECTRAL_MAPPING}
Let $T \in \cB(\cH_n)$ be self-adjoint. Then
\begin{equation}
\label{eq:SPID}
\sigma_S(p(T)) = p(\sigma_S(T)) \quad \quad {\rm for} \quad p \in \RR[t].
\end{equation}
\end{lem}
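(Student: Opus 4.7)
The plan is to reduce the $S$-spectral mapping identity to the classical fact that $(T-rI)$ is invertible in $\cB(\cH_n)$ if and only if $r \in \rho_S(T)$ for $r \in \RR$, by factoring $p(t)-s$ over $\RR$. First, I would record two structural facts: since $T$ is self-adjoint and $p \in \RR[t]$, the operator $p(T) = \sum_{k} a_k T^k$ is also self-adjoint, so both $\sigma_S(T) \subseteq \RR$ and $\sigma_S(p(T)) \subseteq \RR$ by Lemma \ref{lem:15July1}. Consequently, for any real $s$, we have the identity $\cQ_s(p(T)) = (p(T)-sI)^2$ and $\cQ_s(T) = (T-sI)^2$, and since the square of a bounded operator is invertible in $\cB(\cH_n)$ precisely when the operator itself is, the condition $s \in \rho_S(p(T))$ is equivalent to $p(T)-sI$ being invertible in $\cB(\cH_n)$ (and likewise for $T$ with $s$ replaced by $r$).

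Next, I would fix real $s$ and factor $p(t) - s$ over $\RR$ into a scalar $c$, linear factors $(t-r_j)$ corresponding to the real roots $r_j$, and irreducible quadratic factors of the form $q_k(t) = t^2 - 2\alpha_k t + (\alpha_k^2 + \beta_k^2)$ with $\beta_k > 0$. Substituting $T$ (which commutes with itself, so the order of factors is immaterial) yields
\[
p(T) - sI \;=\; c \prod_{j} (T - r_j I) \cdot \prod_{k} q_k(T).
\]
Picking any $\gI \in \mathbb{S}$ and setting $s_k := \alpha_k + \beta_k \gI \in \CC_{\gI} \setminus \RR$, each quadratic factor coincides with $\cQ_{s_k}(T)$. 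Since $\sigma_S(T) \subseteq \RR$ and $s_k \notin \RR$, every such $s_k$ lies in $\rho_S(T)$, so each factor $q_k(T)$ is invertible in $\cB(\cH_n)$.

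With the quadratic factors invertible, $p(T) - sI$ is invertible if and only if every factor $(T - r_j I)$ is invertible; equivalently, by the opening paragraph, if and only if no real root $r_j$ of $p(t)-s$ lies in $\sigma_S(T)$. This gives both containments at once: $s \in \sigma_S(p(T))$ forces some real root $r$ of $p(t)-s$ to lie in $\sigma_S(T)$, so $s = p(r) \in p(\sigma_S(T))$; conversely, if $r \in \sigma_S(T)$ and $s = p(r)$, then $(t-r)$ divides $p(t)-s$, so $(T-rI)$ appears among the linear factors of $p(T)-sI$, preventing invertibility. For the converse direction I would use the small lemma that if $A$ and $B$ commute and $AB \in \cB(\cH_n)$ is invertible, then both $A$ and $B$ are invertible (proved by noting $A$ is surjective and injective from considering $\Ran A \supseteq \Ran AB$ and $Bx = 0 \Rightarrow ABx = 0$, together with the same applied to $BA = AB$).

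The only mild obstacle is bookkeeping rather than substance: one must ensure the factorization of $p(t)-s$ is carried out over $\RR$ (not $\CC_{\gI}$) so that the quadratic factors naturally match the operator $\cQ_{s_k}(T)$ appearing in the definition of the $S$-spectrum, and one must treat the degenerate cases (constant $p$, or $p(t) - s \equiv 0$) separately --- here $p(T) - sI = 0$ is non-invertible, and both sides of \eqref{eq:SPID} reduce to $\{s\}$ or $\emptyset$ consistently with the convention that $\sigma_S(T) \neq \emptyset$ by Theorem \ref{thm:SPECNONEMPTYCOMPACT}.
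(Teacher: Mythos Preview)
Your proposal is correct and follows essentially the same strategy as the paper: factor $p(t)-s$ over $\RR$, argue that the irreducible quadratic factors give invertible operators, and reduce both containments to the invertibility of the linear factors $T-r_jI$. The only differences are cosmetic: you make the reduction $\cQ_s(p(T))=(p(T)-sI)^2$ for real $s$ explicit and identify the quadratic factors directly as $\cQ_{s_k}(T)$ (invertible since $s_k\notin\RR\supseteq\sigma_S(T)$), whereas the paper argues their invertibility via positivity of $(T-u_jI)^2$; and you treat both inclusions in detail, whereas the paper proves only $\sigma_S(p(T))\subseteq p(\sigma_S(T))$ and calls the other direction standard.
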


\begin{proof}
Suppose $p(t) = \sum_{j=0}^k p_j t^j \in \RR[t]$, with $p_k \neq 0$. We begin by noting that $p(T) := \sum_{j=0}^n p_j T^j \in \cB(\cH_n)$ is self-adjoint. Thus, Lemma \ref{lem:15July1} asserts that $\sigma_S(p(T)) \subseteq \RR$.  If $k = 0$, then \eqref{eq:SPID} holds trivially.

Suppose $k > 0$. We will first show that $p(\sigma_S(T)) \subseteq \sigma_S(p(T))$.  We will only show that $\sigma_S(p(T)) \subseteq p(\sigma_S(T))$ (the proof of the other containment is very standard and does not divert at all from the classical case).  Choose $t_0 \in \sigma_S(p(T))$.  Consider the polynomial $\varphi(t) := p(t) - t_0 \in \RR[t]$. If $\varphi$ has all real zeros, say $t_1, \ldots t_k$ (not necessarily distinct), then $\varphi(t) = c \prod_{j=1}^k (t - t_j)$ for some constant $c \in \RR$. Therefore,
$$\varphi(T) = p(T) - t_0 \, I = c \prod_{j=1}^k (T - t_j \, I).$$
Since $\varphi(T)$ is not invertible, $T - t_{j'} \, I$ must not be invertible for some $j' \in \{ 1, \ldots, k \}$. Thus, $t_{j'} \in \sigma_S(T)$, in which case $p(t_{j'}) = t_0$.

Next, if $\varphi$ has non-real zeros, say $\zeta_1, \bar{\zeta}_1, \ldots, \zeta_{\ell}, \bar{\zeta}_{\ell}$.  Then
$$
\varphi(t) = \psi(t) \prod_{j=1}^{\ell} (t - \zeta_k)( t- \bar{\zeta}_m)
$$
 for some polynomial $\psi \in \RR[t]$ which is constant or has all real zeros. Write $\zeta_j = u_j + v_j \, \gI$, where $v_j \neq 0$ for $j =1 ,\ldots, \ell$.  Since
$$(t - \zeta_j )(t - \bar{\zeta}_j) = (t - u_j)^2 + v_j^2 \quad \quad {\rm for} \quad j=1,\ldots, \ell,$$
we have
$$\varphi(T) = \psi(T) \prod_{j=1}^{\ell} \{ (T - u_j \, I)^2 + v_j^2 \, I \}.$$
Since $T -u_j \, I \in \cB(\cH_n)$ is self-adjoint, we have that $(T - u_j \, I)^2$ is a positive operator, in which case $(T - u_j \, I)^2 + v_j^2 \, I$ is invertible for $j=1,\ldots, \ell$. Consequently, $\varphi(T)$ not being invertible implies that $\psi(T)$ is not invertible. Hence, $\psi$ cannot be a constant polynomial. We may now proceed as in the case when $\varphi$ has all real zeros to obtain the desired conclusion.
\end{proof}

\begin{lem}
\label{lem:POLYINEQ}
Let $T \in \cB(\cH_n)$ be self-adjoint. Then
\begin{equation}
\label{eq:POLYineq}
\| p(T) \| =  \max_{t \in \sigma_S(T) } |p(t)| \quad \quad {\rm for} \quad p \in \RR[t].
\end{equation}
\end{lem}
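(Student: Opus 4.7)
The plan is to combine three results already established: that $p(T)$ is self-adjoint whenever $p \in \RR[t]$ and $T$ is self-adjoint, the spectral radius formula for bounded normal operators (Theorem \ref{thm:SRF}), and the spectral mapping theorem for polynomials (Lemma \ref{lem:SPECTRAL_MAPPING}).

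First I would observe that since $p \in \RR[t]$ has real coefficients and $T = T^*$, we may write $p(T) = \sum_{j=0}^k p_j T^j$ with $p_j \in \RR$, and then $p(T)^* = \sum_{j=0}^k p_j (T^*)^j = \sum_{j=0}^k p_j T^j = p(T)$. In particular $p(T)$ is normal (a self-adjoint operator satisfies $p(T)^*p(T) = p(T)p(T)^*$ trivially), so the hypotheses of the spectral radius formula apply. By Theorem \ref{thm:SRF} we therefore have
\begin{equation*}
\| p(T) \| = r_S(p(T)) = \sup_{s \in \sigma_S(p(T))}|s|.
\end{equation*}

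Next I would invoke Lemma \ref{lem:SPECTRAL_MAPPING}, which gives $\sigma_S(p(T)) = p(\sigma_S(T))$. Substituting this into the previous display yields
\begin{equation*}
\| p(T) \| = \sup_{s \in p(\sigma_S(T))}|s| = \sup_{t \in \sigma_S(T)}|p(t)|.
\end{equation*}
Finally, since $\sigma_S(T)$ is a non-empty compact subset of $\RR$ (by Theorem \ref{thm:SPECNONEMPTYCOMPACT} together with Lemma \ref{lem:15July1}) and $t \mapsto |p(t)|$ is a continuous real-valued function on $\sigma_S(T)$, the supremum is attained and may be written as a maximum, giving \eqref{eq:POLYineq}.

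There is essentially no obstacle here: the identity is a direct corollary of the two lemmas immediately preceding it, and the only small point to check is that $p(T)$ is self-adjoint so that the spectral radius formula may be applied. Thus the proof is very short.
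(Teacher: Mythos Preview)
Your proof is correct and uses the same two ingredients as the paper: the spectral radius formula (Theorem \ref{thm:SRF}) and the polynomial spectral mapping lemma (Lemma \ref{lem:SPECTRAL_MAPPING}). The only cosmetic difference is that the paper routes through the $C^*$-identity, writing $\|p(T)\|^2 = \|p(T)^*p(T)\| = \|p(T)^2\| = r_S(p(T)^2)$ and then applying the spectral mapping to $p^2$, whereas you apply $r_S$ directly to $p(T)$; your version is slightly more direct but the argument is essentially identical.
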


\begin{proof}
In view of \eqref{eq:C*identity} and \eqref{eq:SRF}, we have
\begin{align*}
\| p(T) \|^2 =& \; \| p(T)^* \, p(T) \|  = \| p(T)^2 \| = r_S(p(T)^2)\\
=& \; \max \{ | \tau|: \tau \in \sigma_S( p(T)^2 ) \} \\
=& \; \max \{ p(t_0)^2: t_0 \in \sigma_S(T) \},
\end{align*}
i.e., \eqref{eq:POLYineq} holds.
\end{proof}

\begin{thm}[functional calculus for bounded self-adjoint operators]
\label{thm:FC}
Let $T \in \cB(\cH_n)$ be self-adjoint. Then corresponding to any $f,g \in \mathscr{C}(\sigma_S(T), \RR)$, there exist self-adjoint operators $f(T),g(T) \in \cB(\cH_n)$ which exhibit the following properties{\rm :}
\begin{enumerate}
\item[(i)] $(fg)(T) = f(T) g(T)$.
\smallskip
\item[(ii)] $(f+g)(T) = f(T) + g(T)$.
\smallskip
\item[(iii)] $\| f(T) \| = \| f \|_{\infty}$.
\smallskip
\item[(iv)] If $f|_{\sigma_S(T)} \geq 0$, then $f(T)$ is a positive operator.
\end{enumerate}
\end{thm}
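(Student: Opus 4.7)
The plan is to establish a continuous functional calculus by extending the natural real polynomial calculus $p \mapsto p(T)$ to $\mathscr{C}(\sigma_S(T),\RR)$ via uniform approximation. The driving machinery is already in place: Lemma \ref{lem:POLYINEQ} asserts $\|p(T)\| = \max_{t\in \sigma_S(T)}|p(t)| = \|p|_{\sigma_S(T)}\|_\infty$ for $p \in \RR[t]$, so the assignment $\Phi_0: p|_{\sigma_S(T)} \mapsto p(T)$ is a well-defined isometric $\RR$-algebra homomorphism from the real polynomials on $\sigma_S(T)$ into the real Banach algebra $\cB(\cH_n)$.

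First I would invoke the Stone--Weierstrass theorem on the compact subset $\sigma_S(T) \subseteq \RR$ (guaranteed compact by Theorem \ref{thm:SPECNONEMPTYCOMPACT}) to see that $\RR[t]|_{\sigma_S(T)}$ is dense in $\mathscr{C}(\sigma_S(T),\RR)$ under $\|\cdot\|_\infty$. Given $f \in \mathscr{C}(\sigma_S(T),\RR)$, choose a sequence $(p_i)_{i=1}^\infty$ of real polynomials with $\|p_i - f\|_\infty \to 0$. By the isometry property,
\begin{equation*}
\|p_i(T) - p_j(T)\| = \|(p_i - p_j)|_{\sigma_S(T)}\|_\infty \to 0,
\end{equation*}
so $(p_i(T))_{i=1}^\infty$ is Cauchy in $\cB(\cH_n)$. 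Since $\cB(\cH_n)$ is complete (Remark \ref{rem:COMPLETEOPERATORNORM}), I define $f(T) := \lim_{i\to\infty} p_i(T)$. Independence from the choice of approximating sequence follows from the isometric estimate, and self-adjointness of $f(T)$ passes to the limit since each $p_i(T)$ is self-adjoint (the coefficients are real and $T = T^*$), and the adjoint is continuous on $\cB(\cH_n)$.

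For properties (i)--(iii): given $f,g \in \mathscr{C}(\sigma_S(T),\RR)$ with approximating polynomials $p_i \to f$ and $q_i \to g$, the algebraic identities $(p_i q_i)(T) = p_i(T) q_i(T)$ and $(p_i + q_i)(T) = p_i(T) + q_i(T)$ hold in the polynomial calculus; passing to the limit (using continuity of multiplication on bounded sets, cf.\ Lemma \ref{lem:BDD}) yields (i) and (ii). Property (iii) is immediate from $\|p_i(T)\| = \|p_i|_{\sigma_S(T)}\|_\infty \to \|f\|_\infty$. For (iv), if $f \geq 0$ on $\sigma_S(T)$ then $h := \sqrt{f} \in \mathscr{C}(\sigma_S(T),\RR)$, and (i) gives $f(T) = h(T) h(T) = h(T)^2$. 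Since $h$ is real-valued, $h(T)$ is self-adjoint, so for every $x \in \cH_n$,
\begin{equation*}
\langle f(T) x, x \rangle = \langle h(T)^2 x, x \rangle = \langle h(T) x, h(T) x \rangle \succeq 0
\end{equation*}
by \eqref{eq:IP1}, exhibiting $f(T)$ as positive (Definition \ref{def:POS}).

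The main obstacle I anticipate is purely notational rather than mathematical: one must be attentive that the whole calculus is carried out over the \emph{real} scalars (since $f,g$ are real-valued and $T$ is self-adjoint with $\sigma_S(T)\subseteq\RR$ by Lemma \ref{lem:15July1}), which avoids any issue of multiplying by Clifford numbers through $T$ on the wrong side. Once one restricts to real coefficients everything stays in the commutative real subalgebra of $\cB(\cH_n)$ generated by $T$ and $I$, and the standard Banach-algebraic limiting argument transfers without modification; the Clifford-module nature of $\cH_n$ plays no role beyond ensuring $\cB(\cH_n)$ is a complete normed algebra satisfying the $C^*$-identity \eqref{eq:C*identity} that was already used to prove Lemma \ref{lem:POLYINEQ}.
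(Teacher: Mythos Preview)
Your proposal is correct and follows essentially the same approach as the paper: extend the polynomial calculus via the Weierstra\ss\ approximation theorem using the isometry $\|p(T)\| = \|p|_{\sigma_S(T)}\|_\infty$ from Lemma~\ref{lem:POLYINEQ}, then pass the algebraic identities to the limit. Your treatment of (iv) via $h = \sqrt{f} \in \mathscr{C}(\sigma_S(T),\RR)$ is in fact slightly cleaner than the paper's, which (by an apparent slip) writes $g \in \RR[t]$ rather than $g \in \mathscr{C}(\sigma_S(T),\RR)$.
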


\begin{proof}
Fix $f \in \mathscr{C}(\sigma_S(T), \RR)$. By the Weierstra\ss \ approximation theorem, $\RR[t]$ is uniformly dense in $\mathscr{C}(\sigma_S, \RR)$ with supremum norm
$$\| f \|_{\infty} :=  \sup_{t \in \sigma_S(T)} \, |f(t)| \quad \quad {\rm for} \quad f \in \mathscr{C}(\sigma_S(T), \RR).$$
Thus, there exists a sequence of real-valued polynomials $(f_i)_{i=1}^{\infty}$ such that $f$ is the uniform limit of $(f_i)_{i=1}^{\infty}$ on $\sigma_S(T)$, i.e.,
$$\lim_{n\to \infty} \, \max_{t_0 \in \sigma_S(T)} |f(t_0) - f_n(t_0) |.$$
Therefore, $(f_i)_{i=1}^{\infty}$ is a Cauchy sequence and \eqref{eq:POLYineq} applied to $f_i$ implies that
$$\lim_{i,j \to \infty} \| f_i(T) - f_j(T) \| = 0.$$
Since $\cB(\cH_n)$ is complete, as must have that $(f_i(T))_{i=1}^{\infty}$ has a limit in the uniform operator topology, which we will denote by $f(T)$. One can easily check that $f(T)$ does not depend on the choice of the Cauchy sequence $(f_i)_{i=1}^{\infty}$.

Assertions (i)-(iii) are a direct consequence of the definition of $f(T)$ for $f \in \mathscr{C}(\sigma_S(T), \RR)$, given above, just as in the classical complex Hilbert space setting. To prove (iv), observe that if $f |_{\sigma_S(T)} \geq 0$, then there exists $g \in \RR[t]$ such that $f_{\sigma_S(T)} = g^2|_{\sigma_S(T)}$. Thus,
\begin{align*}
\langle f(T)x, x \rangle =& \; \langle g(T)^2 x, x \rangle \\
=& \;  \langle g(T)x, g(T)x \rangle \\
\succeq& \;  0 \quad \quad {\rm for} \quad x \in \cH_n.
\end{align*}
Thus, $f(T)$ is a positive operator.
\end{proof}

We are now ready to prove Theorem \ref{thm:BSA}.

\begin{proof}[Proof of Theorem \ref{thm:BSA}]
Fix $T \in \cB(\cH_n)$ and $x \in \cH_n$. We shall utilise the functional calculus given in Theorem \ref{thm:FC} to make sense of $f(T)$ for $f \in \mathscr{C}(\sigma_S(T), \RR)$. Let $\ell_{x}: \mathscr{C}(\sigma_S(T), \RR) \to \gS(\RR_n)$, where $\gS(\RR_n)$ denotes the real vector space of self-adjoint Clifford number in $\RR_n$, be the linear functional given by
$$\ell_{x}(f) =  \langle f(T) x, x \rangle \quad \quad {\rm for} \quad f \in \mathscr{C}(\sigma_S(T), \RR).$$
The fact that $\ell_x(f) \in \gS(\RR_n)$ follows at once from the fact that $f(T) = f(T)^*$ (see Theorem \ref{thm:FC} and $\ell_x(f) = \langle f(T)x,x\rangle$.

We claim that $\ell_x$ is a positive linear functional. If $f|_{\sigma_S(T)} \geq 0$, then $f|_{\sigma_S(T)} = g^2|_{\sigma_S(T)}$ for some $g \in \mathscr{C}(\sigma_S(T), \RR)$. Thus,
\begin{align*}
\ell_x(f) =& \;  \langle g(T)^2 x, x \rangle \\
=& \;  \, \langle g(T)x, g(T) x \rangle \\
\geq& \; 0 \quad\quad {\rm for} \quad x \in \cH_n,
\end{align*}
in which case, $\ell_x: \mathscr{C}(\sigma_S(T), \RR) \to \gS(\RR_n)$ is a positive linear functional. Consequently,  it follows from Theorem \ref{thm:CliffordRRPOS} that there exists a unique positive $\RR_n$-valued Borel measure $\mu_x$ on $\sB(\sigma_S(T))$ such that
\begin{equation}
\label{eq:ELLx}
\ell_x(f) = \int_{\sigma_S(T)} f(t) \, d\mu_x(t) \quad \quad {\rm for} \quad f \in \mathscr{C}(\sigma_S(T), \RR).
\end{equation}

Fix $x, y \in \cH_n$. Utilising the polarisation formula \eqref{eq:POLAR}, one can check that
\begin{equation}
\label{eq:FULLxy}
\langle f(T)x, y \rangle = \int_{\sigma_S(T)} f(t) \, d\mu_{x,y}(t) \quad \quad {\rm for} \quad f \in \mathscr{C}(\sigma_S(T), \RR),
\end{equation}
where $\mu_{x,y}$ denotes the $\RR_n$-valued measure on $\sB(\sigma_S(T))$ given by
\begin{equation}
\label{eq:fullMU}
\mu_{x,y} := \frac{\sum_{\alpha} (\mu_{x+y e_{\alpha}} - \mu_{x-ye_{\alpha}} )e_{\alpha} }{ 4 \, \dim \mathfrak{S}(\RR_n) } \quad \quad {\rm for} \quad x,y \in \cH_n.
\end{equation}
Moreover, one can use \eqref{eq:fullMU} and the uniqueness of the positive Borel measure in \eqref{eq:ELLx} to show that for any $x, y \in \cH_n$, there is one and only one $\RR_n$-valued measure on $\sB(\sigma_S(T))$ such that \eqref{eq:FULLxy} holds.

An immediate consequence of the uniqueness of $\mu_{x,y}$ in \eqref{eq:FULLxy} is for any $x, y,z \in \cH_n$ and $a,b \in \RR_n$, we have
$$
\mu_{xa + yb, z}(M) = \mu_{x,z}(M) \, a + \mu_{y, z}(M) b
$$
and
$$
\mu_{x, ya + zb}(M) = \bar{a} \mu_{x,y}(M) + \bar{b} \mu_{x,z}(M)
$$
for all $M\in \sB(\sigma_S(T))$. We can also use \eqref{eq:FULLxy} to show that
for any $x, y \in \cH_n$,  we have
\begin{align*}
|\mu_{x,y}(M)| \leq& \; |\mu_{x,y}(\sigma_S(T)) | \\
=& \; | \langle x, y \rangle | \\
\leq& \; \| x \| \| y \| \quad \quad {\rm for} \quad M \in \sB(\sigma_S(T)).
\end{align*}
Thus, for any $M \in \sB(\sigma_S(T))$ we may use Theorem \ref{thm:RRforHM} to obtain an operator $E(M) \in \cB(\cH_n)$ such that
\begin{equation}
\label{eq:Espec}
\langle E(M)x, y \rangle := \mu_{x,y}(M).
\end{equation}

We now wish to show that $E: \sB(\sigma_S(T)) \to \cB(\cH_n)$ is a spectral measure (see Definition \ref{def:SPECTRALMEASURE}). To see that $E$ is a spectral measure, we shall make use of Lemma \ref{lem:2SeptMEASURE}. Since $\langle E(\cdot)x, x \rangle = \mu_x$ is a positive Borel measure (and hence $\mu_x$ is countably additive) for any $x \in \cH_n$, it suffices to show that $E(\sigma_S(T)) = I$, $E(M) = E(M)^*$ and $E(M)^2 = E(M)$ for $M \in \sB(\sigma_S(T))$.

Putting $f(t) = 1$ into \eqref{eq:FULLxy}, we obtain $\langle x, y \rangle = \mu_{x,y}(\sigma_S(T)) = \langle E(\sigma_S(T)x,y \rangle$ for all $x,y \in \cH_n$ and hence $E(\sigma_S(T)) = I$. Next, since
$$\overline{ \langle f(T)x, y \rangle } = \int_{\sigma_S(T)} f(t) d\bar{\mu}_{x,y}(t) \quad \quad {\rm for} \quad f \in \sC(\sigma_S(T), \RR).$$
On the other hand,
\begin{align*}
\overline{ \langle f(T)x, y \rangle } =& \; \langle y, f(T)x \rangle \\
=& \; \langle f(T)y, x \rangle \\
=& \; \int_{\sigma_S(T)} f(t) d\mu_{y,x}(t) \quad \quad {\rm for} \quad f \in \sC(\sigma_S(T), \RR).
\end{align*}
Thus, for any $x, y \in \cH_n$, we have
$$
\int_{\sigma_S(T)} f(t) d\bar{\mu}_{x,y}(t) = \int_{\sigma_S(T)} f(t) d\mu_{y,x}(t) \quad \quad {\rm for} \quad f \in \sC(\sigma_S(T), \RR)
$$
and the aforementioned uniqueness of the $\RR_n$-valued measure $\mu_{x,y}$ such that \eqref{eq:FULLxy} forces $\bar{\mu}_{x,y} = \mu_{y,x}$. Consequently, \eqref{eq:Espec} implies that $E(M) = E(M)^*$ for all $M \in \sB(\sigma_S(T))$.

We will now prove that $E(M)^2 = E(M)$ for all $M \in \sB(\sigma_S(T))$. Using the definition of $f(T)$ for $f \in \sC(\sigma_S(T), \RR)$, one can extend the identity in Theorem \ref{thm:FC}(i) to the case when $f, g\in \sC(\sigma_S(T), \RR)$. Thus, for any $x, y \in \cH_n$, we have
$$\langle f(T) g(T)x, y \rangle = \langle (fg)(T)x, y \rangle = \int_{\sigma_S(T)} f(t) g(t) d\mu_{x,y}(t) $$
for all $f,g \in \sC(\sigma_S(T), \RR).$ On the other hand,
$$\langle f(T) g(T)x, y \rangle = \int_{\sigma_S(T)} f(t) d\mu_{g(T)x, y}  \quad \quad {\rm for} \quad f,g \in \sC(\sigma_S(T), \RR).$$
Thus, using the uniqueness of the $\RR_n$-valued measure $\mu_{x,y}$ in \eqref{eq:FULLxy}, we have 	$d\mu_{g(T)x, y } = g\, d\mu_{x,y}$ for all $x, y \in \cH_n$. Consequently,  for any $x,y \in \cH_n$, we have $\langle E(M) g(T)x, y \rangle = \int_M g(t) d\mu_{x,y}$ for all $M \in \sB(\sigma_S(T))$. Putting these observations together, for any $x, y \in \cH_n$, we have
$$\int_{\sigma_S(T)} g(t) d\mu_{x, E(M)y} = \int_{\sigma_S(T)} g(t) \chi_M(t) d\mu_{x,y}(t)$$
for all $g \in \sC(\sigma_S(T), \RR)$. Thus, using the uniqueness of the $\RR_n$-valued measure in \eqref{eq:FULLxy}, we have $d\mu_{x,E(M)y} = \chi_M d\mu_{x,y}$, i.e.,
$$\mu_{x,E(M)y}(N) = \int_N \chi_M(t) d\mu_{x,y}(t) = \mu_{x,y}(M \cap N) \quad \quad {\rm for} \quad N \in \sB(\sigma_S(T)),$$
i.e., $\langle E(N)x, E(M)y \rangle = \langle E(M \cap N) x, y \rangle$. Using the fact that $E(M) = E(M)^*$, we arrive at $\langle E(M)E(N)x, y \rangle = \langle E(M \cap N)x, y \rangle$ for all $M, N \in \sB(\sigma_S(T))$. Setting $N = M$, we get $E(M)^2 = E(M)$ for all $M \in \sB(\sigma_S(T))$.  Thus, $E$ is a spectral measure on $\sigma_S(T)$.

Formula \eqref{eq:SPECSA} follows at once from \eqref{eq:FULLxy} with $f(t) = t$ together with \eqref{eq:Espec}.

Let $F$ be another spectral measure on $\sB(\RR)$ such that $\eqref{eq:SPECSA}$ holds with $F$ in place of $E$. We shall let $\II_F(f)$ denote the spectral integral for
$f \in \gF(\sB(\RR), F)$ (see Theorem \ref{thm:UNBFUNCCALC}). Let $g: \sigma_S(T) \to \RR$ given by $g(t) = t$. Then
$$T = \II_F(g) = \int_{\sB(\RR)} t \, dF(t).$$
Theorem \ref{thm:IMPORTANT} asserts that
\begin{align*}
\sigma_S(\II(g)) =& \; \{ s \in \RR: F( \{ t \in \sB(\RR): |g(t)^2 - 2 {\rm Re}(s)g(t) + |s|^2 | < \varepsilon \} ) \neq 0 \\
& \;\; \; \text{ for all $\varepsilon > 0$}\} \\
=& \; \{ s \in \RR: F( \{ t \in \sB(\RR): |t^2 -2 st + s^2 | < \varepsilon \} ) \neq 0 \\
& \;\; \; \text{ for all $\varepsilon > 0$}\} \\
=& \; \{ s \in \RR: F( \{ t \in \sB(\RR): |t-s|^2 < \varepsilon \} ) \neq 0 \\
& \;\; \; \text{ for all $\varepsilon > 0$}\} \\
=& \; \supp F,
\end{align*}
where we used \eqref{eq:SPRTALT} to obtain the last equality. Thus, as $\sigma_S(T) = \sigma_S(\II_F(g))$, we have $\sigma_S(T) = \supp F$.  On the other hand,  let $\II(g)$ denote the spectral integral of $g \in \gF(\sigma_S(T), E)$. Then, using the argument above, with $F$ replaced by $E$, we obtain $\sigma_S(T) = \supp E$ and hence $\sigma_S(T) =\supp E = \supp F$. Using Theorem \ref{thm:BFUNC}(v), we obtain for any $x \in \cH_n$,
$$\int_{\sigma_S(T)} 	|f(t)|^2 d( {\rm Re} \langle E(t)x, x \rangle ) = \int_{\sigma_S(T)} 	|f(t)|^2 d( {\rm Re} \langle F(t)x, x \rangle )$$
for every $f \in \gB$ and, in particular, for every $f \in \sC(\sigma_S(T), \RR)$. Thus, Theorem \ref{thm:RR} forces $\langle E(M)x, x \rangle = \langle F(M)x, x \rangle$ for all $M \in \sB(\sigma_S(T))$. Consequently, the polarisation formula \eqref{eq:POLAR} forces $E =F|_{\sigma_S(T)}$.

Finally, suppose $WT = WT$ for $W \in \cB(\cH_n)$. Using Theorem \ref{thm:BFUNC}(iv), we have for any $x, y \in \cH_n$,
$$\langle f(T)Wx, y \rangle = \int_{\sigma_S(T)} {\rm Re}(f(t)) d\langle E(t) Wx, y \rangle \quad \quad {\rm for} \quad f \in \gB.$$
In view of Definition \ref{def:BFUNC}, we have that $Wf(T) = f(T) W$ for every \\$f \in \gB$ and hence, for any $x,y \in \cH_n$,
$$\langle f(T)Wx, y \rangle = \langle f(T)x, W^*y \rangle = \int_{\sigma_S(T)} {\rm Re}(f(t)) d\langle E(t)x, W^*y \rangle \quad  {\rm for} \; f \in \gB.$$
Putting these observations together, we have, in particular,
$$
\int_{\sigma_S(T)} f(t) d\langle E(t) Wx, y \rangle = \int_{\sigma_S(T)} f(t) d\langle E(t) x,W^* y \rangle \quad  {\rm for} \; f \in \mathscr{C}(\sigma_S(T), \RR).
$$
Thus, using Corollary \ref{cor:RR}, we have $\langle E(M)Wx,y \rangle = \langle E(M)x, W^*y$, i.e., $\langle E(M)Wx, y \rangle = \langle WE(M)x, y \rangle$ for every $x,y \in \cH_n$ and $M \in \sB(\sigma_S(T))$, i.e., $E(M)W = WE(M)$ for every $M \in \sB(\sigma_S(T))$.

\end{proof}

\begin{rem}
\label{rem:SUPP}
Let $T$ and $E$ be as in Theorem \ref{thm:BSA}. We wish to record that in the proof of Theorem \ref{thm:BSA} we showed that the support of the spectral measure of $A$ is precisely the $S$-spectrum of $T$, i.e.,
$$\supp E = \sigma_S(T).$$
\end{rem}

\begin{cor}
\label{cor:POSSQROOT}

Let $T \in \cB(\cH_n)$ be a positive operator. Then there exists a unique positive operator $A \in \cB(\cH_n)$ such that $A^2 = T$. Moreover, if $(f_j)_{j=1}^{\infty}$ is any sequence of polynomials such that $f_j(t) \to t^{1/2}$ uniformly on $[0,d]$, where $d = \max \sigma_S(T)$, then
\begin{equation}
\label{eq:NORMOP}
\lim_{j \to \infty} \| f_j(T) - T^{1/2} \| = 0. \end{equation}
\end{cor}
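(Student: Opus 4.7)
The plan is to deduce the corollary from the continuous functional calculus of Theorem \ref{thm:FC} together with the spectral theorem Theorem \ref{thm:BSA}. First I would note that since $T$ is positive self-adjoint, Lemma \ref{lem:15July1pos} and Theorem \ref{thm:SPECNONEMPTYCOMPACT} give $\sigma_S(T) \subseteq [0,d]$ with $d = \max \sigma_S(T) \leq \|T\|$. The function $g(t) := t^{1/2}$ is continuous on $[0,d]$, hence lies in $\mathscr{C}(\sigma_S(T), \RR)$, and we may set $A := g(T)$ via Theorem \ref{thm:FC}. Property (i) of that theorem gives $A^2 = (g \cdot g)(T) = (\mathrm{id})(T) = T$, while property (iv), applied to $g \geq 0$ on $\sigma_S(T)$, shows that $A$ is positive.

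The asserted norm convergence is immediate from Theorem \ref{thm:FC}(iii): if $f_j \to g$ uniformly on $[0,d]$, then a fortiori uniformly on $\sigma_S(T)$, so
\[
\| f_j(T) - T^{1/2} \| \; = \; \|(f_j - g)(T)\| \; = \; \|f_j - g\|_{\infty, \sigma_S(T)} \; \to \; 0 ,
\]
which is exactly \eqref{eq:NORMOP}. (Indeed this fact underlies the very construction of $T^{1/2}$ in Theorem \ref{thm:FC}.)

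The main obstacle is uniqueness. Suppose $B \in \cB(\cH_n)$ is any positive operator with $B^2 = T$. I would first show $BA = AB$: since $BT = B \cdot B^2 = B^2 \cdot B = TB$, the last assertion of Theorem \ref{thm:BSA} gives $BE(M) = E(M)B$ for every $M \in \sB(\sigma_S(T))$, where $E$ is the spectral measure of $T$. Theorem \ref{thm:COMMUT} (applied, say, to $S=B$ and $f=g$) then yields $BA = AB$. Consequently
\[
(A - B)(A + B) \; = \; A^2 - B^2 + BA - AB \; = \; 0 .
\]
Put $C := A - B$, which is self-adjoint, and fix $x \in \cH_n$, $y := Cx$. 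From $(A+B)y = 0$ we obtain $\langle Ay, y \rangle + \langle By, y \rangle = 0$. Both summands lie in $\gP(\RR_n)$, so taking real parts and using that $\mathrm{Re}\, a \geq 0$ for $a \in \gP(\RR_n)$, each real part must vanish: $\mathrm{Re}\, \langle Ay, y\rangle = \mathrm{Re}\,\langle By, y\rangle = 0$.

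To conclude from this that $Ay = 0$, I would apply the existence part already proved to the positive operator $A$ itself, obtaining a positive square root $A^{1/2} \in \cB(\cH_n)$ with $(A^{1/2})^2 = A$. Then
\[
\|A^{1/2} y\|^2 \; = \; \mathrm{Re}\,\langle A^{1/2}y, A^{1/2}y\rangle \; = \; \mathrm{Re}\,\langle Ay, y\rangle \; = \; 0 ,
\]
hence $A^{1/2}y = 0$ and so $Ay = 0$; the analogous argument gives $By = 0$, so $C^2 x = Cy = (A-B)y = 0$. Because $C$ is self-adjoint,
\[
\|Cx\|^2 = \mathrm{Re}\,\langle Cx, Cx\rangle = \mathrm{Re}\,\langle C^2 x, x\rangle = 0 ,
\]
so $Cx = 0$ for every $x$, i.e., $A = B$. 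The only subtlety is the step deducing $Ay=0$ from vanishing of the quadratic form, where the Clifford-valued inner product prevents a one-line Cauchy--Schwarz argument; invoking the positive square root of $A$ (already available from the existence half of the corollary) sidesteps this cleanly.
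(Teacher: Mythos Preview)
Your proof is correct and follows essentially the same route as the paper: construct $A$ via the functional calculus for $t^{1/2}$, get the norm convergence from $\|f(T)\| = \|f\|_\infty$, and for uniqueness show $AB=BA$, deduce $\langle Ay,y\rangle + \langle By,y\rangle = 0$ for $y=(A-B)x$, and kill each term using a square root. The only cosmetic differences are that the paper obtains $AB=BA$ by the more elementary observation that $B$ commutes with every polynomial in $T$ and hence with their norm limit $A$ (rather than via the spectral measure and Theorem~\ref{thm:COMMUT}), and that it phrases the step $Ay=0$ as ``$A = D^*D$'' rather than invoking the existence half for $A^{1/2}$---but these are the same argument in different clothing.
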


\begin{proof} The spectral theorem for a bounded self-adjoint operator (see Theorem \ref{thm:BSA} asserts that there exists a uniquely determined spectral measure $E$ on $\sigma_S(T)$ such that $T = \int_{\sigma_S(T)} t \, dE(t)$. In view of Lemma \ref{lem:15July1pos}, we have $\sigma_S(T) \subseteq [0, d]$, where $d \geq 0$. Notice that have that $f(t) = t^{1/2} \in \gB(\sigma_S(T), \sB(\sigma_S(T)), \CC_{\gI})$ (see Definition \ref{def:BFUNC}), where $\gI \in \mathbb{S}$, and $$A := f(T) = \int_{\sigma_S(T)} t^{1/2} \, dE(t).$$ Thus,  $$\langle Ax, x \rangle = \int_{[0,d]} t^{1/2} d\langle E(t)x, x \rangle \succeq 0 \quad \quad {\rm for} \quad x \in \cH_n$$ and hence $A$ is a positive operator and we may use Theorem \ref{thm:BFUNC}(i) to check that $A^2 = T$. The limit \eqref{eq:NORMOP} is an immediate consequence of Theorem \ref{thm:BFUNC}(vii). We will now prove that there is only one positive operator $W \in \cB(\cH_n)$ such that $W^2 = T$. Suppose $\wt{A} \in \cB(\cH_n)$ is a positive operator such that $\wt{A}^2 = T$.  Then it is easy to see that $\wt{A}T = T \wt{A}$ and hence $\wt{A} g(T) = g(T) \wt{A}$ for any $g \in \RR[t]$. Consequently, we have  $$\wt{A} \left( \lim_{j \to \infty} f_j(T) \right)= \left(\lim_{j \to \infty} f_j(T)\right) \wt{A},$$ where $(f_j)_{j=0}^{\infty}$ is any sequence of polynomials in $\RR[t]$ such that $f_j(t) \to t^{1/2}$ uniformly on $[0, d ]$, i.e.,  \begin{equation} \label{eq:AtildeAcommut} \wt{A} A = A \tilde{A}. \end{equation} For any $x \in \cH_n$, let $y := (A - \wt{A})x$. Thus, using \eqref{eq:AtildeAcommut}, we obtain \begin{equation} \label{eq:INTMED} \langle A y, y\rangle + \langle \wt{A}y, y \rangle = \langle (A+\wt{A})(A-\wt{A})x, y \rangle = \langle (A^2 - \wt{A}^2)x, y \rangle = 0. \end{equation} Therefore, as $\langle Ay, y \rangle \succeq 0$ and $\langle \wt{A} y, y \rangle \succeq 0$, we must have
\begin{equation}
\label{eq:AtildeAy}
\langle Ay,y \rangle = \langle \wt{A}y, y \rangle = 0.
\end{equation}
But then, as $A = D^*D$ for some $D \in \cB(\cH_n)$ and $\wt{A} = \wt{D}^* \wt{D}$ for some $\wt{D} \in \cB(\cH_n)$,  we have that \eqref{eq:AtildeAy} implies that $\langle Dy, Dy \rangle = \langle \wt{D}y, \wt{D}y \rangle =0 \in \RR_n$, i.e.,  $${\rm Re}\, \langle Dy, Dy \rangle = {\rm Re}\, \langle \wt{D}y, \wt{D}y \rangle =0.$$
Thus, $\| Dy \| = \| Dy \| = 0$, i.e., $Dy = \wt{D} y = 0$. But then we have
$$Ay = D^* D y = \wt{D}^* D = \wt{A} y = 0.$$
Therefore, for any $x \in \cH_n$, we have
\begin{align*}
\| (A - \wt{A})x \|^2 =& \; {\rm Re}\, \langle (A-\wt{A})x, (A - \wt{A})x \rangle \\
=& \; {\rm Re}\, \langle (A-\wt{A})^2 x, x \rangle \\
=& \; {\rm Re}\, \langle (A-\wt{A})y, x \rangle \\
=& \; 0
\end{align*}
and hence $(A-\wt{A})x =0$ for every $x \in \cH_n$, i.e., $A = \wt{A}$.
\end{proof}

\begin{cor}
\label{cor:WCOMM}
Suppose $A \in \cB(\cH_n)$ and $B \in \cB(\cH_n)$ are self-adjoint operators with spectral measures $E_A$ and $E_B$, respectively.  Then $AB = BA$ if and only if $E_A(M) E_B(M) = E_B(M) E_A(M)$ for all $M \in \sB(\RR)$.
\end{cor}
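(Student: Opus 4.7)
The plan is to reduce this to two applications of the commutation criterion already established in Theorem \ref{thm:BSA}, which asserts that a bounded operator $W$ commutes with a bounded self-adjoint operator $T$ if and only if $W$ commutes with every spectral projection $E(M)$ of $T$. (I read the stated identity as $E_A(M)E_B(N) = E_B(N)E_A(M)$ for all $M,N \in \sB(\RR)$, which is the natural notion of commuting spectral measures; the ``$M = N$'' version is, in any case, an immediate special case.)

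First I would prove the forward direction. Assume $AB = BA$. Then $B$ is a bounded operator commuting with the self-adjoint operator $A$, so the last assertion of Theorem \ref{thm:BSA}, applied to $T = A$ and $W = B$, yields $B\, E_A(M) = E_A(M)\, B$ for every $M \in \sB(\sigma_S(A))$, and trivially for every $M \in \sB(\RR)$ by extending by zero outside $\sigma_S(A)$. Now fix $M \in \sB(\RR)$ and view $E_A(M) \in \cB(\cH_n)$ as an operator commuting with the self-adjoint operator $B$. A second application of Theorem \ref{thm:BSA}, this time to $T = B$ and $W = E_A(M)$, gives $E_A(M)\, E_B(N) = E_B(N)\, E_A(M)$ for every $N \in \sB(\RR)$. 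Since $M$ was arbitrary, this establishes the desired commutation of the two spectral measures.

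For the converse, assume $E_A(M)\, E_B(N) = E_B(N)\, E_A(M)$ for all $M, N \in \sB(\RR)$. Fixing $M$, the operator $E_A(M)$ commutes with every spectral projection of $B$, so Theorem \ref{thm:BSA} (applied to $T = B$ and $W = E_A(M)$) implies $E_A(M)\, B = B\, E_A(M)$. Thus $B$ commutes with every $E_A(M)$, and a final application of Theorem \ref{thm:BSA} (now to $T = A$ and $W = B$) yields $AB = BA$.

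There is no serious obstacle here: the only substantive input is the commutation statement in Theorem \ref{thm:BSA}, which has already been proved using the integral representation $A = \int t\, dE_A(t)$ together with the uniqueness of the Riesz-type measure in Corollary \ref{cor:RR}. The slight subtlety worth flagging is that Theorem \ref{thm:BSA} produces $E_A$ on $\sB(\sigma_S(A))$; extending $E_A$ to $\sB(\RR)$ by setting $E_A(M) := E_A(M \cap \sigma_S(A))$ makes the equivalence meaningful on $\sB(\RR)$ and is consistent with the uniqueness clause of Theorem \ref{thm:BSA}, so no additional work is needed.
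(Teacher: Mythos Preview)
Your proof is correct and follows essentially the same approach as the paper: two applications of the commutation criterion in Theorem~\ref{thm:BSA}, first with $T=A$, $W=B$, then with $T=B$, $W=E_A(M)$. Your reading of the conclusion as $E_A(M)E_B(N)=E_B(N)E_A(M)$ for all $M,N$ is the natural one, and your remarks on extending $E_A$ from $\sB(\sigma_S(A))$ to $\sB(\RR)$ are appropriate and slightly more careful than what the paper records.
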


\begin{proof}
The last assertion in Theorem \ref{thm:BSA} with $T= A $ and $W = B$ ensures that $AB = BA$ if and only if $B E_A(M) = E_A(M) B$. Applying the last assertion in Theorem \ref{thm:BSA} with $T = B$ and $W = E_B(M)$ we see that
$$AB = BA \Longleftrightarrow B E_A(M) = E_A(M)B \Longleftrightarrow E_A(M) E_B(M) = E_B(M)E_A(M)$$
for any $M \in \sB(\RR)$.
\end{proof}

\section{Polar decomposition for bounded operators}
\label{sec:PDBOs}

Let $T \in \cB(\cH_n)$. The aim of this section it show that there exists a uniquely determined positive operator $P \in\cB(\cH_n)$ and a partial isometry $Q$ such that $T = UQ$. This will be important for the spectral theorem for a normal operator in Section \ref{sec:BN}.

\begin{thm}[polar decomposition for bounded operators]
\label{thm:POLARDECOMP}
Every $T \in \cB(\cH_n)$ admits a factorisation
\begin{equation}
\label{eq:POLARDECOMP}
T = U Q,
\end{equation}
where $Q := |T|$ is uniquely determined, where $|T| := (T^*T)^{1/2}$, and $U: \overline{ {\rm Ran}(Q) } \to \overline{ {\rm Ran}(T)}$ is a partial isometry. In the particular case that $T \in \cB(\cH_n)$ is normal, we can choose $U$ such that $U$ is unitary and we have the following{\rm :}
\begin{equation}
\label{eq:COMMUTINGWTQ}
WT = TW \; {\rm and} \; WT^* = T^* W \Longrightarrow WQ = Q W \; {\rm for} \; W \in \cB(\cH_n)
\end{equation}
and
\begin{equation}
\label{eq:COMMUTINGWTU}
WT = TW \; {\rm and} \; WT^* = T^* W \Longrightarrow WU = U W \; {\rm for} \; W \in \cB(\cH_n).
\end{equation}
Moreover, $T$ is normal if and only if $QU  =U Q$.
\end{thm}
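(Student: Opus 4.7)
\textbf{Proof plan for Theorem \ref{thm:POLARDECOMP}.} My plan is to mirror the classical Hilbert space argument, with the Clifford-module subtleties handled by the orthogonal decomposition Theorem \ref{thm:ORTHOCOMPS}, the positive square-root Corollary \ref{cor:POSSQROOT}, and Corollary \ref{cor:WCOMM} for commutation. First I would observe that $T^*T \in \cB(\cH_n)$ is self-adjoint and positive, since $\langle T^*Tx, x \rangle = \langle Tx, Tx \rangle \succeq 0$ for all $x \in \cH_n$. Hence Corollary \ref{cor:POSSQROOT} gives a uniquely determined positive operator $Q = (T^*T)^{1/2} \in \cB(\cH_n)$ with $Q^2 = T^*T$. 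The fundamental identity
\[
\| Qx \|^2 = {\rm Re}\, \langle Q^2 x, x \rangle = {\rm Re}\, \langle T^* T x, x \rangle = {\rm Re}\, \langle Tx, Tx \rangle = \| Tx \|^2 \quad (x \in \cH_n)
\]
then implies $\Ker Q = \Ker T$ and that the prescription $U_0(Qx) := Tx$ for $x \in \cH_n$ is a well-defined right-linear isometry $U_0 : \Ran\, Q \to \Ran\, T$.

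Next I extend $U_0$ by continuity to an isometry $U_1 : \overline{\Ran\, Q} \to \overline{\Ran\, T}$, and then extend to all of $\cH_n$ by declaring $U := 0$ on $(\overline{\Ran\, Q})^\perp$, invoking Theorem \ref{thm:ORTHOCOMPS} to get the orthogonal decomposition. By construction $UQx = U_0(Qx) = Tx$ for every $x \in \cH_n$, yielding \eqref{eq:POLARDECOMP}. Uniqueness of $Q$ follows because if $T = U' Q'$ with $Q'$ positive and $U'$ a partial isometry with initial space $\overline{\Ran\, Q'}$, then $T^* T = Q'(U')^* U' Q' = Q'(E_{\overline{\Ran\, Q'}}) Q' = (Q')^2$ (since $(U')^* U'$ acts as the identity on $\overline{\Ran\, Q'} \supseteq \Ran\, Q'$), and the uniqueness clause of Corollary \ref{cor:POSSQROOT} forces $Q' = Q$.

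For the normal case I would exploit the crucial fact that $T T^* = T^* T$ gives $\| Tx \| = \| T^* x \|$ for all $x$, so $\Ker T = \Ker T^*$, and by Theorem \ref{thm:Dec14yy1}(ii) this is equivalent to $\overline{\Ran\, T} = \overline{\Ran\, T^*}$. Combining with $\Ker Q = \Ker T$ we obtain
\[
\overline{\Ran\, Q} = (\Ker\, Q)^{\perp} = (\Ker\, T)^{\perp} = \overline{\Ran\, T^*} = \overline{\Ran\, T},
\]
so $U_1$ is an isometry from $\overline{\Ran\, Q}$ \emph{onto itself}. I can then extend $U_1$ to a unitary $U$ by taking $U$ to be any unitary on the common orthogonal complement $\Ker T = \Ker T^*$ (the identity works). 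For the commutation \eqref{eq:COMMUTINGWTQ}, if $W$ commutes with both $T$ and $T^*$, then $W$ commutes with $T^* T$, hence with every polynomial in $T^* T$. Since the norm-limit \eqref{eq:NORMOP} in Corollary \ref{cor:POSSQROOT} expresses $Q$ as a norm-limit of polynomials in $T^* T$, we conclude $WQ = QW$. From $WT = TW$ and $WQ = QW$ we get $UQW = UWQ = WUQ$ on $\Ran\, Q$, hence $UW = WU$ on $\overline{\Ran\, Q}$; on $\Ker\, Q = \Ker\, T = \Ker\, T^*$ we use that $W$ preserves this subspace (since $W$ commutes with $T$ and $T^*$) together with our freedom in choosing $U$ there to obtain \eqref{eq:COMMUTINGWTU}.

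Finally, for the equivalence of normality with $QU = UQ$ (with $U$ taken unitary), the implication $QU = UQ \Rightarrow T$ normal is a direct computation: $T^* = (UQ)^* = Q U^*$, so
\[
T T^* = UQ QU^* = U Q^2 U^* = Q U U^* Q = Q^2 = Q U^* U Q = T^* T.
\]
Conversely, if $T$ is normal then the commutation property just established applied with $W = T$ (which trivially commutes with $T$ and $T^*$) gives $T U = U T$, i.e.\ $UQU = UUQ$, and cancelling the unitary $U$ on the left yields $QU = UQ$. The main obstacle throughout is keeping track of the fact that $\cH_n$ carries an $\RR_n$-valued inner product rather than a scalar one, so every ``isometry'' and ``unitary'' assertion must be verified through the norm $\|\cdot\| = ({\rm Re}\, \langle \cdot, \cdot \rangle)^{1/2}$; but the identity $\|Qx\| = \|Tx\|$ needs only the real part of the Clifford inner product, so no genuinely new difficulty appears.
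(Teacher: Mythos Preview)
Your proposal is correct and follows essentially the same route as the paper's proof: construct $Q=(T^*T)^{1/2}$ via Corollary~\ref{cor:POSSQROOT}, use $\|Qx\|=\|Tx\|$ to define the partial isometry, extend to a unitary on $\Ker T$ in the normal case, derive \eqref{eq:COMMUTINGWTQ} from the polynomial approximation in Corollary~\ref{cor:POSSQROOT} (the paper phrases this via the last assertion of Theorem~\ref{thm:BSA}, which amounts to the same thing), and obtain \eqref{eq:COMMUTINGWTU} from $(WU-UW)Q=0$ together with the splitting $\cH_n=\overline{\Ran\,Q}\oplus\Ker Q$. Your explicit computation for the equivalence ``$T$ normal $\Leftrightarrow QU=UQ$'' (with $U$ unitary) is exactly what the paper defers to the classical case; just note that your phrase ``freedom in choosing $U$'' should really read ``the specific choice $U=I$ on $\Ker Q$'', and you are implicitly using that $W$ preserves $\overline{\Ran\,Q}$ (from $WQ=QW$) as well as $\Ker Q$.
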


\begin{proof}
Note that $T^*T \in \cB(\cH_n)$ is a positive operator and has a unique positive square root $Q := (T^*T)^{1/2} \in \cB(\cH_n)$ (see Corollary \ref{cor:POSSQROOT}). For any $x \in \cH_n$, we have
\begin{align}
\| T x \|^2 =& \; {\rm Re}\, \langle Tx, Tx \rangle \nonumber \\
=& \; {\rm Re}\, \langle T^* Tx, x \rangle \nonumber \\
=& \; {\rm Re}\, \langle Q^2 x ,x \rangle \nonumber \\
=& \; {\rm Re}\, \langle Qx, Qx \rangle \nonumber \\
=& \; \| Q x \|^2. \label{eq:TQ}
\end{align}
Thus, if we let $x = y -z$, with $y,z \in \cH_n$, then \eqref{eq:TQ} implies that $Ty = Tz$ whenever $Qy = Qz$ for any $y,z \in \cH_n$. Consequently, we may let $U: \Ran Q \to  \Ran T$ denote the operator belonging to $\cB(\cH_n)$ given by $U(Qx) = T x$ for $x \in \cH_n$. Next, extend $U$ to all of $\cH_n$ via
\begin{equation}
\label{eq:PERP}
Uy = \begin{cases}
T x & \quad {\rm if} \quad  y = Qx, \\
0 & \quad {\rm if} \quad y   \in \overline{\Ran \, Q}^{\perp}.
\end{cases}
\end{equation}
Since $\langle Uy, z \rangle = \langle y, U^* z \rangle = 0$ for all $y \in \Ran \, Q$ and $z \in \cH_n$, we have $U^*: \cH_n \to \overline{ \Ran \,Q}^{\perp}$ and hence $\Ran \, U \subseteq \overline{\Ran \,Q}$.

We now wish to show that
\begin{equation}
\label{eq:URAN}
U^*U x = x \quad \quad {\rm for} \quad x \in \overline{\Ran\, Q}.
\end{equation}
Suppose $y, z \in \Ran Q$. Then
$$\langle y, z \rangle = \langle Uy, Uz \rangle = \langle y, U^* U z \rangle$$
and hence $\langle U^* Uy -y, w\rangle = 0$ for all $w \in \Ran Q$. We have already noted that $U^*: \cH_n \to \overline{\Ran Q}$. Hence for all $y \in \overline{\Ran Q}$, we have $U^*U y - y \in \overline{\Ran Q}$. Consequently, we have \eqref{eq:URAN}. Putting all of these observations together, we arrive at the factorisation \eqref{eq:POLARDECOMP}. The asserted uniqueness can be justified precisely the same way as the complex Hilbert space case.

Let us now suppose that $T$ is normal. We first note that $\langle TT^*x, x\rangle = \langle T^*Tx, x \rangle$ implies that $\langle T x, Tx \rangle = \langle T^* x, T*x \rangle$ and hence ${\rm Re}\, \langle T x, Tx \rangle = {\rm Re}\, \langle T^* x, T*x \rangle$, i.e., $\| T x \| = \| T^* x \|$ for all $x \in \cH_n$. Therefore, $\Ker\, T = \Ker\, T^*$ and hence
$$\Ran \,Q^{\perp} = \Ker \,Q = \Ker \,T = \Ker \,T^* = \overline{\Ran \,T}^{\perp}.$$
Thus, we can extend $U: \overline{\Ran Q} \to \overline{\Ran Q}$ from \eqref{eq:PERP} to $U: \cH_n \to \cH_n$ (with a slight abuse of notation we shall denote the extension of $U$ by $U$ as well) via
$$Uy = y\quad  {\rm if} \quad y \in \overline{\Ran Q}^{\perp}.$$
As $\cH_n = \overline{\Ran Q} \oplus \overline{\Ran Q}^{\perp}$, we have that $U: \cH_n \to \cH_n$ is unitary.

The implication \ref{eq:COMMUTINGWTQ} is an immediate consequence of the final assertion of Theorem \ref{thm:BSA} and Corollary \ref{cor:POSSQROOT}. We will now show \eqref{eq:COMMUTINGWTU}. In view of \eqref{eq:COMMUTINGWTQ}, we have
\begin{equation}
\label{eq:NULL}
(WU - UW)Q = 0.
\end{equation}
Thus, as $T \in \cB(\cH_n)$ is normal, we have $\cH_n = {\rm Ker} T \oplus \overline{ \Ran T }$. Thus we may write any $x \in \cH_n$ as $x = y + z $, where $y \in \Ker T = \Ker Q$ and $z \in \overline{ \Ran T}$, we may use the fact that $\Ker Q = \Ker T$ and \eqref{eq:NULL} to obtain
$$(WU - UW)x = (WU - UW)(y+z) = 0,$$
i.e., $WU = UW$.

The final assertion can be justified in the same way as the complex Hilbert space case (bearing in mind that one must use Corollary \ref{cor:POSSQROOT} which guarantees that bounded positive operators have a unique positive square root).
\end{proof}

\section{An additive decomposition for bounded operators and imaginary operators}
\label{sec:ADDandIOs}
In this section, we will show that all bounded operators on a Clifford module admit an additive decomposition which is analogous to the well-known additive decomposition
$$T = \left( \frac{T + T^*}{2} \right) + i \left( \frac{T - T^*}{2i} \right),$$
which holds in the complex Hilbert space case. In the particular case that the bounded operator is normal, this additive decomposition will be useful for proving the spectral theorem for a bounded normal operators in Section \ref{sec:BN}.

\begin{lem}
\label{lem:UNITARY}
Let $T \in \cB(\cH_n)$ be unitary. Then
\begin{equation}
\label{eq:UNITARY}
\sigma_S(T) \subseteq \{  s \in \RR^{n+1}: |s| = 1 \}.
\end{equation}
\end{lem}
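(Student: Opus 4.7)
The plan is to use the unitarity of $T$ to establish a ``reciprocal'' symmetry between the $S$-spectra of $T$ and $T^{-1} = T^*$, and to combine this with the bound $\sigma_S(W)\subseteq\{|w|\le\|W\|\}$ from Theorem \ref{thm:SPECNONEMPTYCOMPACT} applied twice.

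First I would verify the easy inclusion $\sigma_S(T)\subseteq\{|s|\le 1\}$. For any $x\in\cH_n$,
$$\|Tx\|^{2} = \Re\langle Tx,Tx\rangle = \Re\langle T^{\ast}Tx,x\rangle = \Re\langle x,x\rangle = \|x\|^{2},$$
so $\|T\|=1$, and Theorem \ref{thm:SPECNONEMPTYCOMPACT} yields $\sigma_S(T)\subseteq\{s\in\RR^{n+1}:|s|\le 1\}$. Since $T^{\ast}$ is also unitary, the same reasoning gives $\|T^{\ast}\|=1$ and $\sigma_S(T^{\ast})\subseteq\{|s|\le 1\}$.

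Next I would show that $0\in\rho_S(T)$. Indeed $\cQ_0(T)=T^{2}=T\cdot T$ is a product of invertible operators in $\cB(\cH_n)$, hence invertible, so by Remark \ref{rem:ALTNERATIVE} we have $0\in\rho_S(T)$. Therefore every $s\in\sigma_S(T)$ is a nonzero paravector.

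The key step is the following identity. For any nonzero paravector $s$, set $s^{-1}:=\bar s/|s|^{2}$, which is again a paravector with $\Re(s^{-1})=\Re(s)/|s|^{2}$ and $|s^{-1}|^{2}=1/|s|^{2}$. Then
\begin{align*}
|s|^{2}\,T^{2}\,\cQ_{s^{-1}}(T^{-1}) &= |s|^{2}T^{2}\Bigl(T^{-2} - \tfrac{2\Re(s)}{|s|^{2}}T^{-1} + \tfrac{1}{|s|^{2}}I\Bigr) \\
&= |s|^{2}I - 2\Re(s)T + T^{2} = \cQ_s(T).
\end{align*}
Since $T^{2}$ is invertible in $\cB(\cH_n)$ and $|s|^{2}\neq 0$, this identity shows that $\cQ_s(T)$ has a bounded inverse on $\cH_n$ if and only if $\cQ_{s^{-1}}(T^{-1})$ does; equivalently (by Remark \ref{rem:ALTNERATIVE}), $s\in\rho_S(T)\iff s^{-1}\in\rho_S(T^{-1})$, so that $s\in\sigma_S(T)\iff s^{-1}\in\sigma_S(T^{-1})$ for every nonzero paravector $s$.

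Combining the pieces: if $s\in\sigma_S(T)$, then $s\neq 0$ and $|s|\le 1$ by the first step, while $s^{-1}\in\sigma_S(T^{-1})\subseteq\{|t|\le 1\}$ gives $|s|\ge 1$. Hence $|s|=1$, establishing \eqref{eq:UNITARY}. The only point that requires a bit of care is verifying that the algebraic identity $\cQ_s(T)=|s|^{2}T^{2}\cQ_{s^{-1}}(T^{-1})$ really translates into the genuine $\cB(\cH_n)$-equivalence of invertibility rather than a purely formal one, but this is immediate since all the ``multipliers'' $T^{2}$ and $|s|^{2}I$ are bounded and boundedly invertible.
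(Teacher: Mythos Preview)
Your proof is correct and follows essentially the same approach as the paper: both arguments hinge on the factorisation $\cQ_s(T)=|s|^{2}T^{2}\,\cQ_{s^{-1}}(T^{\ast})$ (noting $T^{-1}=T^{\ast}$) together with the spectral radius bound from Theorem~\ref{thm:SPECNONEMPTYCOMPACT}. The only cosmetic difference is that you phrase the key step as the symmetric equivalence $s\in\sigma_S(T)\Leftrightarrow s^{-1}\in\sigma_S(T^{\ast})$ and then squeeze $|s|$ between $1$ and $1$, whereas the paper directly checks that $0<|s|<1$ forces $s\in\rho_S(T)$.
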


\begin{proof}
We claim that $\rho_S(T) \subseteq \RR^{n+1} \setminus \{  s \in \RR^{n+1}: |s| = 1 \}$. Since $TT^* = I$, it is obvious that $\| T \| = 1$. Thus, Theorem \ref{thm:SPECNONEMPTYCOMPACT} ensures that
$$\{  s \in \RR^{n+1}: |s| > 1 \} \subseteq \rho_S(T).$$
Since $\cQ_0(T) = T^2$ is a unitary operator, we have that $\cQ_0(T)$ is invertible, i.e., $0 \in \rho_S(T)$. For any $s = s_0 + s_1 \in \RR^{n+1}$, with $0 < |s| < 1$, we have
\begin{align*}
\cQ_s(T) =& \; T^2 - 2 {\rm Re}(s) T + |s|^2\, I \\
=& \;(|s|^2)  T^2 \{  (T^*)^2 + 2{\rm Re}(s^{-1}) T^* + |s^{-1}|^2 T^2    \} \\
=& \; |s|^2 T^2 \cQ_{s^{-1}}(T^*).
\end{align*}
Thus, as $T^*$ is unitary and $|s^{-1}| > 1$, we have that $s^{-1}$ belongs to $\rho_S(T^*)$, in which case $s \in \RR^{n+1}$, with $0 < |s| < 1$, belongs to $\rho_S(T)$. Putting everything together we have \eqref{eq:UNITARY}.
\end{proof}

\begin{defn}[imaginary operator]
\label{def:IMAGINARY}
We will call an operator $J_0 \in \cB(\cH_n)$ a {\it partial imaginary operator} if $J_0$ is a partial isometry and $J_0^* = - J_0$. We will call $J \in \cB(\cH_n)$ an {\it imaginary operator} if $J$ is unitary and $J^* = -J$.
\end{defn}

\begin{thm}
\label{thm:AJBgeneral}
Corresponding to any operator $T\in \cB(\cH_n)$, there exist a self-adjoint operator $A \in \cB(\cH_n)$, a partial imaginary operator $J_0 \in \cB(\cH_n)$ and a positive operator $B \in \cB(\cH_n)$ such that
\begin{equation}
\label{eq:AJBgeneral}
T = A  +J_0 B,
\end{equation}
where $A := (T+T^*)/2$ and $B := |T-T^*|/2$ are uniquely determined by $T$. Moreover, we may choose $J_0$ in \eqref{eq:AJBgeneral} to be an imaginary operator.
\end{thm}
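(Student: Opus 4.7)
The plan is to split $T$ into its ``self-adjoint'' and ``anti self-adjoint'' parts and then apply the polar decomposition of Theorem \ref{thm:POLARDECOMP} to the anti self-adjoint part. Set
$$A := \tfrac{1}{2}(T + T^*) \quad \text{and} \quad C := \tfrac{1}{2}(T - T^*),$$
so that $A = A^*$, $C^* = -C$ and $T = A + C$. Since $C$ is anti self-adjoint one checks directly that $C^*C = -C^2 = CC^*$, so $C$ is normal. Applying the polar decomposition in the normal case (Theorem \ref{thm:POLARDECOMP}) to $C$ produces a positive operator $B := |C| = (C^*C)^{1/2}$ and a unitary $U \in \cB(\cH_n)$ with $C = UB$ and $UB = BU$. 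A brief computation using $(2C)^*(2C) = 4 C^*C$ together with the uniqueness of positive square roots (Corollary \ref{cor:POSSQROOT}) shows that $B = |T-T^*|/2$, which is the required positive operator.

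Next I will identify the partial imaginary operator. Since $UB = BU$, one computes two expressions for $C^2$: on the one hand $C^2 = (UB)(UB) = U^2 B^2$, and on the other hand, since $C^* = -C$, $C^2 = -C^*C = -B^2$. Hence $(U^2 + I)B^2 = 0$, and as $\overline{\Ran\,B^2} = \overline{\Ran\,B}$ this forces $U^2 = -I$ on $\overline{\Ran\,B}$. Because $U|_{\overline{\Ran\,B}}$ is unitary on this subspace, this yields $U^* = U^{-1} = -U$ on $\overline{\Ran\,B}$. Let $P \in \cP(\cH_n)$ denote the orthogonal projection onto $\overline{\Ran\,B}$, which exists by Theorem \ref{thm:ORTHOCOMPS}, and define
$$J_0 := UP.$$
Since $\overline{\Ran\,B}$ and $\Ker\,B = \overline{\Ran\,B}^{\perp}$ are both $U$-invariant (the first by $UB = BU$, the second because $U = I$ there by the construction in Theorem \ref{thm:POLARDECOMP}), a direct block computation gives $PU^* = -UP$, hence $J_0^* = -J_0$. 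Moreover $J_0^*J_0 = P U^* U P = P$ is an orthogonal projection, so $J_0$ is a partial isometry, i.e.\ a partial imaginary operator in the sense of Definition \ref{def:IMAGINARY}. Since $Bx \in \overline{\Ran\,B}$ one has $PBx = Bx$ and therefore $J_0 B = UP B = UB = C$, which yields $T = A + J_0 B$. Uniqueness of $A$ and $B$ is immediate from the defining formulas.

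Finally, to upgrade $J_0$ to an imaginary operator $J$, I will prescribe an imaginary operator on the complementary piece $\Ker\,B$. Because $B$ is right linear, $\Ker\,B$ is a closed right $\RR_n$-submodule of $\cH_n$; combined with Lemma \ref{lem:KAPPA}(iii), this provides an orthonormal family $(\eta_i)_{i \in \cI}$ with closed right linear span equal to $\Ker\,B$. Using the hypothesis $n > 0$, define $K \in \cB(\Ker\,B)$ by $K \eta_i := \eta_i e_1$ and extend right linearly. The identity \eqref{eq:IP4} gives $\langle K\eta_i, \eta_j\rangle = \delta_{ij} e_1$, and a short computation with \eqref{eq:IP3} shows $\langle \eta_i, K^*\eta_j\rangle = -\langle \eta_i, \eta_j e_1\rangle \cdot \text{(correct sign)}$, leading to $K^*\eta_j = -\eta_j e_1 = -K\eta_j$; together with $K^2 = -I$ on $\Ker\,B$ (since $e_1^2 = -1$), this makes $K$ an imaginary operator on $\Ker\,B$. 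Defining $J$ block-wise as $J|_{\overline{\Ran\,B}} := U|_{\overline{\Ran\,B}}$ and $J|_{\Ker\,B} := K$ via the decomposition $\cH_n = \overline{\Ran\,B} \oplus \Ker\,B$ produces a well-defined element of $\cB(\cH_n)$ that is unitary with $J^* = -J$, and still satisfies $JB = UB = C$, hence $T = A + JB$.

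The conceptual heart of the argument lies in the identity $U^2 = -I$ on $\overline{\Ran\,B}$, which is what makes $U$ behave like an ``imaginary unit'' where it matters, and which depends critically on $C$ being \emph{anti} self-adjoint. The main technical obstacle is the last paragraph: upgrading the partial imaginary $J_0$ to a genuine imaginary operator requires exhibiting an imaginary operator on the orthogonal complement $\Ker\,B$, and this is where the Clifford structure with $n > 0$ is genuinely used (a purely real Hilbert space admits no imaginary operator in this sense). Once the orthonormal family in $\Ker\,B$ is in hand, the verification that right multiplication by $e_1$ is both unitary and anti self-adjoint reduces to a direct manipulation with \eqref{eq:IP3} and \eqref{eq:IP4}.
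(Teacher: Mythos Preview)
Your approach coincides with the paper's: decompose $T = A + C$ with $A = (T+T^*)/2$ self-adjoint and $C = (T-T^*)/2$ anti self-adjoint (hence normal), then apply the polar decomposition of Theorem~\ref{thm:POLARDECOMP} to $C$. The paper obtains $J_0^* = -J_0$ in one line from $C^* = -C$ together with $J_0 B = B J_0$; your route through $(U^2+I)B^2 = C^2 + C^*C = 0$ on $\overline{\Ran B}$ is a correct and more explicit unpacking of exactly the same step, and your truncation $J_0 = UP$ recovers precisely the partial isometry the paper uses.

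There is, however, a gap in your final paragraph. Lemma~\ref{lem:KAPPA}(iii) produces an orthonormal basis of the \emph{full} Clifford module $\cH_n = \cH \otimes \RR_n$ by lifting a real orthonormal basis of $\cH$; it does not assert that an arbitrary closed right submodule such as $\Ker B$ carries a family $(\eta_i)$ with $\langle \eta_i, \eta_j\rangle = \delta_{ij} \in \RR$, and for $n \geq 3$ (where $\gS(\RR_n) \supsetneq \RR$, so $\langle x,x\rangle$ need not be real) this is genuinely not automatic. Without such a basis your definition of $K$ does not get off the ground. To be fair, the paper's own treatment of the ``moreover'' clause is equally thin: it simply invokes the unitary extension in Theorem~\ref{thm:POLARDECOMP}, but that extension is defined to be the \emph{identity} on $\overline{\Ran Q}^{\perp} = \Ker B$ and is therefore not anti self-adjoint there either. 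So this point requires more care than either argument supplies; one clean repair is to exploit the two-sided structure and work with left multiplication by $e_1$ on all of $\cH_n$, but you would then need to verify that this operator commutes with the projection onto $\Ker B$.
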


\begin{proof}
If we let $A := (T+T^*)/2$, then $A \in \cB(\cH_n)$ is obviously self-adjoint. Consider the anti self-adjoint operator  $Y := T - A = (T-T^*)/2 \in \cB(\cH_n)$.
Since $Y$ is normal, we may use Theorem \ref{thm:POLARDECOMP} to find a positive operator $B \in \cB(\cH_n)$ and a partial isometry $J_0$ such that $Y = J_0 B $. Moreover, by Theorem \ref{thm:POLARDECOMP}, we have $BJ_0 = J_0 B$. Since $Y$ is anti self-adjoint, we must have $J_0^* = -J_0$. Thus, $J_0$ is a partial imaginary operator.

The uniqueness of $A$ is obvious. The uniqueness of $B$ follows from Theorem \ref{thm:POLARDECOMP}. Finally, the fact that we may choose $J_0$ to be unitary follows from Theorem \ref{thm:POLARDECOMP} applied to the bounded normal operator $(T-T^*)/2$.
\end{proof}

In the case that $T \in \cB(\cH_n)$ is normal, we have the following refinement of Theorem \ref{thm:AJBgeneral}.

\begin{thm}
\label{thm:AJB}
Corresponding to any normal operator $T\in \cB(\cH_n)$, there exist a self-adjoint operator $A \in \cB(\cH_n)$, a partial imaginary operator $J_0 \in \cB(\cH_n)$ and a positive operator $B \in \cB(\cH_n)$ such that $A$, $J_0$ and $B$ mutually commute and satisfy
\begin{equation}
\label{eq:AJB}
T = A  +J_0 B.
\end{equation}
In this case, $A$ and $B$ are as in Theorem \ref{thm:AJBgeneral} and
\begin{equation}
\label{eq:TJ}
TJ_0 = J_0 T.
\end{equation}
Moreover, we may choose $J_0$ to be an imaginary operator {\rm (}in this case we shall write $J$ in place of $J_0${\rm )}.
\end{thm}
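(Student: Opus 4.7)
The plan is to upgrade Theorem \ref{thm:AJBgeneral} in the normal case by leveraging the strong commutation properties built into the polar decomposition of a normal operator (Theorem \ref{thm:POLARDECOMP}). Set $A := (T+T^*)/2$ and $Y := (T-T^*)/2$, so that $T = A + Y$. By Theorem \ref{thm:AJBgeneral} we have $Y = J_0 B$, where $B = |Y| = (Y^*Y)^{1/2}$ and $J_0$ is a partial isometry obtained from the polar decomposition of $Y$. Since $Y^* = -Y$, the operator $Y$ is automatically normal, so the ``normal case'' of Theorem \ref{thm:POLARDECOMP} applies to $Y$: this already gives $J_0 B = B J_0$ and lets us choose $J_0$ unitary, which together with $J_0^* = -J_0$ upgrades $J_0$ to an imaginary operator $J$ in the sense of Definition \ref{def:IMAGINARY}.

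The next step is to show that $A$ commutes with $Y$. Because $T$ is normal, $T T^* = T^* T$, and a direct expansion gives
\begin{equation*}
4 A Y = (T+T^*)(T-T^*) = T^2 - TT^* + T^*T - (T^*)^2 = T^2 - (T^*)^2,
\end{equation*}
and symmetrically $4 Y A = T^2 - (T^*)^2$, so $AY = YA$. Since $Y^* = -Y$, we also have $A Y^* = Y^* A$. Now I invoke the implications \eqref{eq:COMMUTINGWTQ} and \eqref{eq:COMMUTINGWTU} of Theorem \ref{thm:POLARDECOMP}, applied to the normal operator $Y$ with $W = A$: they yield $A B = B A$ and $A J_0 = J_0 A$. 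Combined with $J_0 B = B J_0$ already noted, the three operators $A$, $B$, $J_0$ mutually commute.

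Finally, the identity $T J_0 = J_0 T$ follows because $T = A + Y = A + J_0 B$, and $J_0$ commutes with both summands: it commutes with $A$ by the previous paragraph, and with $Y = J_0 B = B J_0$ trivially. This also delivers the refined decomposition \eqref{eq:AJB} and the upgraded conclusion that $J_0$ can be taken to be an imaginary operator $J$, completing the proof.

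I do not anticipate a substantive obstacle: once one recognises that the anti-self-adjoint $Y$ is normal and therefore eligible for the ``normal'' half of Theorem \ref{thm:POLARDECOMP}, the mutual commutations are a mechanical consequence of $T T^* = T^* T$ combined with \eqref{eq:COMMUTINGWTQ}--\eqref{eq:COMMUTINGWTU}. The only point that warrants care is verifying that the hypothesis $W Y = YW$ and $W Y^* = Y^* W$ really is available for $W = A$ before invoking \eqref{eq:COMMUTINGWTQ}--\eqref{eq:COMMUTINGWTU}; this is exactly where the normality of $T$ (rather than arbitrary $T \in \cB(\cH_n)$) is used in an essential way, and without it one cannot get past Theorem \ref{thm:AJBgeneral}.
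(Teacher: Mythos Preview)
Your proof is correct and follows essentially the same route as the paper: apply Theorem~\ref{thm:AJBgeneral}, note that $Y=(T-T^*)/2$ is normal so the polar decomposition gives $J_0B=BJ_0$ and allows the unitary upgrade, then use normality of $T$ to get $AY=YA$ and $AY^*=Y^*A$, and invoke \eqref{eq:COMMUTINGWTQ}--\eqref{eq:COMMUTINGWTU} with $W=A$ to obtain the remaining commutations. Your explicit expansion verifying $AY=YA$ is a bit more detailed than the paper's one-line assertion, but the argument is the same.
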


\begin{proof}
In view of Theorem \ref{thm:AJB}, we only have to show that $A:=(T+T^*)/2, B:=|T-T^*|/2$ and $J_0$ mutually commute and that \eqref{eq:TJ} holds. The fact that $B$ and $J_0$ commute follows from the fact that $J_0 B$ is a polar factorisation for the bounded normal operator $Y := (T-T^*)/2$. Since $T$ is normal, we have $A (T-T^*) =  (T-T^*) A$ and $A (T-T^*)^* = (T-T^*)^* A$. Thus, the fact that $A$, $B$ and $J_0$ mutually commute follows at once from \eqref{eq:COMMUTINGWTQ} and \eqref{eq:COMMUTINGWTU}. The final assertion \eqref{eq:TJ} is an immediate consequence of \eqref{eq:AJB} and the fact that $A$, $J$ and $B$ mutually commute.
\end{proof}

\begin{thm}
\label{thm:IOs}
Let $J$ be an imaginary operator on a Clifford module $\cH_n$, fix $\gI \in \mathbb{S}$ and define
\begin{equation}
\label{eq:CI}
\cH_{\pm}(J, \gI) := \{ x \in \cH_n: J\, x = x(\pm \gI) \}.
\end{equation}
Then the following statements hold{\rm :}
\begin{enumerate}
\item[(i)] $\cH_{\pm}(J, \gI)$ are nontrivial, i.e., $\cH_{\pm}(J, \gI) \neq \{ 0 \}$ and
\begin{equation}
\label{eq:TRIVIALINTER}
\cH_{+}(J, \gI) \cap \cH_{-}(J, \gI) = \{ 0 \}.
\end{equation}
\item[(ii)] $\cH_{\pm}(J, \gI)$ are $\CC_{\gI}$ closed right linear subspaces of $\cH_n$, with respect to $\CC_{\gI}$, i.e., if $x, y \in \cH_{\pm}(J, \gI)$ and $\lambda \in \CC_{\gI}$, then $x\,\lambda + y \in \cH_{\pm}(J, \gI)$. Consequently, $\cH_{\pm}(J, \gI)$ may be both be viewed as a complex Hilbert space with respect to the $\CC_{\gI}$-valued inner product given by
\begin{equation}
\label{eq:CC_gI}
\langle x, y \rangle_{\CC_{\gI}} := {\rm Re}(\langle x, y \rangle) - {\rm Re}(\langle x, y \rangle \gI)\gI \in \CC_{\gI} \quad \quad {\rm for} \quad x, y \in \cH_{\pm}(J, \gI).
\end{equation}
\item[(iii)] $\cH_n = \cH_{+}(J, \gI) \oplus \cH_{-}(J, \gI)$.
\smallskip
\item[(iv)] For any orthonormal basis $(\eta_i)_{i \in \cI}$ of the complex Hilbert space $\cH_+(J, \gI)$, we have that, for any choice of $\gJ \in \mathbb{S}$ with $\gI \, \gJ = -\gJ \gI$,  $(\eta_i \, \gJ)_{i \in \cI}$ is an orthonormal basis of $\cH_-(J, \gI)$.

     Moreover, for any orthonormal basis $(\tilde{\eta})_{i \in \cI}$ of the complex Hilbert space $\cH_-(J, \gI)$, we have that, for any choice of $\gJ \in \mathbb{S}$ with $\gI \, \gJ = -\gJ \gI$,  $(\eta_i \, \gJ)_{i \in \cI}$ is an orthonormal basis of $\cH_+(J, \gI)$.
    \smallskip
\item[(v)] For any orthonormal basis $(\eta)_{i \in \cI}$ of the complex Hilbert space $\cH_+(J, \gI)$, $(\eta_i)_{i \in \cI}$ is an orthonormal basis of the Clifford module $\cH_n$.
\smallskip
\item[(vi)] $\sigma_S(J) \cap \CC_{\gI}^+ = \{ \gI \}$.
\smallskip
\item[(vii)] For any orthonormal basis $(\eta_i)_{i \in \cI}$ of the complex Hilbert space $\cH_+(J, \gI)$ and $\gI \in \mathbb{S}$, we have
\begin{equation}
\label{eq:Jexpansion}
Jx = \sum_{i \in \cI} \eta_i \, \gI \, \langle x, \eta_i \rangle \quad \quad {\rm for} \quad x \in \cH_n.
\end{equation}
\end{enumerate}
\end{thm}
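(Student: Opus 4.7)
The plan is to exploit complementary projection-like maps adapted to $J$ and $\gI$. Since $J$ is unitary and anti-self-adjoint, $J^2 = -JJ^* = -I$, which lets me define the right $\CC_{\gI}$-linear operators
$$P_\pm x := \tfrac{1}{2}(x \mp Jx\gI), \qquad x \in \cH_n.$$
A direct verification yields $P_\pm^2 = P_\pm$, $P_+ P_- = 0$, $P_+ + P_- = I$, and $J(P_\pm x) = (P_\pm x)(\pm \gI)$, so $\Ran P_\pm = \cH_\pm(J, \gI)$. This delivers (iii) together with the identity $\cH_+(J,\gI) \cap \cH_-(J,\gI) = \{0\}$ from $x\gI = -x\gI \Longrightarrow x = 0$. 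For the nontriviality clause of (i), the case $n = 1$ is tautological since $\mathbb{S} = \{\pm e_1\}$; for $n \geq 2$ I pick $\gJ \in \mathbb{S}$ with $\gI\gJ = -\gJ\gI$ and observe that right multiplication by $\gJ$ maps $\cH_+$ bijectively onto $\cH_-$, so if either were trivial the whole module would vanish.

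For (ii), $\CC_{\gI}$-linearity of $\cH_\pm$ uses only that $\lambda \in \CC_{\gI}$ commutes with $\gI$, and closedness follows from continuity of $J$ and right multiplication. The candidate formula defines a genuine complex inner product: conjugate symmetry uses the trace identity ${\rm Re}(ab) = {\rm Re}(ba)$ in $\RR_n$ together with $\overline{\gI} = -\gI$; $\CC_{\gI}$-linearity in the first slot is routine; positive definiteness reduces to $\langle x,x\rangle_{\CC_{\gI}} = \|x\|^2$, which follows from ${\rm Re}\langle x,x\rangle = \|x\|^2$ (Lemma \ref{lem:KAPPA}(i)) together with the observation that $\langle x,x\rangle \in \gS(\RR_n)$ has vanishing degree-one components, forcing ${\rm Re}(\langle x,x\rangle \gI) = 0$.

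Assertion (iv) then follows from the bijection $\eta \mapsto \eta\gJ$ between $\cH_+$ and $\cH_-$, combined with the identity $\langle \eta_i\gJ, \eta_j\gJ\rangle = -\gJ\langle \eta_i,\eta_j\rangle\gJ$ and the anticommutation $\gJ\gI = -\gI\gJ$. For (v), I would decompose an arbitrary $x = P_+x + P_- x$ and use (iv) to write $P_- x = y\gJ$ for some $y \in \cH_+$, then expand $P_+ x$ and $y$ against $(\eta_i)$ to produce $x = \sum_i \eta_i(c_i + \gJ d_i)$ with $c_i, d_i \in \CC_{\gI}$, furnishing the closed right $\RR_n$-linear span; linear independence and $\langle \eta_i, \eta_j\rangle = \delta_{ij}$ would be extracted by applying $P_\pm$ to candidate relations and invoking complex orthonormality. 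Assertion (vi) is a direct calculation: $\cQ_s(J) = (|s|^2 - 1)I - 2{\rm Re}(s)J$ admits the explicit inverse $(a^2+b^2)^{-1}(aI - bJ)$ (with $a = |s|^2 - 1$, $b = -2{\rm Re}(s)$) whenever $a^2 + b^2 \neq 0$, and on $\CC_{\gI}^+$ this quantity vanishes only at $s = \gI$. Finally (vii) is immediate from Lemma \ref{lem:KAPPA}(iv) once (v) is in hand: $Jx = \sum_i J\eta_i \langle x,\eta_i\rangle = \sum_i \eta_i\gI\langle x,\eta_i\rangle$.

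I expect the main obstacle to lie in (v): specifically, upgrading the $\CC_{\gI}$-valued orthonormality of $(\eta_i)$ inside $\cH_+$ to the $\RR_n$-valued orthonormality required of a basis of the Clifford module $\cH_n$. A priori the inner product $\langle \eta_i, \eta_j\rangle \in \RR_n$ is only known to commute with $\gI$, so the strategy would be to exploit the additional constraints $\langle J\eta_i, \eta_j\rangle = \langle \eta_i, \eta_j\rangle \gI$ and $\langle \eta_i, J\eta_j\rangle = -\gI\langle \eta_i, \eta_j\rangle$ to pin down its structure, and then combine these with the normalisation $\langle \eta_i, \eta_j\rangle_{\CC_{\gI}} = \delta_{ij}$ to eliminate all non-$\CC_{\gI}$ components.
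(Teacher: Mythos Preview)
Your arguments for (i)--(iv), (vi), and (vii) track the paper's proof, though packaged more cleanly: the paper does not name the projections $P_\pm$ but works directly with $y=(x-Jx\gI)/2$ and $z=(x+Jx\gI)/2$; for (vi) it invokes Lemmas~\ref{lem:15July1ASA} and~\ref{lem:UNITARY} (anti self-adjoint plus unitary forces the $S$-spectrum into the imaginary unit sphere) rather than inverting $\cQ_s(J)$ explicitly as you do. Your treatment of positive definiteness in (ii) is more careful than the paper's, which only remarks that the axioms are ``easily verified''.

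Your reservation about (v) is well founded---in fact the obstacle you isolate is not merely technical but fatal for $n\geq 3$. A dimension count: from (iii) and (iv) one has $\dim_\RR \cH_+(J,\gI)=\tfrac12\dim_\RR\cH_n=2^{n-1}\dim_\RR\cH$, hence $\dim_{\CC_{\gI}}\cH_+(J,\gI)=2^{n-2}\dim_\RR\cH$, whereas an orthonormal basis of the Clifford module $\cH_n$ (in the paper's sense) has cardinality $\dim_\RR\cH$. These agree only when $n=2$. Concretely, take $n=3$, $\cH=\RR$, $\gI=e_1$, and $J$ equal to left multiplication by $e_1$: then $\{1,e_2e_3\}$ is a $\CC_{e_1}$-orthonormal basis of $\cH_+(J,e_1)$, yet $\langle 1,e_2e_3\rangle=-e_2e_3\neq 0$ and the two vectors are $\RR_3$-linearly dependent (since $e_2e_3=1\cdot e_2e_3$). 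The paper's own proof of (v) is a single sentence (``apply (iv) and (iii)''), which delivers the $\RR_n$-span but not orthonormality or linear independence; and your proposed fix---forcing $\langle\eta_i,\eta_j\rangle\in\CC_{\gI}$ via commutation with $\gI$---cannot succeed either, because the centraliser of $\gI$ in $\RR_n$ strictly contains $\CC_{\gI}$ once $n\geq 3$. Assertion (vii), which both you and the paper deduce from (v) via Lemma~\ref{lem:KAPPA}(iv), inherits the same defect outside the quaternionic case.
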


\begin{proof}
We will first prove (i). Suppose there exists $x \in \cH_n \setminus \{ 0 \}$ such that $x - Jx \gI \neq 0$. Then
\begin{align*}
J(x - xJx \gI) =& \; Jx + x \gI \\
=& \; (x - Jx \gI) \gI.
\end{align*}
Thus, $y := x - Jx \gI \in \cH_n \setminus \{ 0\}$ and $Jy = y \gI$ and hence $\cH_{+}(J, \gI) \neq \{ 0\}$.

On the other hand, if there exists $x \in \cH_n \setminus \{ 0 \}$ such that $x = Jx \gI$, then choose $\gJ \in \mathbb{S}$ such that $\gI \gJ =  -\gJ \gI$. Thus,
$Jx \gI \gJ = x \gJ$ implies that $-J(x \gJ) \gI = x \gJ$ and hence
$$J(x\gJ) = (x\gJ) \gI.$$
Thus, we again arrive at the conclusion $\cH_{+}(J, \gI) \neq \{ 0\}$. The justification that $\cH_{-}(J, \gI) \neq \{ 0 \}$ can be carried out in a similar fashion.

We will now show \eqref{eq:TRIVIALINTER}. Suppose to the contrary that there exists a nonzero vector $x \in \cH_{+}(J, \gI) \cap \cH_-(J, \gI)$. Then we have $x \gI = x(-\gI)$, i.e., $x \gI = 0$ which forces $x = 0$, a contradiction.

We will now prove (ii). Suppose $x,y \in \cH_+(J, \gI)$ and $\lambda \in \CC_{\gI}$. Then
$$J(x \lambda+ y) = x \gI \lambda+y\gI = (x \lambda+y) \gI.$$
Thus, $\cH_{+}(J, \gI)$ is a $\CC_{\gI}$ right linear subspace of $\cH_n$. The fact that $\cH_+(J, \gI)$ is closed follows at once via the continuity of multiplication. The justification that $\cH_{-}(J, \gI)$ is a closed right-linear subspace, with respect to $\CC_{\gI}$, can be carried out in a similar fashion. Finally, the fact that $\cH_{+}(J, \gI)$ and $\cH_-(J, \gI)$ may be viewed as a complex Hilbert spaces with respect to the $\CC_{\gI}$-valued inner product given by \eqref{eq:CC_gI} follows immediately from $\cH_{\pm}(J, \gI)$ being closed right subspaces of $\cH_n$ and checking that \eqref{eq:CC_gI} is an $\CC_{\gI}$-valued inner product (to this end, it will be helpful to use the easily verified fact that $\langle x, y \rangle \lambda = \pm \lambda \langle x, y \rangle$ for all $\lambda \in \CC_{\gI}$ and $x, y \in \cH_{\pm}(J, \gI)$).

We will now prove (iii). Let $y = (x-Jx\gI)/2$ and $z = (x+Jx \gI)/2$. Then $y \in \cH_+(J, \gI)$ since
$$Jy = \frac{1}{2}(Jx+x\gI) = \frac{1}{2} (x - Jx \gI)\gI.$$
Similarly, $z \in \cH_-(J, \gI)$ since
$$Jz = \frac{1}{2}(Jx - x \gI) = \frac{1}{2} (x + Jx \gI) (-\gI).$$
Thus, as $x = y + z$ and \eqref{eq:TRIVIALINTER} holds we have $\cH_n = \cH_+(J, \gI)\oplus \cH_-(J, \gI)$.

We will now prove (iv). Let $\gJ \in \mathbb{S}$ be such that $\gI \gJ = -\gJ \gI$. Then for any $y \in \cH_+(J, \gI)$, we have
$$J y \, \gJ = (y \gI)\gJ = (\eta_i \gJ)(-\gI).$$
Thus, $y \gJ \in \cH_-(J, \gI)$. One can easily push this observation further  and establish that $\varphi: \cH_+(J,\gI) \to \cH_-(J, \gI)$ given by $\varphi(y) = y \gJ$ is an isomorphism. Consequently, if $(\eta_i)_{i \in \cI}$ is an orthonormal basis of $\cH_+(J, \gI)$, then $(\eta_i \, \gJ)_{i \in \cI}$ is an orthonormal basis of $\cH_-(J, \gI)$.

We will now prove (v). Let $(\eta_i )_{i \in \cI}$ be any orthonormal basis of $\cH_+(J, \cI)$. Then we may apply (iv) and (iii) to see that $(\eta_i)_{i \in \cI}$ is also an orthonormal basis of $\cH_n$.

We will now prove (vi). Recall that $J^* = - J$ and $JJ^* = I$. Thus,  combining \eqref{eq:15Julye1ASA} and \eqref{eq:UNITARY}, we have
$$\sigma_S(J) \cap \CC_{\gI}^+ = \{ \lambda \in \CC_{\gI}^+: {\rm Re}(\lambda) = 0 \} \cap \{ \lambda \in \CC_{\gI}^+: |\lambda|=1 \} = \{ \gI \}.$$

We will now prove (vii). Let $(\eta_i )_{i \in \cI}$ be any orthonormal basis of $\cH_+(J, \cI)$. In view of (v), we have that $(\eta_i)_{i \in \cI}$ is an orthonormal basis of $\cH_n$. Consequently, in view of Lemma \ref{lem:KAPPA}(iv), we have
$$x = \sum_{i \in \cI} \eta_i \langle x, \eta_i \rangle \quad\quad {\rm for}\quad x \in \cH_n$$
and hence
$$
Jx = \sum_{i \in \cI}  J \, \eta_i \langle x, \eta_i \rangle
= \sum_{i \in \cI} \eta_i \, \gI \langle x, \eta_i \rangle\quad\quad {\rm for}\quad x \in \cH_n.$$
\end{proof}

\section{Spectral theorem for a bounded normal operator and some consequences}
\label{sec:BN}

In this section we will be prove one of the main results of this manuscript, namely the spectral theorem for a bounded normal operator on a Clifford module.

Let $\gI \in \mathbb{S}$ be arbitrary. In the following lemma, we shall identify $\RR \times [0, \infty)$ with $\CC^+_{\gI} := \{ \lambda \in \CC^+_{\gI}: {\rm Im}(\lambda) \geq 0\}$ in the natural way. Consequently, a spectral measure on $\sB(\RR \times [0,\infty))$ may be viewed as a spectral measure on $\sB(\CC^+_{\gI})$.

\begin{lem}
\label{lem:INTEGRAL}
Let $E_1$ be a spectral measure on $\sB(\RR)$, $E_2$ be a spectral measure on $\sB([0, \infty))$ and $\gI \in \mathbb{S}$. Suppose $E_1(M_1) E_2(M_2) = E_2(M_2) E_1(M_1)$ for every $M_1 \in \sB(\RR)$ and $M_2 \in \sB([0,\infty))$ and let $E(M_1 \times M_2) := E(M_1)E(M_2)$ be the uniquely determined spectral measure on $\sB(\CC_{\gI}^+)$ (see Theorem \ref{thm:COMMUTING}). For any real-valued function $f \in \gF(\sB(\CC_{\gI}^+, E)$, we have
\begin{equation}
\label{eq:F1}
\int_{\RR} f(t) \, dE_1(t) = \int_{\CC_{\gI}^+ } f( {\rm Re}(\lambda)) dE(\lambda)
\end{equation}
and
\begin{equation}
\label{eq:F2}
\int_{[0,\infty)} g(u) \, dE_2(u) = \int_{\CC_{\gI}^+} g({\rm Im}(\lambda)) dE(\lambda)
\end{equation}
\end{lem}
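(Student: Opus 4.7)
The plan is to recognize both identities as change-of-variables formulas for spectral integrals, driven by the projection maps $\pi_1, \pi_2 : \CC_{\gI}^+ \to \RR$ and $\CC_{\gI}^+ \to [0,\infty)$ defined by
$$\pi_1(\lambda) = {\rm Re}(\lambda), \qquad \pi_2(\lambda) = {\rm Im}(\lambda).$$
Both maps are continuous and hence Borel measurable, so they fit the framework of Definition \ref{def:TSM} and Theorem \ref{thm:CHANGEOFVAR}. The whole argument then reduces to identifying the pushforward spectral measures $E \circ \pi_i^{-1}$ with $E_i$.

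The first key step is this identification. For $M \in \sB(\RR)$, one computes
$$\pi_1^{-1}(M) = M \times [0,\infty) \in \sB(\CC_{\gI}^+),$$
so by the defining property \eqref{eq:PRODUCTMEASURE} of the product spectral measure supplied by Theorem \ref{thm:COMMUTING},
$$E(\pi_1^{-1}(M)) = E(M \times [0,\infty)) = E_1(M)E_2([0,\infty)) = E_1(M) \cdot I = E_1(M).$$
An analogous computation with $\pi_2^{-1}(M) = \RR \times M$ for $M \in \sB([0,\infty))$ yields $E(\pi_2^{-1}(M)) = E_2(M)$. Thus the pushforward of $E$ under $\pi_i$ is exactly $E_i$ for $i=1,2$.

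The second step is to invoke Theorem \ref{thm:CHANGEOFVAR} directly. With $\Omega = \CC_{\gI}^+$, $\Omega' = \RR$, $\psi = \pi_1$, and $h = f$, we have that $f \circ \pi_1 \in \gF(\CC_{\gI}^+, \sB(\CC_{\gI}^+), E)$ and Theorem \ref{thm:CHANGEOFVAR} yields
\begin{align*}
& \int_{\RR} {\rm Re}(f(t))\, dE_1(t) + \int_{\RR} {\rm Im}(f(t))\, dE_1(t) J \\
=\; & \int_{\CC_{\gI}^+} {\rm Re}(f(\pi_1(\lambda)))\, dE(\lambda) + \int_{\CC_{\gI}^+} {\rm Im}(f(\pi_1(\lambda)))\, dE(\lambda) J.
\end{align*}
Since $f$ is assumed real-valued, the imaginary integrals vanish and we obtain \eqref{eq:F1}. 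Repeating the same step with $\psi = \pi_2$ and $h = g$ yields \eqref{eq:F2}.

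There is no serious obstacle here: the content of the lemma is precisely that the projection maps convert a spectral integral over $\CC_{\gI}^+$ into a spectral integral over the coordinate axis. The only point requiring even mild care is the verification of the pushforward identity $E \circ \pi_i^{-1} = E_i$, which rests squarely on the product structure \eqref{eq:PRODUCTMEASURE} established in Theorem \ref{thm:COMMUTING}; once this is observed, Theorem \ref{thm:CHANGEOFVAR} delivers the conclusion mechanically.
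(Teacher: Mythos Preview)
Your proof is correct. The core observation---that $E(\pi_1^{-1}(M)) = E(M \times [0,\infty)) = E_1(M)$ via \eqref{eq:PRODUCTMEASURE}---is exactly the same as in the paper. The difference is one of packaging: the paper verifies the identity directly for characteristic functions $\chi_M$, then extends by linearity to simple functions and by limits to general $f \in \gF$, whereas you recognise this extension step as already encoded in Theorem~\ref{thm:CHANGEOFVAR} and invoke it wholesale. Your route is slightly more economical, since the machinery for passing from the pushforward identity on sets to the integral identity on functions has already been built; the paper's direct argument is more self-contained but repeats work done in the proof of Theorem~\ref{thm:CHANGEOFVAR}. Either way the substance is the same single computation with the product measure.
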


\begin{proof}
We will verify \eqref{eq:F1} for simple functions. For any $M \in \sB(\RR)$, we have
\begin{align*}
\int_{\CC_{\gI}^+} \chi_M({\rm Re}(\lambda)) \, dE(\lambda) =& \; E(M \times [0, \infty)) \\
=& \; E_1(M) E_2([0, \infty)) = E_1(M) \\
=& \; \int_{\RR} \chi_M( t) dE_1( t )
\end{align*}
Thus, by linearity, \eqref{eq:F1} holds for simple functions. Passing to a limit, we obtain \eqref{eq:F1} for any $f \in \gF(\sB(\CC_{\gI}^+), E)$.

Formula \eqref{eq:F2} can be justified in much the same way as \eqref{eq:F1}.

\end{proof}

We are now ready to state and proof the spectral theorem for a bounded normal operator on a Clifford module.

\begin{thm}[spectral theorem for bounded normal operators]
\label{thm:BN}
Let $T \in \cB(\cH_n)$ be normal. Then corresponding to any choice of $\gI \in \mathbb{S}$, there exists a spectral measure $E$ on the Borel $\sigma$-algebra $\mathscr{B}(\sigma_S(T) \cap \CC^+_{\gI})$ such that
\begin{equation}
\label{eq:SPECN}
T= \int_{\sigma_S(T) \, \cap \, \CC_{\gI}^+} {\rm Re}(\lambda) \, dE(\lambda) + \int_{\sigma_S(T)\, \cap \, \CC_{\gI}^+} {\rm Im}(\lambda) \, dE(\lambda) \, J,
\end{equation}
where $J$ an anti self-adjoint and unitary operator obeying \eqref{eq:AJB}. $E$ is unique in the sense that if $F$ is a spectral measure on $\mathscr{B}(\CC^+_{\gI})$ such that
$$
T = \int_{\CC^+_{\gI}} {\rm Re}(\lambda) \, dF(\lambda) + \int_{\CC^+_{\gI}} {\rm Im}(\lambda) dF(\lambda) J,
$$
 then $E(M \cap \sigma_S(T)\cap \CC^+_{\gI}) = F(M)$ for all $M \in \mathscr{B}(\CC^+_{\gI})$.  Moreover, $W \in \cB(\cH_n)$ commutes with $T$ if and only if $W E(M) = E(M) W$ for every $M \in \sB(\sigma_S(T)\cap \CC^+_{\gI})$.
\end{thm}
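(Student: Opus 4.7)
The plan is to build $E$ out of the ``commuting spectral calculi'' of the three mutually commuting pieces produced by Theorem \ref{thm:AJB}, and then to identify both its support and its uniqueness via the key Theorem \ref{thm:IMPORTANT}. Fix $\gI \in \mathbb{S}$. Since $T \in \cB(\cH_n)$ is normal, Theorem \ref{thm:AJB} yields a self-adjoint $A = (T+T^*)/2$, a positive $B = |T-T^*|/2$ and an imaginary operator $J$ such that $T = A + BJ$ and $A$, $B$, $J$ pairwise commute. Theorem \ref{thm:BSA} furnishes spectral measures $E_A$ on $\sB(\sigma_S(A))$ and $E_B$ on $\sB(\sigma_S(B))$ with $A = \int t \, dE_A(t)$ and $B = \int u \, dE_B(u)$; both spectra are real (Lemma \ref{lem:15July1}), and $\sigma_S(B) \subseteq [0,\infty)$ (Lemma \ref{lem:15July1pos}). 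Corollary \ref{cor:WCOMM} together with $AB = BA$ gives $E_A(M_1) E_B(M_2) = E_B(M_2) E_A(M_1)$ for all Borel $M_1, M_2$, so Theorem \ref{thm:COMMUTING} manufactures a unique spectral measure $F$ on $\sB(\sigma_S(A) \times \sigma_S(B))$ with $F(M_1 \times M_2) = E_A(M_1) E_B(M_2)$.

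Identify $\sigma_S(A) \times \sigma_S(B) \subseteq \RR \times [0,\infty)$ with the subset $\Sigma := \{t + u\gI : t \in \sigma_S(A),\, u \in \sigma_S(B)\} \subseteq \CC_{\gI}^+$ via $(t,u) \mapsto t + u\gI$, and transport $F$ to a spectral measure on $\sB(\Sigma)$ (Definition \ref{def:TSM}). Using the last assertion of Theorem \ref{thm:BSA} applied in turn to $A$ and $B$, the commutation relations $JA = AJ$ and $JB = BJ$ from Theorem \ref{thm:AJB} give $JE_A(M_1) = E_A(M_1)J$ and $JE_B(M_2) = E_B(M_2)J$; by the construction of the product in Theorem \ref{thm:COMMUTING}, $J$ is therefore associated (in the sense of Definition \ref{def:ASSOCIATED}) with $F$. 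Two applications of Lemma \ref{lem:INTEGRAL} then express
\begin{align*}
A &= \int_{\Sigma} {\rm Re}(\lambda) \, dF(\lambda), &
B &= \int_{\Sigma} {\rm Im}(\lambda) \, dF(\lambda),
\end{align*}
and since $J$ commutes with $F$ and is bounded, Theorem \ref{thm:BFUNC}(iv) yields the integral representation
$$T = A + BJ = \int_{\Sigma} {\rm Re}(\lambda) \, dF(\lambda) + \int_{\Sigma} {\rm Im}(\lambda) \, dF(\lambda) \, J.$$

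To upgrade the domain of integration from $\Sigma$ to $\sigma_S(T) \cap \CC_{\gI}^+$, apply Theorem \ref{thm:IMPORTANT} to the intrinsic function $f(\lambda) = \lambda$ (which is clearly intrinsic in the sense of Definition \ref{def:INTRINSIC} with $f_0(u,v) = u$, $f_1(u,v) = v$). The operator $\II_F(f)$ coincides with $T$ by the display above, so Theorem \ref{thm:IMPORTANT} and the characterisation \eqref{eq:SPRTALT} combine to give $\sigma_S(T) \cap \CC_{\gI}^+ = \supp F$; in particular $F$ is supported in $\sigma_S(T) \cap \CC_{\gI}^+$ and we set $E(M) := F(M \cap \sigma_S(T) \cap \CC_{\gI}^+)$, which delivers \eqref{eq:SPECN}.

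For uniqueness, suppose $F'$ is any spectral measure on $\sB(\CC_{\gI}^+)$ realising the same integral representation for $T$; the same application of Theorem \ref{thm:IMPORTANT} to $f(\lambda) = \lambda$ forces $\supp F' = \sigma_S(T) \cap \CC_{\gI}^+$, and then separating real and imaginary parts against this decomposition recovers spectral measures for $A$ and $B$ via Lemma \ref{lem:INTEGRAL}, which by the uniqueness clauses of Theorem \ref{thm:BSA} must agree with $E_A$ and $E_B$ respectively; the uniqueness part of Theorem \ref{thm:COMMUTING} then gives $F' = F$ on the relevant support. The final commutativity statement is a straightforward consequence: if $WT = TW$ then from $T = A + BJ$ and the uniqueness of the decomposition one deduces $WA = AW$, $WB = BW$ and $WJ = JW$ (using \eqref{eq:COMMUTINGWTQ}--\eqref{eq:COMMUTINGWTU}), so the last assertion of Theorem \ref{thm:BSA} applied to $A$ and $B$ yields $WE_A(M_1) = E_A(M_1)W$ and $WE_B(M_2) = E_B(M_2)W$, and hence $WE(M) = E(M)W$ on products and, by Theorem \ref{thm:COMMUTING}, on all of $\sB(\sigma_S(T) \cap \CC_{\gI}^+)$; the converse is immediate from \eqref{eq:SPECN}.

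The main technical obstacle is the verification that $J$ is genuinely associated with the product spectral measure $F$ rather than just commuting with $A$ and $B$ separately; this hinges on the commutativity statement in Theorem \ref{thm:BSA} being strong enough that commutation with $A$ passes to commutation with every spectral projection $E_A(M_1)$, and similarly for $B$, so that the identity $JF(M_1 \times M_2) = JE_A(M_1)E_B(M_2) = E_A(M_1)E_B(M_2)J = F(M_1 \times M_2)J$ propagates from measurable rectangles to the full product $\sigma$-algebra via Theorem \ref{thm:COMMUTINGspectral}. Once this is in hand, the decomposition $T = A + BJ$ furnished by Theorem \ref{thm:AJB} is precisely the Clifford substitute for the classical decomposition $T = {\rm Re}(T) + i\, {\rm Im}(T)$, and the remainder of the argument parallels the proof of the complex spectral theorem with $i$ replaced by the operator-valued imaginary unit $J$.
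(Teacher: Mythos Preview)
Your argument for existence and for the identification $\supp E=\sigma_S(T)\cap\CC_{\gI}^+$ is essentially identical to the paper's: Theorem~\ref{thm:AJB} for $T=A+BJ$, Theorem~\ref{thm:BSA} for $E_A,E_B$, Corollary~\ref{cor:WCOMM} and Theorem~\ref{thm:COMMUTING} for the product measure, Lemma~\ref{lem:INTEGRAL} for the integral representation, and Theorem~\ref{thm:IMPORTANT} with $f(\lambda)=\lambda$ for the support. You are in fact more careful than the paper in one place: you explicitly verify that $J$ is associated with the product spectral measure (Definition~\ref{def:ASSOCIATED}), which is needed before one can even write $\II_F(\lambda)$; the paper leaves this implicit.

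For uniqueness you take a different route. The paper argues that for real-valued continuous $f$ one has $\int f\,d\langle E(\cdot)x,x\rangle=\int f\,d\langle F'(\cdot)x,x\rangle$ and then invokes the Riesz representation (Corollary~\ref{cor:RR}) and polarisation, as in Theorem~\ref{thm:BSA}. Your approach instead forms the marginals $F'_1(M_1)=F'(M_1\times[0,\infty))$, $F'_2(M_2)=F'(\RR\times M_2)$, identifies them with $E_A$, $E_B$ via the uniqueness clause of Theorem~\ref{thm:BSA}, and then appeals to the uniqueness in Theorem~\ref{thm:COMMUTING}. This works (note that $F'(M_1\times M_2)=F'_1(M_1)F'_2(M_2)$ follows from Lemma~\ref{lem:DISJOINT}), but your citation of Lemma~\ref{lem:INTEGRAL} is slightly misplaced: that lemma presupposes the product structure rather than producing marginals from an arbitrary spectral measure, so you should state the marginal construction directly.

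There is a genuine gap in your commutation argument. You claim that $WT=TW$ alone yields $WA=AW$, $WB=BW$, $WJ=JW$ via \eqref{eq:COMMUTINGWTQ}--\eqref{eq:COMMUTINGWTU}. But those implications are stated in Theorem~\ref{thm:POLARDECOMP} under the hypotheses $WT=TW$ \emph{and} $WT^*=T^*W$; since $A=(T+T^*)/2$, the conclusion $WA=AW$ is in fact equivalent to $WT^*=T^*W$. So you are missing a Fuglede-type step passing from $WT=TW$ to $WT^*=T^*W$ for normal $T$, which is not proved anywhere in the paper. (The paper's own proof of this clause simply says ``same as in Theorem~\ref{thm:BSA}'', which has the same lacuna; nonetheless, your explicit invocation of \eqref{eq:COMMUTINGWTQ}--\eqref{eq:COMMUTINGWTU} makes the gap visible.) For the converse direction you also need $WJ=JW$ before $WE(M)=E(M)W$ can yield $WT=TW$ from \eqref{eq:SPECN}, and commutation with all $E(M)$ does not by itself give commutation with $J$.
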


\begin{proof}
We begin by using \eqref{eq:AJB} to write $T = A+ JB$, where $A \in \cB(\cH_n)$ is self-adjoint, $B \in \cB(\cH_n)$ is positive and $J$ is anti self-adjoint and unitary and $A, B$ and $J$ all mutually commute. Since $A$ is self-adjoint and $B$ is positive, we may use Theorem \ref{thm:BSA} to obtain spectral measures $E_1$ (resp., $E_2$) on $\sB(\sigma_S(A))$ (resp., $\sB(\sigma_S(B))$) such that
$$A = \int_{\sigma_S(A)} t \, dE_1(t) \quad {\rm and} \quad B = \int_{\sigma_S(B)} t \, dE_2(t).$$
Note that since $A \in \cB(\cH_n)$ is self-adjoint, we have that $\sigma_S(T)$ is a non-empty compact subset of $\RR$ and since $B \in \cB(\cH_n)$ is positive, we have that $\sigma_S(B)$ is a non-empty compact subset of $[0, \infty)$ (see Lemma \ref{lem:15July1}). Moreover, in view of Remark \ref{rem:SUPP}, we have
$$\supp E_1 = \sigma_S(A) \quad {\rm and} \quad \supp E_2 = \sigma_S(B).$$
Since $A$ and $B$ commute, we have that $E_1(M_1) E_2(M_2) = E_1(M_1) E_2(M_2)$ for every $M \in \sB(\sigma)$. Thus, we invoke Theorem \ref{thm:COMMUTING} with $\Omega_1 = \sB(\sigma_S(A))$ and $\Omega_2 = \sB(\sigma_S(B))$ to obtain a uniquely determined spectral measure $E$ on $\sB(\sigma_S(A) \times \sigma_S(B))$ given
by $E(M_1 \times M_2) = E(M_1)E(M_2)$ for $M_1 \in \sB(\sigma_S(A))$ and $M_2 \in \sB(\sigma_S(B))$.

Let $\gI \in \mathbb{S}$ be arbitrary and identify $\RR \times[0, \infty)$ with $\CC^+_{\gI} := \{ \lambda \in \CC^+_{\gI}: {\rm Im}(\lambda) \geq 0\}$ in the natural way. Consequently, we will view $E$ as a spectral measure on the non-empty compact subset $\sigma_S(A) \times \sigma_S(B) \subseteq \sB(\CC^+_{\gI})$. Lemma \ref{lem:INTEGRAL} with $f(\lambda) = {\rm Re}(\lambda)$ and $g(\lambda) = {\rm Im}(\lambda)$ imply that
\begin{equation}
\label{eq:A}
A = \int_{\RR} t \, dE_1(t) = \int_{\CC^+_{\gI} } {\rm Re}(\lambda) dE(\lambda)
\end{equation}
and
\begin{equation}
\label{eq:B}
B = \int_{[0, \infty)} u \, dE_2(u) = \int_{\CC^+_{\gI}} {\rm Im}(\lambda)dE(\lambda),
\end{equation}
respectively. Since $\supp E_1 = \sigma_S(A)$, $\supp E_2 = \sigma_B(A)$ and $\supp E \subseteq \sigma_S(A) \times \sigma_B(A)$, we may rewrite \eqref{eq:A} and \eqref{eq:B} as
\begin{equation}
\label{eq:Anew}
A = \int_{\sigma_S(A)} t \, dE_1(t) = \int_{\sigma_S(A) \times \sigma_S(B) } {\rm Re}(\lambda) dE(\lambda)
\end{equation}
and
\begin{equation}
\label{eq:Bnew}
B = \int_{\sigma_S(B)} u \, dE_2(u) = \int_{\sigma_S(A) \times \sigma_S(B)} {\rm Im}(\lambda)dE(\lambda),
\end{equation}
respectively. Thus , as $T = A + BJ$, we can use \eqref{eq:A} and \eqref{eq:B} to obtain
\begin{equation}
\label{eq:CLOSE}
T = \int_{\CC_{\gI}^+} {\rm Re}(\lambda) dE(\lambda) + \int_{\CC_{\gI}^+} {\rm Im}(\lambda) dE(\lambda) \, J.
\end{equation}
Theorem \ref{thm:IMPORTANT} with $f(\lambda) = \lambda$ implies that
\begin{align}
\sigma_S(T)\cap \CC_{\gI}^+ =& \; \sigma_S(\II(f)) \cap \CC_{\gI}^+ \nonumber \\
=& \;  \{ s \in \CC^+_{\gI}: E(\{ \lambda \in \Omega: |f(\lambda)^2 - 2{\rm Re}(s) f(\lambda) + |s|^2| < \varepsilon \} ) \neq 0 \nonumber\\
& \; \; \text{for all $\varepsilon > 0$}\} \nonumber\\
=& \; \{ s \in \CC^+_{\gI}: E(\{ \lambda \in \Omega: |\lambda^2 - 2{\rm Re}(s) \lambda + |s|^2| < \varepsilon \} ) \neq 0 \nonumber\\
& \; \; \text{for all $\varepsilon > 0$}\} \label{eq:NEWAB} \\
=& \; \{ s \in \CC^+_{\gI}: E(\{ \lambda \in \Omega: |\lambda - s|\cdot |\lambda - \bar{s}|  < \varepsilon \} ) \neq 0 \nonumber\\
& \; \; \text{for all $\varepsilon > 0$}\} \nonumber\\
=& \; \{ s \in \CC^+_{\gI}: E(\{ \lambda \in \Omega: |\lambda - s|^2 < \varepsilon \} ) \neq 0 \label{eq:SPECINC}\\
& \; \; \text{for all $\varepsilon > 0$}\}, \nonumber
\end{align}
since, if we write $\lambda = \lambda_0 + \lambda_1 \gI \in \CC_{\gI}^+$, with $\lambda_0 \in \RR$ and $\lambda_1 \geq 0$, and $s = s_0 + s_1 \gI \in \CC_{\gI}^+$, with $s_0 \in \RR$ and $s_1 \geq0$, then
\begin{align*}
|\lambda - \bar{s}|^2 =& \; (\lambda_0-s_0)^2 + \lambda_1^2 + 2 \lambda_1 s_1 + s_1^2 \\
\geq& \; (\lambda_0-s_0)^2 + \lambda_1^2 - 2 \lambda_1 s_1 + s_1^2 \\
=& \; |\lambda - s|^2.
\end{align*}
Thus, in view of \eqref{eq:SPRTALT}, \eqref{eq:SPECINC} implies that $\sigma_S(T) \cap \CC_{\gI}^+ = \supp E$. Thus, we may rewrite \eqref{eq:CLOSE} to obtain \eqref{eq:SPECN}.

We will now show that $E$ is unique. Suppose $E'$ on $\sB(\CC_{\gI}^+)$ is another spectral measure such that \eqref{eq:SPECN} holds. One can argue as in the proof of Theorem \ref{thm:BSA} to see that $\supp E' = \sigma_S(T)$ which is non-empty and compact by  Theorem
\ref{thm:SPECNONEMPTYCOMPACT}. Consequently, we may use Theorem \ref{thm:BFUNC}(iv) with $y=x$ and $f \in \sC(\sigma_S(T) \cap \CC_{\gI}^+, \RR)$ to obtain
$$\int_{\sigma_S(T) \cap \CC_{\gI}^+} f(\lambda) d\langle E(\lambda)x, x \rangle = \int_{\sigma_S(T) \cap \CC_{\gI}^+} f(\lambda) d\langle E'(\lambda)x, x \rangle.
$$
Thus, one can show that $E = E'$ as in the proof of Theorem \ref{thm:BSA}.

The proof of the final assertion can be completed in the same way as the proof of the final assertion in Theorem \ref{thm:BSA}.
\end{proof}

\begin{rem}
\label{rem:IDENTIFICATION_unbounded}
Let $\gI, \gJ \in \mathbb{S}$ and $\gamma: \CC^+_{\gI} \to \CC^+_{\gJ}$ denote the bijective map given by
\begin{equation}
\label{eq:GAMMA}
\gamma(\lambda_0 + \lambda_1 \gI) = \lambda_0 + \lambda_1 \mathfrak{J},
\end{equation}
where $\lambda_0 \in \RR$ and $\lambda_1 \geq 0$. An immediate consequence of the proof of Theorem \ref{thm:BN}, is that if $E_{\gI}$, where $\gI \in \mathbb{S}$, is a spectral measure for a normal operator $T \in \cB(\cH_n)$ and $E_{\gJ}$, where $\gJ \in \mathbb{S}$, then since $\gamma(\sigma_S(T) \cap \CC_{\gI}^+) = \sigma_S(T) \cap \CC_{\gI}^+$ and $\supp E_{\gI} = \sigma_S(T) \cap \CC_{\gI}^+$, we have
\begin{equation}
\label{eq:EQUIVALENCE}
E_{\gI}(M) = E_{\gJ}(\gamma(M)) \quad \quad {\rm for}\quad M \in\sB(\sigma_S(T) \cap \CC_{\gI}^+).
\end{equation}
In view of the above observations, we are justified in calling a spectral measure $E$ on $\sigma_S(T) \cap \CC_{\gI}^+$ {\it the spectral measure of $T$}.
\end{rem}

\begin{cor}[Borel functional calculus in the bounded case]
\label{cor:BFC}
Let $T \in \cB(\cH_n)$ be normal, $J$ be an imaginary operator satisfying \eqref{eq:AJB} and $E$ be the spectral measure of $T$. Fix $\gI \in \mathbb{S}$ and put $\Omega_{\gI}^+ := \sigma_S(T) \cap\CC_{\gI}^+$. For any $f, g \in \gB(\Omega^+_{\gI}, \sB(\Omega_{\gI}^+), \CC_{\gI}, E)$ {\rm (}see Subsection \ref{sec:SIsbdd}{\rm )}, we have the spectral integrals $\II(f), \II(g) \in \cB(\cH_n)$ are normal operators with the following properties{\rm :}
\begin{enumerate}
\item[(i)] $\II(\bar{f}) = \II(f)^*$.

\smallskip

\item[(ii)] $\II(fg) = \II(f)\II(g)$.

\smallskip

\item[(iii)] $\II(\gI) = J$ and $\II(cf + g) = ( {\rm Re}(c)I +{\rm Im}(c) \, J ) \II(f) + \II(g)$ for all  $c \in \CC_{\gI}$ and $\gI \in \mathbb{S}$.

\smallskip

\item[(iv)] For all $x,y \in \cH_n$, we have
$$
\langle \II(f)x, y \rangle = \int_{\Omega_{\gI}^+} {\rm Re}(f(\lambda)) d\langle E(\lambda)x, y \rangle + \int_{\Omega_{\gI}^+} {\rm Im}(f(\lambda)) d\langle JE(\lambda)x, y \rangle
$$

\smallskip

\item[(v)] $\| \II(f) x \|^2 = \int_{\Omega} | f(\lambda)|^2 d ({\rm Re}\, \langle E(\lambda)x, x \rangle)$ for all $x \in \cH_n$.

\smallskip

\item[(vi)] $\| \II(f) \| \leq \| f \|_{\infty}$. Moreover, $\| \II(f) \| = \| f \|_{\infty}$ if and only if $f \in L_{\infty}(\Omega_{\gI}^+, \sB(\Omega_{\gI}^+, \CC_{\gI}, E )$.

\smallskip

\item[(viii)] For any sequence of functions $(f_j)_{j=1}^{\infty}$, where $f_j \in \gB$ for $j=1,2,\ldots$, which converges pointwise $E$-a.e. on $\Omega$ to $f$ and there exists $\kappa > 0$ such that $|f_j(\lambda)| \leq \kappa$ for all $\lambda \in \Omega$ and $j=1,2,\ldots$, we have
$$s-\lim_{j \to \infty} \II(f_n) = \II(f).$$

\smallskip

\item[(viii)] If we let $\Pi_{\pm}(J, \gI)$ denote the orthogonal projection onto the right complex subspace $\cH_{\pm}(J, \gI)$, respectively {\rm (}see Theorem \ref{thm:IOs}{\rm(ii))}, then
\begin{align*}
\langle \II(f)x, y \rangle =& \; \int_{\Omega_{\gI}^+} d\langle E(\lambda) \Pi_+(J, \gI)x, y \rangle \, f(\lambda)\\
+& \; \int_{\Omega_{\gI}^+} d\langle E(\lambda) \Pi_-(J, \gI)x, y \rangle \overline{f(\lambda)} \quad \quad {\rm for} \quad x,y \in \cH_n,
\end{align*}
where both integrals above are meant in the sense of \eqref{eq:complexRIGHTINTEGRAL}.

\smallskip

\item[(ix)] If $f \in \gB(\Omega_{\gI}^+, \sB(\Omega_{\gI}^+), \CC_{\gI}, E)$ is nonnegative $E$-a.e., then $\II(f) \succeq 0$.

\smallskip

\item[(x)] $\II(f)^{-1} \in \cB(\cH_n)$ if and only if $f(\lambda) \neq 0$ $E$-a.e. and
$$
f^{-1} \in L_{\infty}(\Omega_{\gI}^+, \sB(\Omega_{\gI}^+), \CC_{\gI}, E).
$$
 In this case,
 $$
 \II(f)^{-1} = \II(1/f).
 $$

\smallskip

\item[(xi)] The spectral measure $E_{\II(f)}$ of $\II(f)$ satisfies the identity
\begin{equation}
\label{eq:SPECTRALINTEGRALMEASURE}
E_{\II(f)}(M) = E(f^{-1}(M \cap f(\Omega_{\gI}^+) ).
\end{equation}
\end{enumerate}
\end{cor}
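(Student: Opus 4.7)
The plan is to dispatch items (i)--(vii), (ix), and (x) by direct citation of the results developed in Section \ref{sec:SIs}. Items (i)--(vii) are essentially the content of Theorem \ref{thm:BFUNC} in the particular setting where $E$ is the spectral measure produced by Theorem \ref{thm:BN}; the equality $\| \II(f) \| = \| f \|_{\infty}$ in (vi) is the second half of Lemma \ref{lem:SIbounded}, item (ix) is Theorem \ref{thm:Ffacts}(iii), and (x) follows from Lemma \ref{lem:INVERT} together with the characterisation of bounded invertibility in Lemma \ref{lem:SIbounded}. That $\II(f)$ itself is a bounded normal operator then drops out of (i) and (ii) via
\[
\II(f)^* \II(f) = \II(\overline{f}) \II(f) = \II(\overline{f} f) = \II(f \overline{f}) = \II(f) \II(f)^*.
\]

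For (viii), I would first record the stability of the complex subspaces $\cH_{\pm}(J, \gI)$ under every $E(M)$: since $E(M)$ is right $\RR_n$-linear and $JE(M) = E(M) J$, for $x \in \cH_+(J, \gI)$ we have $J(E(M) x) = E(M)(Jx) = E(M)(x\gI) = (E(M)x)\gI$, so that $E(M) x \in \cH_+(J, \gI)$, and similarly for $\cH_-(J, \gI)$. In particular $E(M)$ commutes with the $\CC_{\gI}$-linear orthogonal projections $\Pi_{\pm}(J, \gI)$ furnished by Theorem \ref{thm:IOs}(iii). For a simple function $f = \sum_j c_j \chi_{M_j}$ and $x \in \cH_+(J, \gI)$, the identity $Jx = x \gI$ together with $E(M_j)x \in \cH_+(J, \gI)$ gives
\[
\II(f) x = \sum_j \bigl( {\rm Re}(c_j) E(M_j) x + {\rm Im}(c_j) (E(M_j) x) \gI \bigr) = \sum_j E(M_j) x \cdot c_j,
\]
and the same computation for $x \in \cH_-(J, \gI)$, where $Jx = -x \gI$, yields $\II(f)x = \sum_j E(M_j) x \cdot \overline{c_j}$. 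Writing $x = \Pi_+(J, \gI) x + \Pi_-(J, \gI) x$, pairing with $y$, and passing to the uniform limit on simple functions (as in Definition \ref{def:BFUNC}) deliver the integral formula in (viii).

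The main obstacle is (xi). My plan is to define
\[
E'(M) := E\bigl(f^{-1}(M \cap f(\Omega_{\gI}^+))\bigr) \quad \text{for } M \in \sB(\CC_{\gI}^+),
\]
to verify directly that $E'$ is a spectral measure on $\sB(\CC_{\gI}^+)$ with $J$ as an associated imaginary operator (countable additivity and the projection property are inherited from $E$, and $JE'(M) = E'(M) J$ follows from $JE(N) = E(N)J$ for all $N$ in the source $\sigma$-algebra), and then to invoke Theorem \ref{thm:CHANGEOFVAR} with $\psi = f$ applied to the identity function $h(\lambda') = \lambda'$, combined with item (iv) above, to obtain
\[
\int_{\CC_{\gI}^+} {\rm Re}(\lambda')\, dE'(\lambda') + \int_{\CC_{\gI}^+} {\rm Im}(\lambda')\, dE'(\lambda')\, J = \II(f).
\]
By the uniqueness assertion of Theorem \ref{thm:BN} applied to the bounded normal operator $\II(f)$, this forces $E' = E_{\II(f)}$, which is precisely \eqref{eq:SPECTRALINTEGRALMEASURE}. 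The delicate point is that $f$ is only assumed $\CC_{\gI}$-valued, not $\CC_{\gI}^+$-valued, so the identification of $f(\Omega_{\gI}^+) \subseteq \CC_{\gI}$ with a subset of $\CC_{\gI}^+$ through the axially symmetric folding $\lambda \mapsto {\rm Re}(\lambda) + |{\rm Im}(\lambda)|\,\gI$ must be coordinated with the action of $J$; the intersection $M \cap f(\Omega_{\gI}^+)$ in the statement is precisely what makes the pullback well-defined on $\sB(\CC_{\gI}^+)$ and consistent with the axial symmetry of $\sigma_S$ recorded in Remark \ref{rem:AXIALLYSYMMETRIC}.
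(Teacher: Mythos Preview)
Your proposal is correct and follows essentially the same route as the paper: items (i)--(vii) and (x) are obtained by citing Theorem~\ref{thm:BFUNC} and Lemmas~\ref{lem:SIbounded}, \ref{lem:INVERT}; normality of $\II(f)$ is derived from (i)--(ii); and (xi) is handled by defining the pushforward measure via Theorem~\ref{thm:CHANGEOFVAR} with $\psi=f$, $h(\lambda')=\lambda'$, and then invoking the uniqueness clause of Theorem~\ref{thm:BN}. The only minor deviations are that for the $\Pi_\pm$ identity you compute on simple functions and pass to the limit whereas the paper applies (iv) directly to $x=x_++x_-$, and for (ix) you cite Theorem~\ref{thm:Ffacts}(iii) whereas the paper reproves it in one line from (iv); both variants are straightforward. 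Your closing observation about $f$ landing in $\CC_\gI$ rather than $\CC_\gI^+$ is a genuine subtlety that the paper does not address explicitly either.
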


\begin{proof}
Let $T \in \cB(\cH_n)$ be normal and fix $\gI \in \mathbb{S}$. Then by Theorem \ref{thm:BN}, we can find a spectral measure $E$ on $\Omega_{\gI}^+ := \sigma_S(T) \cap \CC_{\gI}^+$ such that \eqref{eq:SPECN} holds. Thus, we may invoke Theorem \ref{thm:BFUNC} with $\Omega = \Omega_{\gI}^+$ and $\sA = \sB(\Omega_{\gI}^+)$ to obtain (i)-(v), the first assertion in (vi) and (vii). The fact that $\II(f)\in \cB(\cH_n)$ is normal is an easy consequence of (i) and (ii). Indeed, as
$$\II(f)\II(f)^* = \II(f \bar{f}) = \II(\bar{f} f ) = \II(f)^* \II(f),$$
we have that $\II(f)$ is normal.

The second assertion in (vi) follows immediately from Lemma \ref{lem:SIbounded}.  We will now prove (vii). We may use Theorem \ref{thm:IOs}(iii) to write $x = x_+ + x_-$ and $y = y_+ + y_-$, where $x_+, y_+ \in \cH_+(J, \gI)$ and $x_-,y_- \in\cH_-(J, \gI)$. We note that $\Pi_{\pm}(J, \gI) x = x_{\pm}$. Then we may use (iv) and \eqref{eq:CI} to obtain
\begin{align*}
\langle \II(f)x, y \rangle =& \;  \int_{\Omega_{\gI}^+} {\rm Re}(f(\lambda)) \, d\langle E(\lambda)x, y \rangle \\
+& \; \int_{\Omega_{\gI}^+} {\rm Im}(f(\lambda)) \, d\langle E(\lambda)J(x_+ + x_-), y \rangle \\
 =& \; \int_{\Omega_{\gI}^+} {\rm Re}(f(\lambda)) \, d\langle E(\lambda)(x_+ + x_-), y \rangle \\
+& \; \int_{\Omega_{\gI}^+} {\rm Im}(f(\lambda)) \, d\langle E(\lambda)(x_+ \gI - x_- \gI), y \rangle \\
 =& \; \int_{\Omega_{\gI}^+}  \, d\langle E(\lambda)x_+, y \rangle f(\lambda)\\
+& \; \int_{\Omega_{\gI}^+}  \, d\langle E(\lambda)x_-, y \rangle \overline{f(\lambda)},
\end{align*}
in which case we have the desired identity.

We will now prove (ix). If we let $y = x$ in (iv) with $f(\lambda) \geq 0$ $E$-a.e., then we may use the fact that $E(M) \succeq 0$ for all $M \in \sB(\Omega_{\gI}^+$ to obtain
$$\langle \II(f)x, x \rangle = \int_{\Omega_{\gI}^+} f(\lambda) d\langle E(\lambda)x, x \rangle \succeq 0.$$

Assertion (x) is an immediate consequence of Lemma \ref{lem:SIbounded} and Lemma \ref{lem:INVERT}. Finally, we will prove (xi). Using Theorem \ref{thm:CHANGEOFVAR} with $\Omega = \Omega_{\gI}^+$, $\Omega' = f(\Omega_{\gI}^+) \setminus \{ \infty \}$, $\psi(\lambda) = f(\lambda)$, $h(\lambda') = \lambda'$ and  $F$ be the spectral measure given by \eqref{eq:SPECTRALINTEGRALMEASURE}, we obtain
\begin{align*}
& \; \int_{\CC_{\gI}^+} {\rm Re}(\lambda') dF(\lambda') + \int_{\CC_{\gI}^+} {\rm Im}(\lambda') dF(\lambda') J\\
=& \; \int_{f(\Omega_{\gI}^+) \setminus \{ \infty\} } {\rm Re}(\lambda') dF(\lambda') + \int_{f(\Omega_{\gI}^+)\setminus \{ \infty \}} {\rm Im}(\lambda') dF(\lambda') J\\
=& \; \int_{f(\Omega_{\gI}^+) \setminus \{ \infty\} } {\rm Re}(f(\lambda)) dE(\lambda) + \int_{f(\Omega_{\gI}^+)\setminus \{ \infty \}} {\rm Im}(f(\lambda) ) dE(\lambda) J\\
=& \; \II(f).
\end{align*}
Thus, the asserted uniqueness in Theorem \ref{thm:BN} ensures that the spectral measure for $\II(f)$ is given by \eqref{eq:SPECTRALINTEGRALMEASURE}.
\end{proof}

\begin{cor}[subclasses of bounded normal operators]
\label{cor:SUBCLASSES}
Let $T \in \cB(\cH_n)$ be normal with spectral measure $E$ and $J$ be an imaginary operator associated with $T$ {\rm (}see Theorem \ref{thm:AJB}{\rm )}. We have the following{\rm :}
\begin{enumerate}
\item[(i)] $T$ is self-adjoint if and only if $\sigma_S(T) \subseteq \RR$. In this case,
$$T = \int_{\RR} t \, dE(t).$$
\item[(ii)] $T$ is positive if and only if $\sigma_S(T) \subseteq [0, \infty)$.
In this case,
$$T = \int_{[0,\infty)} t \, dE(t).$$
\item[(iii)] $T$ is anti self-adjoint if and only if $\sigma_S(T) \subseteq \{s \in \RR^{n+1}: {\rm Re}(s) = 0 \}$. In this case,
$$T = \int_{\RR}  t \, dE(t) J.$$
\item[(v)] $T$ is unitary if and only if $\sigma_S(T) \subseteq \{s \in \RR^{n+1}: |s| = 1 \}$. In this case,
$$T = \int_{|\lambda|=1} {\rm Re}(\lambda)  \, dE(\lambda) + \int_{|\lambda|=1} {\rm Im}(\lambda)  \, dE(\lambda).$$
\item[(vi)] $T$ is imaginary if and only if $\sigma_S(T) \cap \CC_{\gI}^+ = \{ \gI \}$. In this case,
$$Tx  = \sum_{i \in \cI} \eta_i \, \gI \langle x, \eta_i \rangle \quad \quad {\rm for} \quad x \in \cH_n,$$
where $(\eta_i )_{i \in \cI}$ is any orthonormal basis of $\cH_+(T, \gI)$ {\rm (}see \eqref{eq:CI}{\rm )}.
\end{enumerate}
\end{cor}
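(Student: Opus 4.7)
The strategy is uniform across (i)--(vi): the forward implication (that membership in a given subclass forces the $S$-spectrum to lie in a prescribed set) is already available from earlier work, while the reverse implication is extracted from the spectral representation \eqref{eq:SPECN} together with the Borel functional calculus of Corollary \ref{cor:BFC}. For the forward directions, I would cite Lemma \ref{lem:15July1} for (i), Lemma \ref{lem:15July1pos} for (ii), Lemma \ref{lem:15July1ASA} for (iii), Lemma \ref{lem:UNITARY} for (v), and Theorem \ref{thm:IOs}(vi) together with Theorem \ref{thm:SPECNONEMPTYCOMPACT} for (vi). In every reverse direction, I would fix $\gI \in \mathbb{S}$ and apply Theorem \ref{thm:BN} to obtain the spectral measure $E$ of $T$, noting (see the proof of Theorem \ref{thm:BN} and Remark \ref{rem:IDENTIFICATION_unbounded}) that $\supp E = \sigma_S(T) \cap \CC_{\gI}^+$, so the hypothesis on $\sigma_S(T)$ translates into a pointwise condition on the identity function $f(\lambda)=\lambda$ holding $E$-a.e.\ on $\CC_{\gI}^+$.

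For (i), if $\sigma_S(T) \subseteq \RR$, then ${\rm Im}(\lambda) = 0$ holds $E$-a.e., so the second integral in \eqref{eq:SPECN} vanishes and $T = \int_{\RR} t\,dE(t)$; this is self-adjoint by Corollary \ref{cor:BFC}(i) (equivalently Theorem \ref{thm:Ffacts}(ii)) because $\bar f = f$. For (ii), if additionally $\sigma_S(T) \subseteq [0,\infty)$, then $f(\lambda) = \lambda \geq 0$ on $\supp E$, and Corollary \ref{cor:BFC}(ix) yields $T \succeq 0$. For (iii), the hypothesis forces ${\rm Re}(\lambda) = 0$ $E$-a.e., so the first integral in \eqref{eq:SPECN} disappears and $T = \bigl(\int_{\RR} t\,dE(t)\bigr) J$; invoking Corollary \ref{cor:BFC}(i) and (iii) together with $J^* = -J$ and the fact that $J$ commutes with every $E(M)$ (Theorem \ref{thm:AJB}) yields $T^* = -T$.

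For (v), I would combine Corollary \ref{cor:BFC}(i), (ii), and (iii) to compute $T^* T = \II(\bar f) \II(f) = \II(|f|^2) = \II(1) = I$ (and symmetrically $TT^* = I$), using that $|\lambda|^2 = 1$ $E$-a.e. For (vi), the condition $\sigma_S(T) \cap \CC_{\gI}^+ = \{\gI\}$ forces $\supp E = \{\gI\}$, hence $E(\{\gI\}) = I$; substituting into \eqref{eq:SPECN} gives $T = {\rm Re}(\gI)\, I + {\rm Im}(\gI)\, I \cdot J = J$, and Theorem \ref{thm:IOs}(vii) applied to $T = J$ produces the expansion $Tx = \sum_{i\in\cI} \eta_i \,\gI\,\langle x,\eta_i\rangle$ with respect to any orthonormal basis of $\cH_+(T,\gI) = \cH_+(J,\gI)$.

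The main subtlety, and the step I would expect to warrant the most care, is keeping the bookkeeping between $\sigma_S(T)$ and its intersection with $\CC_{\gI}^+$ consistent: because the $S$-spectrum is axially symmetric (Remark \ref{rem:AXIALLYSYMMETRIC}) while $E$ lives on $\CC_{\gI}^+$, each geometric hypothesis on $\sigma_S(T)$ must be translated to the correct statement on $\supp E \subseteq \CC_{\gI}^+$ before the functional calculus can be invoked. Once this translation is made explicit, the remaining manipulations are direct applications of properties (i)--(iii) and (ix) of Corollary \ref{cor:BFC}.
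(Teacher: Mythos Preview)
Your proposal is correct and follows essentially the same route as the paper: forward directions via the earlier lemmas on the location of $\sigma_S(T)$ (Lemmas \ref{lem:15July1}, \ref{lem:15July1pos}, \ref{lem:15July1ASA}, \ref{lem:UNITARY}, and Theorem \ref{thm:IOs}(vi)), and reverse directions by reading off the required algebraic property from the spectral representation of Theorem \ref{thm:BN} via Corollary \ref{cor:BFC} applied to $f(\lambda)=\lambda$. Your write-up is in fact more detailed than the paper's own proof, which dispatches (ii)--(v) in a single sentence and handles (vi) by pointing to \eqref{eq:Jexpansion}; your explicit computation that $\supp E = \{\gI\}$ forces $T=J$ is a helpful clarification.
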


\begin{proof}
Let us prove the claims.
We prove (i).
If $T$ is self-adjoint, then Lemma \ref{lem:15July1} asserts that $\sigma_S(T) \subseteq \RR$. The integral representation $T = \int_{\RR} t\, dE(t)$ appeared in Theorem \ref{thm:BSA}. Conservely, suppose $T \in \cB(\cH_n)$ is normal and $\sigma_S(T) \subseteq \RR$. Then the fact that $T= T^*$ follows at once from Corollary \ref{cor:BFC} with $f(\lambda) = \lambda$.  Assertions (ii) (with the help of Lemma
\ref{lem:15July1pos}), (iii) (with the help of Lemma \ref{lem:UNITARY}) and (iv) (with the help of Lemma \ref{lem:15July1ASA}) can be proved in much the same manner using Theorem \ref{thm:BN} and Corollary \ref{cor:BFC}.

Finally, Assertion (vi) is just \eqref{eq:Jexpansion}.
\end{proof}

\section{Spectral theorem for an unbounded normal operator and some consequences}
\label{sec:UBN}

Before we can formulate and prove the spectral theorem for an unbounded normal operator on a Clifford module, we will need to define the bounded transform of a densely defined closed operator and also a lemma which highlights various properties of the aforementioned bounded transform.

\begin{defn}
\label{def:Ztransform}
Suppose $T \in \cL(\cH_n)$ is a densely defined closed operator. Let $C_T$ be as
 in the statement of Theorem \ref{thm:Ctransform}, i.e., $C_T := (I+T^*T)^{-1}$. In view of Theorem \ref{thm:Ctransform}(ii), $C_T \in \cB(\cH_n)$ and $C_T$ is positive. In view of Corollary \ref{cor:POSSQROOT}, $C_T$ has a unique positive square root $C_T^{1/2} \in \cB(\cH_n)$. If we let
\begin{equation}
\label{eq:Z_Ttransform}
Z_T := T C_T^{1/2},
\end{equation}
then $Z_T$ will be called the {\it bounded transform of $T$} (we will justify this nomenclature in Lemma \ref{lem:Z_Ttransform}).
\end{defn}

\begin{lem}
\label{lem:Z_Ttransform}
Let $T\in \cL(\cH_n)$ be densely defined. Then the following statements hold{\rm :}
\begin{enumerate}
\item[(i)] $Z_T \in \cB(\cH_n)$. Moreover,
\begin{equation}
\label{eq:CONTRACTION}
\| Z_T \| \leq 1
\end{equation}
and
\begin{equation}
\label{eq:IDENTITY}
C_T = (I+T^*T)^{-1} = I - Z_T^* Z_T.
\end{equation}
\smallskip
\item[(ii)] $(Z_T)^* = Z_{T^*}$ and hence $Z_T$ is self-adjoint whenever $T$ is self-adjoint.
\smallskip
\item[(iii)] $Z_T$ is normal whenever $T$ is normal.
\end{enumerate}
\end{lem}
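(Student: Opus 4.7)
The plan is to chain three self-contained arguments, each one building on the previous by invoking Corollary \ref{cor:POSSQROOT} (which gives $C_T^{1/2}$ together with the polynomial-approximation statement \eqref{eq:NORMOP}) and the basic properties of $C_T = (I+T^*T)^{-1}$ proved in Theorem \ref{thm:Ctransform}. The anchor identity is the consequence of \eqref{eq:IMPORT}, namely
\begin{equation*}
\| x \|^2 + \| Tx \|^2 = {\rm Re}\, \langle (I+T^*T) x, x \rangle \quad\quad {\rm for} \quad x \in \cD(T^*T),
\end{equation*}
which, on specialising to $x = C_T v$ (so that $(I+T^*T)x = v$), gives the crucial estimate
\begin{equation*}
\| C_T v\|^2 + \| T C_T v \|^2 = {\rm Re}\, \langle v, C_T v \rangle = \| C_T^{1/2} v \|^2 \quad\quad {\rm for} \quad v \in \cH_n.
\end{equation*}

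To prove (i), my plan is to extend $T C_T$ from $\cH_n$ to $T C_T^{1/2}$ by a closure argument. Fix $w \in \cH_n$ and, using that $\Ran C_T^{1/2}$ contains $\Ran C_T = \cD(T^*T)$, which is dense in $\cH_n$, pick $v_n \in \cH_n$ with $w_n := C_T^{1/2} v_n \to w$. Set $z_n := C_T^{1/2} w_n = C_T v_n \in \cD(T^*T)$; since $C_T^{1/2}$ is bounded, $z_n \to C_T^{1/2} w$. The displayed estimate, applied to $v_n - v_m$, shows that both $(z_n)$ and $(T z_n)$ are Cauchy, so closedness of $T$ yields $C_T^{1/2} w \in \cD(T)$ and $Z_T w = \lim_n T z_n$, with
\begin{equation*}
\| C_T^{1/2} w \|^2 + \| Z_T w \|^2 = \| w \|^2.
\end{equation*}
This gives $Z_T \in \cB(\cH_n)$, $\| Z_T \| \leq 1$, and ${\rm Re}\, \langle Z_T^* Z_T w, w \rangle = {\rm Re} \langle (I - C_T) w, w\rangle$ for all $w \in \cH_n$; the polarisation identity \eqref{eq:POLAR} then upgrades this to $Z_T^* Z_T = I - C_T$, which is \eqref{eq:IDENTITY}.

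For (ii), my plan is to first establish the intertwining $C_T T^* = T^* C_{T^*}$ on $\cD(T^*)$. Using $T^{**} = T$ from Theorem \ref{thm:Dec14uu1}(iii), the operator $C_{T^*} = (I+TT^*)^{-1}$ maps $\cH_n$ into $\cD(TT^*) \subseteq \cD(T^*)$, so for $y \in \cD(T^*)$ and $u := C_{T^*} y$ we have $(I+TT^*)u = y$, whence $T^*u + T^*TT^*u = T^*y$ and $(I+T^*T) T^*u = T^*y$; applying $C_T$ gives $T^* C_{T^*} y = C_T T^* y$. An induction then yields $T^* p(C_{T^*}) y = p(C_T) T^* y$ for every real polynomial $p$ and every $y \in \cD(T^*)$ (using that $C_{T^*}$ preserves $\cD(TT^*)$). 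Approximating $\sqrt{t}$ uniformly on $\sigma_S(C_T) \cup \sigma_S(C_{T^*}) \subseteq [0,1]$ by real polynomials $p_j$ and invoking \eqref{eq:NORMOP}, the closedness of $T^*$ promotes this to $T^* C_{T^*}^{1/2} y = C_T^{1/2} T^* y$ for $y \in \cD(T^*)$. Since both $Z_{T^*} = T^* C_{T^*}^{1/2}$ (bounded by (i) applied to the densely defined closed operator $T^*$) and $Z_T^*$ are bounded operators which agree on the dense subspace $\cD(T^*)$ with $C_T^{1/2} T^*$, they coincide; self-adjointness when $T = T^*$ is immediate.

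The key technical obstacle I anticipate is precisely the step of justifying $T^* C_{T^*}^{1/2} = C_T^{1/2} T^*$: the polynomial intertwining passes to a uniform limit only after invoking closedness of $T^*$ carefully, and the choice of dense domain $\cD(T^*)$ must be compatible with the preservation property $C_{T^*}(\cD(TT^*)) \subseteq \cD(TT^*)$. Granted (i) and (ii), (iii) is an essentially bookkeeping exercise: when $T$ is normal one has $C_T = C_{T^*}$, and (i) applied both to $T$ and to $T^*$ gives
\begin{equation*}
Z_T^* Z_T = I - C_T, \quad\quad Z_{T^*}^* Z_{T^*} = I - C_{T^*}.
\end{equation*}
By (ii) applied to $T^*$, $(Z_{T^*})^* = Z_{T^{**}} = Z_T$, so the left-hand side of the second identity equals $Z_T Z_T^*$, and equality $C_T = C_{T^*}$ then forces $Z_T Z_T^* = Z_T^* Z_T$, i.e., $Z_T$ is normal.
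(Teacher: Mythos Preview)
Your proof is correct and follows essentially the same three-step route as the paper: bound $TC_T$ on $\cH_n$ and extend by closedness of $T$ to get (i); establish the intertwining $C_T T^* = T^* C_{T^*}$ on $\cD(T^*)$, promote it to square roots via polynomial approximation and closedness of $T^*$ to get (ii); and compare $C_T = C_{T^*}$ for (iii). The only cosmetic wrinkle is that \eqref{eq:POLAR} recovers $\langle x,y\rangle$ from the full Clifford-valued form $\langle z,z\rangle$, not from its real part alone; but since $S:=Z_T^*Z_T-(I-C_T)$ is self-adjoint, ${\rm Re}\,\langle Sw,w\rangle=0$ for all $w$ polarises to ${\rm Re}\,\langle Sw,v\rangle=0$ for all $w,v$, and taking $v=Sw$ gives $S=0$, so your conclusion stands.
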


\begin{proof} The proof in the classical complex Hilbert space (see, e.g., Lemma 5.8 in \cite{Schmuedgen}) can be carried over into the Clifford module setting. For completeness, we will provide the proof. The proof is broken into steps.

\bigskip

\noindent {\bf Step 1:} {\it Prove {\rm (i)}}. \\

\bigskip

First note that
\begin{equation}
\label{eq:Sept3yt1}
\{\text{$Cx$: $x \in \cH$}\} = \cD(I + T^* T ) = \cD(T^* T)
\end{equation}
and hence if $x \in \cH_n$, then
\begin{align*}
\| T C_T^{1/2} C_T^{1/2} x \|^2 =& \; \langle T^* T C_T x, C_T x \rangle \\
\leq& \; \langle (I + T^* T) C_T x, C_T x \rangle \\
=& \; \langle C_T^{-1} C_T x , C_T x \rangle \\
=& \; \langle x, C_Tx \rangle \\
=& \; \| C_T^{1/2} x \|^2.
\end{align*}
Thus, if $y \in \{\text{$C_T^{1/2}x$: $x \in \cH_n$}\}$, then
\begin{equation}
\label{eq:Sept3ttu1}
\| Z_T y \| = \| T C_T^{1/2} y \| \leq \| y \|.
\end{equation}
Since ${\rm Ker}(C_T) = \{ 0 \}$, we have that ${\rm Ker}(C_T^{1/2}) = \{ 0 \}$ and hence $\{\text{$C_T^{1/2} x$: $x \in \cH$}\}$ is a dense subset of $\cH_n$. By assumption, $T$ is a closed operator and since $C_T^{1/2} \in \cB(\cH_n)$, we have that $Z_T$ is closed as well.
Thus, we have
$$
\{\text{$C_T^{1/2}x$: $x \in \cH_n$}\} \subseteq \cD(T), \ \ \ \  \cD(Z_T) = \cH_n
$$
 and, in view of \eqref{eq:Sept3ttu1}, we have $\| Z_T \| \leq 1$.

Next, it follows from \eqref{eq:Sept3ttu1} and $C^{1/2} T^* \subseteq Z_T^*$ that
\begin{align*}
(I - C_T) C_T^{1/2} =& \; C_T^{1/2} (I + T^* T) C_T - C_T^{1/2} C_T \\
=& \; C_T^{1/2} T^* T C_T^{1/2} C_T^{1/2} \\
\subseteq & \; Z_T^* Z_T C_T^{1/2}.
\end{align*}
Thus, $Z_T^* Z_T C_T^{1/2} = (I - C_T )C_T^{1/2}$ and, as $\{\text{$C_T^{1/2} x$: $x \in \cH$}\}$ is a dense subset of $\cH_n$, we get \eqref{eq:IDENTITY}.

\smallskip

\noindent {\bf Step 2:} {\it Prove {\rm (ii)}}. \\

\smallskip

Using \eqref{eq:IDENTITY} we get that $C_{T^*} = (I + T T^*)^{-1}$. If $x \in \cD(T^*)$, then let $y = C_{T^*} x$. Therefore,
$$x = (I + T T^* )y $$
and
$$T^* x = T^* (I + TT^*)y = (I + T^* T) T^* y.$$
Thus, $C_{T^*} x \in \cD(T^*)$ and hence
\begin{equation}
\label{eq:Sept3rtz1}
C_T T^* x = T^* y = T^* C_{T^*} x.
\end{equation}
It follows easily from \eqref{eq:Sept3rtz1} and \eqref{eq:IDENTITY} that $p(C_{T^*})x \in \cD(T^*)$ and
$$p (C_T ) T^* x = T^* p(C_{T^*} ) x$$
for any real polynomial $p \in \RR[t]$. By the Weierstra{\ss} approximation theorem, there exists a sequence of real polynomials $( p_n )_{n=0}^{\infty}$ which converge uniformly in supremum norm to the function $t \mapsto t^{1/2}$ on $[0, 1]$. We may use Corollary \ref{cor:POSSQROOT} to obtain
$$\lim_{j \to \infty} \| p_j(C_T) - C_T^{1/2} \| = \lim_{j \to \infty} \| p_j( C_T) - C_T^{1/2} \| = 0.$$

Since $T$ is a closed operator, $T^*$ is also a closed operator. Thus, we have
\begin{align*}
C_T^{1/2} T^* x =& \; \lim_{j \to \infty} p_j(C_T) T^* x = \lim_{j\to \infty} T^* p_n (C_{T^*}) x \\
=& \; T^* (C_{T^*})^{1/2} x \quad {\rm for} \quad x \in \cD(T^*).
\end{align*}
As $C_T^{1/2} T^* \subseteq (TC_T^{1/2})^* = Z_{T^*}$, we get that
$$Z_{T^*} x = C_T^{1/2} T^* x = T^* (C_{T^*})^{1/2} x = (Z_{T})^* x$$
for $x \in \cD(T^*)$. Finally, since $\cD(T^*)$ is dense in $\cH$ and $Z_T \in \cB(\cH_n)$, we have that $Z_{T^*} x = (Z_T)^* x$ for $x \in \cH_n$, i.e., $Z_{T^*} = (Z_T)^*$.

\smallskip

\noindent {\bf Step 3:} {\it Prove {\rm (iii)}.} \\

\smallskip

Using the first of (ii) on $T$ and $T^*$ and the assumption $TT^* = T^* T$, we have
\begin{align*}
I - (Z_T)^* Z_T =& \;  (I + T^* T )^{-1} = (I + T T^* )^{-1} = I- (Z_{T^*})^* Z_{T^*} \\
=& \; I - Z_T (Z_{T})^*
\end{align*}
in which case it is clear that $Z_T$ is normal.
\end{proof}

Suppose $T \in \cL(\cH_n)$ is an unbounded normal operator and consider the bounded transform $Z_T \in \cB(\cH_n)$ of $T$ given by \eqref{eq:Z_Ttransform}. In view of Lemma \ref{lem:Z_Ttransform}, we have $Z_T$ is a bounded normal operator. Thus, we may use Theorem \ref{thm:AJB} to find a self-adjoint operator $A_{Z_T} \in \cB(\cH_n)$, a positive operator $B_{Z_T} \in \cB(\cH_n)$ and a imaginary operator $J$ such that $A, B$ and $J$ mutually commute and obey
\begin{equation}
\label{eq:ZAJB}
Z_T = A_{Z_T} + JB_{Z_T}.
\end{equation}

\begin{defn}[strongly commuting operators]
\label{def:STRONGLYCOMMUTING}
Suppose $T, \widetilde{T} \in \cL(\cH_n)$ are unbounded normal operators . We will say that $T$ and $\widetilde{T}$ {\it strongly commute} if the bounded transforms of $T$ and $\widetilde{T}$, i.e., $Z_T$ and $ Z_{ \widetilde{T}}$, respectively, commute.
\end{defn}

We are now ready to formulate and prove the spectral theorem for an unbounded normal operator.

\begin{thm}
\label{thm:UNBOUNDEDNORMAL}
Let $T \in \cL(\cH_n)$ be an unbounded normal operator. Then corresponding to any choice of $\gI \in \mathbb{S}$ and an imaginary operator $J$ in \eqref{eq:ZAJB}, there exists a spectral measure $E$ on the Borel $\sigma$-algebra $\mathscr{B}(\sigma_S(T) \cap \CC^+_{\gI})$ such that
\begin{equation}
\label{eq:SPECNZ}
T= \int_{\sigma_S(T) \, \cap \, \CC_{\gI}^+} {\rm Re}(\lambda) \, dE(\lambda) + \int_{\sigma_S(T)\, \cap \, \CC_{\gI}^+} {\rm Im}(\lambda) \, dE(\lambda) \, J.
\end{equation}

$E$ is unique in the sense that if $F$ is a spectral measure on $\mathscr{B}(\CC^+_{\gI})$ such that $T = \int_{\CC^+_{\gI}} {\rm Re}(\lambda) \, dF(\lambda) + \int_{\CC^+_{\gI}} {\rm Im}(\lambda) dF(\lambda) J$, then $E(M \cap \sigma_S(T)\cap \CC^+_{\gI}) = F(M)$ for all $M \in \mathscr{B}(\CC^+_{\gI})$.
\end{thm}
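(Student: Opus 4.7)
The plan is to reduce the unbounded case to the bounded case via the bounded transform $Z_T = T C_T^{1/2}$ introduced in Definition \ref{def:Ztransform}, which by Lemma \ref{lem:Z_Ttransform} is a bounded normal operator on $\cH_n$ satisfying $\| Z_T \| \leq 1$ and $I - Z_T^* Z_T = C_T$. Applying the bounded spectral theorem (Theorem \ref{thm:BN}) to $Z_T$, and decomposing $Z_T = A_{Z_T} + J B_{Z_T}$ as in \eqref{eq:ZAJB}, there exists a spectral measure $F$ on $\sB(\sigma_S(Z_T) \cap \CC_{\gI}^+)$, with $\supp F = \sigma_S(Z_T) \cap \CC_{\gI}^+ \subseteq \{ \mu \in \CC_{\gI}^+ : |\mu| \leq 1 \}$, such that
\begin{equation*}
Z_T = \int_{\sigma_S(Z_T) \cap \CC_{\gI}^+} {\rm Re}(\mu) \, dF(\mu) + \int_{\sigma_S(Z_T) \cap \CC_{\gI}^+} {\rm Im}(\mu) \, dF(\mu) \, J.
\end{equation*}

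The next step is to pass from $F$ to the desired spectral measure $E$ of $T$ using the intrinsic bijection between $\{ |\mu| < 1 \} \cap \CC_{\gI}^+$ and $\CC_{\gI}^+$. Define $h(\mu) := \mu (1 - |\mu|^2)^{-1/2}$ for $|\mu| < 1$ and $h(\mu) := \infty$ for $|\mu| = 1$; note $h$ is intrinsic in the sense of Definition \ref{def:INTRINSIC}. A crucial observation is that $F(\{|\mu|=1\}) = 0$: indeed, Corollary \ref{cor:BFC} identifies $C_T = I - Z_T^* Z_T$ with $\II_F(1 - |\mu|^2)$, and since $C_T = (I+T^*T)^{-1}$ is injective, Lemma \ref{lem:INVERT} forces $1 - |\mu|^2 \neq 0$ $F$-a.e., so $h \in \gF(\sigma_S(Z_T) \cap \CC_{\gI}^+, \sB(\sigma_S(Z_T) \cap \CC_{\gI}^+), \CC_{\gI}, F)$. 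One then transfers the measure using Definition \ref{def:TSM}: let $\Omega' := h(\sigma_S(Z_T) \cap \CC_{\gI}^+ \setminus \{ |\mu|=1 \})$ and set $E(M) := F(h^{-1}(M))$ for $M \in \sB(\Omega')$. Since $J$ commutes with $F$ (Theorem \ref{thm:BN}), it also commutes with $E$ (Remark \ref{rem:TRANSSPEC}).

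Now verify the integral representation and identify the support. Using Theorem \ref{thm:UNBFUNCCALC}(ii) together with the factorisation $T = Z_T C_T^{-1/2}$ on $\cD(T)$, one has
\begin{equation*}
T \;\subseteq\; \II_F(\mu) \, \II_F\bigl((1-|\mu|^2)^{-1/2}\bigr) \;\subseteq\; \II_F(h).
\end{equation*}
Since $\II_F(h)$ is normal by Theorem \ref{thm:UNBFUNCCALC}(iv) and $T$ is normal by hypothesis, Lemma \ref{lem:Aug25yq1} gives $T = \II_F(h)$. The change-of-variable result Theorem \ref{thm:CHANGEOFVAR}, with $\psi = h$ and the identity function on $\Omega'$, then converts $T = \II_F(h)$ into \eqref{eq:SPECNZ} with $E$ in place of the original measure. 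To check $\supp E = \sigma_S(T) \cap \CC_{\gI}^+$, apply Theorem \ref{thm:IMPORTANT} with the intrinsic function $h$ and spectral measure $F$: this yields
\begin{equation*}
\sigma_S(T) \cap \CC_{\gI}^+ \;=\; \sigma_S(\II_F(h)) \cap \CC_{\gI}^+ \;=\; \{ s : F(\{ |h(\mu) - s|\cdot|h(\mu)-\bar{s}| < \varepsilon\}) \neq 0 \text{ for all } \varepsilon > 0 \},
\end{equation*}
and pushing this forward via $h$ gives exactly the characterisation \eqref{eq:SPRTALT} of $\supp E$, so $E$ is supported on $\sigma_S(T) \cap \CC_{\gI}^+$.

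For uniqueness, suppose $F'$ is another spectral measure on $\sB(\CC_{\gI}^+)$ satisfying the integral representation for $T$. Applying the bounded Borel calculus (Corollary \ref{cor:BFC}) to the intrinsic function $\mu \mapsto \lambda/\sqrt{1+|\lambda|^2}$ (the inverse of $h$), one constructs a bounded operator which must equal $Z_T$ by the factorisation argument, and then the uniqueness statement of the bounded Theorem \ref{thm:BN} applied to $Z_T$ forces $F'$ to coincide, after transport by $h^{-1}$, with our $E$ on $\sigma_S(T) \cap \CC_{\gI}^+$. The main obstacle in the execution is Step 3: proving $T = \II_F(h)$ cleanly, because this requires comparing the domains of the unbounded operators $T$ and $\II_F(\mu) \II_F((1-|\mu|^2)^{-1/2})$ and invoking Theorem \ref{thm:UNBFUNCCALC}(v) to handle the product, followed by Lemma \ref{lem:Aug25yq1} to upgrade the inclusion to equality using normality on both sides.
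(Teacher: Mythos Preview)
Your proposal is correct and follows essentially the same route as the paper: reduce to the bounded transform $Z_T$, apply Theorem \ref{thm:BN} to obtain a spectral measure on the closed half-disc, observe that the unit circle carries no mass because $C_T = I - Z_T^*Z_T$ is injective, push the measure forward by the intrinsic map $h(\mu)=\mu(1-|\mu|^2)^{-1/2}$ via Theorem \ref{thm:CHANGEOFVAR}, and identify the support using Theorem \ref{thm:IMPORTANT}. The one place where you differ slightly is in establishing $T=\II_F(h)$: the paper computes $\II_F(h)=Z_T(C_T^{1/2})^{-1}$ directly from Theorem \ref{thm:UNBFUNCCALC} and Lemma \ref{lem:INVERT} and then asserts $T=Z_T(C_T^{1/2})^{-1}$ as a fact, whereas you argue $T\subseteq\II_F(h)$ and then invoke Lemma \ref{lem:Aug25yq1} (no proper normal extension of a normal operator) to upgrade the inclusion to equality; your treatment of this domain issue is arguably more transparent. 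The uniqueness arguments are also the same in spirit, both pulling back to $Z_T$ via $h^{-1}$ and invoking the uniqueness clause of Theorem \ref{thm:BN}.
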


\begin{rem}
\label{rem:SPECTRUMNONEMPTY}
Before proceeding to the proof of Theorem \ref{thm:UNBOUNDEDNORMAL}, we wish to point out that a consequence of Theorem \ref{thm:UNBOUNDEDNORMAL} is that for any unbounded normal operator $T \in \cL(\cH_n)$, we have
\begin{equation}
\label{eq:NONEMPTY}
\sigma_S(T) \neq \emptyset.
\end{equation}
Indeed, for any choice of $\gI \in \mathbb{S}$, Theorem \ref{thm:IMPORTANT} with $f(\lambda) = \lambda$ implies that $\supp E = \sigma_S(T) \cap \CC_{\gI}^+$. Since $\supp E \neq \emptyset$, we have \eqref{eq:NONEMPTY}.
\end{rem}

\begin{proof}[Proof of Theorem \ref{thm:UNBOUNDEDNORMAL}]
The proof is broken into steps.

\bigskip

\noindent {\bf Step 1:} {\it Show that there exists a spectral measure $E$ on $\sigma_S(T) \cap \CC^+_{\gI}$ such that \eqref{eq:SPECN} holds}.  \\

\smallskip

In view of Lemma \ref{lem:Z_Ttransform}, we have $Z_T$ is a bounded normal operator. Moreover, $\| Z_T \| \leq 1$. Let $J$ be the imaginary operator appearing in \eqref{eq:ZAJB}. Since $Z_T \in \cB(\cH_n)$ is normal, we can use Theorem \ref{thm:BSA} to obtain a uniquely determined spectral measure $E_{Z_T}$ on $\sB(\sigma_S(Z_T)) \subseteq \overline{ \DD^+_{\gI}}$,where
$$\DD_{\gI} := \{ \lambda \in \CC_{\gI}: |\lambda| <1 \} \quad \quad {\rm for}\quad \gI \in \mathbb{S},$$
such that
\begin{equation}
\label{eq:ZTSR}
Z_T = \int_{\DD_{\gI}} {\rm Re}(\lambda) dE_{Z_T}(\lambda) + \int_{\DD_{\gI}} {\rm Im}(\lambda) dE_{Z_T}(\lambda) J.
\end{equation}

We may use \eqref{eq:ZTSR} and \eqref{eq:XY} to see that, for any $x \in \cH_n$ and $M \in \sB(\CC^+_{\gI})$, we have
\begin{equation}
\label{eq:STEPFURTHER}
\langle (I -(Z_T)^* Z_T) E_{Z_T}(M), E_{Z_T}(M)x \rangle = \int_M (1 - |\lambda|^2 ) d\langle E_{Z_T}(\lambda)x , x \rangle.
\end{equation}
A simple consequence of Theorem \ref{thm:Ctransform} is $C_T^{-1} = I - (Z_T)^* Z_T$ is invertible and positive. Thus,  $\Ker(I - (Z_T)^*(Z_T) ) = \{ 0 \}$ and $I - (Z_T)^*Z_T$ is positive. Thus, \eqref{eq:STEPFURTHER} implies that $\sigma_S(Z_T) \cap \CC_{\gI}^+ = \supp E_{Z_T} \subseteq \overline{\DD^+_{\gI}}$ and $E_{Z_T}(\TT_{\gI}^+) = 0$.
Consequently,
$$
E_{Z_T}(\DD^+_{\gI} )= E_{Z_T}(\overline{\DD^+_{\gI}} \, \setminus \, \TT_{\gI}^+ ) = I,
$$
 where $\TT_{\gI} := \{ \lambda \in \CC_{\gI}: |\lambda| = 1 \}$.

Let $\psi(\lambda) = \lambda(1-|\lambda|)^{-1/2}$. Notice that $\psi$ is an intrinsic function (see Defintion \ref{def:INTRINSIC}). We claim that $\II(\psi) = Z_T (C_T^{1/2})^{-1}$, where $\II$ denotes the spectral integral with respect to the spectral measure $E_{Z_T}$. Indeed, since $E_{Z_T}(\DD_{\gI}^+) = I$ and $\psi$ is clearly finite $E_{Z_T}$-a.e. on $\CC_{\gI}^+$, \eqref{eq:DOM} asserts that $\cD(\II(\psi)) = \cD(\II( (1-\lambda)^{-1/2} ))$. Thus, we may make us of Theorem \ref{thm:UNBFUNCCALC}(ii) and (iv) to obtain
$$\II(\psi) = \II(\lambda) \II( (1-|\lambda|^2)^{-1/2} ).$$ Next, in view of Lemma \ref{lem:invert}, we have that
$$\II( (1-|\lambda|^2)^{-1/2} ) = \II( (1-|\lambda|^2)^{1/2})^{-1}.$$
Finally, since $Z_T = \II(\lambda)$ and the square root of a positive bounded operator is unique (see Corollary \ref{cor:POSSQROOT}), we have that
$$C_T^{1/2} = (I - (Z_T)^* Z_T )^{1/2} = \II( (1- |\lambda|^2)^{1/2} ).$$
Therefore, we have proved the claim $\II(\psi) = Z_T (C_T^{1/2})^{-1/2}$.

In what follows, with a slight abuse of notation, we will view $\psi$ as $\psi|_{\DD_{\gI}^+}$. Let $E(M) := E_{Z_T}(\psi^{-1}(M))$ for $M \in \sB(\CC_{\gI}^+)$. In view of Remark \ref{rem:TRANSSPEC}, $E$ is a spectral measure on $\sB(\CC_{\gI}^+)$ and $J$ is an imaginary associated operator with the spectral measure $E$. Using the fact that $T = Z_T \{(C_T)^{1/2}\}^{-1}$, $\II(\psi) = Z_T \{(C_T)^{1/2}\}^{-1}$ and Theorem \ref{thm:CHANGEOFVAR}, we obtain
\begin{align}
T =& \; \II(\psi) = \int_{\DD_{\gI}^+} {\rm Re}(\psi(\lambda)) dE_{Z_T}(\lambda) + \int_{\DD_{\gI}^+} {\rm Im}(\psi(\lambda)) dE_{Z_T}(\lambda)J \nonumber \\
=& \; \int_{\DD_{\gI}^+} \frac{ 1 }{\sqrt{1-|\lambda|^2}} \, dE_{Z_T}(\lambda) \nonumber \\
=& \; \int_{\CC_{\gI}^+} {\rm Re}(\lambda) dE(\lambda) + \int_{\CC_{\gI}^+} {\rm Im}(\lambda) dE(\lambda) J. \label{eq:ALMOSTTHERE}
\end{align}
Finally, using Theorem \ref{thm:IMPORTANT}, we have that $\supp E = \sigma_S(T) \cap \CC_{\gI}^+$, i.e.,  \eqref{eq:ALMOSTTHERE} is \eqref{eq:SPECNZ}.

\smallskip

\noindent {\bf Step 2:} {\it Show that $E$ is unique}.  \\

\smallskip

Suppose $F$ is another spectral measure on $\sB(\CC_{\gI}^+)$ such that the imaginary operator $J$ is associated with $F$ and
$$T= \int_{\CC_{\gI}^+} {\rm Re}(\lambda) \, dF(\lambda) + \int_{\CC_{\gI}^+} {\rm Im}(\lambda) \, dF(\lambda) \, J.$$
Let $F'(M) := F(\psi(M))$ for $M \in \sB(\DD_{\gI}^+)$. In view of Remark \ref{rem:TRANSSPEC}, $F'$ is a spectral measure on $\sB(\DD_{\gI}^+)$ and the imaginary operator $J$ is associated with $F'$. Moreover,  since $E_{Z_T}(\DD_{\gI}^+)$, we have
\begin{align*}
Z_T =& \;  \int_{\DD_{\gI}^+} {\rm Re}(\lambda) dF'(\lambda) +
\int_{\DD_{\gI}^+} {\rm Im}(\lambda) dF'(\lambda) \, J \\
=& \;  \int_{\DD_{\gI}^+} {\rm Re}(\lambda) dE_{Z_T}(\lambda) +
\int_{\DD_{\gI}^+} {\rm Im}(\lambda) dE_{Z_T}(\lambda) \, J.
\end{align*}
But then the Borel functional calculus applied to $Z_T \in \cB(\cH_n)$ (see Corollary \ref{cor:BFC}) asserts that for any real-valued polynomial $p \in \RR[\lambda]$ and $x \in \cH_n$, we have
\begin{align*}
\langle p(\lambda) x, x \rangle =& \;  \int_{\DD_{\gI}^+} p(\lambda) dF'(\lambda)x, x \rangle \\
=& \; \int_{\DD_{\gI}^+} p(\lambda) dE_{Z_T}(\lambda)x, x \rangle.
\end{align*}
Since $\RR[\lambda]$ is dense in the Banach space of real-valued continuous functions on the compact set $\overline{\DD_{\gI}^+}$ (denoted by $\sC(\overline{\DD_{\gI}^+}, \RR)$), we can use Corollary \ref{cor:RR} to see that $\langle F'(M)x, x \rangle = \langle E_{Z_T}(M)x, x \rangle$ for any $M \in \sB(\DD_{\gI}^+)$, But then the polarisation formula \eqref{eq:POLAR} enables us to deduce that $F' = E_{Z_T}$. Finally since $F(M) = F'(\psi^{-1}(M))$ for $M \in \sB(\CC_{\gI}^+)$, we have $F = E$ on $\sB(\CC_{\gI}^+)$ as required.
\end{proof}

\begin{rem}
\label{rem:IDENTIFICATION}
Let $\gI, \gJ \in \mathbb{S}$ and $\gamma: \CC^+_{\gI} \to \CC^+_{\gJ}$ denote the bijective map given by
\begin{equation}
\label{eq:unGAMMA}
\gamma(\lambda_0 + \lambda_1 \gI) = \lambda_0 + \lambda_1 \mathfrak{J},
\end{equation}
where $\lambda_0 \in \RR$ and $\lambda_1 \geq 0$. An immediate consequence of the proof of Theorem \ref{thm:UNBOUNDEDNORMAL}, is that if $E_{\gI}$, where $\gI \in \mathbb{S}$, is a spectral measure for an unbounded normal operator $T \in \cL(\cH_n)$ and $E_{\gJ}$, where $\gJ \in \mathbb{S}$, then since $\gamma(\sigma_S(T) \cap \CC_{\gI}^+) = \sigma_S(T) \cap \CC_{\gI}^+$ and $\supp E_{\gI} = \sigma_S(T) \cap \CC_{\gI}^+$, we have
\begin{equation}
\label{eq:EQUIVALENCEun}
E_{\gI}(M) = E_{\gJ}(\gamma(M)) \quad \quad {\rm for}\quad M \in\sB(\sigma_S(T) \cap \CC_{\gI}^+).
\end{equation}
In view of the above observations, we are justified in calling a spectral measure $E$ on $\sigma_S(T) \cap \CC_{\gI}^+$ {\it the spectral measure of $T$}.
\end{rem}

\begin{cor}[Borel functional calculus in the unbounded case]
\label{cor:BFCUNBOUNDED}
Let $T, E$ and $J$ be as in Theorem \ref{thm:UNBOUNDEDNORMAL} and $\Omega_{\gI}^+ := \sigma_S(T) \cap \CC_{\gI}^+$ for $\gI \in \mathbb{S}$. For any $f, g \in \gF(\Omega_{\gI}^+, \sB(\Omega_{\gI}^+), E)$ and $c \in \CC_{\gI}$, we have the spectral integrals $\II(f), \II(g) \in \cL(\cH_n)$ have the following properties{\rm :}
\begin{enumerate}
\item[(i)] $\II(\bar{f}) = \II(f)^*$.

\smallskip
\item[(ii)] $\II(fg) = \overline{\II(f)\II(g)}$.

\smallskip
\item[(iii)] $\II(fc +g) = \overline{\II(f) ({\rm Re}(c)I + {\rm Im}(c)J )+ \II(g)}$ for all $c \in \CC_{\gI}$.

\smallskip
\item[(iv)] $\II(f)$ is a closed normal operator belonging to $\cL(\cH_n)$.

\smallskip
\item[(v)] $\cD(\II(f) \II(g)) = \cD( \II(fg) ) \cap \cD(\II(g))$.

\item[(vi)] If we let $\Pi_{\pm}(J, \gI)$ denote the orthogonal projection onto the right complex subspace $\cH_{\pm}(J, \gI)$, respectively {\rm (}see Theorem \ref{thm:IOs}{\rm(ii))}, then
\begin{align*}
\langle \II(f)x, y \rangle =& \; \int_{\Omega_{\gI}^+} d\langle E(\lambda) \Pi_+(J, \gI)x, y \rangle \, f(\lambda)\\
+& \; \int_{\Omega_{\gI}^+} d\langle E(\lambda) \Pi_-(J, \gI)x, y \rangle \overline{f(\lambda)} \quad \quad {\rm for} \quad x \in \cD(T),
\end{align*}
where both integrals above are meant in the sense of \eqref{eq:complexRIGHTINTEGRAL}.

\smallskip

\item[(vii)] If $f \in \gF(\Omega_{\gI}^+, \sB(\Omega_{\gI}^+), \CC_{\gI}, E)$ is nonnegative $E$-a.e., then $\II(f) \succeq 0$.

\smallskip

\item[(vii)] $\II(f)^{-1} \in \cL(\cH_n)$ if and only if $f(\lambda) \neq 0$ $E$-a.e. In this case, $\II(f)^{-1} = \II(1/f)$.

\smallskip

\item[(viii)] The spectral measure $E_{\II(f)}$ of $\II(f)$ satisfies the identity
\begin{equation}
\label{eq:SPECTRALINTEGRALMEASURE2}
E_{\II(f)}(M) = E(f^{-1}(M \cap f(\Omega_{\gI}^+) ).
\end{equation}

\end{enumerate}
\end{cor}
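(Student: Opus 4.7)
The plan is to leverage the general theory already developed in Section~\ref{sec:SIubdd} together with the specific spectral measure $E$ attached to $T$ by Theorem~\ref{thm:UNBOUNDEDNORMAL}. Properties (i)--(v) are immediate instances of Theorem~\ref{thm:UNBFUNCCALC}: since $E$ is a spectral measure on $(\Omega_{\gI}^+, \sB(\Omega_{\gI}^+))$ and $J$ is an imaginary operator associated with $E$ (by the construction in the proof of Theorem~\ref{thm:UNBOUNDEDNORMAL}, where $J$ is carried over from $Z_T$), one simply specialises Theorem~\ref{thm:UNBFUNCCALC} to this setup. No further argument is required beyond invoking these results and noting that the closure operations in (ii) and (iii) follow from Theorem~\ref{thm:UNBFUNCCALC}(iv), which guarantees closedness of each $\II(f)$.

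For (vi) I would adapt the argument used to establish Corollary~\ref{cor:BFC}(vii). Given $x \in \cD(\II(f))$, decompose $x = \Pi_+(J, \gI)x + \Pi_-(J, \gI)x$ using Theorem~\ref{thm:IOs}(iii); since $E(M)$ commutes with $J$ for every $M \in \sB(\Omega_{\gI}^+)$, the projections $\Pi_{\pm}(J, \gI)$ also commute with $E(M)$, which in turn implies $\Pi_{\pm}(J, \gI)x \in \cD(\II(f))$ by the characterisation \eqref{eq:DOM} of the domain. Applying Theorem~\ref{thm:FG} with $g(\lambda) = 1$ and exploiting the defining identity $J\Pi_{\pm}(J, \gI)x = \Pi_{\pm}(J, \gI)x(\pm\gI)$, the real and imaginary integrals combine into $\int d\langle E(\lambda)\Pi_+(J, \gI)x, y\rangle\, f(\lambda)$ and $\int d\langle E(\lambda)\Pi_-(J, \gI)x, y\rangle\,\overline{f(\lambda)}$, as required. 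Assertion (vii) is already contained in Theorem~\ref{thm:Ffacts}(iii), specialised to $E$.

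The substantive content is in (viii), the spectral mapping for measures, and this I expect to be the main obstacle. The plan is to set $F(M) := E(f^{-1}(M \cap f(\Omega_{\gI}^+)))$ for $M \in \sB(\CC_{\gI}^+)$. One verifies (by Definition~\ref{def:TSM} and Remark~\ref{rem:TRANSSPEC}) that $F$ is a spectral measure on $\sB(\CC_{\gI}^+)$ with which the imaginary operator $J$ remains associated. Applying Theorem~\ref{thm:CHANGEOFVAR} with $\psi = f$ and with $h(\lambda') = \lambda'$ on $f(\Omega_{\gI}^+) \setminus \{\infty\}$ (a set of full $F$-measure since $f$ is $E$-a.e.\ finite) yields
\begin{equation*}
\int_{\CC_{\gI}^+}{\rm Re}(\lambda')\,dF(\lambda') + \int_{\CC_{\gI}^+}{\rm Im}(\lambda')\,dF(\lambda')\,J = \int_{\Omega_{\gI}^+}{\rm Re}(f(\lambda))\,dE(\lambda) + \int_{\Omega_{\gI}^+}{\rm Im}(f(\lambda))\,dE(\lambda)\,J,
\end{equation*}
which by definition equals $\II(f)$. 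Since $\II(f)$ is a normal operator by (iv), Theorem~\ref{thm:UNBOUNDEDNORMAL} guarantees that its spectral measure $E_{\II(f)}$ is uniquely determined, and the uniqueness clause forces $E_{\II(f)} = F$, giving \eqref{eq:SPECTRALINTEGRALMEASURE2}.

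The hard part is ensuring that every hypothesis needed to invoke the uniqueness half of Theorem~\ref{thm:UNBOUNDEDNORMAL} is in place for $\II(f)$: one must check that the $J$ appearing in the integral representation of $\II(f)$ is genuinely an imaginary operator obeying the decomposition \eqref{eq:ZAJB} for $Z_{\II(f)}$, and that the measure $F$ is supported on $\sigma_S(\II(f)) \cap \CC_{\gI}^+$, which via Theorem~\ref{thm:IMPORTANT} reduces to showing that $\supp F$ and the essential range of $f$ coincide up to $E$-null sets. Overcoming this requires assembling Theorem~\ref{thm:IMPORTANT}, Remark~\ref{rem:IDENTIFICATION} and the intrinsic character of the coordinate function together; once these pieces are in place, the identification of $E_{\II(f)}$ is immediate.
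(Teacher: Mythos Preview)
Your proposal is correct and follows essentially the same route as the paper. The paper's own proof is extremely terse: it invokes Theorem~\ref{thm:UNBFUNCCALC} for (i)--(v) and then declares (vi)--(viii) to be ``straight forward adaptations'' of the corresponding bounded-case assertions in Corollary~\ref{cor:BFC}, with no further detail. Your treatment of (vi) via the $\Pi_\pm$ decomposition and of (viii) via Theorem~\ref{thm:CHANGEOFVAR} plus uniqueness is precisely how those adaptations go; the paper simply does not spell them out. Two small remarks: you omit the invertibility assertion (the second item labelled (vii)), but this is just Lemma~\ref{lem:INVERT} verbatim; and the concern you flag about whether $J$ legitimately serves as the imaginary operator for $Z_{\II(f)}$ in the uniqueness clause is a genuine subtlety that the paper glosses over in both the bounded and unbounded cases --- it is resolvable (since $J$ commutes with $E$ and hence with $\II(f)$, one can check it fits the required decomposition), but you are being more careful here than the authors.
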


\begin{proof}
Let $T \in \cL(\cH_n)$ be normal and fix $\gI \in \mathbb{S}$. Then by Theorem \ref{thm:UNBOUNDEDNORMAL}, we can find a spectral measure $E$ on $\sigma_S(T) \cap \CC_{\gI}^+$ such that \eqref{eq:SPECNZ} holds. Thus, we may invoke Theorem \ref{thm:UNBFUNCCALC} with $\Omega = \sigma_S(T) \cap \CC_{\gI}^+$ and $\sA = \sB(\sigma_S(T) \cap \CC_{\gI}^+)$ to obtain (i)-(v).

Assertions (vi)-(viii) are straight forward adaptations of \\Theorem \ref{thm:BFUNC}(viii)-(xi) to the unbounded setting. The corresponding proofs can be completed in much the same way as the bounded setting.
\end{proof}

\begin{cor}[subclasses of bounded normal operators]
\label{cor:UNBOUNDEDSUBCLASSES}
Let $T, E$ and $J$ be as in Theorem \ref{thm:UNBOUNDEDNORMAL} and $\Omega_{\gI}^+ := \sigma_S(T) \cap \CC_{\gI}^+$ for $\gI \in \mathbb{S}$. We have the following{\rm :}
\begin{enumerate}
\item[(i)] $T$ is self-adjoint if and only if $\sigma_S(T) \subseteq \RR$. In this case,
$$T = \int_{\RR} t \, dE(t).$$
\item[(ii)] $T$ is positive if and only if $\sigma_S(T) \subseteq [0, \infty)$. In this case,
$$T = \int_{[0,\infty)} t \, dE(t).$$
\item[(iii)] $T$ is anti self-adjoint if and only if $\sigma_S(T) \subseteq \{s \in \RR^{n+1}: {\rm Re}(s) = 0 \}$. In this case,
$$T = \int_{\RR}  t \, dE(t) J.$$
\end{enumerate}
\end{cor}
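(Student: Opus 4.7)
\bigskip

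\noindent \textbf{Proof proposal for Corollary \ref{cor:UNBOUNDEDSUBCLASSES}.}
The plan is to obtain each of the three equivalences by combining the spectral inclusion lemmas from Subsection \ref{sec:SA} (one direction) with the unbounded spectral theorem \ref{thm:UNBOUNDEDNORMAL} together with the Borel functional calculus of Corollary \ref{cor:BFCUNBOUNDED} (reverse direction). Throughout, fix $\gI\in\mathbb{S}$, let $\Omega_{\gI}^+:=\sigma_S(T)\cap\CC_{\gI}^+$, and let $E$ and $J$ be as supplied by Theorem \ref{thm:UNBOUNDEDNORMAL}, so that
$$
T \;=\; \int_{\Omega_{\gI}^+}\!\!{\rm Re}(\lambda)\,dE(\lambda) + \Bigl(\int_{\Omega_{\gI}^+}\!\!{\rm Im}(\lambda)\,dE(\lambda)\Bigr) J,
$$
and $\supp E = \Omega_{\gI}^+$ (Theorem \ref{thm:IMPORTANT} applied with $f(\lambda)=\lambda$).

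For (i), the forward direction is exactly Lemma \ref{lem:15July1}. Conversely, if $\sigma_S(T)\subseteq\RR$, then $\Omega_{\gI}^+\subseteq\RR$, so ${\rm Im}(\lambda)=0$ on $\supp E$ and the $J$-term drops out, giving $T=\int_\RR t\,dE(t)=\II(f)$ with $f(t)=t$ real-valued. Since $\overline f = f$, Corollary \ref{cor:BFCUNBOUNDED}(i) yields $T^*=\II(\bar f)=\II(f)=T$ with $\cD(T^*)=\cD(T)$, so $T$ is self-adjoint. Statement (ii) is handled identically: the forward direction is Lemma \ref{lem:15July1pos}, and conversely $\sigma_S(T)\subseteq[0,\infty)$ gives $\Omega_{\gI}^+\subseteq[0,\infty)\subseteq\RR$, hence $T=\int_{[0,\infty)} t\,dE(t)=\II(f)$ with $f(t)=t\ge0$ on $\supp E$; Corollary \ref{cor:BFCUNBOUNDED}(vii) (positivity of $\II(f)$ for $E$-a.e.\ nonnegative $f$) together with part (i) just proved furnishes $\cD(T)=\cD(T^*)$ and $\langle Tx,x\rangle\succeq 0$.

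For (iii), the forward direction is Lemma \ref{lem:15July1ASA}. For the converse, if $\sigma_S(T)\subseteq\{s\in\RR^{n+1}:{\rm Re}(s)=0\}$, then ${\rm Re}(\lambda)=0$ on $\Omega_{\gI}^+=\supp E$, so the first integral vanishes and
$$
T \;=\; \Bigl(\int_{\Omega_{\gI}^+}{\rm Im}(\lambda)\,dE(\lambda)\Bigr) J \;=\; \II(g)\,J,
$$
where $g(\lambda)={\rm Im}(\lambda)\ge 0$ on $\supp E$. By (ii) applied to the normal operator $B:=\II(g)$, $B$ is positive and self-adjoint, and since $J$ is an imaginary operator associated with $E$, Theorem \ref{thm:COMMUT} gives $JB\subseteq BJ$ and a standard domain argument (using that $J$ is unitary so $J\cD(B)=\cD(B)$) yields $BJ=JB$ with $\cD(T)=\cD(B)=\cD(T^*)$. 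Then $T^*=(BJ)^*\supseteq J^*B^*=-JB=-BJ=-T$ and equality of the domains gives $T=-T^*$; the displayed formula $T=\int_\RR t\,dE(t)\,J$ is then obtained by the change-of-variables theorem (Theorem \ref{thm:CHANGEOFVAR}) applied to the map $\psi(\lambda)={\rm Im}(\lambda)$, reinterpreting $E$ as the pushforward spectral measure on $\sB(\RR)$ supported in $[0,\infty)$.

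The only genuine subtlety will be the equality of domains $\cD(T)=\cD(T^*)$ in each direction; once one has established that $\II(f)$ is closed and normal with $\II(\bar f)=\II(f)^*$ (already available from Corollary \ref{cor:BFCUNBOUNDED}(i) and (iv)), the domain bookkeeping is automatic, and the remaining manipulations consist of observing that the relevant integrand vanishes on $\supp E$ and invoking Theorem \ref{thm:Ffacts}(i) (equality of spectral integrals of $E$-a.e.\ equal functions) to drop one of the two pieces in the spectral decomposition.
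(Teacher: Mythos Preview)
Your proposal is correct and follows essentially the same approach as the paper, which simply refers back to the bounded case (Corollary \ref{cor:SUBCLASSES}) with Theorem \ref{thm:UNBOUNDEDNORMAL} in place of Theorem \ref{thm:BN}: forward directions via Lemmas \ref{lem:15July1}, \ref{lem:15July1pos}, \ref{lem:15July1ASA}, reverse directions via the spectral representation and the Borel functional calculus. Your treatment of (iii) via the factorisation $T=BJ$ and commutation is more elaborate than necessary---one can argue directly that $\bar f=-f$ $E$-a.e.\ on $\supp E$ when $f(\lambda)=\lambda$ and ${\rm Re}(\lambda)=0$ there, whence $T^*=\II(\bar f)=\II(-f)=-T$ by Corollary \ref{cor:BFCUNBOUNDED}(i),(iii) and Theorem \ref{thm:Ffacts}(i)---but your route is sound.
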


\begin{proof}
The proofs of (i)-(iii) can be completed in much the same way of the proofs of Corollary \ref{cor:SUBCLASSES}(i)-(iii) with Theorem \ref{thm:UNBOUNDEDNORMAL} in place of Theorem \ref{thm:BN}.
\end{proof}

The following corollary is an unbounded analogue of the Teichm\"uller decomposition.

\begin{cor}
\label{cor:AJBunbounded}
Corresponding to any normal operator $T\in \cL(\cH_n)$, there exist a self-adjoint operator $A \in \cL(\cH_n)$, an imaginary operator $J \in \cB(\cH_n)$ and a positive operator $B \in \cL(\cH_n)$ such that $A$, $J$ and $B$ strongly commute and satisfy
\begin{equation}
\label{eq:AJBunbounded}
T = A  +J B.
\end{equation}
In this case, $A = \int_{\CC_{\gI}^+} {\rm Re}(\lambda) \, dE(\lambda)$ and $B = \int_{\CC_{\gI}^+} {\rm Im}(\lambda) \, dE(\lambda)$, where $E$ is the spectral measure for $T$, and
\begin{equation}
\label{eq:TJunbounded}
\text{$T$ and $J$ strongly commute.}
\end{equation}
\end{cor}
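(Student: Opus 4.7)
My plan is to apply the unbounded spectral theorem (Theorem \ref{thm:UNBOUNDEDNORMAL}) and harvest the desired decomposition from the Borel functional calculus (Corollary \ref{cor:BFCUNBOUNDED}). Fix $\gI \in \mathbb{S}$, let $\Omega_{\gI}^+ := \sigma_S(T) \cap \CC_{\gI}^+$, and let $E$ and $J$ be as produced by Theorem \ref{thm:UNBOUNDEDNORMAL}, with $\II$ denoting the associated spectral integral. Define
$$A := \II(\mathrm{Re}(\lambda)) = \int_{\Omega_{\gI}^+} \mathrm{Re}(\lambda)\, dE(\lambda), \qquad B := \II(\mathrm{Im}(\lambda)) = \int_{\Omega_{\gI}^+} \mathrm{Im}(\lambda)\, dE(\lambda).$$
Since $\mathrm{Re}(\lambda)$ is real-valued on $\Omega_{\gI}^+$, Theorem \ref{thm:Ffacts}(ii) gives $A = A^*$; since $\mathrm{Im}(\lambda) \geq 0$ on $\Omega_{\gI}^+$, Theorem \ref{thm:Ffacts}(iii) gives that $B$ is positive.

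To obtain the decomposition $T = A + JB$, I would apply Corollary \ref{cor:BFCUNBOUNDED}(iii) with $f(\lambda) = \mathrm{Im}(\lambda)$, $g(\lambda) = \mathrm{Re}(\lambda)$, and $c = \gI$, noting that on $\Omega_{\gI}^+$ we have $fc + g = \mathrm{Im}(\lambda)\gI + \mathrm{Re}(\lambda) = \lambda$, hence
$$T = \II(\lambda) = \overline{A + BJ}.$$
Because $J$ is associated with $E$, Theorem \ref{thm:COMMUT} yields $J\,\II(h) \subseteq \II(h)\,J$ for every $h \in \gF(\Omega_{\gI}^+, \sB(\Omega_{\gI}^+), \CC_{\gI}, E)$; in particular $J$ maps $\cD(B)$ into itself and $BJ = JB$ on $\cD(B)$. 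Using $|\lambda|^2 = \mathrm{Re}(\lambda)^2 + \mathrm{Im}(\lambda)^2$ together with the domain description \eqref{eq:DOM}, one sees that $\cD(T) = \cD(A) \cap \cD(B) = \cD(A+BJ) = \cD(A + JB)$, so in fact the closure bar above is unnecessary and $T = A + JB$.

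For strong commutativity (Definition \ref{def:STRONGLYCOMMUTING}), I must show $Z_A$, $Z_B$, $Z_J$ pairwise commute, and similarly $Z_T$ commutes with $Z_J$. Since $J^*J = I$, we have $C_J = (I+J^*J)^{-1} = \tfrac{1}{2}I$ and hence $Z_J = \tfrac{1}{\sqrt{2}}J$. By the functional-calculus identities of Corollary \ref{cor:BFCUNBOUNDED}(ii)--(iv) applied to the intrinsic functions $\mathrm{Re}(\lambda)$, $\mathrm{Im}(\lambda)$ and $\lambda$, one obtains
$$Z_A = \II\Bigl(\tfrac{\mathrm{Re}(\lambda)}{\sqrt{1+\mathrm{Re}(\lambda)^2}}\Bigr), \quad Z_B = \II\Bigl(\tfrac{\mathrm{Im}(\lambda)}{\sqrt{1+\mathrm{Im}(\lambda)^2}}\Bigr), \quad Z_T = \II\Bigl(\tfrac{\lambda}{\sqrt{1+|\lambda|^2}}\Bigr),$$
all of which are bounded operators arising from intrinsic functions and therefore lie in the commutative family $\{\II(h) : h \in L_\infty(\Omega_{\gI}^+, \sB(\Omega_{\gI}^+), \CC_{\gI}, E)\}$ (commutativity among these follows from Theorem \ref{thm:UNBFUNCCALC}(ii)). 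Moreover, Theorem \ref{thm:COMMUT} applied to $W = J$ forces $J$ to commute with each of $Z_A$, $Z_B$, $Z_T$. Combining these facts, $Z_A Z_B = Z_B Z_A$, $Z_A Z_J = Z_J Z_A$, $Z_B Z_J = Z_J Z_B$, and $Z_T Z_J = Z_J Z_T$, establishing all the required strong commutativity relations, including \eqref{eq:TJunbounded}.

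The main obstacle is the domain bookkeeping in verifying $T = A + JB$ on the nose (rather than merely up to closure) and in justifying that $J$ preserves the relevant domains; once Theorem \ref{thm:COMMUT} and the characterisation \eqref{eq:DOM} are brought to bear, this becomes a short computation. The remaining identifications are purely formal applications of the functional-calculus identities (i)--(iii) of Corollary \ref{cor:BFCUNBOUNDED}.
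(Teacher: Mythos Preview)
Your argument is correct and follows the same overall strategy as the paper: both derive \eqref{eq:AJBunbounded} directly from the spectral representation \eqref{eq:SPECNZ} and then verify strong commutativity by examining bounded transforms. The paper argues strong commutativity by asserting $Z_A = A_{Z_T}$ and $Z_B = B_{Z_T}$ (the components of the Teichm\"uller decomposition \eqref{eq:ZAJB} of $Z_T$) and then using that $A_{Z_T}, B_{Z_T}, J$ mutually commute. Your route instead expresses $Z_A, Z_B, Z_T$ (and $Z_J = J/\sqrt{2}$) explicitly as bounded spectral integrals $\II(h)$ via the functional calculus and appeals to commutativity of the bounded Borel calculus together with Theorem \ref{thm:COMMUT}. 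This is a little more self-contained; in particular it sidesteps the identification $Z_A = A_{Z_T}$, which as written would need further justification since in general $Z_A = A(I+A^2)^{-1/2}$ whereas $A_{Z_T} = A(I+A^2+B^2)^{-1/2}$.
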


\begin{proof}
The additive decomposition \eqref{eq:AJBunbounded} follows at once from \eqref{eq:SPECNZ}. The fact that $A$, $B$ and $J$ strongly commute follows from the fact that $Z_A = A_{Z_T}$, $Z_B = B_{Z_T}$, where $A_{Z_T}$ and $B_{Z_T}$ are as in \eqref{eq:ZAJB}, and the fact that $A_{Z_T}, B_{Z_T}$ and $J$ mutually commute. Finally, \eqref{eq:TJunbounded} is an immediate consequence of \eqref{eq:AJBunbounded} and the fact that $A, B$ and $J$ strongly commute
\end{proof}

\begin{thm}[spectral mapping theorem for an unbounded normal operator]
Let $T \in \cL(\cH_n)$ be normal with spectral measure $E$ and $\Omega_{\gI}^+ := \sigma_S(T) \cap \CC^+_{\gI}$, where $\gI \in \mathbb{S}$. For any intrinsic continuous function $f \in \gB(\Omega_{\gI}^+, \sB(\Omega_{\gI}^+), \CC_{\gI}, E)$, we have
\begin{equation}
\label{eq:SPECMAPPING}
\sigma_S(\II(f)) \cap \CC_{\gI} = \overline{f(\sigma_S(T) \cap \CC_{\gI})},
\end{equation}
If $T\in \cB(\cH_n)$ is normal, then \eqref{eq:SPECMAPPING} becomes
\begin{equation}
\label{eq:SPECMAPPING_compact}
\sigma_S(\II(f)) \cap \CC_{\gI} = f(\sigma_S(T) \cap \CC_{\gI}).
\end{equation}
\end{thm}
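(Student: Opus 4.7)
The plan is to reduce the identity to the upper half plane using the axial symmetry of both sides, then translate the characterisation of $\sigma_S(\II(f))\cap\CC_{\gI}^+$ given by Theorem~\ref{thm:IMPORTANT} into a statement about limits of $f(\lambda_n)$ along sequences in $\supp E = \sigma_S(T)\cap\CC_{\gI}^+$ (this identification was established in the proofs of Theorems~\ref{thm:BN} and \ref{thm:UNBOUNDEDNORMAL}). The elementary factorisation
$$|f(\lambda)^2 - 2{\rm Re}(s)f(\lambda) + |s|^2| = |f(\lambda)-s|\cdot|f(\lambda)-\bar{s}|$$
together with the intrinsic identity $f(\bar\lambda) = \overline{f(\lambda)}$ (Definition~\ref{def:INTRINSIC}) will bridge the two descriptions. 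Both $\sigma_S(\II(f))$ and (by intrinsicity) $f(\sigma_S(T)\cap\CC_{\gI})$ are invariant under conjugation within $\CC_{\gI}$, so it is enough to work with elements $s \in \CC_{\gI}^+$.

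For the inclusion $\sigma_S(\II(f))\cap\CC_{\gI}^+ \subseteq \overline{f(\sigma_S(T)\cap\CC_{\gI})}$, I would take $s \in \sigma_S(\II(f))\cap\CC_{\gI}^+$ and use Theorem~\ref{thm:IMPORTANT} to obtain, for every $n$, a point $\lambda_n \in \sigma_S(T)\cap\CC_{\gI}^+ = \supp E$ with $|f(\lambda_n)-s|\,|f(\lambda_n)-\bar s| < 1/n$. After extracting a subsequence, either $f(\lambda_n)\to s$ or $f(\lambda_n)\to\bar s$. The first case places $s$ directly in $\overline{f(\sigma_S(T)\cap\CC_{\gI})}$; in the second case, the intrinsic identity gives $f(\bar\lambda_n) \to s$ with $\bar\lambda_n \in \sigma_S(T)\cap\CC_{\gI}^-$ (using axial symmetry of $\sigma_S(T)$), so again $s\in\overline{f(\sigma_S(T)\cap\CC_{\gI})}$. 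Conversely, if $s\in\overline{f(\sigma_S(T)\cap\CC_{\gI})}\cap\CC_{\gI}^+$, pick $\mu_n \in \sigma_S(T)\cap\CC_{\gI}$ with $f(\mu_n)\to s$; replacing $\mu_n$ by its conjugate when necessary and using $f(\bar\mu_n) = \overline{f(\mu_n)}$, I may assume $\lambda_n \in \sigma_S(T)\cap\CC_{\gI}^+$ with either $f(\lambda_n)\to s$ or $f(\lambda_n)\to\bar s$. In either case, one factor of $|f(\lambda_n)-s|\,|f(\lambda_n)-\bar s|$ tends to zero and the other stays bounded by $2|{\rm Im}(s)|$ plus a vanishing term, so the product tends to $0$. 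Because $f$ is continuous, the set $U_{s,\varepsilon}:=\{\lambda\in\Omega_{\gI}^+ : |f(\lambda)-s|\,|f(\lambda)-\bar s|<\varepsilon\}$ is open and eventually contains $\lambda_n\in\supp E$; Lemma~\ref{lem:IMPORTANT} then forces $E(U_{s,\varepsilon}) \ne 0$ for every $\varepsilon>0$, and Theorem~\ref{thm:IMPORTANT} puts $s \in \sigma_S(\II(f))\cap\CC_{\gI}^+$.

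For the bounded case, Theorem~\ref{thm:SPECNONEMPTYCOMPACT} makes $\sigma_S(T)\cap\CC_{\gI}$ compact, so its image under the continuous intrinsic function $f$ is already compact, hence closed, and the closure bar in \eqref{eq:SPECMAPPING_compact} is redundant. The only subtle point in the proof is the need to treat the two possible limits $f(\lambda_n)\to s$ and $f(\lambda_n)\to \bar s$ symmetrically via the intrinsic identity; this is the reason the statement is formulated with the full slice $\CC_{\gI}$ (rather than $\CC_{\gI}^+$) on both sides, and also the reason the intrinsic hypothesis on $f$ is essential (cf.\ Remark~\ref{rem:INTRINSIC}).
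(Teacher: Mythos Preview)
Your proposal is correct and follows essentially the same route as the paper: both arguments reduce to $\CC_{\gI}^+$ by axial/intrinsic symmetry, invoke Theorem~\ref{thm:IMPORTANT} together with the factorisation $|f(\lambda)^2-2{\rm Re}(s)f(\lambda)+|s|^2|=|f(\lambda)-s|\,|f(\lambda)-\bar s|$, use continuity of $f$ to translate between the $E$-measure condition and membership in $\supp E=\sigma_S(T)\cap\CC_{\gI}^+$, and finish the bounded case by compactness. The only cosmetic difference is emphasis: the paper writes out the inclusion $\overline{f(\sigma_S(T)\cap\CC_{\gI})}\subseteq\sigma_S(\II(f))\cap\CC_{\gI}$ in detail (via a single $\lambda_0$ and an explicit $U_\delta\subseteq V_\varepsilon$) and dispatches the reverse inclusion in one line by ``comparing formulas'', whereas you argue both directions symmetrically with sequences $\lambda_n$ and the pigeonhole observation $\min(|f(\lambda_n)-s|,|f(\lambda_n)-\bar s|)\to 0$.
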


\begin{proof}
We begin by noting that image of $f$ is symmetric about the real axis in $\CC_{\gI}$ (see \eqref{eq:INTRINSICREAL} and \eqref{eq:INTRINSICIMAGINARY}). We will first show that
 $$
\overline{f(\sigma_S(T) \cap \CC_{{\gI}})} \subseteq \sigma_S(\II(f)) \cap \CC_{\gI}.
$$
 Suppose $w_0 \in \overline{f(\sigma_S(T) \cap \CC_{\gI} )}$ and $w_0 \in \CC_{\gI}^+$. For any $\varepsilon > 0$, we can find $\lambda_0 \in \sigma_S(T) \cap \CC_{\gI}$ such that $f(\lambda_0) \in \CC_{\gI}^+$ and
\begin{equation}
\label{eq:SQUEEZE}
|f(\lambda_0) - w_0 | < \frac{\varepsilon}{2}.
\end{equation}
Since $f$ is continuous, we have the existence of $\delta > 0$ such that
\begin{align*}
U_{\delta} :=& \; \{ \lambda \in \CC_{\gI}: |\lambda - \lambda_0 | < \delta \} \\
\subseteq& \; \{ \lambda \in \CC_{\gI}: |f(\lambda) - f(\lambda_0)| < \frac{\varepsilon}{2} \} \\
\subseteq& \; \{ \lambda \in \CC_{\gI}: |f(\lambda) - w_0| < \varepsilon \} \\
=: & \; V_{\varepsilon}.
\end{align*}
Since $w_0$ and $f(\lambda_0)$ both belong to $\CC_{\gI}^+$, we may proceed as in the verification of \eqref{eq:SPECINC} and show that
$$V_{\varepsilon} \cap \CC_{\gI}^+ = \{  \lambda \in \CC^+_{\gI}: |f(\lambda)^2 - 2{\rm Re}(w_0) f(\lambda) +| w_0|^2| < \varepsilon^2 \}.$$
Thus, as $\lambda_0 \in \sigma_S(T) \cap \CC_{\gI}^+$ and $\lambda_0 \in U_{\delta}$, we have that $E(U_{\delta}) \neq \emptyset$, in which case $E(V_{\varepsilon} \cap \CC_{\gI}^+) \neq \emptyset$. But then Theorem \ref{thm:IMPORTANT} ensures that $w_0 \in \sigma_S(\II(f)) \cap \CC_{\gI}^+$.

If $w_0$ and $f(\lambda_0)$ both belong to $\CC_{\gI}^-$, then we can use the fact that the image of $f$ is symmetric about the real axis and the fact that the $S$-spectrum is axially symmetric (see Remark \ref{rem:AXIALLYSYMMETRIC}) and repeat the argument above with $\bar{w}_0$ and $\bar{\lambda}_0$ in place of $w_0$ and $\lambda_0$, respectively, to arrive at the same conclusion.

We will now show that $\sigma_S(\II(f)) \cap \CC_{\gI} \subseteq\overline{f(\sigma_S(T) \cap \CC_{\gI})}$.  The inclusion $\sigma_S(\II(f)) \cap \CC_{\gI} \subseteq\overline{f(\sigma_S(T) \cap \CC_{\gI})}$ follows immediately from comparing the formula for $\sigma_S(\II(f) \cap \CC_{\gI}$
coming from Theorem \ref{thm:IMPORTANT} and the formula for $\overline{f(\sigma_S(T) \cap \CC_{\gI})}$ derived from \eqref{eq:NEWAB}.

Finally, \eqref{eq:SPECMAPPING_compact} drops out easily from the fact that if $T\in \cB(\cH_n)$, then $\sigma_S(T) \cap \CC_{\gI}^+$ is compact.
\end{proof}

\appendix

\section{Background on the $S$-spectrum and connections with functional calculi and function theory}
\label{app:Spectrum}

In 1936, Birkhoff and von Neumann,  in their paper \cite{BF}  on the logic of quantum mechanics, showed that an set-theoretic abstraction of quantum mechanics can be formulated on Hilbert spaces over the reals, complex numbers and quaternions. Consequently, there was a strong motivation to prove the spectral theorem in the quaternionic setting (i.e., a Clifford module over $\RR_2$) and since that time
 several attempts have appeared in the literature. The main contributions are due to O. Teichm{\"u}ller \cite{Teichmueller} in 1936
 and to K. Viswanath \cite{Viswanath} in 1971.
However, both authors do not make clear the notion of spectrum that is in use for quaternionic linear operators. Nevertheless, there are useful results on quaternionic operator theory in \cite{Teichmueller} and \cite{Viswanath}.

The major breakthrough came in 2006
when I. Sabadini and the first author discovered the $S$-spectrum  $\sigma_S(T)$ and the $S$-functional calculus for
a quaternionic linear operator $T$. A prime motivation for this investigation was to give quaternionic quantum mechanics a rigorous mathematical
foundation. The strategy for the identification of the $S$-spectrum
was purely based on  hyperholomorphic analysis methods and not on physical arguments (see the introduction of the book  \cite{6CKG} for a detailed explanation).
The definition of $S$-spectrum $\sigma_S(T)$ for an linear operator $T$ on a quaternionic Banach space $V$ is somewhat counter intuitive because it involves the second order operator
$$
Q_s(T):=T^2-2s_0T+|s|^2 \, I
$$
and is given by
$$
\sigma_S(T)=\{s\in \mathbb{H}\ : \ Q_s(T)\ \ {\rm is \ not \ invertible} \ \ \mathcal{B}(V)\},
$$
where $\mathbb{H}$ is the algebra of quaternions, $s_0$ the real part of $s\in \mathbb{H}$,
 $|s|^2$ is the modulus squared and $\mathcal{B}(V)$ is the space of all bounded linear operators.

Before 2006 in the literature there were two different notions of spectrum in the quaternionic setting (as well as in the Clifford setting):
 the left spectrum $\sigma_L(T)$ and the right spectrum $\sigma_R(T)$ and both definitions mimic the eigenvalue problem for
complex operators.
We point out that just in the finite dimensional case the quaternionic spectral theorem was precisely proved using the notion of right spectrum
by  D. R. Farenick, B. A. F. Pidkowich in \cite{fpp} that was published in 2003.
We also want to point out that in the literature on
 quaternionic quantum mechanics physicists used the right spectrum to
 describe the bounded states where there are just the eigenvalues,
 see the book of S. Adler \cite{adler} and for more recent advanced see the paper of J. Gantner \cite{JONAQS} on the equivalence of complex and quaternionic quantum mechanics.
Only in 2015 the authors with D. Alpay proved the spectral theorem for quaternionic normal operators based on the $S$-spectrum $\sigma_S(T)$ in
\cite{6SpecThm1} which was published in 2016.
Fairly recently, there has been a renewed quaternionic quantum mechanics, which utilises the notion of $S$-spectrum (see, e.g.,  \cite{santar2,santar3,santar4,santar1}).

\end{document}